\newcommand{\Id}{\textrm{id}}
\def\i{^{-1}}
\def\ge{\geqslant}
\def\le{\leqslant}
\def\<{\langle}
\def\>{\rangle}
\def\stab{\text{stab}}
\def\dom{\text{dom}}
\def\a{\alpha}
\def\b{\beta}
\def\g{\gamma}
\def\G{\Gamma}
\def\d{\delta}
\def\D{\Delta}
\def\L{\Lambda}
\def\e{\epsilon}
\def\o{\omega}
\def\s{\sigma}
\def\t{\tau}
\def\th{\theta}
\def\k{\kappa}
\def\l{\lambda}
\def\z{\zeta}
\def\tPhi{\tilde \Phi}
\def\ZZ{\mathbb Z}
\def\AA{\mathbb A}
\def\NN{\mathbb N}
\def\QQ{\mathbb Q}
\def\FF{\mathbb F}
\def\RR{\mathbb R}
\def\PP{\mathbb P}
\def\ca{\mathcal A}
\def\cl{\mathcal L}
\def\co{\mathcal O}
\def\cp{\mathcal P}
\def\car{\mathcal R}
\def\cu{\mathcal U}
\def\car{\mathcal R}
\def\tta{{\tilde \alpha}}
\def\tu{\tilde u}
\def\tW{\tilde W}
\def\tw{\tilde w}
\theoremstyle{plain}
\newtheorem{thm}{Theorem}[section]
\newtheorem*{thm*}{Theorem}
 \newtheorem{prop}[thm]{Proposition}
 \newtheorem{lem}[thm]{Lemma}
 \newtheorem{cor}[thm]{Corollary}
\theoremstyle{definition}
\theoremstyle{remark}
\newtheorem*{rmk}{Remark}
\newtheorem*{claim*}{Claim}
\begin{document}

\title[]{Connected components of closed affine Deligne-Lusztig varieties in affine Grassmannians}

\author{Sian Nie}
\address{Institute of Mathematics, Academy of Mathematics and Systems Science, Chinese Academy of Sciences, 100190, Beijing, China}
\email{niesian@amss.ac.cn}

\thanks{This work is supported in part by the National Natural Science Foundation of China (No. 11321101)}

\begin{abstract}
We determine the set of connected components of closed affine Deligne-Lusztig varieties for hyperspecial maximal parahoric subgroups of unramified connected reductive groups. This extends the work by Viehmann for split reductive groups, and the work by Chen-Kisin-Viehmann on minuscule affine Deligne-Lusztig varieties.

\end{abstract}
\maketitle

\section{Introduction}
\subsection{} Let $\FF_q$ be a finite field with $q$ elements and let $\bold k$ be an algebraic closure of $\FF_q$. Let $\bold k [[t]]$ (resp. $\FF_q[[t]]$) be the ring of formal power series over $\bold k$ (resp. $\FF_q$), whose fractional field is denoted by $L$ (resp. $F$). Let $\s$ be the Frobenius automorphism of $L / F$.

Let $G$ be a (connected) reductive group over $\FF_q[[t]]$. We also denote by $\s$ the induced automorphism on $G(L)$. Since $G$ is unramified and quasi-split over $\FF_q[[t]]$, there exist a Borel subgroup $B \subseteq G$ and a maximal torus $T \subseteq B$ (over $\FF_q[[t]]$), which split over $\bold k[[t]]$. Denote by $Y$ the (absolute) cocharacter group of $T$ and by $Y^+ \subseteq Y$ the set of $G$-dominant cocharacters defined with respect to $B$. We have Cartan decomposition $G(L)=\sqcup_{\mu \in Y^+} K t^\mu K$, where $K=G(\bold k[[t]])$ is hyperspecial maximal parahoric subgroup of $G(L)$.

For $\l \in Y^+$ and $b \in G(L)$, the affine Deligne-Lusztig variety $X_{\l}^G(b)=X_\mu(b)$ is defined as $$X_\l(b)=\{g \in G(L) / K; g\i b \s(g) \in K t^\l K\}.$$ The closed affine Deligne-Lusztig variety $X_{\preceq \l}(b)$ is defined as $$X_{\preceq \l}(b)=\cup_{\l' \preceq \l} X_{\l'}(b).$$ Here $\l' \preceq \l$ means $K t^{\l'} K / K$ is contained in the closure of $K t^\l K / K$ in $G(L) / K$. Note that $X_\l(b)=X_{\preceq \l}(b)$ if $\mu$ is minuscule.

The main purpose of this paper is to determine the set $\pi_0(X_{\preceq \l}(b))$ of connected components of $X_{\preceq \l}(b)$. When $G$ is split over $\FF_q[[t]]$ or $\l$ is minuscule, this problem has been solved by Viehmann \cite{Vie} and by Chen-Kisin-Viehmann \cite{CKV} respectively. Their description of $\pi_0(X_{\preceq \l}(b))$ has essential applications on the connected components of unramified Rapoport-Zink spaces (see \cite{Ch} and \cite{CKV}) and on Langlands-Rapoport conjecture for mod $p$ points on Shimura varieties (see \cite{Ki}). In this paper, we finish the computation of $\pi_0(X_{\preceq \l}(b))$ in general case.

\subsection{} To state our main result, we need more notations. Let $$[b]=\{g\i b \s(g); g \in G(L)\}$$ be the $G(L)$-conjugacy class of $b$. Due to Kottwitz, $[b]$ is determined by two invariants. One is the Kottwitz point $\k_{[b]}^G$, the image of $b$ under the natural projection $\k_G: G(L) \to \pi_1(G)_\s:=\pi_1(G)/ (1-\s)(\pi_1(G))$. The other is the Newton point $\nu_{[b]}^G$, the $G$-dominant cochracter conjugate to the Newton cocharater of $b$. By \cite{KR} and \cite{Ga}, $X_{\preceq \l}(b) \neq \emptyset$ if and only if $\k_G(b)=\k_G(t^\l)$ and $\l^\diamond-\nu_{[b]}^G$ is a linear combination of coroots occurring in $B$ with nonnegative rational coefficients. Here $\l^\diamond$ is the average of $\s$-conjugates of $\l$.

From now on we assume $X_{\preceq \l}(b) \neq \emptyset$. Following \cite{CKV}, we say the pair $(\l, b)$ is irreducible with respect to the Hodge-Newton decomposition (HN irreducible for short) if the coefficient of any simple coroot in $\l^\diamond - \nu_{[b]}^G$ is strictly positive. Thanks to \cite[\S 2.4 \& \S 2.5]{CKV}, to study $\pi_0(X_{\preceq \l}(b))$, it suffices to consider the case that $G$ is simple and adjoint, and $(b, \l)$ is HN-irreducible. Thus the following main result of the paper, which is conjectured by Chen-Kisin-Viehmann, completes the computation of connected components of closed affine Deligne-Lusztig varieties.

\begin{thm} \label {main'}
Assume $G$ is adjoint and simple. If $(\l, b)$ is HN-irreducible, then the natural projection $\eta_G: G(L) \to \pi_1(G)$ induces a bijection $$\pi_0(X_{\preceq \l}(b)) \cong (\s-1)\i(\eta_G(t^\l)-\eta_G(b)) \subseteq \pi_1(G).$$
\end{thm}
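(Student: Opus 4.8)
The plan is to establish two things: (i) the map $\eta_G$ induces a well-defined map $\pi_0(X_{\preceq\l}(b)) \to (\s-1)^{-1}(\eta_G(t^\l)-\eta_G(b))$ that is surjective, and (ii) the fibers of this map are connected, i.e.\ the preimage in $X_{\preceq\l}(b)$ of each coset is connected. The target set is a torsor under $\pi_1(G)^\s$, and since $G$ is adjoint, $\pi_1(G)$ is finite; so surjectivity and injectivity together with finiteness of the index set is what we are after.

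First I would recall the reduction machinery: by \cite{CKV} we may assume $G$ is simple and adjoint and $(\l,b)$ is HN-irreducible, so the whole argument takes place over $\bar{\mathbb Q}_\ell$ (or just as a question about the $\bold k$-points and the reduced scheme structure). The target $(\s-1)^{-1}(\eta_G(t^\l)-\eta_G(b))$ is nonempty precisely because $X_{\preceq\l}(b)\neq\emptyset$ forces $\kappa_G(b)=\kappa_G(t^\l)$ in $\pi_1(G)_\s$. The statement that $\eta_G$ is \emph{constant on connected components} is essentially formal: $\eta_G$ factors through $\pi_0$ of the affine Grassmannian $G(L)/K$, and the locally closed pieces $Kt^{\l'}K/K$ all lie in a single fiber of $\eta_G$, while $X_{\preceq\l}(b)$ is a closed union of such orbits inside a fixed fiber — so the induced map on $\pi_0$ is well defined, and its image lands in the asserted torsor by a direct computation with $\kappa_G$ and $\eta_G$.

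For \textbf{surjectivity} I would exhibit, for each target class, an actual point of $X_{\preceq\l}(b)$ mapping to it. Here the HN-irreducibility hypothesis is what guarantees there is ``enough room'': one uses that $\l^\diamond - \nu_{[b]}^G$ has strictly positive coefficient on every simple coroot, so that translating a base point by suitable $t^\chi$ with $\chi$ ranging over representatives of $\pi_1(G)$ keeps us inside $X_{\preceq\l}(b)$ (the strict positivity absorbs the perturbation of the Newton/Hodge inequality). This is the same mechanism as in \cite{CKV} and \cite{Vie}, adapted to the closed (non-minuscule) setting.

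The \textbf{main obstacle}, and the real content, is \textbf{connectedness of the fibers}. The strategy I expect to follow is the one pioneered by Viehmann and extended by Chen--Kisin--Viehmann: reduce to the case where $b$ is basic (superbasic, in fact, after a further Levi reduction using Hodge--Newton indecomposability), and then analyze the $X_{\preceq\l}(b)$ directly via an explicit parametrization — e.g.\ when $G$ is (an inner form of) $\mathrm{PGL}_n$ and $b$ superbasic, one has a concrete description in terms of lattices, and one shows connectedness by exhibiting explicit curves (images of $\mathbb A^1$) joining the relevant points, built from one-parameter root subgroups $x_\a(t^k)$ acting on the base point. The passage from $X_\l$ (minuscule) to $X_{\preceq\l}$ requires care: one must show that adding the lower strata $X_{\l'}(b)$ with $\l'\prec\l$ does not disconnect, which one gets by checking these strata are ``attached'' to the top stratum — concretely, that the closure of each connected component of the top-dimensional part already meets every lower stratum it needs to. An induction on the group (passing to proper Levi subgroups via the Hodge--Newton decomposition of \cite{CKV}, using HN-irreducibility to control the inductive step) combined with the superbasic base case is how I would organize it; the delicate point throughout is keeping track of the $\s$-action and the twisted centralizer $J_b$, whose component group acts on $\pi_0(X_{\preceq\l}(b))$ and must be shown to act transitively on each $\eta_G$-fiber.
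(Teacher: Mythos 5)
Your outline reproduces, at a high level, the Viehmann/Chen--Kisin--Viehmann strategy (reduce to a superbasic $\s$-conjugacy class in a Levi $M=M_J$, connect points by explicit $\mathbb A^1$'s built from affine root subgroups, act by $J_b(F)$), and that is indeed the skeleton of the paper's proof. But as written it has genuine gaps precisely at the two points where the non-minuscule case differs from the cases already in the literature, and it misdiagnoses where the difficulty lies.

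First, the step you describe as ``adding the lower strata does not disconnect'' is not the issue; the issue is the opposite one. After the Levi reduction, every connected component of $X_{\preceq\l}(b)$ meets $(J_b(F)\cap N(L))\,X^{M}_{\mu}(b)$ for \emph{some} $M$-minuscule $\mu\preceq\l$ with $\k_M(t^\mu)=\k_M(b)$, and the set $\bar I_{\l,M,b}$ of such $\mu$ is large when $\l$ is not minuscule (its elements are not even $W_0$-conjugate to each other, unlike the minuscule case). One must \emph{connect} $X^{M}_{\mu}(b)$ to $X^{M}_{\mu'}(b)$ inside $X_{\preceq\l}(b)$ for all $\mu,\mu'\in\bar I_{\l,M,b}$, i.e.\ prove the analogue of Proposition \ref{k1}. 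Your proposal gives no mechanism for this. The paper does it by introducing the set $\Theta(\mu,\mu',\l)=\{\a\in\Phi;\ \<\a,\mu-\mu'\>\ge 2,\ \mu-\a^\vee,\ \mu'+\a^\vee\preceq\l\}$, proving it is nonempty whenever $\mu\neq\mu'$ (Proposition \ref{con'}, which itself needs the weak-dominance lemmas and Gashi's theorem), and then running an induction that produces chains $x\overset{(\a,r)}{\to}x'$ realizable by explicit curves $z\mapsto g_x\,U_\a(zt^{-1})\,{}^{\dot b_x\s}U_\a(zt^{-1})\cdots K$. Saying ``exhibit explicit curves from one-parameter subgroups'' without this combinatorial input is where the proof would stall.

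Second, for the injectivity (``fibers are connected'') you need to show that the stabilizer of the $\pi_1(M)^\s$-action on the image of $\pi_0(X^M_{\mu}(b))$ is exactly $\ker(\pi_1(M)^\s\to\pi_1(G)^\s)$, i.e.\ that every generator $y_\a=\sum_{\b\in\co_\a}\b^\vee$ of that kernel is realized by a pair of connected points (Proposition \ref{k2} and \ref{k2'}). This is where HN-irreducibility is actually used (via Lemma \ref{gen}), not in the surjectivity step as you suggest: your surjectivity mechanism (``translating by $t^\chi$ keeps us inside $X_{\preceq\l}(b)$ because strict positivity absorbs the perturbation'') is not how the argument goes and would not produce points in prescribed components; surjectivity instead follows from the $J_b(F)\cap N(L)$-orbit statement together with the stabilizer computation. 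Moreover, even in the minuscule case the published argument for this stabilizer step had a gap; the paper closes it (and extends it) by working with weakly dominant cocharacters, which is why the Levi $M_J$ must be chosen so that $\bar I_{\l,J,b}$ contains a weakly dominant element (Corollary \ref{ideal}) --- a constraint absent from your outline. Finally, the superbasic base case in the paper is not done by the lattice description of $\mathrm{PGL}_n$ you invoke, but by a Deligne--Lusztig reduction in the affine flag variety down to $\s$-straight elements (Proposition \ref{flag}); your route would also work but is the older, computational one.
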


The following is a byproduct of Theorem \ref{main'}, which holds without any restrictions on $G$, $\l$ and $b$.
\begin{thm}\label{main''}
Let $J_b(F)=\{g \in G(L); g\i b \s(g)=b\}$ be the $\s$-centralizer of $b$. Then $J_b(F)$ acts transitively on $\pi_0(X_{\preceq \l}(b))$ by left multiplication.
\end{thm}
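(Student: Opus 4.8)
The plan is to deduce Theorem \ref{main''} from Theorem \ref{main'} by a series of reductions that strip away the hypotheses "$G$ adjoint and simple" and "$(\l,b)$ HN-irreducible". First I would recall the reduction steps of Chen--Kisin--Viehmann cited in \S2: using \cite[\S2.4 \& \S2.5]{CKV} one reduces the description of $\pi_0(X_{\preceq\l}(b))$, together with the $J_b(F)$-action on it, to the case where $G$ is simple and adjoint; the point is that these reductions are $J_b(F)$-equivariant, so transitivity of the $J_b(F)$-action is inherited from the simple adjoint case to the general case. One must check that the Hodge--Newton decomposition (when $(\l,b)$ is \emph{not} HN-irreducible) realizes $X_{\preceq\l}(b)$, $J_b(F)$-equivariantly, as built out of closed affine Deligne--Lusztig varieties for a Levi subgroup $M$ for which the corresponding pair is HN-irreducible — this is essentially \cite[Theorem 2.1.4]{CKV} or its analogue in the present (possibly non-minuscule) setting, and the $J_b^M(F) \hookrightarrow J_b^G(F)$ compatibility gives transitivity for $G$ from transitivity for $M$.

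Next, granting Theorem \ref{main'}, in the HN-irreducible simple adjoint case the map $\eta_G$ identifies $\pi_0(X_{\preceq\l}(b))$ with the coset $(\s-1)\i(\eta_G(t^\l)-\eta_G(b)) \subseteq \pi_1(G)$, which is a torsor under $\ker(\s-1\colon \pi_1(G)\to\pi_1(G))$. So I would show that the composite $J_b(F) \to \pi_0(X_{\preceq\l}(b)) \xrightarrow{\ \eta_G\ } \pi_1(G)$ is a group homomorphism onto this kernel. The key computation: for $g\in X_{\preceq\l}(b)$ and $j\in J_b(F)$ one has $\eta_G(jg) = \eta_G(j) + \eta_G(g)$, and $j\i b\s(j)=b$ forces $(\s-1)\eta_G(j)=0$, i.e. $\eta_G(j)\in\ker(\s-1)$ on $\pi_1(G)$; so the action is through the translation of the torsor by $\eta_G(J_b(F)) \subseteq \ker(\s-1)$. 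Transitivity then amounts to the surjectivity $\eta_G\colon J_b(F) \twoheadrightarrow \ker(\s-1\colon \pi_1(G)\to\pi_1(G))$, which is a standard fact about $\s$-centralizers (the Kottwitz homomorphism for the inner form $J_b$, combined with $\pi_1(J_b)\cong\pi_1(M_b)$ and Galois descent); this is where I would cite the structure theory of $J_b$ as an inner form of a Levi $M_b$ of $G$.

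The main obstacle I expect is \emph{not} the group-theoretic surjectivity $\eta_G(J_b(F)) = \ker(\s-1)$, which is classical, but rather making the reduction in the first paragraph fully rigorous in the closed (non-minuscule) setting: one needs the Hodge--Newton decomposition and the product-over-$G$-vs-$M$ comparison to hold for $X_{\preceq\l}$ rather than just $X_\l$, and to be compatible with the respective $J_b$-actions and with $\eta$. Once that bookkeeping is in place, Theorem \ref{main''} follows formally: in the reduced case $\pi_0$ is a single $\ker(\s-1)$-torsor and $J_b(F)$ surjects onto $\ker(\s-1)$, hence acts transitively; and transitivity propagates back through the $J_b$-equivariant reductions to arbitrary $G$, $\l$, $b$ with $X_{\preceq\l}(b)\neq\emptyset$ (the empty case being vacuous).
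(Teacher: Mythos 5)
Your proposal is logically sound but follows a genuinely different, and in fact more dependent, route than the paper. The paper does \emph{not} deduce Theorem \ref{main''} from Theorem \ref{main'}: it obtains transitivity directly from the first three steps of the argument, namely the superbasic case (Corollary \ref{basic}(ii)), the fact that every connected component of $X_{\preceq \l}(b)$ contains a point $jQ$ with $j \in J_b(F)\cap N(L)$ and $Q \in X_{\mu_x}^{M_J}(b)$ (Corollary \ref{red'}), and the independence of the image of $\pi_0(X_{\mu_x}^{M_J}(b)) \to \pi_0(X_{\preceq\l}(b))$ from $x$ (Proposition \ref{k1}). Since $J_b(F)$ contains both $J_b(F)\cap N(L)$ and $J_b^{M_J}(F)$, and the latter already acts transitively on $\pi_0(X_{\mu_x}^{M_J}(b))$, transitivity follows with no appeal to HN-irreducibility, to Proposition \ref{k2}, or to the identification of $\pi_0$ with a $\pi_1(G)^\s$-torsor. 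Your route instead routes everything through Theorem \ref{main'}, which forces you to (i) invoke the Hodge--Newton reduction of \cite{CKV} for \emph{closed} affine Deligne--Lusztig varieties in a $J_b(F)$-equivariant form (the paper only needs the reduction to $G$ simple adjoint for this theorem, not the HN reduction), and (ii) prove that $\eta_G(J_b(F)) = \ker(\s-1\colon \pi_1(G)\to\pi_1(G))$ for general, not necessarily basic, $b$. Point (ii) is true but not quite ``classical'' as stated: it reduces to the superbasic computation in the proof of Corollary \ref{basic}(iii) together with the surjectivity of $\pi_1(M')^\s \to \pi_1(M)^\s$ for nested Levi subgroups, which is exactly the nontrivial input from \cite{CKV} that the paper's direct argument avoids for this theorem. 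Your torsor argument itself (translation by $\eta_G(j) \in \pi_1(G)^\s$ combined with injectivity of $\eta_G$ on $\pi_0$) is correct; what your approach buys is a clean conceptual picture once Theorem \ref{main'} is in hand, while the paper's approach buys a proof of Theorem \ref{main''} that is strictly weaker in its hypotheses and available before the hardest part (Proposition \ref{k2}) is established.
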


\begin{rmk}
Theorem \ref{main'} and Theorem \ref{main''} are proved if $G$ splits over $\bold k[[t]]$ (see \cite{Vie}) or $\l$ is minuscule (see \cite{CKV}).
\end{rmk}

The proofs of the main results follow the strategy of \cite{CKV}. Firstly, we prove Theorem \ref{main'} when $b$ is superbasic, that is, $b$ is not $\s$-conjugate to an element of any proper Levi subgroup of $G$. As a consequence, $J_b(F)$ acts on $\pi_0(X_{\preceq \l}(b))$ transitively. Secondly, we find a suitable standard parabolic subgroup $P=MN \subseteq G$ and a superbasic $\s$-conjugacy class $[b']_M \subseteq [b]$ of $M(L)$ with $\nu_{[b']_M}^M=\nu_{[b]}^G$ such that each connected component of $X_{\preceq \l}(b')$ contains a point in $(J_{b'}(F) \cap N(L))X_\mu^M(b')$, where $M \supseteq T$ (resp. $N$) is the Levi subgroup (resp. unipotent radical) of $P$ and $\mu$ ranges over the set $$\bar I_{\l, M, b'}=\{\mu \in Y; \mu \preceq \l, \text{ $\mu$ is $M$-minuscule and $\k_M(b')=\k_M(t^\mu)$}\}.$$ Thirdly, we show the image of the natural map $$\varphi_\mu: \pi_1(M)^\s \cong \pi_0(X_{\mu}^M(b')) \to \pi_0(X_{\preceq \l}(b'))$$ dose not depend on the choice of $\mu \in \bar I_{\l, M, b'}$. Here $\pi_1(M)^\s$ is the set of $\s$-fixed points on $\pi_1(M)$. From the first three steps, we see immediately $J_{b'}(F)$ acts on $\pi_0(X_{\preceq \l}(b'))$ transitively and Theorem \ref{main''} follows. To deduce Theorem \ref{main'} (i.e., $\pi_0(X_{\preceq \l}(b')) \cong \pi_1(G)^\s$), we need to show, under the assumption that $(\l, b)$ is HN-irreducible, that $\varphi_\mu$ is surjective, and moreover, any two points $P, P' \in X_\mu^M(b')$ are in the same connected component of $X_{\preceq \l}(b')$ if and only if $\eta_G(P)-\eta_G(P')$ lies in the kernel of the natural surjective homomorphism $\pi_1(M)^\s \twoheadrightarrow \pi_1(G)^\s$. This is the last step of the whole proof.

The first two steps are already established in \cite{CKV}. In this paper, we develop the techniques used in \cite{CKV} systematically and finish the last two steps. Based on Deligne-Lusztig reduction methods, we also provide a new conceptual proof for the first step, avoiding concrete computations in superbasic case.

Compared to the original proof for the case that $\l$ is minuscule, there are two main difficulties for the general case. The first lies in the third step. We need to construct affine lines in $X_{\preceq \l}(b')$ connecting $X_{\mu}^M(b')$ with $X_{\mu'}^M(b')$ for any $\mu, \mu' \in \bar I_{\l, M, b}$. When $\l$ is minuscule, the original construction of affine lines relies on the property that each element of $\bar I_{\l, M, b}$ is conjugate to $\l$ (under the Weyl group of $T$). In general case, $\bar I_{\l, M, b}$ becomes much more complicated. To overcome the difficulty, we introduce a new algorithm for the construction via defining the key set $\Theta(\mu, \mu', \l)$ (see \S 6 for definition). The other difficulty is to prove the sufficiency direction of the ``moreover" part in the last step. There is a gap in the original proof in \cite{CKV}. However, this gap can be filled in (for minuscule $\l$) by Miaofen Chen via a case-by-case analysis. For general case, we figure out a conceptual proof using weakly dominant cocharacters (see \S 4 for definition), extending Chen's arguments.

The paper is organized as follows. In \S 2, we collect basic notations and properties which are used frequently in the paper. In \S 3, we prove Theorem \ref{main'} for superbasic case. In \S 4, we show the existence of the Levi subgroup $M$ as above such that $\bar I_{\l, M, b'}$ contains a weakly dominant cocharacter. In \S 5, we outline the proof of Theorem \ref{main'}, where the surjectivity of $\varphi_\mu$ in the fourth step is redundant. In \S 6 and \S 7, we introduce the set $\Theta(\mu, \mu', \l)$ and establish the third step (see Proposition \ref{k1}). In \S 8 and \S 9, we finish the last step (see Proposition \ref{k2}).

\subsection*{Acknowledgement} We would like to thank Xuhua He for his preprint \cite{He1}. We are grateful to Miaofen Chen for sharing her essential ideas on the proof of Proposition \ref{k2}. Various definitions and arguments, especially in Section 5, 7, 8 and 9, are inspired or borrowed form the work of Chen-Kisin-Viehmann \cite{CKV}. Their work has a foundational influence on the present paper. Part of the work was done during the author's visit to the Institut Mittag-Leffler. We are grateful to the institute for the excellent working atmosphere.

\section{Preliminaries}
\subsection{}\label{setup1} Let $G \supseteq B \supseteq T$ be as above. We denote by $\car=(Y, \Phi_G^\vee, X, \Phi_G, \Pi_G)$ the root datum of $G$, where $X$ (resp. $Y$) is the absolute character (resp. cocharacter) group of $T$; $\Phi_G=\Phi \subseteq X$ (resp. $\Phi^\vee \subseteq Y$) is the set of roots (resp. coroots); $\Pi_G=\Pi_0 \subseteq \Phi$ is the set of simple roots occurring in $B$. Moreover, there exists a bijection $\a \mapsto \a^\vee$ between $\Phi$ and $\Phi^\vee$.

Let $N_T \subseteq G$ be the normalizer of $T$ in $G$. The quotients $W_0=N_T(L) / T(L)$ and $\tW=N_T(L) / T(\bold k[[t]])$ are called the Weyl group and the extended affine Weyl group of $G$ respectively. We have $$\tW_G=\tW=Y \rtimes W_0=\{t^\mu w; \mu \in Y, w \in W_0\}.$$ We can embed $\tW$ into the group of affine transformations of $Y_\RR$, where the action of $\tw=t^\mu w$ is given by $v \mapsto \mu+w(v)$. Here $W_0$ acts on $Y_\RR$ naturally by scalar extension.

Let $\tPhi_G=\tPhi=\Phi \times \ZZ$ be the set of (real) affine roots. Let $a=(\a, k) \in \tPhi$. We can view $a$ as an affine function such that $a(v)=-\<\a, v\>+k$ for $v \in Y_\RR$, where $\< , \>: X_\RR \times Y_\RR \to \ZZ$ is the scalar extension of the natural pairing between $X$ and $Y$. The induced action of $\tW$ on $\tPhi$ is given by $(\tw(a))(v)=a(\tw\i(v))$ for $\tw \in \tW$. Let $s_a=t^{k \a^\vee} s_\a \in \tW$ be the corresponding affine reflection. Then $\{s_a; a \in \tPhi\}$ generates the affine Weyl group $$W^a=\ZZ \Phi^\vee \rtimes W_0=\{t^\mu w; \mu \in \ZZ \Phi^\vee, w \in W_0\}.$$

Let $\Phi^+=\Phi \cap \NN \Pi_0$ be the set of positive roots and let $Y_\RR^+=\{v \in Y_\RR; \<\a, v\> \ge 0, \a \in \Phi^+\}$ be the set of dominant vectors. For $v \in Y_\RR$, we denote by $\bar v$ the unique dominant vector in the $W_0$-orbit of $v$. Let $$\D=\{v \in Y_\RR; 0 < \<\a, v\> < 1, \a \in \Phi^+\}$$ be the base alcove. Note that $\tW=W^a \rtimes \Omega$, where $\Omega=\{x \in \tW; x(\D)=\D\}$. Set $\tPhi^+=\{a \in \tPhi; a(\D) > 0\}$ and $\tPhi^-=-\tPhi^+$. Then $\tPhi=\tPhi^+ \sqcup \tPhi^-$. The associated length function $\ell: \tW \to \NN$ is defined by $\ell(\tw)=|\tPhi^- \cap \tw(\tPhi^+)|$. Let $S^a=\{s_a; a \in \tPhi, \ell(s_a)=1\}$ be the set of simple affine reflections and let $S_0=S^a \cap W_0$. Then $(W^a, S^a)$ and $(W_0, S_0)$ are Coxeter systems.

For $\tw, \tw' \in \tW$, we say $\tw \leq \tw'$ if there exists a sequence $\tw=\tw_1, \dots, \tw_r=\tw'$ such that $\ell(\tw_k) < \ell(\tw_{k+1})$ and $\tw_k \tw_{k+1}\i$ is an affine reflection for $k \in [1, r-1]$. We call this partial order $\leq$ the Bruhat order on $\tW$.

Let $J \subseteq S^a$. We denote by $W_J$ the parabolic subgroup of $W^a$ generated by $J$. Let $W \supseteq W_J$ be a subgroup of $\tW$. We set $W {}^J=\{\min (\tw W_J); \tw \in W\}$, ${}^J W=(W {}^J)\i$ and ${}^J W {}^J={}^J W \cap W {}^J$.

\subsection{}\label{setup2} Let $\s$ be the Frobenius automorphism of $G(L)$. We also denote by $\s$ the induced automorphism on the root datum $\car$. Then $\s$ acts on $Y_\QQ$ as a linear transformation of finite order. For $\tw \in \tW$, there exists $n \in \NN$ such that $(\tw\s)^n=t^{\xi}$ for some $\xi \in Y$. We define $\nu_{\tw}=\xi / n$, which dose not depend on the choice of $n$.

Let $\eta_G: G(L) \to \pi_1(G)=Y/\ZZ \Phi^\vee \cong \Omega$ be the natural homomorphism such that $\eta_G (K t^\mu K)=\mu \in Y/\ZZ \Phi^\vee$ for $\mu \in Y$. Let $\k_G: G(L) \to \pi_1(G)_\s$ be the Kottwitz map obtained by composing $\eta_G$ with the quotient map $\pi_1(G) \to \pi_1(G)_\s=\pi_1(G) / (1-\s)(\pi_1(G))$. Since $K$ lies in the kernel of $\eta_G$, we also denote by $\eta_G$ (resp. $\k_G$) the induced map on $\tW$ or on $G(L)/K$.

For $b \in G(L)$, we denote by $[b]=\{g\i b \s(g); g \in G(L)\}$ the $\s$-conjugacy class of $G(L)$ containing $b$. It is known that $[b] \cap N_T(L) \neq \emptyset$. Then the Kottwitz point $\k_{[b]}^G$ and the Newton point $\nu_{[b]}^G$ of $[b]$ are given by $\k(\t)$ and $\bar \nu_{p(\t)}$ respectively for any $\t \in [b] \cap N_T(L)$. Here $p: N_T(L) \to \tW$ is the natural projection. Note that $\s(\nu_{[b]}^G)=\nu_{[b]}^G$ since $\s([b])=[b]$. We say $b$ or $[b]$ is basic if $\nu_{[b]}^G$ is central on $\Phi$, that is, $\<\a, \nu_{[b]}^G\>=0$ for any $\a \in \Phi$. We say $b$ or $[b]$ is superbasic if there dose not exists a proper Levi subgroup $M \subseteq G$ (over $\FF_q [[t]]$) such that $[b] \cap M(L) \neq \emptyset$.

We say $\o \in \Omega$ is $\s$-superbasic in $\tW$ if each orbit of the action $s \mapsto \o \s(s) \o\i$ on $S^a$ is a union of connected components of the affine Dynkin diagram of $S^a$.
\begin{lem} \label{ss}
Let $b \in G(L)$. Then $[b]$ is a superbasic $\s$-conjugacy of $G(L)$ if and only if there exits $\t \in [b] \cap N_T(L)$ such that $p(\t) \in \Omega$ is $\s$-superbasic in $\tW$.
\end{lem}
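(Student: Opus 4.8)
The plan is to reduce the statement to a computation internal to the extended affine Weyl group, using the standard dictionary between $\s$-conjugacy classes of $G(L)$ and $\s$-conjugacy classes in $\tW$, together with the description of Levi subgroups in terms of parabolic subgroups of $\tW$. First I would recall that $[b] \cap N_T(L) \neq \emptyset$ (stated in \S\ref{setup2}), so we may fix a representative $\t \in [b] \cap N_T(L)$ and write $w = p(\t) \in \tW$; by the theory of He--Nie on minimal length elements, after replacing $\t$ within its $\s$-conjugacy class we may assume $w$ has minimal length in its $\s$-conjugacy class $\cO$ of $\tW$. The key translation facts are: (i) a proper Levi subgroup $M \supseteq T$ of $G$ (over $\FF_q[[t]]$) corresponds to a proper $\s$-stable parabolic subset of $\Phi$, hence to a proper $\s$-stable standard parabolic subgroup of $W_0$ together with the associated $\tW_M \subseteq \tW$; and (ii) $[b] \cap M(L) \neq \emptyset$ for such an $M$ if and only if the $\s$-conjugacy class of $w$ in $\tW$ meets $\tW_M$, which in turn (for $w$ of minimal length, or via the Newton point being $M$-dominant after conjugation) is controlled by whether $\nu_w = \nu^G_{[b]}$ can be made $M$-dominant, equivalently whether $\nu_w$ lies on a wall, equivalently whether $\supp_\s(w)$ (the $\s$-support) is a proper subset.

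Next I would make precise the passage from $w \in \tW$ to an element of $\Omega$. Since $\nu_w$ being non-central on $\Phi$ would place $\nu_w$ in the interior of a proper face, one shows superbasicity of $[b]$ forces $\nu_w$ central, hence $w$ basic; then $w$ of minimal length in its class and basic forces $\ell(w) = 0$, i.e.\ $w \in \Omega$ (here I use that $\Omega$ is exactly the length-zero part of $\tW$, from $\tW = W^a \rtimes \Omega$ in \S\ref{setup1}). Set $\o = w \in \Omega$. It remains to identify the condition ``$[b]$ not contained in any proper Levi'' with ``each orbit of $s \mapsto \o\s(s)\o\i$ on $S^a$ is a union of connected components of the affine Dynkin diagram.'' For this I would argue in both directions: if the orbits are \emph{not} of this form, some orbit is a proper subset of a connected component, so its complement within that component spans a $\o\s$-stable proper parabolic $J \subsetneq S^a$ whose finite part gives a proper $\s$-stable Levi $M$ with $[b] \cap M(L) \neq \emptyset$ (because $\o$ normalizes $\tW_M$ and $\o \in \Omega_M$ up to the relevant identification, so $\o$ itself is an element of $M(L)$'s affine Weyl data); conversely, if some proper $\s$-stable standard Levi $M$ satisfies $[b] \cap M(L) \neq \emptyset$, then by (ii) the basic element $\o$ is, up to $\s$-conjugacy in $\tW_M$, a length-zero element of $\tW_M$, and chasing through the Dynkin diagram of $M^a \subsetneq S^a$ shows the $\o\s$-orbits on $S^a$ cannot be unions of full connected components.

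The main obstacle I expect is step (ii): cleanly establishing that $[b] \cap M(L) \neq \emptyset$ is equivalent to the $\tW$-$\s$-conjugacy class of $w$ meeting $\tW_M$, and that this can be detected on a minimal-length (hence length-zero, in the basic case) representative. The subtlety is that membership of a $\s$-conjugacy class in a parabolic subgroup need not be visible on an arbitrary representative; one needs the minimal-length/``cordial'' element theory (He--Nie, and the cited preprint \cite{He1} of He) guaranteeing that if the class meets $\tW_M$ at all, then a minimal length element of the class can be chosen in $\tW_M$, and that the minimal length within $\tW$ agrees with that within $\tW_M$. Once this structural input is in hand, the remaining combinatorics — translating ``$\o\s$-orbits on $S^a$ are unions of connected components'' into ``no proper $\o\s$-stable parabolic of $S^a$ with the right finite part exists'' — is a routine inspection of the affine Dynkin diagram, using that $\o \in \Omega$ permutes $S^a$ via a diagram automorphism and that $\Omega_M$ embeds in $\Omega$ compatibly.
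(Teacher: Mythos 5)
The paper's own ``proof'' of Lemma \ref{ss} is a two-line citation: it invokes \cite[Proposition 3.5]{HN1} (each basic $\s$-conjugacy class of $G(L)$ has a representative in $N_T(L)$ projecting to $\Omega$; more generally, $B(G)$ is parametrized by the \emph{straight} $\s$-conjugacy classes of $\tW$) and \cite[Lemma 3.1.1]{CKV} (the combinatorial characterization of $\s$-superbasic elements of $\Omega$). Your plan instead tries to reprove both ingredients, and the first half contains a genuine error. You write: ``$w$ of minimal length in its class and basic forces $\ell(w)=0$.'' This is false. Take $b=1$ and $\t$ a lift of a simple reflection $s\in W_0$; by Lang's theorem $\dot s\in[1]\cap N_T(L)$, the class $[1]$ is (super)basic when $G$ is anisotropic modulo center in the relevant sense, $\nu_s=0$ is central, and $s$ has minimal length $1$ in its $\s$-conjugacy class of $\tW$. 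The point is that different choices of $\t\in[b]\cap N_T(L)$ land in \emph{different} $\s$-conjugacy classes $\co$ of $\tW$, and only one of these (the straight one) has a length-zero element; minimizing length inside the class of an arbitrary lift does not reach $\Omega$. Repairing this requires exactly the cited input: one must \emph{choose} $\t$ so that $p(\t)$ lies in the straight class attached to $[b]$ (equivalently, use that $\tw\mapsto[\dot\tw]$ is a bijection from straight classes onto $B(G)$, so a basic class admits a representative with $p(\t)\in\Omega$). Your appeal to ``He--Nie minimal length theory'' gestures at the right source but does not supply this step.

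A secondary caution on the combinatorial half: a proper $\o\s$-stable subset $J\subsetneq S^a$ of the \emph{affine} simple reflections generally yields a pseudo-Levi, not a Levi subgroup of $G$ containing $T$ (its set of roots $\{\a: s_{(\a,k)}\in W_J\}$ need not be a Levi subsystem), so the passage from ``some orbit is a proper subset of a component'' to ``a proper $\s$-stable Levi $M$ with $[b]\cap M(L)\neq\emptyset$'' needs more care than your sketch indicates; this is precisely the content of \cite[Lemma 3.1.1]{CKV}, whose proof identifies the superbasic case with groups of the form $\prod_i\mathrm{Res}_{E_i/F}PGL_{n_i}$ rather than arguing directly with complements of orbits.
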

\begin{proof}
It follows from \cite[Proposition 3.5]{HN1} and \cite[Lemma 3.1.1]{CKV}.
\end{proof}

\subsection{} \label{setup3} Let $M \subseteq G$ be a Levi subgroup containing $T$. Replacing the triple $T \subseteq B \subseteq G$ with $T \subseteq M \cap B \subseteq M$, we can define, as in \S \ref{setup1} and \S \ref{setup2}, $\Pi_M$, $\Phi_M$, $\tW_M$, $S_M^a$, $\leq_M$ and so on.

Let $J \subseteq S_0$ with $\s(J)=J$. We denote by $M_J \subseteq G$ the Levi subgroup generated by $T$ and the root subgroups $U_\a$ such that $s_\a \in W_J$. We write $\Pi_J=\Pi_{M_J}$, $\Phi_J=\Phi_{M_J}$, $\tW_J=\tW_{M_J}$ and so on for simplicity. Moreover, for $v \in Y_\RR$, we denote by $\bar v^J$ the unique $J$-dominant elements in the $W_J$-orbit of $v$.

For $\mu, \l \in Y$, we write $\mu \le_J \l$ if $\l-\mu \in \NN \Pi_J^\vee$ and write $\mu \preceq_J \l$ if $\bar \mu^J \le_J \bar \l^J$. If $J=S_0$, we abbreviate $\le_J$ (resp. $\preceq_J$) to $\le$ (resp. $\preceq$) for simplicity. The following two properties will be used frequently in the paper.

(a) $\mu \preceq \l$ if $\mu \preceq_J \l$;

(b) Let $\a \in \Phi_J$. Then $\mu-\a^\vee \preceq_J \mu$ if $\<\a, \mu\> \ge 1$. Moreover, $\mu-\a^\vee \prec_J \mu$ if $\<\a, \mu\> \ge 2$.

Let $K_J=M_J(\bold k[[t]])=K \cap M_J(L)$. Via the natural embedding $M_J(L) /K_J \hookrightarrow G(L) / K$, we can view $X_{\preceq_J \l}^{M_J}(b)$, the closed Deligne-Lusztig variety define with respect to $M_J$, as a closed subset of $X_{\preceq \l}(b)$ for $b \in M_J(L)$.

\subsection{} \label{setup4} Let $J \subseteq S_0$ with $\s(J)=J$ and $b \in G(L)$. We say the pair $(J, b)$ is admissible if the following two statements holds:

(a) $\nu_{[b]}^G$ is central on $\Phi_J$, that is, $\<\a, \nu_{[b]}^G\>=0$ for any $\a \in \Phi_J$;

(b) $[b] \cap M_J(L)$ contains a (unique) $\s$-conjugacy class $[b]_{J, \dom}$ of $M_J(L)$ such that $\nu_{[b]_{J, \dom}}^{M_J}=\nu_{[b]}^G$.

Assume $(J, b)$ is admissible. We define $$ \bar I_{\l, M_J, b}=\bar I_{\l, J, b}=\{x \in \pi_1(M_J); \k_J(x)=\k_{[b]_{J, \dom}}^{M_J}, \mu_x \preceq \l\}.$$ Here $\mu_x \in Y$ is the unique $J$-dominant and $J$-minuscule coweight such that $\eta_J(t^{\mu_x})=\eta_{M_J}(t^{\mu_x})=x$. Via the bijection $x \mapsto \mu_x$, we also view $\bar I_{\l, J, b}$ as the set of $J$-dominant $J$-minuscule cocharacters $\mu$ such that $\mu \preceq \l$ and $\k_J(t^\mu)=\k_J(b)$. Furthermore, we say $b$ is superbasic in $M_J$ if $[b]_{J,\dom}$ is a superbasic $\s$-conjugacy class of $M_J(L)$.

For $x \in \pi_1(M_J)$, we denote by $b_x=t^{\mu_x} w_x \in \Omega_J$ the unique element such that $\eta_J(b_x)=x$, where $w_x = u_x w_J$ and $u_x$ (resp. $w_J$) is the unique maximal element in parabolic subgroup generated by $\{s \in J; s(\mu_x)=\mu_x\}$ (resp. $J$).

\begin{lem} \label{dominant}
Let $J \subseteq S_0$ with $\s(J)=J$ and $b \in G(L)$ such that $(J, b)$ is admissible. If $\bar I_{\l, J,b} \neq \emptyset$, then $\nu_{b_x}=\nu_{[b]}^G$ for any $x \in \bar I_{\l, J,b}$.
\end{lem}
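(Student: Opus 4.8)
The plan is to compute $\nu_{b_x}$ directly from the definition in \S\ref{setup2} applied to $b_x = t^{\mu_x} w_x \in \Omega_J$, and to compare it with $\nu_{[b]_{J,\dom}}^{M_J}$, which by admissibility equals $\nu_{[b]}^G$. The first step is to observe that, since $b_x \in \Omega_J$ and $\s(\Omega_J) = \Omega_J$, the element $b_x$ is basic in $M_J$: indeed $\Omega_J \cong \pi_1(M_J)$ consists exactly of the length-zero elements of $\tW_J$, so $b_x$ normalizes the base alcove of $M_J$ and its image in $\tW_J$ has a Newton vector that is central on $\Phi_J$ (this is the standard fact that elements of $\Omega_J$ are $\s$-conjugate in $\tW_J$ only to basic classes). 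Hence $\nu_{b_x}$ is $M_J$-central, i.e. $\langle \a, \nu_{b_x}\rangle = 0$ for all $\a \in \Phi_J$.

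Next I would identify $\nu_{b_x}$ via the Kottwitz map. By construction $\eta_J(b_x) = x$, and $x = \k_J(x) $ lifts the Kottwitz point $\k^{M_J}_{[b]_{J,\dom}}$; more precisely $\k_J(b_x) = \k_J(t^{\mu_x}) = x$ in $\pi_1(M_J)_\s$. Since a basic $\s$-conjugacy class of $M_J(L)$ is determined by its Kottwitz point, and since $[b]_{J,\dom}$ has Newton point $\nu^{M_J}_{[b]_{J,\dom}} = \nu^G_{[b]}$ which is central on $\Phi_J$ by admissibility \S\ref{setup4}(a) and hence corresponds to a \emph{basic} class of $M_J(L)$, it follows that $b_x$ and $[b]_{J,\dom}$ lie in the same basic $\s$-conjugacy class of $M_J(L)$ as soon as their Kottwitz points agree in $\pi_1(M_J)_\s$. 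The equality of Kottwitz points is exactly the condition $\k_J(t^{\mu_x}) = \k_{[b]_{J,\dom}}^{M_J}$ built into the definition of $\bar I_{\l, J, b}$, valid for $x \in \bar I_{\l,J,b}$. Therefore $[b_x]_{M_J} = [b]_{J,\dom}$, and comparing Newton points gives $\nu_{b_x} = \nu^{M_J}_{[b]_{J,\dom}} = \nu^G_{[b]}$.

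The only genuinely delicate point is the claim that every element of $\Omega_J$, read inside $\tW_J$, has $M_J$-central Newton vector, and that a basic $\s$-class of $M_J(L)$ meeting $N_T(L)$ in an element with image in $\Omega_J$ is pinned down by its image in $\pi_1(M_J)_\s$. The first assertion is essentially Lemma \ref{ss} combined with the fact that $\Omega_J$ is the length-zero subgroup of $\tW_J$ (an element fixing the base alcove has $(\tw\s)^n$ a translation by a vector orthogonal to $\Phi_J$, since the whole orbit fixes the alcove); the second is Kottwitz's classification of basic classes. Here $\bar I_{\l,J,b} \neq \emptyset$ is used only to guarantee that the condition $\k_J(t^{\mu_x}) = \k^{M_J}_{[b]_{J,\dom}}$ is actually met by some (hence, by the displayed reasoning, the relevant) $x$, so that the comparison of Kottwitz points is not vacuous. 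I expect the write-up to be short, with the main care going into citing the right form of Kottwitz's uniqueness statement for basic classes of the Levi $M_J$ and into the bookkeeping that passes between $\pi_1(M_J)$, $\pi_1(M_J)_\s$, and $\Omega_J$.
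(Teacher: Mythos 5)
Your argument is correct, but it runs along a genuinely different track from the paper's. You work at the level of the group $M_J(L)$: you first check that any $b_x\in\Omega_J$ has $\nu_{b_x}$ central on $\Phi_J$ (the length-zero/alcove-fixing argument is fine), observe that $[b]_{J,\dom}$ is basic in $M_J(L)$ by admissibility, and then invoke Kottwitz's uniqueness of a basic $\s$-conjugacy class with prescribed Kottwitz point to identify $[\dot b_x]_{M_J}$ with $[b]_{J,\dom}$ and compare Newton points. The paper instead stays inside $\tW_J$: it picks $\t\in[b]_{J,\dom}\cap N_T(L)$, sets $x_0=\eta_J(\t)$, quotes [HN1, Theorem 3.3] to get $\nu_{b_{x_0}}=\nu_{p(\t)}=\nu_{[b]}^G$, and then, for an arbitrary $x\in\bar I_{\l,J,b}$, uses $\k_J(x)=\k_J(x_0)$ to write $x=x_0+(1-\s)(x')$, so that $b_x=b_{x'}b_{x_0}\s(b_{x'})^{-1}$ in $\Omega_J$ and $\nu_{b_x}=w_{x'}(\nu_{b_{x_0}})=\nu_{b_{x_0}}$ because $\nu_{b_{x_0}}$ is fixed by $W_J$. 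So the paper's proof is an explicit combinatorial shadow of the classification you cite as a black box; yours is shorter to state but leans on Kottwitz's theorem for the Levi, while the paper's is self-contained modulo [HN1]. Two minor points of hygiene: the equality $\k_J(t^{\mu_x})=\k_{[b]_{J,\dom}}^{M_J}$ holds for \emph{every} $x\in\bar I_{\l,J,b}$ by definition, so the hypothesis $\bar I_{\l,J,b}\neq\emptyset$ only prevents the statement from being vacuous (your closing remark slightly overstates its role); and $x$ lives in $\pi_1(M_J)$ while $\k_J(x)$ lives in $\pi_1(M_J)_\s$, so the displayed identification should be phrased as an equality of images in $\pi_1(M_J)_\s$.
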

\begin{proof}
By definition, there exists $\t \in [b]_{J, \dom} \cap N_T(L)$ such that $\nu_{p(\t)}=\nu_{[b]}^G$ is central on $\Phi_J$. Let $x_0=\eta_J(\t) \in \pi_1(M_J)$. Applying \cite[Theorem 3.3]{HN1} to $\tW_J$, we have $\nu_{b_{x_0}}=\nu_{p(\t)}=\nu_{[b]}^G$.

Let $x \in \bar I_{\l, J, b}$. We show $\nu_{b_x}=\nu_{b_{x_0}}$. Note that $\k_J(x)=\k_{[b]_{J, \dom}}^{M_J}=\k_J(x_0)$. We have $x=x_0+(1-\s)(x') \in \pi_1(M_J)$ for some $x' \in \pi_1(M_J)$. Then $b_x=b_{x'} b_{x_0} \s(b_{x'}\i) \in \Omega_J$. Since $\nu_{b_{x_0}}=\nu_{[b]}^G$ is central on $\Phi_J$, we have $\nu_{b_x}=w_{x'}(\nu_{b_{x_0}})=\nu_{b_{x_0}}$ as desired.
\end{proof}

\subsection*{Terminology}
Let $g, h \in G(L)$ and $Z \subseteq G(L)$. We put ${}^g h= g h g\i$ (resp. ${}^g Z=g Z g\i$) and ${}^{g\s} h=g \s(h) g\i$ (resp. ${}^{g\s} Z=g \s(Z) g\i$).

For $\l \in Y^+$, $J \subseteq S_0$ and $b \in G(L)$, we write $Q \sim_{\l, J, b} P$ if $P$ and $Q$ are in the same connected component of $X_{\preceq_J \l}^{M_J}(b)$. If $J=S_0$, we abbreviate $\sim_{J, \l, b}$ to $\sim_{\l, b}$ for simplicity.

For $\tw \in \tW_J$ with $J \subseteq S_0$, we denote by $\dot \tw$ a lift of $\tw$ in $N_T(L) \cap M(L)$ under the natural projection $p: N_T(L) \to \tW$.

\section{The superbasic case}
In this section, we use the Deligne-Lusztig reduction method to study the connected components of unions of affine Deligne-Lusztig varieties in affine flag varieties. As an application, we obtain a new conceptual proof of Theorem \ref{main'} in the superbasic case.

\

Let $B^-$ be the opposite Borel subgroup of $B$ such that $B \cap B^-=T$. Let $I \subseteq G(L)$ be the pre-image of $B^-(\bold k)$ under the natural mod $t$ projection $K=G(\bold k[[t]]) \to G(\bold k)$. We call $I$ an Iwahori subgroup of $G(L)$. We have Bruhat decomposition $G(L)=\sqcup_{\tw \in \tW} I \dot \tw I$.

For $a=(\a, k) \in \tPhi$ with $\a \in \Phi$ and $k \in \ZZ$, let $U_a$ denote the corresponding affine root subgroup of the loop group $LG$. We have $U_a(\bold k)=U_\a(\bold k t^k)$. Here $U_\a \subseteq G$ is the root subgroup of $\a$. Then $I=T(\bold k[[t]]) \prod_{a \in \tPhi^+} U_a(\bold k)$.

For $\tw \in \tW$ and $b \in G(L)$, we define the corresponding affine Deligne-Lusztig variety (in affine flag variety) by $$X_{\tw}(b)=\{g I \in G(L)/I; g\i b \s(g) \in I \dot \tw I\}.$$ We say $\tw$ is reducible (with respect to $b$) if the following two statements holds:

(a): $X_{\tw}(b) \neq \emptyset$;

(b): for any $Q \in X_{\tw}(b)$, there exists a morphism $\phi: \PP^1 \to G(L)/I$ and an open subset $0 \in U \subseteq \AA^1=\PP^1-\{\infty\}$ such that $\phi(U) \subseteq X_{\tw}(b)$, $Q=\phi(0)$ and $P=\phi(\infty) \in X_{\tu}(b)$ for some $\tu \in \tW$ such that $\tu < \tw$ in the sense of Bruhat order.

\begin{lem}\label{sigma}
$\tw$ is reducible if so is $\s(\tw)$.
\end{lem}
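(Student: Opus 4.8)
The plan is to establish a direct correspondence between the Deligne--Lusztig reduction data for $X_{\tw}(b)$ and for $X_{\s(\tw)}(\s\i(b))$ --- or equivalently, to show that applying $\s$ to everything in sight transports a witness of reducibility for $\s(\tw)$ into a witness of reducibility for $\tw$. The key observation is that $\s$ acts as an automorphism of $G(L)$ carrying $I$ to $I$ (since $I$ is defined over $\FF_q[[t]]$, being the preimage of $B^-(\bold k)$ under the $\sigma$-equivariant projection $K \to G(\bold k)$), hence induces an automorphism of the affine flag variety $G(L)/I$ which carries the Bruhat cell $I\dot\tw I$ to $I\dot{\s(\tw)}I$ (after adjusting the representative $\dot\tw$, which is harmless since $I$-double cosets are all that matter). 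Consequently $\s$ restricts to an isomorphism of $\bold k$-schemes $X_{\tw}(b) \xrightarrow{\ \sim\ } X_{\s(\tw)}(\s(b))$, compatible with the Bruhat order on the index $\tW$ and with the analogous isomorphisms for all $\tu < \tw$.

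First I would make precise the statement that $\s\colon G(L)\to G(L)$ sends $X_{\tw}(b)$ isomorphically onto $X_{\s(\tw)}(\s(b))$: if $gI \in X_{\tw}(b)$, i.e. $g\i b\,\s(g)\in I\dot\tw I$, then applying $\s$ gives $\s(g)\i\,\s(b)\,\s^2(g) = \s(g\i b\,\s(g)) \in \s(I\dot\tw I)=I\dot{\s(\tw)}I$, so $\s(g)I \in X_{\s(\tw)}(\s(b))$; the inverse is given by $\s\i$. Second, I would note that the defining conditions (a) and (b) of reducibility are preserved under this isomorphism: condition (a), nonemptiness, is immediate; for condition (b), given $Q\in X_{\tw}(b)$, we have $\s(Q)\in X_{\s(\tw)}(\s(b))$, and since $\s(\tw)$ is reducible there is a morphism $\phi\colon\PP^1\to G(L)/I$ with $\phi(U)\subseteq X_{\s(\tw)}(\s(b))$, $\phi(0)=\s(Q)$, and $\phi(\infty)=P'\in X_{\tu'}(\s(b))$ for some $\tu'<\s(\tw)$. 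Then $\s\i\circ\phi\colon\PP^1\to G(L)/I$ (using that $\s\i$ is an isomorphism of the ind-scheme $G(L)/I$) satisfies $(\s\i\circ\phi)(U)\subseteq X_{\tw}(b)$, $(\s\i\circ\phi)(0)=Q$, and $(\s\i\circ\phi)(\infty)=\s\i(P')\in X_{\s\i(\tu')}(b)$; since $\s\i$ preserves length and the Bruhat order on $\tW$ (it is a diagram automorphism of the Coxeter system $(W^a,S^a)$ permuting $\Omega$), we get $\s\i(\tu')<\s\i(\s(\tw))=\tw$, which is exactly what is required.

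The only genuinely delicate point is the bookkeeping with lifts: a priori $\dot{\s(\tw)}$ and $\s(\dot\tw)$ are two different elements of $N_T(L)$, but they lie in the same $I$-double coset (they differ by an element of $T(\bold k[[t]])\subseteq I$, since $\s$ preserves $T(\bold k[[t]])$), so $\s(I\dot\tw I)=I\s(\dot\tw)I=I\dot{\s(\tw)}I$ regardless of the choice of lift; I would spell this out in one sentence. One should also confirm that $\s$ genuinely acts on the ind-scheme $G(L)/I$ as a morphism of $\bold k$-schemes --- this is standard, as $\s$ is the base change to $\bold k$ of the $q$-Frobenius on $G$ over $\FF_q[[t]]$, but a fully careful treatment would remark that "$\s$" here means the geometric Frobenius twist so that it is a $\bold k$-morphism rather than merely $\FF_q$-semilinear, and that this is the convention under which $X_{\tw}(b)$ is a $\bold k$-scheme in the first place. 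Apart from this, the argument is a formal transport of structure and presents no real obstacle. I would conclude by remarking that the same argument in fact shows $\tw$ is reducible \emph{if and only if} $\s(\tw)$ is, though only one direction is stated.
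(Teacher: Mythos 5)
The paper states Lemma \ref{sigma} without proof, so there is no argument of the author's to compare against; your transport-of-structure strategy is certainly the intended one, and you handle the two real bookkeeping points correctly: the choice of lift ($\dot{\s(\tw)}$ versus $\s(\dot\tw)$, differing by an element of $T(\bold k[[t]])\subseteq I$), and the replacement of $b$ by $\s(b)$, which is harmless because --- as the way the lemma is invoked inside the proof of Lemma \ref{red} confirms --- ``reducible'' is quantified over all $b$ with $X_{\tw}(b)\neq\emptyset$.

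There is, however, one genuine gap. In this equal-characteristic setting $\s$ is \emph{not} an isomorphism of the ind-scheme $G(L)/I$: it is the $q$-Frobenius of the affine flag variety (which is defined over $\FF_q$), hence a finite, purely inseparable universal homeomorphism. It is bijective on $\bold k$-points, but $\s\i$ is not a morphism of schemes, so $\s\i\circ\phi$ is not a morphism $\PP^1\to G(L)/I$, and condition (b) of reducibility --- which demands an actual morphism from $\PP^1$ --- does not transfer by the argument you give. The gap is fillable. Since $\phi(0)\in X_{\s(\tw)}(\s(b))$ and $\phi(\infty)\in X_{\tu'}(\s(b))$ lie in disjoint strata, $\phi$ is non-constant, so $C=\phi(\PP^1)_{\mathrm{red}}$ is a rational curve inside a Schubert variety; the reduced preimage $\s\i(C)_{\mathrm{red}}$ is universally homeomorphic to $C$, hence again a rational curve, and its normalization yields a morphism $\psi:\PP^1\to G(L)/I$ hitting exactly the points of $\s\i(C)$; after composing with an automorphism of $\PP^1$ one checks that $0$, $\infty$ and a suitable open $U'\ni 0$ can be arranged so that $\psi$ witnesses reducibility of $\tw$. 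Alternatively, one can observe that everything the paper extracts from condition (b) (Proposition \ref{flag} and its corollaries) is purely topological and therefore preserved by universal homeomorphisms. Either way, the sentence ``using that $\s\i$ is an isomorphism of the ind-scheme $G(L)/I$'' asserts something false and must be replaced by one of these arguments.
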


\begin{lem}\label{red}
If $\tw \in \tW$ is non-reducible, then it is of minimal length in its $\s$-conjugacy classes in $\tW$.
\end{lem}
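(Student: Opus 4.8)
The statement to prove is Lemma~\ref{red}: if $\tw \in \tW$ is non-reducible, then $\tw$ has minimal length in its $\s$-conjugacy class. My plan is to prove the contrapositive: if $\tw$ is \emph{not} of minimal length in $\{x\i \tw \s(x)\}_{x \in \tW}$ (equivalently, in the terminology of Geck--Pfeiffer-style combinatorics adapted to the twisted setting by He--Nie), then $\tw$ is reducible. The main combinatorial input I would invoke is the $\s$-twisted analogue of the ``reduction to minimal length'' result for extended affine Weyl groups, namely that whenever $\tw$ is not of minimal length in its $\s$-conjugacy class, there exists a simple affine reflection $s \in S^a$ and an element $\tw'$ obtained from $\tw$ by a chain of $\s$-conjugations $x \mapsto s x \s(s)$ that strictly decrease length, so that in particular one may find $s$ with $\ell(s \tw \s(s)) < \ell(\tw)$ — or, more usefully, with $s\tw < \tw$ and $s \tw \s(s) = \tw$ having the same length but allowing a further reduction downstream. (This is the content of results of He--Nie on minimal length elements; I would cite \cite{HN1}.)

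**Key steps.** First, I reduce to a single elementary $\s$-conjugation step: by Lemma~\ref{sigma} and the transitivity built into the definition of reducibility (each reduction step lands in a strictly Bruhat-smaller $\tu$, so chains compose), it suffices to show that if $s \in S^a$ satisfies $\ell(s \tw \s(s)) < \ell(\tw)$ (the ``length-decreasing'' case) then $\tw$ is reducible. Second, I analyze this single step geometrically inside the affine flag variety. Given $Q = gI \in X_\tw(b)$, so $g\i b \s(g) \in I \dot\tw I$, I consider the $\PP^1$ obtained from the $U_s \cong \AA^1$-orbit: the map $u \mapsto g \dot s u I$ (or rather the standard rank-one family attached to the simple reflection $s$, using the parahoric $P_s \supseteq I$ with $P_s/I \cong \PP^1$). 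One then checks, using the standard commutation relations $I \dot s I \cdot I \dot\tw I \subseteq I \dot{s\tw} I \cup I \dot\tw I$ and its $\s$-twisted companion on the right, that along the generic point of this $\PP^1$ the element $g'^{-1} b \s(g')$ lies in $I \dot\tw I$ (or a conjugate giving the same stratum), while the special fiber at $\infty$ lands in $I \dot\tu I$ with $\tu = s\tw\s(s) < \tw$. This is exactly the kind of reduction carried out in Deligne--Lusztig theory and in the affine setting by He and others; the bookkeeping of which of the two cases (length goes up on the left versus the right) produces the drop in the Bruhat order is the technical heart.

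**Main obstacle.** The hard part will be the second step — verifying cleanly that the $\PP^1$-family really does have its generic fiber inside $X_\tw(b)$ and its point at infinity inside some $X_\tu(b)$ with $\tu < \tw$, handling correctly the interaction between left multiplication by $\dot s$ and the $\s$-twist on the right (the right factor is $\s(s)$, not $s$). One must treat separately the case $\ell(s\tw) = \ell(\tw) + 1$, $\ell(\tw\s(s)) = \ell(\tw)-1$ versus the other sign patterns, and in the ``same length'' case $\ell(s\tw\s(s)) = \ell(\tw)$ with $s\tw \neq \tw\s(s)$ one gets an isomorphism of Deligne--Lusztig varieties rather than a genuine reduction, which is why the hypothesis ``not of minimal length'' (rather than merely ``not length-minimal under a single elementary step'') is needed: one must iterate, invoking \cite{HN1} to guarantee that the chain of elementary conjugations can be chosen to actually reach a strictly shorter element, and then propagate reducibility back along the chain using Lemma~\ref{sigma} and the composability of the $\PP^1$'s. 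I would organize the write-up so that the purely group-combinatorial statement (existence of the good chain) is quoted from \cite{HN1}, and only the single-step geometric reduction is proved in detail here.
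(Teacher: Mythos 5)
Your overall strategy coincides with the paper's: argue the contrapositive, invoke the He--Nie minimal length machinery to produce a chain of elementary $\s$-conjugations $\tw \approx_\s \tw'$ ending in a strictly length-decreasing step $s\tw'\s(s) < \tw'$, prove reducibility of $\tw'$ by an explicit rank-one $\PP^1$-family (this is exactly step (a) of the paper, where $\phi(x)=g'U_{-\a_s}(x)I$ connects $Q'=\phi(0)$ to $\phi(\infty)=g'\dot sI \in X_{\tw'\s(s)}(b)\cup X_{s\tw'\s(s)}(b)$), and then propagate reducibility back along the length-preserving steps. Your account of the strictly decreasing step is essentially right, modulo the minor point that the limit at infinity may land in $X_{\tw'\s(s)}(b)$ rather than $X_{s\tw'\s(s)}(b)$ (both are still $<\tw'$, so this is harmless).

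The genuine gap is in the back-propagation, which you flag as the ``main obstacle'' but do not actually carry out, and for which the hint you offer would not work. You suggest that in the same-length case $\ell(s\tw\s(s))=\ell(\tw)$ one ``gets an isomorphism of Deligne--Lusztig varieties,'' and that reducibility then transfers by ``composability of the $\PP^1$'s.'' An abstract isomorphism $X_{\tw}(b)\cong X_{s\tw\s(s)}(\cdot)$ does not transfer reducibility: reducibility is the assertion that every point of $X_{\tw}(b)$ is connected by an affine line, inside the ambient union of strata, to a point of a strictly Bruhat-smaller stratum, and when you transport the affine line $\phi'$ for $\tw'=s\tw\s(s)$ back to a family $\phi$ for $\tw$ (via the correspondence $\phi \overset{\s(s)}{\longrightarrow} \s(\phi')$), the limit point $\phi(\infty)$ a priori lies only in $X_{\s(\tu)}(b)$ for some $\tu\le\tw$, and nothing obvious prevents $\tu=\tw$, which would give no reduction at all. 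The paper spends the larger half of its proof precisely on ruling this out: it establishes that $P_1=\phi(\infty)$ satisfies $P_1\overset{\s(s)}{\longrightarrow}\s(P_1')$, derives the relation $P_1 \overset{\s(s)}{\to} \s(P_1') \overset{\s(\tu')}{\to} b\s^2(P_1') \overset{\s^2(s)}{\to} b\s(P_1)$, and then runs a length bookkeeping (via Lemma \ref{add}(1) and the normalization $s\tw<\tw<\tw\s(s)$) showing that $\tu=\tw$ forces $\ell(\tu')\in\{\ell(\tw')-1,\ell(\tw')-2\}$ and that each resulting case ($\tw=s\tu'\s(s)$, $\tw=s\tu'$, $\tw=\tu'\s(s)$) is contradictory. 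Without this argument, or a substitute for it, your proof is incomplete at its central step. (Relatedly, your opening claim that ``it suffices to show that a single length-decreasing step implies reducibility'' is not a valid reduction on its own, since the HN1 chain generally passes through several length-preserving conjugations before any length drop occurs.)
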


To prove the lemma, we need some preparation. For $w, w' \in \tW$ and $s \in S^a$, we write $w \xrightarrow{s}_\s w'$ if $w'=s w \s(s)$ and $\ell(w') \le \ell(w)$.  We write $w \to_\s w'$ if there is a sequence $w=w_0, w_1, \cdots, w_n=w'$ of elements in $\tW$ such that for each $k \in [1, n]$, $w_{k-1} \xrightarrow{s_k}_\s w_k$ for some $s_k \in S^a$. We write $w \approx_\s w'$ if $w \to_\s w'$ and $w' \to_\s w$.

\begin{thm} \cite{HN1} \label{min}
Let $\co$ be a $\s$-conjugacy class in $\tW$ and let $\co_{\min}$ be the set of minimal length elements in $\co$. Then for each $w \in \co$, there exists $w' \in \co_{\min}$ such that $w \rightarrow_\s w'$.
\end{thm}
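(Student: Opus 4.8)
The plan is to prove the contrapositive: if $\tw$ is \emph{not} of minimal length in its $\s$-conjugacy class $\co \subseteq \tW$, then $\tw$ is reducible. By Theorem \ref{min} there is a chain
$\tw = w_0 \xrightarrow{s_1}_\s w_1 \xrightarrow{s_2}_\s \cdots \xrightarrow{s_n}_\s w_n$
with $w_n \in \co_{\min}$, so in particular $\ell(w_n) < \ell(\tw)$. Each elementary move changes the length by $0$ or by $-2$, so we may choose $m \in [1,n]$ minimal with $\ell(w_m) < \ell(w_{m-1})$; then $\ell(w_0) = \cdots = \ell(w_{m-1}) = \ell(\tw)$, the moves $w_i \xrightarrow{s_{i+1}}_\s w_{i+1}$ for $i < m-1$ are length-preserving, and the move $w_{m-1} \xrightarrow{s_m}_\s w_m$ has $\ell(w_m) = \ell(w_{m-1}) - 2$. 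Writing $v := w_{m-1}$, $s := s_m$, this last condition forces $sv < v$, $v\s(s) < v$, and $sv\s(s) < sv$ in the Bruhat order.

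The heart of the matter is to produce, from the ``full'' move at $v$, the affine lines required by the definition of reducibility. For $s \in S^a$ set $P_s = I \cup I\dot s I$ and let $\pi_s \colon G(L)/I \to G(L)/P_s$ be the projection, whose fibres are copies of $\PP^1$. Given $Q = gI \in X_v(b)$, one has $g\i b\s(g) \in I\dot v I$, and for every $h = gpI$ in the fibre $\gamma_Q := gP_s/I$ the element $h\i b\s(h)$ lies in a single cell $I\dot x I$ with $x \in \{1,s\}v\{1,\s(s)\} = \{v, sv, v\s(s), sv\s(s)\}$; since all four elements are $\le v$, we get $\gamma_Q \subseteq \bigsqcup_{x \le v} X_x(b)$. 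By the Deligne--Lusztig reduction method (the $U_a$-analysis underlying \cite{HN1}), $\gamma_Q \cap X_v(b)$ is a dense open subset of $\gamma_Q$ containing $Q$, and its complement is a nonempty finite subset of $X_{sv\s(s)}(b) \cup X_{sv}(b)$. Pick an isomorphism $\phi \colon \PP^1 \to \gamma_Q$ with $\phi(0) = Q$ and $\phi(\infty)$ in that complement, and let $U \subseteq \AA^1$ be the preimage of $\gamma_Q \cap X_v(b)$; then $\phi(U) \subseteq X_v(b)$, $\phi(0) = Q$, and $\phi(\infty) \in X_\tu(b)$ with $\tu \in \{sv\s(s), sv\}$, both $< v$. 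Hence $v = w_{m-1}$ is reducible.

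It remains to push reducibility back along the length-preserving moves $w_{m-1}, w_{m-2}, \dots, w_0 = \tw$. For a length-preserving move $w \xrightarrow{s}_\s w'$, the same reduction (with no drop in length) yields an isomorphism $X_w(b) \cong X_{w'}(b)$ realised fibrewise over $G(L)/P_s$, and one checks that it carries the reducibility datum, so $w$ is reducible iff $w'$ is; iterating from $i = m-2$ down to $0$ (and using Lemma \ref{sigma} together with the symmetry between the left action of $s$ and the right action of $\s(s)$ to normalise these moves) gives that $\tw = w_0$ is reducible, the desired contradiction. The main obstacle is exactly this last step: one must verify that the isomorphism coming from a length-preserving move is geometric enough to transport the affine lines and, crucially, to keep the limit point $\phi(\infty)$ in a stratum $X_\tu(b)$ with $\tu$ strictly \emph{below} the source in the Bruhat order, not merely of smaller length. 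The clean input here is that for any $w$ the set $\bigsqcup_{\tu \le w} X_\tu(b) = \{gI : g\i b\s(g) \in \overline{I\dot w I}\}$ is closed in $G(L)/I$ (preimage of a Schubert variety), so a rational curve generically contained in $X_w(b)$ automatically limits into $\bigsqcup_{\tu < w} X_\tu(b)$; granting this, the whole lemma comes down to the bookkeeping of the reduction method.
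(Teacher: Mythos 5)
Your proposal does not address the statement in question. Theorem \ref{min} is a purely combinatorial assertion about the extended affine Weyl group $\tW$: every element $w$ of a $\s$-conjugacy class $\co$ can be brought, by a chain of elementary moves $w_{k-1}\xrightarrow{s_k}_\s w_k$ that never increase length, to an element of minimal length in $\co$. No group $G(L)$, no $b$, no affine Deligne--Lusztig variety enters its statement. What you have written is instead an argument for Lemma \ref{red} (``non-reducible implies minimal length''), and in its very first step you invoke Theorem \ref{min} itself to produce the chain $\tw=w_0\to_\s\cdots\to_\s w_n$ with $w_n\in\co_{\min}$. As a proof of Theorem \ref{min} this is circular, and the remaining geometric content (fibres of $G(L)/I\to G(L)/P_s$, closedness of $\bigcup_{\tu\le w}X_\tu(b)$, transport of reducibility along length-preserving moves) is simply irrelevant to the statement you were asked to prove.

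For the record, the paper does not prove Theorem \ref{min} either: it is quoted from He--Nie \cite{HN1}, where it is established by a combinatorial analysis of $\s$-conjugacy classes in extended affine Weyl groups (in the spirit of Geck--Pfeiffer, via partial conjugation and reduction to distinguished classes), with no geometry involved. If you want to supply a proof, that is the kind of argument you would need; alternatively, your reduction-method material belongs to the proof of Lemma \ref{red}, where the paper uses Theorem \ref{min} as an input exactly as you do.
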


Let $Q , P \in G(L)/I$. Then there exists a unique $\tw \in \tW$ such that $g Q=I$ and $g P=\dot\tw I$ for some $g \in G(L)$. In this case, we write $P \overset \tw \to Q$. For example, $Q \in X_{\tw}(b)$ if and only if $ Q \overset \tw \to b \s(Q)$. Define $Z_{\tw}=\{(Q, P) \in (G(L)/I)^2; Q \overset {\tw} \to P\}$

\begin{lem} \label{add}
Let $\tw, \tw_1, \tw_2 \in \tW$. We have the following properties:

(1) If $P \overset {\tw_1} \to H \overset {\tw_2} \to Q$ and $Q \overset {\tw'} \to P$, then $I \dot\tw I \subseteq I \dot\tw_1 I \dot\tw_2 I$;

(2) If $Q \overset \tw \to P$ and $\tw=\tw_1 \tw_2$ with $\ell(\tw)=\ell(\tw_1)+\ell(\tw_2)$, then there exists a unique $H \in G(L)/I$ such that $P \overset {\tw_1} \to H \overset {\tw_2} \to Q$;

(3) $\overline{Z_{\tw}}=\cup_{\tu \le \tw} Z_{\tu}$, where $\overline{Z_{\tw}}$ denotes the closure of $Z_{\tw}$ in $(G(L)/I)^2$.
\end{lem}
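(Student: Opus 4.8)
\emph{Overview.} Parts (1) and (2) are elementary facts about products of Iwahori double cosets in $G(L)$, whereas (3) is a geometric statement about closures in the affine flag variety; I treat them in that order. The one combinatorial input is the standard multiplication rule: for $s\in S^a$ and $\tw\in\tW$ one has $I\dot\tw I\,\dot s\,I=I\dot\tw\dot s I$ if $\ell(\tw s)=\ell(\tw)+1$ and $I\dot\tw I\,\dot s\,I=I\dot\tw I\cup I\dot\tw\dot s I$ if $\ell(\tw s)=\ell(\tw)-1$, and symmetrically on the left. Iterating, $I\dot\tw_1 I\cdot I\dot\tw_2 I=I\dot\tw_1\tw_2 I$ whenever $\ell(\tw_1\tw_2)=\ell(\tw_1)+\ell(\tw_2)$, and then the multiplication map $I\dot\tw_1 I\times I\dot\tw_2 I\to I\dot\tw_1\tw_2 I$ is surjective with fibres the $I$-orbits, hence descends to a bijection $I\dot\tw_1 I\times^I I\dot\tw_2 I\to I\dot\tw_1\tw_2 I$. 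It is also useful to unwind the notation: $A\overset{\tu}{\to}B$ says that, after fixing a representative in $G(L)$ of one of the two cosets, the other lies in the Iwahori double coset of $\dot\tu$; composing two such relations puts the relative position of the outer cosets in the product of the two intermediate double cosets.

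\emph{Parts (1) and (2).} For (1): the chain $P\overset{\tw_1}{\to}H\overset{\tw_2}{\to}Q$ puts the relative position of $P$ and $Q$ in the product set $I\dot\tw_1 I\cdot I\dot\tw_2 I$, and $Q\overset{\tw'}{\to}P$ identifies that relative position with the double coset of $\dot\tw'$. Since $I\dot\tw_1 I\cdot I\dot\tw_2 I$ is invariant under left and right multiplication by $I$, it is a union of Iwahori double cosets, so once it meets $I\dot\tw' I$ it contains all of $I\dot\tw' I$; the only care needed is in tracking the inverses produced by the two arrows. For (2): the length hypothesis lets one take $\dot\tw=\dot\tw_1\dot\tw_2$, so one may pick representatives $P=pI$, $Q=qI$ with $p\i q=\dot\tw_1\dot\tw_2$; then $H:=p\dot\tw_1 I=q\dot\tw_2\i I$ is well defined, and a direct check gives $P\overset{\tw_1}{\to}H$ and $H\overset{\tw_2}{\to}Q$. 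Uniqueness of $H$ is exactly the injectivity of $I\dot\tw_1 I\times^I I\dot\tw_2 I\to I\dot\tw I$ noted above — a second $H'$ would furnish a second factorisation of $p\i q$ through the two double cosets, forcing $H'=H$. (Equivalently, reduce to $\tw_2=s\in S^a$: among the chambers $s$-adjacent to $Q$, exactly one lies at relative position $\tw_1$ from $P$.)

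\emph{Part (3).} Using the diagonal action of $G(L)$ on $(G(L)/I)^2$, for which $Z_\tw$ is the orbit of $(I,\dot\tw I)$, the closure $\overline{Z_\tw}$ is closed and $G(L)$-stable, hence a union of orbits $Z_\tu$, and (3) becomes equivalent to the closure relation $\overline{I\dot\tw I/I}=\sqcup_{\tu\le\tw}I\dot\tu I/I$ in the affine flag variety. I would obtain the latter from a Bott--Samelson resolution: for a reduced word $\tw=s_1\cdots s_r$, the iterated $\PP^1$-bundle $\overline{Z_{s_1}}\times^I\cdots\times^I\overline{Z_{s_r}}$ based at $I$ is a projective variety mapping to $G(L)/I$ with closed image; this image contains the dense subset $I\dot\tw I/I$ — the image of the locus where all the successive relative positions equal $s_i$, by parts (1)--(2) and $\ell(\tw)=r$ — hence equals $\overline{I\dot\tw I/I}$; on the other hand, expanding $\prod_i(I\sqcup I\dot s_i I)$ by the multiplication rule above writes the same image as a union of $I\dot\tu I/I$ over products $\tu$ of subwords of $s_1\cdots s_r$, which by the subword characterisation of the Bruhat order is precisely $\sqcup_{\tu\le\tw}I\dot\tu I/I$. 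Translating back by the $G(L)$-action gives (3); alternatively one may cite the standard closure relations of affine Schubert varieties.

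\emph{Expected obstacle.} I expect (1) and (2) to be essentially formal once the Iwahori double coset multiplication is available, the only subtlety being the consistent treatment of the inverses introduced by composing arrows. The substantive step is (3): controlling the closure inside the ind-scheme $(G(L)/I)^2$ requires genuine geometry — properness of the Bott--Samelson map and the identification of its image — rather than group theory alone, and it is there that the affine Schubert closure relations must be invoked or reproved.
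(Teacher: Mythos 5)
The paper offers no proof of this lemma at all — it is stated as a collection of standard facts about relative position in the affine flag variety (equivalently, Weyl-group-valued distance in the Bruhat–Tits building) and the closure relations for affine Schubert cells, and is then used freely in the proof of Lemma 3.2. So there is nothing to compare against; what you have done is supply the standard proofs, and they are correct in substance. For (1) and (2) your reduction to the Iwahori double coset calculus ($I\dot\tw_1 I\cdot I\dot\tw_2 I=I\dot\tw_1\tw_2 I$ when lengths add, with the convolution $I\dot\tw_1 I\times^I I\dot\tw_2 I\to I\dot\tw I$ bijective, proved by induction from the rank-one case) is exactly the right mechanism, and your alternative phrasing of uniqueness in (2) — exactly one chamber $s$-adjacent to $Q$ sits at position $\tw_1$ from $P$ — is the building-theoretic form of the same statement. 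For (3), the reduction via $G(L)$-equivariance of the diagonal action to the closure relation $\overline{I\dot\tw I/I}=\sqcup_{\tu\le\tw}I\dot\tu I/I$, followed by the Demazure/Bott–Samelson argument (properness of the resolution, density of the big cell, and the subword description of the image matching the subword characterisation of Bruhat order), is the standard proof and is what the paper is implicitly citing.

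Two small points you should tighten. First, the inverse bookkeeping you flag is genuinely delicate here because the paper's own conventions are not internally consistent: the conclusion of (1) refers to $\tw$ while the hypothesis introduces $\tw'$, and the paper's illustrative example ``$Q\in X_{\tw}(b)$ iff $Q\overset{\tw}{\to}b\s(Q)$'' forces a convention under which your explicit choice $p\i q=\dot\tw_1\dot\tw_2$ in (2) acquires an extra inverse. Since any consistent choice of convention makes your argument go through verbatim (the content being symmetric under $\tw\mapsto\tw\i$), this is not a gap, but a final write-up should fix one convention and check the example against it. Second, in (3) your Bott–Samelson argument as written applies to $\tw\in W^a$; for general $\tw=u\o$ with $u\in W^a$ and $\o\in\Omega$ one should remark that right translation by $\dot\o$ is an automorphism of $G(L)/I$ identifying $\overline{I\dot u I/I}\cdot\dot\o$ with $\overline{I\dot\tw I/I}$ and that the Bruhat order only compares elements in the same $W^a\o$-coset, so the general case follows from the $W^a$ case.
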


\begin{proof}[Proof of Lemma \ref{red}]
Assume $\tw$ is not of minimal length in its conjugacy classes. We show it is reducible. Let $b \in G(L)$ such that $X_{\tw}(b) \neq \emptyset$. By \cite[Theorem 1.1]{HN1}, there exist $\tw' \in \tW$ with $\tw' \approx_\s \tw$ and $s \in S^a$ such that $s \tw' \s(s) < \tw'$. First we show

(a) $\tw'$ is reducible.

Since $\tw' \approx_\s \tw$, $X_{\tw'}(b) \neq \emptyset$. We choose $Q' \in X_{\tw'}(b)$. Let $g' \in G(L)$ such that $Q'=g'I$ and ${g'}\i b \s(g') \in I_s U_{a_s}(x_0) \dot\tw$ for some $x_0 \in \bold{k}$. Here $a_s \in \tPhi^+$ is the positive affine root corresponding to $s$ and $I_s=T(\bold k[[t]])\prod_{a_s \neq a \in \tilde \Phi^+} U_a(\bold{k})$. Define $\phi: \AA^1 \to G(L)$ by $x \mapsto g' U_{-\a_s}(x) I$. Set $U=\AA^1$ if $x_0=0$ and $U=\AA^1 - \{-x_0\i\}$ otherwise. One checks that $\phi(U) \subseteq X_{\tw'}(b)$, $Q'=\phi(0)$ and $P':=\phi(\infty)=g' s I$. Then $P' \in X_{\tw' \s(s)}(b) \cup X_{s \tw' \s(s)}(b)$. Note that $\tw'\s(s), s \tw' \s(s) < \tw'$. So $\tw'$ is reducible and (a) is proved.

By Theorem \ref{min}, to finish the proof, it suffices to show the following statement:

\

{\it Let $s \in S^a$ such that $\tw$ and $\tw'=s \tw \s(s)$ are of the same length. Then $\tw$ is reducible if so is $\tw'$.}

\

Without loss of generality, we assume $s \tw < \tw < \tw \s(s)$. By Lemma \ref{sigma}, it suffices to show that $\s(\tw)$ is reducible. Let $Q_1 \in X_{\s(\tw)}(b)$. By Lemma \ref{add}(2), there exists unique $Q_1' \in G(L)/I$ such that $Q_1 \overset {\s(s)} \longrightarrow {\s(Q_1')} \overset {\s(s\tw)} \longrightarrow b\s(Q_1)$. Note that $b\s(Q_1) \overset {\s^2(s)} \to b\s^2(Q_1')$ and $\ell(s \tw \s(s))=\ell(s \tw)+\ell(\s(s))$, we have $\s(Q_1') \overset {\s(s \tw \s(s))} \to b\s^2(Q_1')$ by Lemma \ref{add}(1), that is, $Q_1' \in X_{\tw'}(\s\i(b))$.

By assumption, there exists $\phi': \PP^1 \to G(L)/I$ and an open subset $0 \in U \subseteq \AA^1$ such that $\phi'(U) \subseteq X_{\tw'}(\s\i(b))$, $Q_1'=\phi'(0)$ and $P_1':=\phi'(\infty) \in X_{\tu'}(\s\i(b))$ for some $\tu' < \tw'$. Using Lemma \ref{add}(2), we can define a morphism $\phi: U \to G(L)/I$ such that $\phi' \overset {\tw' \s(s)} \longrightarrow \s\i(b) \phi \overset {\s(s)} \longrightarrow \s\i(b)\s(\phi')$. One checks that $\phi \overset {\s(s)} \longrightarrow \s(\phi') \overset {\s(\tw' \s(s))} \longrightarrow b\s(\phi)$. So $\phi(U) \subseteq X_{\s(\tw)}(b)$ and $\phi(0)=Q_1$. Now we extend $\phi$ to $\PP^1 \supseteq U$, which is still denoted by $\phi$. Let $\tu \in \tW$ such that $P_1:=\phi(\infty) \in X_{\s(\tu)}(b)$. By Lemma \ref{add}(3), $\tu \le \tw$. We have to show that $\tu < \tw$. Assume otherwise, that is, $\tu=\tw$ and $P_1 \overset {\s(\tw)} \longrightarrow b\s(P_1)$. We claim

(b) $P_1 \overset {\s(s)} \longrightarrow \s(P_1')$.

Note that $\phi(x) \overset {\s(s)} \longrightarrow \s(\phi'(x))$ for $x \in U$. So either $P_1 \overset {\s(s)} \longrightarrow \s(P_1')$ or $P_1=\s(P_1')$ by Lemma \ref{add}(3). If the latter case occurs, then $P_1=\s(P_1') \in X_{\s(\tu')}(b)$ and hence $\tu'=\tw$, which is impossible because $\ell(\s(\tu')) < \ell(\tw')=\ell(\tw)$. Therefore, (b) is proved.

Now we have \begin{align*}\tag{$\ast$} P_1 \overset {\s(s)} \longrightarrow \s(P_1') \overset {\s(\tu')} \longrightarrow b\s^2(P_1') \overset {\s^2(s)} \longrightarrow b\s(P_1).\end{align*} Therefore, $\ell(\tu') \ge \ell(\tw')-2$ by Lemma \ref{add}(1). If $\ell(\tu')=\ell(\tw')-2$, then $\tw=s \tu' \s(s)$ and hence $\tw \s(s) < \tw$, which contradicts our assumption on $\tw$. So $\ell(\tu') =\ell(\tw')-1$ and $\tw=s \tu'$ or $\tw=\tu' \s(s)$. In the former case, we have $\ell(s \tu' \s(s))=\ell(\tu')+2$, which means, by ($\ast$) and Lemma \ref{add}(1), $\tw=s \tu' \s(s)$, a contradiction because $\ell(s \tu' \s(s)) \neq \ell(\tw)$. In the latter case, we have $\tu'=\tw \s(s)$ and $\ell(\tu') > \ell(\tw')$, which is also impossible. Therefore $\s(\tw)$ is reducible and the proof is finished.
\end{proof}

\

Let $v \in Y_\RR$. Denote by $M_v \subseteq G$ the Levi subgroup (over $\bold k[[t]]$) generated by $T$ and the root subgroups $U_\a$ with $\<\a, v\>=0$. Denote by $N_v \subseteq G$ the unipotent subgroup (over $\bold k[[t]]$) generated by the root subgroups $U_\b$ with $\<\b, v\> > 0$. We put $I_{M_v}=M_v(L) \cap I$ and $I_{N_v}=N_v(L) \cap I$.

Let $\tw=t^\mu w \in \tW$ with $\mu \in Y$ and $w \in W_0$. We say $\tw$ is a $(v, \s)$-alcove element if $w \s(v)=v$ and ${}^{\dot\tw\s} I_{N_v} \subseteq I_{N_v}$.  We say $\tw$ is $(v, \s)$-fundamental if $\tw$ is a $(v, \s)$-alcove element and ${}^{\dot\tw\s} I_{M_v}=I_{M_v}$.

\begin{thm} [\cite{GHKR}, \cite{GHN}] \label{HN}
Let $v \in Y_\RR$ and $\tw \in \tW$ such that $\tw$ is a $(v, \s)$-alcove elements. Then the inclusion $I_{M_v} \tw \s(I_{M_v}) \hookrightarrow I \tw I$ induces a bijection between $I_{M_v}$-$\s$-conjugacy classes in $I_{M_v} \tw \s(I_{M_v})$ and $I$-$\s$-conjugacy classes in $I \tw I$.
\end{thm}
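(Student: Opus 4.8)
Since this statement is cited from \cite{GHKR} and \cite{GHN}, I only outline the argument I have in mind. The plan is to use the Iwahori factorization of $I$ relative to the parabolic $P_v=M_vN_v$ (with Levi $M_v$ and unipotent radical $N_v$) and its opposite $P_v^-=M_vN_v^-$, where $I_{N_v^-}=N_v^-(L)\cap I$. First I would note that, splitting the affine roots $a=(\a,k)\in\tPhi^+$ according to whether $\<\a,v\>$ is zero, positive, or negative and using $I=T(\bold k[[t]])\prod_{a\in\tPhi^+}U_a(\bold k)$, the multiplication map $I_{N_v^-}\times I_{M_v}\times I_{N_v}\to I$ is a bijection (and likewise for the other orderings of the three factors), and that $I_{M_v}$ normalizes both $I_{N_v}$ and $I_{N_v^-}$. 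Next I would record that the hypothesis $w\s(v)=v$ yields ${}^{\dot\tw\s}M_v=M_v$, so that $\dot\tw\s$-conjugation respects the triangular decomposition, and that ${}^{\dot\tw\s}I_{N_v}\subseteq I_{N_v}$ then forces the dual inclusion ${}^{\dot\tw\s}I_{N_v^-}\supseteq I_{N_v^-}$ by a comparison of congruence-subgroup indices.

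For surjectivity of the induced map on $\s$-conjugacy classes I would take $x\in I\dot\tw I$, factor the two copies of $I$, and use the normalizing properties together with ${}^{\dot\tw\s}I_{N_v}\subseteq I_{N_v}$ to $\s$-conjugate $x$ by an element of $I_{M_v}$ into $I_{N_v^-}\cdot\bigl(I_{M_v}\dot\tw\s(I_{M_v})\bigr)\cdot I_{N_v}$. It then remains to clear the $I_{N_v}$- and $I_{N_v^-}$-components. The point is that the effect on the $N_v$-component of $\s$-conjugating by $u\in I_{N_v}$ is described by an equation for $u$ in the pro-unipotent group $I_{N_v}$ in which $u$ also occurs through $\dot\tw\s$-conjugation; since $\dot\tw\s$ carries $I_{N_v}$ \emph{into} $I_{N_v}$, this contribution is contracting for the congruence filtration, and the equation is solved by successive approximation (a fixed-point argument using completeness in the $t$-adic topology). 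Passing to the opposite Iwahori and using ${}^{\dot\tw\s}I_{N_v^-}\supseteq I_{N_v^-}$, i.e.\ that $(\dot\tw\s)\i$ contracts $I_{N_v^-}$, clears the $N_v^-$-component in the same way; what survives lies in $I_{M_v}\dot\tw\s(I_{M_v})$.

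For injectivity I would take $x,x'\in I_{M_v}\dot\tw\s(I_{M_v})$ with $x'=g\i x\,\s(g)$ for some $g\in I$, write $g=g_{N^-}g_0g_N$ in the Iwahori factorization, and project the identity $x'=g\i x\,\s(g)$ onto the three factors. Since the $N_v$- and $N_v^-$-parts of $x$ and $x'$ vanish, the same contraction properties of $\dot\tw\s$ and $(\dot\tw\s)\i$, combined with uniqueness of the factorization, should force $g_N$ and $g_{N^-}$ to be trivial, so that $g=g_0\in I_{M_v}$ and $x,x'$ are already $I_{M_v}$-$\s$-conjugate. Together with well-definedness (which is immediate from the inclusion) and surjectivity, this gives the asserted bijection.

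I expect the main obstacle to be the two contraction statements: that $\dot\tw\s$ acts as a topologically contracting operator on $I_{N_v}$ and $(\dot\tw\s)\i$ on $I_{N_v^-}$. The subtlety is that ${}^{\dot\tw\s}I_{N_v}$ may coincide with $I_{N_v}$ on a ``fundamental'' part, so contraction is not literal; the remedy is to filter $I_{N_v}$ by the affine root subgroups $U_a$ and track, root by root, how $\dot\tw\s$ shifts the exponent $k$ of $a=(\a,k)$, isolating the directions along which it strictly decreases. This is exactly where the definition of $M_v$ as the vanishing locus of $\<\cdot,v\>$ and the hypothesis $w\s(v)=v$ are used essentially, guaranteeing that the non-contracting directions are precisely those inside $M_v$. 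Once this root-theoretic analysis is in place, the remaining steps are routine bookkeeping with the Iwahori factorization.
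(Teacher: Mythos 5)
The paper does not prove this statement -- it is quoted from \cite{GHKR} and \cite{GHN} -- so there is no internal proof to compare against; your outline does follow the general shape of the argument in those references (Iwahori factorization $I=I_{N_v^-}I_{M_v}I_{N_v}$, the dual inclusion ${}^{\dot\tw\s}I_{N_v^-}\supseteq I_{N_v^-}$, elimination of the two unipotent components for surjectivity, and big-cell uniqueness for injectivity). Two steps, however, are not right as stated.

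First, in the surjectivity step you rely on topological contraction plus completeness, and you assert that the non-contracting directions are precisely those inside $M_v$. That is false: the alcove condition only gives the inclusion ${}^{\dot\tw\s}I_{N_v}\subseteq I_{N_v}$, and equality can hold on arbitrarily many affine root subgroups inside $N_v$ -- indeed on all of them (take $\tw=1$ and $v$ dominant regular with $\s(v)=v$; then $M_v=T$ and nothing whatsoever is contracted, yet the theorem asserts that every element of $I$ is $\s$-conjugate under $I$ into $T(\bold k[[t]])$). On each graded piece of the congruence filtration of $I_{N_v}$ where $\dot\tw\s$ preserves the level, the equation you need to solve is a Lang equation $u^{-1}\,n\,({}^{\dot\tw\s}u)=1$ for a Frobenius-type endomorphism of a connected unipotent group over the algebraically closed field $\bold k$, and its solvability is Lang's theorem, not a contraction fixed-point argument. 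The correct mechanism is: filter $I_{N_v}$ (resp.\ $I_{N_v^-}$) by congruence subgroups and at each finite-dimensional quotient invoke Lang's theorem; this is where algebraic closedness of $\bold k$ enters, and a purely metric fixed-point argument cannot replace it.

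Second, in the injectivity step the conclusion that $g_N$ and $g_{N^-}$ are ``forced to be trivial, so that $g=g_0\in I_{M_v}$'' is too strong and in general false: the $\s$-centralizer of $x$ in $I$ is usually much larger than its intersection with $I_{M_v}$ (again $\tw=1$, $x=1$ gives all of $I\cap G(F)$). What the uniqueness of the $N_v^-\times M_v\times N_v$ decomposition actually yields, after writing $g=g_{N^-}g_0g_N$ and comparing components using that ${}^{x\s}$ preserves $N_v(L)$ and $N_v^-(L)$, is that $g_{N^-}$ and $g_N$ are fixed by the relevant twisted conjugations and that $x'=g_0^{-1}x\s(g_0)$ -- which is all that injectivity requires. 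This part is a repairable misstatement rather than a wrong method, but as written the deduction ``hence $g\in I_{M_v}$'' does not hold.
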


\begin{thm} [\cite{GHKR}, \cite{He}, \cite{N}] \label{fundamental}
Let $\tw \in \tW$. The following are equivalent:

(a) $\tw$ is $(\nu_{\tw}, \s)$-fundamental;

(b) $\tw$ is $\s$-fundamental, that is, $I \dot \tw I$ lies in a single $I$-$\s$-conjugacy class;

(c) $\tw$ is $\s$-straight, that is, $(I \dot \tw \s I)^n=I (\dot \tw \s)^n I$ for $n \in \NN$, or equivalently, $\ell(\tw)=2\<\rho, \bar \nu_{\tw}\>$. Here $\rho$ is the half sum of positive roots.
\end{thm}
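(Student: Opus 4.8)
The plan is to establish the cycle $(c)\Rightarrow(a)\Rightarrow(b)\Rightarrow(c)$, after first settling the equivalence that is internal to $(c)$: that $\tw$ is $\s$-straight, i.e.\ $(I\dot\tw\s I)^n=I(\dot\tw\s)^n I$ for all $n$, if and only if $\ell(\tw)=2\langle\rho,\bar\nu_\tw\rangle$. Set $\tw^{(k)}=\tw\,\s(\tw)\cdots\s^{k-1}(\tw)\in\tW$, so that $(\dot\tw\s)^k$ is a lift of $\tw^{(k)}$ followed by $\s^k$; since $\s(I)=I$, one has $(I\dot\tw\s I)^k=I\dot\tw^{(k)}I\,\s^k$, which by the standard multiplication rule for Iwahori double cosets coincides with $I(\dot\tw\s)^k I$ exactly when $\ell(\tw^{(k)})=k\ell(\tw)$. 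Now $\ell(\tw^{(k)})\le k\ell(\tw)$ always, and by subadditivity $\lim_k\ell(\tw^{(k)})/k$ exists; evaluating it along the subsequence $k=nd$, where $d$ is chosen so that $(\tw\s)^d=t^\xi$ (hence $\nu_\tw=\xi/d$ and $\tw^{(nd)}=t^{n\xi}$), the limit equals $\lim_n\ell(t^{n\xi})/nd=2\langle\rho,\bar\nu_\tw\rangle$. Combining this with the known lower bound $\ell(\tx)\ge 2\langle\rho,\bar\nu_\tx\rangle$, applied to $\tx=\tw^{(k)}$ (whose Newton point is $k\nu_\tw$), we get $2k\langle\rho,\bar\nu_\tw\rangle\le\ell(\tw^{(k)})\le k\ell(\tw)$, and therefore $\ell(\tw)=2\langle\rho,\bar\nu_\tw\rangle$ if and only if $\ell(\tw^{(k)})=k\ell(\tw)$ for every $k$.

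For $(c)\Rightarrow(a)$, put $v=\nu_\tw$. One always has $w\s(v)=v$, since the linear part $w\s$ of $\tw\s$ fixes the translation part of $(\tw\s)^d=t^\xi$. The idea is to refine the length bound: partition the $\ell(\tw)$ affine roots $a=(\alpha,k)\in\tPhi^+$ with $\tw\s(a)\in\tPhi^-$ according to the sign of $\langle\alpha,v\rangle$, and write the corresponding counts as $\ell(\tw)=c_0+c_++c_-$. Using the compatible Borel $B\cap M_v\subseteq M_v$ one checks that $c_0=\ell_{M_v}(\tw)$, that $c_0=0$ if and only if $\tw\s$ preserves $\tPhi_{M_v}\cap\tPhi^+$, i.e.\ if and only if ${}^{\dot\tw\s}I_{M_v}=I_{M_v}$, and that $c_+=0$ if and only if no $v$-positive affine root is reversed by $\tw\s$, i.e.\ (given $w\s(v)=v$) if and only if ${}^{\dot\tw\s}I_{N_v}\subseteq I_{N_v}$; finally, a direct computation with the length formula on the $v$-negative affine roots gives $c_-\ge 2\langle\rho,\bar v\rangle$ in general, with equality once $c_0=c_+=0$. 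Granting these, $\ell(\tw)=2\langle\rho,\bar v\rangle$ forces $c_0=c_+=0$, which is precisely the assertion that $\tw$ is $(v,\s)$-fundamental.

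For $(a)\Rightarrow(b)$: since $\tw$ is a $(v,\s)$-alcove element with $v=\nu_\tw$, Theorem \ref{HN} identifies the $I$-$\s$-conjugacy classes in $I\dot\tw I$ with the $I_{M_v}$-$\s$-conjugacy classes in $I_{M_v}\dot\tw\,\s(I_{M_v})$. The fundamental condition gives $I_{M_v}\dot\tw\,\s(I_{M_v})=\dot\tw\,I_{M_v}$, and the morphism $h\mapsto h\,{}^{\dot\tw\s}(h)\i$ of $I_{M_v}$ into itself is surjective by a Lang--Steinberg argument for the pro-algebraic group $I_{M_v}$ over the algebraically closed field $\bold k$; hence $\dot\tw\,I_{M_v}$, and so $I\dot\tw I$, is a single $\s$-conjugacy class. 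For $(b)\Rightarrow(c)$: assume $I\dot\tw I$ is a single $I$-$\s$-conjugacy class. Then $\tw$ is of minimal length in its $\s$-conjugacy class in $\tW$; otherwise, after replacing $\tw$ by an $\approx_\s$-equivalent element of the same length---which does not change whether $I\dot\tw I$ is a single class---a single strictly length-decreasing reduction, carried out as in the proof of Lemma \ref{red}, yields a $\PP^1$ exhibiting the $\s$-conjugacy class of $\dot\tw$ meeting $I\dot u I$ with $\ell(u)<\ell(\tw)$, which is impossible because that class equals $I\dot\tw I$ and $I\dot\tw I\cap I\dot u I=\emptyset$. A $\s$-minimal $\tw$ is a $(\nu_\tw,\s)$-alcove element (a known property of minimal-length elements), so Theorem \ref{HN} again reduces us to $M_v$, inside which $\tw$ is basic, $v$ being central on $\Phi_{M_v}$. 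It remains to show that a basic, $\s$-minimal element of $\tW_{M_v}$ with $\ell_{M_v}(\tw)>0$ has $I_{M_v}\dot\tw\,\s(I_{M_v})$ splitting into more than one $I_{M_v}$-$\s$-conjugacy class; granting this, $\ell_{M_v}(\tw)=0$, so $c_0=0$ and ${}^{\dot\tw\s}I_{M_v}=I_{M_v}$, and together with $c_+=0$ (automatic for an alcove element) we conclude $\ell(\tw)=2\langle\rho,\bar v\rangle$, i.e.\ $(c)$.

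The hard part is the claim invoked at the end of $(b)\Rightarrow(c)$: a $\s$-minimal but non-$\s$-straight element cannot have $I\dot\tw I$ equal to a single $I$-$\s$-conjugacy class. The Deligne--Lusztig reduction only brings $\tw$ down to minimal length, not below it, so one genuinely needs the finer structure of minimal-length elements, and---after reducing to $M_{\nu_\tw}$---an argument exhibiting a positive-dimensional family of $\s$-conjugacy classes inside $I_{M_v}\dot\tw\,\s(I_{M_v})$ whenever $\ell_{M_v}(\tw)>0$, e.g.\ through the semisimple part of $\tw$ produced by the reduction method. The other non-formal input is the refined inequality $c_-\ge 2\langle\rho,\bar\nu_\tw\rangle$ (and the underlying bound $\ell(\tx)\ge 2\langle\rho,\bar\nu_\tx\rangle$) used in $(c)\Rightarrow(a)$; the remaining ingredients---the multiplication rule for Iwahori double cosets, the partial-conjugation step, and $(a)\Rightarrow(b)$---are routine given Theorems \ref{HN} and \ref{min} and the reduction method of \S 3.
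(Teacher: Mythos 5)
This theorem is not proved in the paper: it is quoted from \cite{GHKR}, \cite{He} and \cite{N}, so there is no internal argument to compare yours against. Your cycle $(c)\Rightarrow(a)\Rightarrow(b)\Rightarrow(c)$ does match the architecture of the proofs in those references, and two of its legs are sound: the reformulation of straightness via $\ell(\tw^{(k)})=k\ell(\tw)$ and subadditivity (note that the ``known lower bound'' $\ell(\tx)\ge 2\langle\rho,\bar\nu_{\tx}\rangle$ you invoke is itself an immediate consequence of the same subadditivity computation, so it is not an extra input), and $(a)\Rightarrow(b)$ via Theorem \ref{HN} plus Lang's theorem for the pro-unipotent-by-reductive group $I_{M_v}$. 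But the proposal is an outline rather than a proof: the refined root count $c_-\ge 2\langle\rho,\bar\nu_{\tw}\rangle$ with equality analysis in $(c)\Rightarrow(a)$, and above all the claim that a minimal-length, basic, non-straight element has $I_{M_v}\dot\tw\,\s(I_{M_v})$ meeting more than one $I_{M_v}$-$\s$-conjugacy class, are exactly the substantive content of the theorem, and you leave both unproved. The latter cannot be obtained from the Deligne--Lusztig reduction (which, as you say, stops at minimal length); in the literature it is a dimension count comparing $\dim I\dot\tw I/I=\ell(\tw)$ with the dimension of a single $I$-$\s$-orbit, controlled through the $\s$-centralizer by $\langle 2\rho,\bar\nu_{\tw}\rangle$.

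There is also a concrete error in the step of $(b)\Rightarrow(c)$ where you deduce that $\tw$ has minimal length. Observe that $I$-$\s$-conjugation preserves each double coset: $i\i(I\dot\tw I)\s(i)=I\dot\tw I$, so every $I$-$\s$-conjugacy class is contained in a single cell $I\dot\tu I$, and under hypothesis $(b)$ the class of $\dot\tw$ is literally equal to $I\dot\tw I$. The Deligne--Lusztig reduction moves elements into lower cells only by $\s$-conjugation by elements of $I\dot sI$, i.e.\ by elements \emph{outside} $I$; what it shows is that the $G(L)$-$\s$-conjugacy class (or the $\cp_{\{s\}}$-$\s$-class) of $\dot\tw$ meets $I\dot uI$ for some $u<\tw$. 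That is no contradiction with $(b)$, since $(b)$ constrains only the $I$-$\s$-class. So the sentence ``impossible because that class equals $I\dot\tw I$ and $I\dot\tw I\cap I\dot uI=\emptyset$'' conflates the two notions of $\s$-conjugacy, and minimality of $\ell(\tw)$ is not established this way; it too comes out of the dimension comparison (or, more simply, from the already-proved inequality $\ell\ge 2\langle\rho,\bar\nu\rangle$ once $(b)\Rightarrow(c)$ is known). As it stands the proposal should be regarded as a reading plan for \cite{GHKR}, \cite{He}, \cite{N} rather than a self-contained proof.
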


\begin{prop} \label{flag}
Let $b \in G(L)$ and let $C \subseteq \tW$ be closed under the Bruhat order. Then any connected component of $\cup_{\tu \in C} X_{\tu}(b)$ intersects with $X_x(b)$ for some $\s$-straight element $x \in C$.
\end{prop}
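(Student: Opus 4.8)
The plan is to run the Deligne--Lusztig reduction encoded in Lemmas~\ref{sigma}, \ref{red} and~\ref{add} ``downhill'' inside the closed union $Z:=\bigcup_{\tu\in C}X_\tu(b)$, land on a non‑reducible element of $C$ inside the given component, and then prove that such an element must be $\s$‑straight.

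First I would fix a connected component $\Gamma$ of $Z$ and record the fact that keeps rational curves inside $Z$: as $C$ is closed under the Bruhat order, Lemma~\ref{add}(3) gives $\overline{X_\tw(b)}\subseteq\bigcup_{\tu\le\tw}X_\tu(b)\subseteq Z$ for every $\tw\in C$, so any $\phi\colon\PP^1\to G(L)/I$ with $\phi(U)\subseteq X_\tw(b)$ on a nonempty open $U\subseteq\AA^1$ satisfies $\phi(\PP^1)\subseteq\overline{X_\tw(b)}\subseteq Z$, and $\phi(\PP^1)$ --- being connected and meeting $X_\tw(b)$ --- lies in a single connected component of $Z$. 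Next I would pick $\tw_0\in C$ of minimal length with $\Gamma\cap X_{\tw_0}(b)\neq\emptyset$. If $\tw_0$ were reducible with respect to $b$, then for $Q\in\Gamma\cap X_{\tw_0}(b)$ the curve $\phi$ from the definition of reducibility, with $\phi(0)=Q$ and $\phi(\infty)\in X_\tu(b)$ for some $\tu<\tw_0$, would lie in $\Gamma$; since $\tu\in C$ by Bruhat‑closedness and $\ell(\tu)<\ell(\tw_0)$, this contradicts minimality. Hence $\tw_0$ is non‑reducible, and Lemma~\ref{red} shows that $\tw_0$ has minimal length in its $\s$‑conjugacy class $\co$ in $\tW$.

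The remaining, and hardest, point is to show $\tw_0$ is $\s$‑straight --- then $x:=\tw_0\in C$ does the job. I would argue by contradiction. If $\tw_0$ is not $\s$‑straight, then by the structure theory of minimal length elements I would arrange $\tw_0$ to be a $(v,\s)$‑alcove element with $v=\bar\nu_{\tw_0}$, and apply Theorem~\ref{HN} with $M=M_v$: there $v$ is central, $\tw_0$ is basic in $\tW_M$, and $X_{\tw_0}(b)$ is (after replacing $b$ by a $\s$‑conjugate in $M(L)$, which is available since $X_{\tw_0}(b)\neq\emptyset$) built from the corresponding $M$‑variety by an iterated affine‑space bundle, so $\pi_0$ is unchanged; a length computation using $\langle\rho_M,v\rangle=0$ together with Theorem~\ref{fundamental} shows $\tw_0$ is $\s$‑straight in $G$ exactly when $\ell_M(\tw_0)=0$, i.e.\ $\tw_0\in\Omega_M$. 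Thus one is reduced to $\tw_0\in\tW_M\setminus\Omega_M$ basic. At this stage I would bring in the superbasic reduction: if $[\tw_0]_M$ is superbasic then Lemma~\ref{ss} forces its minimal length elements into $\Omega_M$, so $\tw_0\in\Omega_M$, a contradiction; otherwise $[\tw_0]_M$ meets a proper Levi of $M$, and one repeats the argument there. As the Levi strictly shrinks the process must stop, and at every step Theorem~\ref{HN} supplies the affine lines that keep the successively smaller affine Deligne--Lusztig varieties inside one connected component of $Z$; the termination is the desired contradiction --- equivalently, it exhibits a $\s$‑straight $x\le\tw_0$ (so $x\in C$) inside $\Gamma$.

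The hard part is exactly this last step. In contrast to the minuscule case of \cite{CKV}, a minimal length element of $\co$ need not be $\s$‑straight, so one genuinely needs both the Hodge--Newton reduction to $M_{\nu_{\tw_0}}$ (Theorem~\ref{HN}) and the superbasic reduction (Lemma~\ref{ss}); and throughout one must keep track, using the closure formula of Lemma~\ref{add}(3) and the Bruhat‑closedness of $C$, that every auxiliary affine line produced along the way stays inside $\bigcup_{\tu\in C}X_\tu(b)$ and hence inside $\Gamma$.
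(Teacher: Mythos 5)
Your first paragraph and the reduction to a non-reducible $\tw_0$ of minimal length in $C$ meeting the given component are fine and match the paper's setup: Lemma \ref{red} plus Bruhat-closedness of $C$ legitimately reduce the problem to a $\tw_0$ that is of minimal length in its $\s$-conjugacy class. The gap is in the remaining step, where you assert that such a $\tw_0$ must itself be $\s$-straight. Lemma \ref{red} only yields minimal length, and minimal-length elements are in general \emph{not} $\s$-straight; the content of the proposition is precisely to bridge this difference, and your argument for the bridge does not work as stated. Concretely: (i) Lemma \ref{ss} is a statement about superbasic $\s$-conjugacy classes of $M(L)$ containing a lift of some $\o\in\Omega$; it does not say that minimal-length elements of the corresponding $\s$-conjugacy classes in $\tW_M$ lie in $\Omega_M$, so the step ``superbasic $\Rightarrow\tw_0\in\Omega_M$, contradiction'' is unfounded. (ii) Theorem \ref{HN} is a bijection of $I_{M}$-$\s$-conjugacy classes in $I_{M}\tw\s(I_{M})$ with $I$-$\s$-conjugacy classes in $I\tw I$; it does not by itself give the ``iterated affine-space bundle'' identification of $\pi_0$ of the ambient union of affine Deligne--Lusztig varieties that your induction on Levis requires, nor does it produce the affine lines inside $Z$ that your argument needs at each stage.

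What the paper actually does at this point is different and is the heart of the proof: for a minimal-length $\tw$ it invokes the structure theorem (\cite{He}, \cite{N}) giving a decomposition $\tw=ux$ with $x$ a $(\nu_{\tw},\s)$-fundamental (hence $\s$-straight) element normalizing a finite parabolic $W_J\subseteq\tW_{M_{\nu_{\tw}}}$ and $u\in W_J$; it then uses surjectivity of Lang's map $f\mapsto f\i\dot x\s(f)\dot x\i$ on the parahoric $\cp_J$ to write $Q=jh_0I$ with $j\i b\s(j)=\dot x$, and connects $h_0I_M\in X_u^{\cp_J}(\dot x)$ to a point of $X_{\Id}^{\cp_J}(\dot x)$ inside the \emph{finite-type} Deligne--Lusztig union $X_{\le_M u}^{\cp_J}(\dot x)$, using that $\psi$ is flat and finite on $\cp_J/\cu_J$ and that $\cup_{w\le_M u}I_M\dot wI_M$ has connected image there. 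This is what lands the component on $X_x(b)$ with $x$ straight and $x\le\tw$, hence $x\in C$. None of this is replaced by your Hodge--Newton/superbasic induction, so as written the proposal does not establish the proposition.
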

\begin{proof}
Let $\tw \in C$ and $Q \in X_{\tw}(b)$. We show the connected component of $\cup_{\tu \in C} X_{\tu}(b)$ containing $Q$ intersects with $X_x(b)$ for some $\s$-straight element $x \in C$. Since $C$ is closed under the Bruhat order, we may assume, by Lemma \ref{red}, that $\tw$ is of minimal length in its $\s$-conjugacy class. Thanks to \cite[Theorem 3.5]{He}, $I \dot\tw I$ lies in a single $\s$-conjugacy class of $G(L)$. Therefore, $\tw$ is a $(\nu_{\tw}, \s)$-alcove element by \cite[Theorem 5.2]{N}. Moreover, there exist $J \subseteq S_M^a$ with $W_J$ finite, $x \in \tW$ which is $(\nu_{\tw}, \s)$-fundamental such that $x\s(J)x\i=J$ and $\tw =u x$ for some $u \in W_J$. Here $M=M_{\nu_{\tw}}$ and $S_M^a$ is the set of simple affine refections in $\tW_M$ defined in \S \ref{setup3}.

Write $Q=g I$ for some $g \in G(L)$. Then $g\i b \s(g) \in I \dot\tw I$. By Theorem \ref{HN}, we can assume $g\i b \s(g) \in I_M \dot\tw \s(I_M)$ since $\tw$ is a $(\nu_{\tw}, \s)$-alcove element. Let $\cp_J\subseteq M(L)$ be the standard parahoric subgroup of type $J$ and let $\cu_J \subseteq \cp_J$ be its pro-unipotent radical.
Since ${}^{\dot x\s} I_M=I_M$ and $x\s(J)x\i=J$, we see that Lang's map $\psi: \cp_J \to \cp_J$ define by $f \mapsto f\i \dot x\s(f) \dot x\i$ is surjective. So there exists $h_0 \in \cp_J$ such that $h_0\i \dot x \s(h_0)=g\i b \s(g)$, which means $g=j h_0$ for some $j \in G(L)$ satisfying $j\i b \s(j)=\dot x$. For $w \in W_J$, we define $$X_w^{\cp_J}(\dot x)=\{h I_M \in \cp_J / I_M; h\i \dot x \s(h) \dot x\i \in I_M \dot w I_M\}$$ and $X_{\leq_M u}^{\cp_J}(\dot x)=\cup_{w \in W_J, w \leq_M u} X_w^{\cp_J}(\dot x)$. Here $\leq_M$ is the Bruhat order on $\tW_M$ defined in \S \ref{setup3}. Then the map $h I_M \mapsto j h I$ induces a natural morphism $\varpi_{\tw, b}: X_{\leq_M u}^{\cp_J}(\dot x) \to \cup_{\tu \in C} X_{\tu}(b)$.

Since the image of $\cup_{w \in W_J, w \leq_M u} I_M \dot w I_M$ under the natural projection $\cp_J \to \cp_J / \cu_J$ is connected and $\psi$ induces a flat and finite endomorphism on $\cp_J / \cu_J$, there exists $h_1 \in \cp_J$ such that $h_1 I_M \in X_{\Id}^{\cp_J}(\dot x)$ and $h_0 I_M \in X_{u}^{\cp_J}(\dot x)$ are in the same connected component of $X_{\leq_M u}^{\cp_J}(\dot x)$. Therefore, via the morphism $\varpi_{\tw, b}$, we deduce that $Q=g I=j h_0 I$ and $P=j h_1 I \in X_x(b)$ are connected in $\cup_{\tu \in C} X_{\tu}(b)$.
\end{proof}

\

\begin{rmk}
There is an alternative proof of Proposition \ref{flag} due to Xuhua He \cite{He1}. The main idea is to show the following two facts:

(1) $X_{\tw}(b)$ is locally quasi-affine;

(2) each irreducible component of $X_{\tw}(b)$ is of dimension $\ge 1$ if $\tw$ is not $\s$-straight.
\end{rmk}

\

We say $b \in G(L)$ is basic if $\k_{[b]}^G$ is central on $\Phi$.
\begin{cor} \label{basic}
Let $\l \in Y^+$ and $b \in G(L)$ which is basic. Then

(i) each connected component of $X_{\preceq \l}(b)$ intersects with $X_{\mu}(b)$, where $\mu \in Y^+$ is the unique dominant minuscule cocharacter such that $\mu \preceq \l$;

(ii) $J_b(F)=\{g \in G(L); g\i b \s(g)=b\}$ acts transitively on $\pi_0(X_{\preceq \l}(b))$;

(iii) if $b$ is superbasic, then the natural projection $X_{\preceq \l}(b) \to \pi_1(G)$ induces an isomorphism $\pi_0(X_{\preceq \l}(b)) \cong (\s-1)\i(\eta(t^\l)-\eta(b))$.
\end{cor}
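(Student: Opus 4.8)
The plan is to pull everything back to the affine flag variety, apply Proposition \ref{flag}, and read off the three assertions; the only genuine work is hidden in the injectivity part of (iii). Let $\pi\colon G(L)/I\to G(L)/K$ be the projection. The preimage $\pi\i\big(\overline{Kt^\l K/K}\big)=\pi\i\big(\cup_{\l'\preceq\l}Kt^{\l'}K/K\big)$ is a closed $I$-stable subset of $G(L)/I$, hence equals $\cup_{\tu\in C}I\dot\tu I/I$ for a set $C\subseteq\tW$ that is closed under the Bruhat order, and then $\pi\i(X_{\preceq\l}(b))=\cup_{\tu\in C}X_{\tu}(b)$. Moreover, if $gK\in X_{\preceq\l}(b)$, say $g\i b\s(g)\in Kt^{\l'}K$ with $\l'\preceq\l$, then for every $h\in K$ one has $(gh)\i b\s(gh)=h\i(g\i b\s(g))\s(h)\in Kt^{\l'}K\subseteq\cup_{\tu\in C}I\dot\tu I$; so the whole fibre $\pi\i(gK)\cong K/I$ lies in $\cup_{\tu\in C}X_{\tu}(b)$. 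Thus $\pi$ restricts to a fibre bundle $\cup_{\tu\in C}X_{\tu}(b)\to X_{\preceq\l}(b)$ with connected fibre $K/I$, and so induces a bijection $\pi_0\big(\cup_{\tu\in C}X_{\tu}(b)\big)\xrightarrow{\ \sim\ }\pi_0(X_{\preceq\l}(b))$. By Proposition \ref{flag}, every connected component of $\cup_{\tu\in C}X_{\tu}(b)$ meets $X_x(b)$ for some $\s$-straight $x\in C$.

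For (i): let $x\in C$ be $\s$-straight with $X_x(b)\neq\emptyset$. Since $I\dot xI$ lies in a single $\s$-conjugacy class (Theorem \ref{fundamental}), $[\dot x]=[b]$, so $\bar\nu_x=\nu_{[b]}^G$, which is central on $\Phi$ because $b$ is basic; hence $\ell(x)=2\<\rho,\bar\nu_x\>=\sum_{\a\in\Phi^+}\<\a,\nu_{[b]}^G\>=0$ by Theorem \ref{fundamental}. Thus $x\in\Omega$; writing $x=t^{\mu_x}w_x$ as in \S\ref{setup4} applied to $G$ itself, $\mu_x$ is the unique $G$-dominant $G$-minuscule cocharacter with $\eta_G(t^{\mu_x})=\eta_G(x)$. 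As $x\in\Omega\cap C$, the point $\dot xK$ lies in $Kt^{\mu_x}K/K\cap\overline{Kt^\l K/K}$, whence $\mu_x\preceq\l$; being dominant and minuscule, $\mu_x$ must then equal $\mu$ by uniqueness of the dominant minuscule cocharacter $\preceq\l$. In particular $x=x_\mu$ is the unique element of $\Omega$ with $\eta_G(x_\mu)=\eta_G(t^\mu)$, independent of the component; since $K\dot x_\mu K=Kt^\mu K$, the $\pi$-image of $X_{x_\mu}(b)$ lies in $X_\mu(b)$. Combined with the $\pi_0$-bijection above, every connected component of $X_{\preceq\l}(b)$ meets $X_\mu(b)$, which is (i) (and this also exhibits the existence of $\mu$).

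For (ii): as $x_\mu\in\Omega$ normalises $I$ we have $I\dot x_\mu I=\dot x_\mu I$; choose $g_0\in G(L)$ with $g_0\i b\s(g_0)=\dot x_\mu$ (possible as $\dot x_\mu\in[\dot x_\mu]=[b]$). Then $gI\mapsto g_0\i gI$ identifies $X_{x_\mu}(b)$ with the fixed-point set $(G(L)/I)^{\v}$ of the Frobenius $\v\colon hI\mapsto ({}^{\dot x_\mu\s}h)I$, and conjugation by $g_0$ identifies $J_b(F)$ with the group $G(L)^{\v}=\{g\in G(L);\,{}^{\dot x_\mu\s}g=g\}$; here $(G(L)/I)^{\v}\cong X_{x_\mu}(b)$ is $0$-dimensional since $x_\mu$ is $\s$-straight. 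Each connected component of $G(L)/I$ is a fibre of $\eta_G$, hence a single orbit of the connected group $\ker\eta_G$, and $\v$ stabilises those indexed by $\pi_1(G)^\s$; Lang's theorem gives exactly one $(\ker\eta_G)^{\v}$-orbit of $\v$-fixed points in each such component, and — applied inside $\ker\eta_G$ — shows $\eta_G\colon G(L)^{\v}\to\pi_1(G)^\s$ is surjective. Hence $G(L)^{\v}$ acts transitively on $(G(L)/I)^{\v}$, so $J_b(F)$ acts transitively on $X_{x_\mu}(b)$ and, by (i), on $\pi_0(X_{\preceq\l}(b))$.

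For (iii): the map $gK\mapsto\eta_G(g)$ sends $X_{\preceq\l}(b)$ into $(\s-1)\i(\eta_G(t^\l)-\eta_G(b))$ — every $\l'\preceq\l$ has $\eta_G(t^{\l'})=\eta_G(t^\l)$ — factors through $\pi_0$, and is equivariant for left multiplication by $J_b(F)$ and for translation by $\eta_G(J_b(F))=\pi_1(G)^\s$ on the torsor $(\s-1)\i(\eta_G(t^\l)-\eta_G(b))$. By (ii) the source is a single $J_b(F)$-orbit, so the induced map $\pi_0(X_{\preceq\l}(b))\to(\s-1)\i(\eta_G(t^\l)-\eta_G(b))$ is surjective; it is bijective if and only if it is injective, equivalently if and only if $J_b(F)\cap\ker\eta_G$ preserves every connected component of $X_{\preceq\l}(b)$. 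This is the essential point and the place where $b$ superbasic enters: for such $b$ one shows that $J_b(F)\cap\ker\eta_G$ is generated by the one-parameter affine root subgroups of $J_b$ together with a compact maximal torus, and that each of these moves a given point of $X_\mu(b)\subseteq X_{\preceq\l}(b)$ within a single connected component, which reduces to producing the requisite affine lines in $X_{\preceq\l}(b)$ (alternatively, one reduces via (i) to the case $\l=\mu$ minuscule and invokes \cite{CKV}). I expect this last injectivity step to be the main obstacle.
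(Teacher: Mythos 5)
Parts (i) and (ii) of your proposal are correct and follow the paper's route: pull back along $\pi\colon G(L)/I\to G(L)/K$, invoke Proposition \ref{flag}, use Theorem \ref{fundamental} and basicness to force $\ell(x)=2\<\rho,\nu_{[b]}^G\>=0$, hence $x\in\Omega\cap C$, and then use Lang's theorem for the Iwahori (since $x\in\Omega$ normalises $I$, every element of $I\dot xI=\dot xI$ is $\s$-conjugate under $I$ to $\dot x$) to get transitivity. One caution on (ii): your formulation routes the transitivity through the claim that $\eta_G\colon G(L)^{\v}\to\pi_1(G)^\s$ is surjective, justified by ``Lang applied inside $\ker\eta_G$''; but $\ker\eta_G$ is an ind-group, not an algebraic group, so Lang's theorem does not formally apply to it. This detour is unnecessary: once $g\i b\s(g)$ and $g'^{-1}b\s(g')$ are both normalised to equal $\dot x$ (Lang for $I$ only), the element $g'g\i$ lies in $J_b(F)$ and carries $gI$ to $g'I$, which is how the paper argues.

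Part (iii) contains a genuine gap: you reduce the statement to showing that $J_b(F)\cap\ker\eta_G$ acts trivially on $\pi_0(X_{\preceq\l}(b))$, declare this ``the main obstacle'', and sketch an approach via one-parameter affine root subgroups of $J_b$ and affine lines. That sketch points in the wrong direction and is not carried out. The whole point of the superbasic hypothesis is that this step is \emph{not} an obstacle: for $b=\dot x$ with $x\in\Omega$ superbasic one has $J_b(F)=N_T(L)^{b\s}\,I^{b\s}$, and by \cite[Proposition 5.3]{He} the image of $N_T(L)^{b\s}$ in $\tW$ is $\tW^{x\s}=\Omega^\s\cong\pi_1(G)^\s$ (here superbasicness kills $(W^a)^{x\s}$). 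Consequently any $j\in J_b(F)$ with $\eta_G(j)=0$ lies in $T(\bold k[[t]])^{b\s}I^{b\s}=I^{b\s}\subseteq K$, hence fixes the base point $K\in X_{\preceq\l}(b)$ and, by the transitivity in (ii), acts trivially on all of $\pi_0$. In other words, $J_b(F)\cap\ker\eta_G$ is compact; there are no affine root subgroups of $J_b$ to contend with and no affine lines to construct. The same structure result supplies the surjectivity of $\eta_G(J_b(F))=\pi_1(G)^\s$ needed for your map onto the torsor $(\s-1)\i(\eta_G(t^\l)-\eta_G(b))$, which your Lang-inside-$\ker\eta_G$ argument does not legitimately establish. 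Your fallback of quoting \cite{CKV} for $\l=\mu$ minuscule would close the gap logically, but defeats the purpose of this section, which is to give a proof independent of the case-by-case computations there.
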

\begin{proof}
Let $\pi: G(L)/I \to G(L)/K$ be the natural projection. Note that $\pi\i(X_{\preceq \l}(b))=\cup_{\tu \in C} X_{\tu}(b)$, where $C=\cup_{\l' \preceq \l} W_0 t^{\l'} W_0$ is closed under the Bruhat order. By Lemma \ref{flag}, each connected component of $\pi\i(X_{\preceq \l}(b))$ intersects with $X_x(b)$ for some $\s$-straight element $x \in C$. So, we have $[\dot x]=[b]$ by \cite[Theorem 3.7]{He}. In particular, $\bar \nu_x=\nu_{[b]}^G$ and hence $\ell(x)=2\<\rho, \nu_{[b]}^G\>=0$. Therefore, $x$ must be the unique element of $\Omega \cap C$.

Now (i) follows by noticing that $x \in t^\mu W_0$ and $\pi(X_x(b)) \subseteq X_\mu(b)$.

Since $x \in \Omega$, each element of $I \dot x I$ is $\s$-conjugate to $\dot x$ by $I$. In particular, the map $g \mapsto g I$ induces a surjective map: $$J_{b ,\dot x}(F)=\{g \in G(L); g\i b \s(g)=\dot x\} \twoheadrightarrow X_x(b).$$ Then (ii) follows by noticing that $J_b(F)$ acts (by left multiplication) on $J_{b ,\dot x}(F)$ transitively.

Since $[\dot x]=[b]$, we can assume $b=\dot x \in N_T(L)$. Since $x \in \Omega$ is superbasic, $J_{b}(F)$ is generated by $N_T(L)^{b\s}=J_b(F) \cap N_T(L)$ and $I^{b\s}=J_b(F) \cap I$. Moreover, by \cite[Proposition 5.3]{He}, the image of $N_T(L)^{b\s}$ under the natural projection $p: N_T(L) \to \tW$ is $$\tW^{x \s}=\{\tw \in \tW; x \s(\tw) x\i=\tw\}=\{\tw \in \Omega; \s(\tw)=\tw\} \cong \pi_1(G)^\s.$$ So $J_b(F)=N_T(L)^{b\s} I^{b\s}$ and $J_b(F) / I^{b\s} = \tW^{x \s}$. By (ii), the map $g \mapsto g K$ induces a surjection $$\pi_1(G)^\s =\tW^{x \s}=J_{b}(F) / I^{b\s} \to \pi_0(X_{\preceq \l}(b)).$$ On the other hand, the map is obviously an injection and hence a bijection as desired.
\end{proof}

\begin{cor} \label{red'}
Let $\l \in Y^+$, $b \in G(L)$ and $J \subseteq S_0$ with $\s(J)=J$ such that $(J, b)$ is admissible. Assume $b \in N_T(L) \cap [b]_{J, \dom}$ such that $p(b) \in \Omega_J$. Then each connected component of $X_{\preceq \l}(b)$ contains a point $j Q$, where $j \in J_b(F) \cap N(L)$ and $Q \in X_{\mu}^{M_J}(b)$ for some $\mu \in \bar I_{\l, J, b}$. Here $N=\prod_{\a \in \Phi^+-\Phi_J} U_\a$.
\end{cor}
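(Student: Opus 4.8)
The plan is to lift the problem to the affine flag variety, exactly as in the proof of Corollary \ref{basic}. Let $\pi: G(L)/I \to G(L)/K$ be the projection; then $\pi^{-1}(X_{\preceq \l}(b)) = \cup_{\tu \in C} X_{\tu}(b)$, where $C = \cup_{\l' \preceq \l} W_0 t^{\l'} W_0$ is closed under the Bruhat order. By Proposition \ref{flag}, each connected component of $\pi^{-1}(X_{\preceq \l}(b))$ meets $X_x(b)$ for some $\s$-straight $x \in C$. The identity $[\dot x] = [b]$ gives $\bar\nu_x = \nu_{[b]}^G$, and since $\nu_{[b]}^G$ is central on $\Phi_J$ and $x \in C$, the element $x$ should be constrained to lie in (a translate of) $\tW_J$; more precisely, one wants to show that every such $\s$-straight $x$ is, up to the $\s$-conjugation used in Proposition \ref{flag}, of the form $b_\mu \in \Omega_J$ for some $\mu$, and that after the identification the connecting point lands in $X_\mu^{M_J}(b)$ with $\mu \preceq \l$ and $\k_J(t^\mu) = \k_J(b)$, i.e. $\mu \in \bar I_{\l, J, b}$.

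**Extracting the $N(L)$-translate.** Once a connected component is known to contain a point $\pi(P)$ with $P \in G(L)/I$ lying over $X_x(b)$ and $x$ a $\s$-straight element whose Newton point is $\nu_{[b]}^G$, I would use the $(\nu_{\tw},\s)$-alcove / Hodge--Newton machinery (Theorem \ref{HN}) relative to the Levi $M_{\nu_{[b]}^G} \supseteq M_J$ to move the representative into $M_J(L)$ up to left multiplication by an element of $J_b(F) \cap N(L)$. Concretely: writing $b = \dot{p(b)}$ with $p(b) \in \Omega_J$, the group $J_b(F)$ is generated by $J_b(F) \cap M_J(L)$, $J_b(F) \cap N(L)$, and $J_b(F) \cap N^-(L)$; the alcove condition ${}^{\dot x \s} I_{N_v} \subseteq I_{N_v}$ lets one absorb the $N^-$-part and conclude that the component contains $j Q$ with $j \in J_b(F) \cap N(L)$ and $Q \in X_\mu^{M_J}(b)$. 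This is essentially the content of the first two steps of the strategy described in the introduction, attributed to \cite{CKV}; here it is being re-derived via the Deligne--Lusztig reduction instead of the superbasic computation.

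**The admissibility/membership bookkeeping.** The remaining work is to check that the cocharacter $\mu$ produced this way genuinely lies in $\bar I_{\l, J, b}$: that $\mu$ is $J$-dominant and $J$-minuscule, that $\mu \preceq \l$, and that $\k_J(t^\mu) = \k_{[b]_{J,\dom}}^{M_J}$. The condition $\mu \preceq \l$ comes from $x \in C$ together with the description of $C$ and property \ref{setup3}(a); $J$-minusculity comes from $x$ being length-zero in $\tW_J$ (so $x \in \Omega_J$ and $x = b_{\mu}$ in the notation of \S\ref{setup4}); and the Kottwitz-point equality is forced by $[\dot x]_{M_J} \subseteq [b]_{J,\dom}$, which in turn uses $\nu_{b_\mu} = \nu_{[b]}^G$ from Lemma \ref{dominant}.

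**Main obstacle.** I expect the delicate point to be showing that the $\s$-straight element $x \in C$ delivered by Proposition \ref{flag} can actually be taken (after the $I$-$\s$-conjugation implicit in that proposition, and after the Hodge--Newton move) to have $\nu_x$ equal to $\nu_{[b]}^G$ on the nose and to lie in $\Omega_J \subseteq \tW_J$, rather than merely being $\s$-conjugate in $\tW$ to something with the right Newton point. Equivalently, the subtlety is the interaction between ``$\s$-straight in $C \subseteq \tW$'' and ``$M_J$-minuscule''; one must use that $\l$ is $G$-dominant and $(J,b)$ is admissible to rule out straight elements of positive length contributing, paralleling the argument $\ell(x) = 2\langle\rho,\nu_{[b]}^G\rangle$ in Corollary \ref{basic} but now carried out inside $M_{\nu_{[b]}^G}$ rather than in $G$. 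Keeping track of which conjugations happen where, and that the resulting $Q$ lies in the closed variety $X_\mu^{M_J}(b)$ with the correct $\mu$, is where the care is needed; the geometric input (affine lines, Lang surjectivity on parahorics) is already packaged in Proposition \ref{flag}.
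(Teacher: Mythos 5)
Your outline diverges from the paper at the crucial middle step, and that is where it has a genuine gap. The paper's proof is short: it \emph{cites} \cite[Proposition 3.4.1]{CKV} to obtain that each connected component of $X_{\preceq \l}(b)$ already contains a point $jQ'$ with $j \in J_b(F) \cap N(L)$ and $Q' \in X_{\mu'}^{M_J}(b)$ for some $\mu' \preceq \l$ (this is the ``second step'' which the introduction explicitly says is established in \cite{CKV}), and then only has to apply Corollary \ref{basic}(i) \emph{inside $M_J$} to replace $\mu'$ by a $J$-dominant $J$-minuscule $\mu \preceq_J \mu'$, and to check $\k_J(t^\mu)=\k_J(b)$ so that $\mu \in \bar I_{\l,J,b}$. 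Your proposal instead tries to re-derive the CKV input from Proposition \ref{flag}, and the step labelled ``Extracting the $N(L)$-translate'' does not go through as written. Two concrete problems: (1) since $b$ is only assumed basic in $M_J$, not in $G$, the $\s$-straight element $x \in C$ produced by Proposition \ref{flag} satisfies $\ell(x)=2\langle\rho,\nu_{[b]}^G\rangle$, which is in general positive, so $x$ need not lie in $\Omega$ or $\Omega_J$ and the length-zero argument of Corollary \ref{basic} does not apply; you acknowledge this but do not resolve it. (2) The claim that $J_b(F)$ is generated by its intersections with $M_J(L)$, $N(L)$ and $N^-(L)$ and that the alcove condition ``lets one absorb the $N^-$-part'' is unsubstantiated and is in any case not the right mechanism: what one must control is an arbitrary point $gK$ of the component, written via the Iwasawa decomposition $G(L)=N(L)M_J(L)K$, and one must connect it by explicit curves to a point of the form $jQ'$ with $j\in J_b(F)\cap N(L)$. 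That is precisely the content of \cite[Proposition 3.4.1]{CKV}, and your sketch does not reproduce it.

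The last part of your proposal (the ``bookkeeping'': $J$-minusculity from length zero in $\tW_J$, $\mu\preceq\l$ from $\preceq_J$ implying $\preceq$, and the Kottwitz-point equality) does match the paper's actual argument once the CKV input is granted. So the fix is either to cite \cite[Proposition 3.4.1]{CKV} as the paper does, or to supply a genuine proof of that statement; the Deligne--Lusztig reduction of Section 3 alone does not yield it.
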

\begin{proof}
By \cite[Proposition 3.4.1]{CKV}, each connected component of $X_{\preceq \l}(b)$ contains a point $j Q'$, where $j \in J_b(F) \cap N(L)$ and $Q' \in X_{\mu'}^{M_J}(b)$ for some $\mu' \preceq \l$. Applying Corollary \ref{basic} (i), there exist a $J$-minuscule and $J$-dominant cocharacter $\mu \preceq_J \mu'$ and $Q \in X_\mu^{M_J}(b)$ such that $Q' \sim_{J, \mu', b} Q$. Hence we have $j Q' \sim_{\l, b} j Q$. Since $X_\mu^{M_J}(b) \neq \emptyset$, we have $\k_J(t^\mu)=\k_J(b)$. So $\mu \in \bar I_{\l, J, b}$ and the proof is finished.
\end{proof}

\section{The choice of $J$}
Let $\l \in Y^+$ and $b \in G(L)$. In view of Corollary \ref{red'}, we are faced with the question of the existence of $J \subseteq S_0$ such that

\

(a) $\s(J)=J$, $(J, b)$ is admissible and $b$ is superbasic in $M_J$.

\

It is known such $J$ always exists. However, due to technical reasons (see Lemma \ref{lattice}), we need to add the following additional requirement.

\

(b) $\bar I_{\l, J, b}$ contains a weakly dominant cocharacter. Here we say a cocharacter $\mu \in Y$ is weakly dominant if $\<\a, \mu\> \ge -1$ for any $\a \in \Phi^+$.

\

The main purpose of this section is to show the existence of $J$ that satisfies (a) and (b), using the different characterizations of $\s$-straight elements in Theorem \ref{fundamental}. Notice that if $\l$ is minuscule, then each element $\bar I_{\l, J, b}$ is minuscule and (b) is a priori satisfied.

\

For $v \in Y_\QQ^+$, we define $J_v=\{s \in S_0; s(v)=v\}$. Let $\tw \in \tW$ be a $\s$-straight element. We set $\tw^\diamond=z \tw \s(z)$, where $z \in {}^{J_{\bar \nu_{\tw}}} W_0$ is the unique element such that $z(\nu_{\tw})=\bar \nu_{\tw}$.

\begin{lem} \label{diamond}
Let $\tw \in \tW$ be $\s$-straight. Then

(a) $\tw^\diamond \in \Omega_{J_{\bar \nu_{\tw}}}$ with $\s(J_{\bar \nu_{\tw}})=J_{\bar \nu_{\tw}}$;

(b) $\tw^\diamond \in t^\chi  W_{J_{\bar \nu_{\tw}}}$ with $\chi \in Y$ weakly dominant.
\end{lem}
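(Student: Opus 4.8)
The plan is to analyze $\tw^\diamond = z\tw\s(z)\i$ directly, exploiting that $\tw$ is $\s$-straight and that $z$ was chosen to straighten the Newton vector. Write $\tw = t^\mu w$ with $\mu \in Y$, $w \in W_0$, and abbreviate $\nu = \nu_{\tw}$, $\bar\nu = \bar\nu_{\tw}$, $J = J_{\bar\nu}$. By Theorem \ref{fundamental}, $\tw$ being $\s$-straight is equivalent to $\ell(\tw) = 2\langle\rho,\bar\nu\rangle$ and also to $\tw$ being $(\nu,\s)$-fundamental, i.e. a $(\nu,\s)$-alcove element with ${}^{\dot\tw\s}I_{M_\nu} = I_{M_\nu}$. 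For part (a), I first note that conjugating by $z$ carries the $(\nu,\s)$-fundamental element $\tw$ to a $(\bar\nu,\s)$-fundamental element $\tw^\diamond$, since $z(\nu) = \bar\nu$ and $z \in W_0$. Thus $\tw^\diamond = t^\chi w^\diamond$ with $w^\diamond(\s(\bar\nu)) = \bar\nu$; but $\s(\bar\nu) = \bar\nu$ (as $\s$ preserves dominance and $\nu_{[b]}^G$ is $\s$-stable, hence so is $\bar\nu$), so $w^\diamond \in W_{J}$ because the stabilizer of $\bar\nu$ in $W_0$ is exactly $W_J$. Next, the ${}^{\dot\tw^\diamond\s}I_{M_{\bar\nu}} = I_{M_{\bar\nu}}$ condition together with the length identity $\ell(\tw^\diamond) = \ell(\tw) = 2\langle\rho,\bar\nu\rangle$ forces $\ell(\tw^\diamond)$, computed inside $\tW_{M_{\bar\nu}}$, to coincide with the $M_{\bar\nu}$-length: one checks that $\tPhi^- \cap \tw^\diamond(\tPhi^+)$ is contained in the affine roots of $M_{\bar\nu}$, which is precisely the statement that $\tw^\diamond$ permutes the base alcove modulo $W^a_{J}$, i.e. $\tw^\diamond \in \Omega_{M_J} = \Omega_J$. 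Finally $\s(J) = J$ because $\s(\bar\nu) = \bar\nu$ and $\s$ permutes $S_0$, so $\s$ fixes $J_{\bar\nu}$ setwise.

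For part (b), having established $\tw^\diamond = t^\chi w^\diamond$ with $w^\diamond \in W_J$, it remains to see $\chi$ is weakly dominant, i.e. $\langle\a,\chi\rangle \ge -1$ for all $\a \in \Phi^+$. The key input is that $\tw^\diamond \in \Omega_J$, so $\tw^\diamond$ stabilizes the base alcove $\D$ of $G$ up to the action of $W^a_J$; more concretely, as an element of $\Omega_J$ it sends the $M_J$-base-alcove to itself, and globally it sends $\D$ into the union of alcoves at bounded distance governed by $J$. I would argue as follows: since $\tw^\diamond(\D)$ meets the closure of the dominant chamber's bottom alcove in a controlled way — precisely because $w^\diamond \in W_J$ fixes $\bar\nu$ and $\tw^\diamond$ is length-additive over $t^\chi$ and $w^\diamond$ within $\tW_{M_J}$ — one gets for each $\a \in \Phi^+$ that $a = (\a,0)$ satisfies $a(\tw^\diamond(v)) = -\langle\a,\chi\rangle - \langle\a, w^\diamond(v)\rangle$ for $v \in \D$, and the $\Omega_J$-property bounds how negative this affine function can be on $\D$, giving $\langle\a,\chi\rangle \ge -1$. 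Alternatively, and perhaps more cleanly, one uses that $\tw^\diamond \in \Omega_J$ implies $\tw^\diamond$ maps simple affine roots of $\tW_{M_J}$ to simple affine roots; combined with the description of $w^\diamond$ as the product $u_x w_J$-type element fixing $\bar\nu$, one reads off that $\chi$ lies in the $J$-dominant-minuscule-shifted cone, which in particular is weakly dominant.

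The main obstacle I anticipate is part (b): verifying the weak-dominance bound $\langle\a,\chi\rangle \ge -1$ uniformly over all $\a \in \Phi^+$, not just the simple ones or those of $M_J$. For roots $\a \in \Phi_J^+$ the bound is essentially automatic from $\tw^\diamond \in \Omega_J$ (it fixes the $M_J$-base alcove, so $\chi$ is $J$-minuscule-like and in particular $\langle\a,\chi\rangle \in \{-1,0,1\}$ up to the $w^\diamond$-twist). The subtle case is $\a \in \Phi^+ \setminus \Phi_J$, where $\langle\a,\bar\nu\rangle > 0$; here I expect to combine the $(\bar\nu,\s)$-alcove condition ${}^{\dot\tw^\diamond\s}I_{N_{\bar\nu}} \subseteq I_{N_{\bar\nu}}$ with the explicit form of $I_{N_{\bar\nu}}$ in terms of affine root subgroups to constrain the pairing $\langle\a,\chi\rangle$ from below. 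The cleanest route is probably to first reduce to the case where $\tw^\diamond$ itself is of the form $t^\chi w^\diamond$ with minimal possible $\chi$ in its class (using that $\Omega_J$-elements have a canonical representative), and then invoke the characterization $\ell(\tw^\diamond) = 2\langle\rho,\bar\nu\rangle$ to pin down $\chi$ against the positivity of $\bar\nu$ on $\Phi^+ \setminus \Phi_J$. I would keep an eye out for whether the argument really needs the full strength of $\s$-straightness or only that $\tw^\diamond \in \Omega_J$ with $w^\diamond$ fixing a dominant vector whose stabilizer is $W_J$.
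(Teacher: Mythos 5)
Part (a) of your proposal is essentially the paper's argument: transport the $(\nu_{\tw},\s)$-fundamentality of $\tw$ (Theorem \ref{fundamental}(a)) through conjugation by $z$ to get ${}^{\dot\tw^\diamond\s}I_{M_{\bar\nu_{\tw}}}=I_{M_{\bar\nu_{\tw}}}$, hence $\tw^\diamond\in\Omega_{J_{\bar\nu_{\tw}}}$, and get $\s(J_{\bar\nu_{\tw}})=J_{\bar\nu_{\tw}}$ from dominance of $\bar\nu_{\tw}$. (Your detour through $\ell(\tw^\diamond)=\ell(\tw)=2\langle\rho,\bar\nu_{\tw}\rangle$ is unnecessary and not obviously true as stated -- conjugation by $z$ need not preserve length -- but the conclusion you draw does follow directly from the normalization of $I_{M_{\bar\nu_{\tw}}}$.)

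Part (b) has a genuine gap, and you have in fact located it yourself: the bound $\langle\a,\chi\rangle\ge -1$ for $\a\in\Phi^+\setminus\Phi_{J_{\bar\nu_{\tw}}}$. Both arguments you actually sketch for this case are unusable. Membership in $\Omega_{J_{\bar\nu_{\tw}}}$ only controls the affine roots of $M_{J_{\bar\nu_{\tw}}}$, i.e.\ the pairings $\langle\a,\chi\rangle$ for $\a\in\Phi_{J_{\bar\nu_{\tw}}}$; it places no constraint whatsoever on $\langle\a,\chi\rangle$ for $\a$ outside $\Phi_{J_{\bar\nu_{\tw}}}$, so "the $\Omega_J$-property bounds how negative this affine function can be" is false, and "$J$-dominant-minuscule, hence weakly dominant" is also false ($J$-minuscule says nothing about roots outside $\Phi_J$). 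The correct mechanism, which you name but do not execute, is the $(\nu_{\tw},\s)$-alcove condition ${}^{\dot\tw\s}I_{N_{\nu_{\tw}}}\subseteq I_{N_{\nu_{\tw}}}$: writing $u=t^{-\chi}\tw^\diamond\in W_{J_{\bar\nu_{\tw}}}$ and setting $\b=z\i\s\i(u)\i(\a)$, one has $\langle\b,\nu_{\tw}\rangle=\langle\a,\bar\nu_{\tw}\rangle>0$, so $U_\b(L)\cap I\subseteq I_{N_{\nu_{\tw}}}$; applying ${}^{\dot\tw\s}$ shifts the affine root $(\b,k)$ to $(w\s(\b),k+\langle w\s(\b),\mu\rangle)$ where $\tw=t^\mu w$, and the requirement that the image lie in $I$ forces exactly $\langle\a,\chi\rangle\ge -1$. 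Without this computation the weak dominance of $\chi$ is not established, and this is precisely the input needed later (Lemma \ref{exist}, Corollary \ref{ideal}) to ensure $\bar I_{\l,J,b}$ contains a weakly dominant cocharacter.
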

\begin{proof}
(a) The equality $\s(J_{\bar \nu_{\tw}})=J_{\bar \nu_{\tw}}$ follows immediately from the fact that $\bar \nu_{\tw}$ is dominant. Note that ${}^{\dot z} I_{M_{\nu_{\tw}}}=I_{M_{\bar \nu_{\tw}}}$. By Theorem \ref{fundamental}(a), we have ${}^{\dot\tw\s} I_{M_{\nu_{\tw}}}=I_{M_{\nu_{\tw}}}$, that is, ${}^{\dot\tw^\diamond\s} I_{M_{\bar \nu_{\tw}}}=I_{M_{\bar \nu_{\tw}}}$. So $\tw^\diamond \in \Omega_{J_{\bar \nu_{\tw}}}$.

(b) We show that $\<\a, \chi\> \ge -1$ for any $\a \in \Phi^+$. If $\a \in \Phi_{J_{\bar \nu_{\tw}}}$, then $\<\a, \chi\> \in \{0, 1\}$ since $\tw^\diamond \in \Omega_{J_{\bar \nu_{\tw}}}$. Assume $\a \in \Phi^+ - \Phi_{J_{\bar \nu_{\tw}}}$. Then $\<z\i \s\i(u)\i(\a), \nu_{\tw}\>=\<\s\i(u)\i(\a), \bar \nu_{\tw}\>=\<\a, \bar \nu_{\tw}\> > 0$, where $u=t^{-\chi} \tw^\diamond \in W_{J_{\bar \nu_{\tw}}}$ and $z \in {}^{J_{\bar \nu_{\tw}}} W_0$ is the unique element such that $z(\nu_{\tw})=\bar \nu_{\tw}$. Since $\tw$ is a $(\nu_{\tw}, \s)$-alcove element, ${}^{\dot\tw\s} (U_{z\i \s\i(u\i)(\a)}(L) \cap I) \subseteq I$, which implies $\<z\i(\a), z\i(\chi)\>=\<\a, \chi\> \ge -1$ and the proof is finished.
\end{proof}

\begin{lem} \label{exist}
Let $\tw \in \tW$ be $\s$-straight. Then there exist $J \subseteq J_{\bar \nu_{\tw}}$ and $z_0 \in W_{J_{\bar \nu_{\tw}}}$ such that

(a) $\s(J)=J$ and $\tw_0= z_0 \tw^\diamond \s(z_0)\i$ is $\s$-superbasic in $\tW_J$;

(b) $\tw_0 \in t^{\chi_0} W_J$ with $\chi_0 \in Y$ weakly dominant;

(c) $\nu_{\tw_0}=\nu_{\tw^\diamond}=\bar \nu_{\tw}$.
\end{lem}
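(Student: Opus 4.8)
The plan is to reduce the problem, via Lemma \ref{diamond}, to the case where $\tw^\diamond$ is a length-zero element of $\tW_{J_{\bar\nu_{\tw}}}$ and then decompose the Levi $M_{\bar\nu_{\tw}}$ further to isolate a superbasic piece. Write $N=J_{\bar\nu_{\tw}}$ for brevity, $M=M_N$, and recall from Lemma \ref{diamond} that $\tw^\diamond \in \Omega_N$ with $\s(N)=N$, and $\nu_{\tw^\diamond}=\bar\nu_{\tw}$ is central on $\Phi_N$. So $\tw^\diamond$ is a basic length-zero element of the extended affine Weyl group $\tW_N$ of $M$, equivalently $\tw^\diamond \in \Omega_N$ with $\nu_{\tw^\diamond}$ central on $\Phi_N$.

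First I would invoke Lemma \ref{ss} (applied inside $M$, with $\tW_N$ in place of $\tW$ and $S_N^a$ in place of $S^a$): the element $\tw^\diamond \in \Omega_N$ need not itself be $\s$-superbasic in $\tW_N$, but its conjugacy class in $M(L)$ is a basic class, hence by the standard structure theory (e.g.\ \cite[Lemma 3.1.1]{CKV} together with \cite[Proposition 3.5]{HN1}) it meets a proper Levi $M_J$ of $M$ with $J\subseteq N$, $\s(J)=J$, in a superbasic class. Concretely, one seeks $z_0 \in W_N$ with $\s(J)=J$ for $J=z_0\,(\text{support})\,z_0\i$ suitably normalized so that $\tw_0 := z_0\,\tw^\diamond\,\s(z_0)\i$ normalizes $\tW_J$ and acts $\s$-superbasically on $S_J^a$; this is exactly the content of \cite[Lemma 3.1.1]{CKV}, which I would cite. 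That gives (a), and (c) is automatic: conjugating by $z_0 \in W_N \subseteq W_0$ and twisting by $\s$ does not change the ($W_0$-dominant, $\s$-fixed) Newton point, so $\nu_{\tw_0}=\nu_{\tw^\diamond}=\bar\nu_{\tw}$ by Lemma \ref{dominant}-type reasoning or directly from $(\tw_0\s)^n = z_0 (\tw^\diamond\s)^n \s^n(z_0)\i$ being a translation conjugate to $(\tw^\diamond\s)^n$.

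The real work is (b): arranging that after the conjugation by $z_0$ the translation part $\chi_0$ of $\tw_0$ is still weakly dominant, i.e.\ $\<\a,\chi_0\>\ge -1$ for all $\a\in\Phi^+$. The point is that $z_0 \in W_N$, so $z_0$ permutes the roots in $\Phi_N$ among themselves and the roots in $\Phi^+\setminus\Phi_N$ stay positive up to the $W_N$-action on a set on which $\bar\nu_{\tw}$ is strictly positive; the argument should parallel the proof of Lemma \ref{diamond}(b). For $\a \in \Phi_J \subseteq \Phi_N$ one has $\<\a,\chi_0\>\in\{0,1\}$ since $\tw_0 \in \Omega_J \cdot W_J$ with the translation $J$-minuscule (indeed $\tw_0$ normalizes $\tW_J$, so $\bar\chi_0^{J}$ is $J$-minuscule, forcing $\<\a,\chi_0\>\ge -1$); for $\a \in \Phi_N \setminus \Phi_J$, weak dominance of the translation part inside $\tW_N$ — which I would establish by noting $\tw^\diamond \in \Omega_N$ has translation part with $\<\a,\cdot\>\in\{0,1\}$ on $\Phi_N$ and is preserved under $W_N$-conjugation up to the $J$-minusculization — gives the bound; and for $\a \in \Phi^+\setminus\Phi_N$ one uses $\<\,\cdot\,,\bar\nu_{\tw}\>>0$ and the $(\nu_{\tw},\s)$-alcove property transported through $z_0$, exactly as in Lemma \ref{diamond}(b), to conclude ${}^{\dot\tw_0\s}(U_a(L)\cap I)\subseteq I$ for the relevant affine roots $a$, which forces $\<\a,\chi_0\>\ge -1$.

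The main obstacle I anticipate is this last step (b): one must check that the choice of $z_0 \in W_N$ furnished by \cite[Lemma 3.1.1]{CKV} can be taken compatibly with weak dominance, rather than being an arbitrary element realizing the superbasic reduction. The safe route is to prove that \emph{every} element of $W_N$ conjugating $\tw^\diamond$ into $\Omega_J \cdot W_J$-form automatically preserves weak dominance — plausibly true because $W_N$ fixes $\bar\nu_{\tw}$ and the translation part of a length-zero-normalizing element is controlled coordinate-by-coordinate by the $(v,\s)$-alcove condition, which is a $W_N$-equivariant notion once $v=\nu_{\tw^\diamond}$ is $W_N$-fixed. If that fails, one falls back on choosing $z_0$ of minimal length in its relevant coset, which keeps positive roots positive as much as possible and should suffice. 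I would structure the written proof as: (1) set up $M=M_N$ and reduce to $\tw^\diamond\in\Omega_N$ basic; (2) apply \cite[Lemma 3.1.1]{CKV} to get $J,z_0$ and property (a); (3) derive (c) from the translation formula for powers of $\tw_0\s$; (4) prove (b) by the three-case root analysis above, citing the proof of Lemma \ref{diamond}(b) for the $\a\in\Phi^+\setminus\Phi_N$ case.
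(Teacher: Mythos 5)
Your skeleton (reduce via Lemma \ref{diamond} to $\tw^\diamond\in\Omega_{J_{\bar\nu_{\tw}}}$, perform a superbasic reduction inside $M_{J_{\bar\nu_{\tw}}}$, then check weak dominance) matches the paper, but the proof has a genuine gap exactly where you flag it, and your two fallback strategies are not the resolution. The first problem is already in (a)/(c): Lemma \ref{ss} and \cite[Lemma 3.1.1]{CKV} are existence statements about representatives of a superbasic class; they do not hand you a conjugating element $z_0$ lying in the \emph{finite} Weyl group $W_{J_{\bar\nu_{\tw}}}$, and without $z_0\in W_{J_{\bar\nu_{\tw}}}$ your claim that (c) is ``automatic'' fails, since $\nu_{z_0\tw^\diamond\s(z_0)\i}=z_0(\nu_{\tw^\diamond})$ and one needs $z_0$ to fix $\bar\nu_{\tw}$. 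The paper produces $J$ and $z_0$ concretely: following \cite{HN2} it forms the affine subspace $V_{\tw^\diamond}=\{q;\ \tw^\diamond\s(q)=q+\bar\nu_{\tw}\}$ and its parallel linear subspace $V_{\tw^\diamond}'$, picks a generic point $v_0\in V_{\tw^\diamond}'$ in a sufficiently small neighborhood of $\bar\nu_{\tw}$, and sets $J=J_{\bar v_0}$ and $z_0\in{}^{J_{\bar v_0}}W_0$ with $z_0(v_0)=\bar v_0$; the proximity of $v_0$ to the dominant vector $\bar\nu_{\tw}$ forces $z_0(\bar\nu_{\tw})=\bar\nu_{\tw}$, i.e. $z_0\in W_{J_{\bar\nu_{\tw}}}$, and superbasicity of $\tw_0$ in $\tW_J$ is \cite[Lemma 3.1]{HN2}.

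The second problem is your treatment of (b). Once $z_0\in W_{J_{\bar\nu_{\tw}}}$ is secured, no case analysis and no alcove-element argument for $\tw_0$ is needed (your proposal to ``transport the $(\nu_{\tw},\s)$-alcove property through $z_0$'' would require knowing in advance that $\tw_0$ is an alcove element for a suitable datum, which you have not established). The correct observation, which is the whole content of the paper's proof of (b), is that Lemma \ref{diamond} gives $\chi$ both weakly dominant \emph{and} $J_{\bar\nu_{\tw}}$-minuscule, and that this pair of properties makes weak dominance stable under conjugation by \emph{any} element of $W_{J_{\bar\nu_{\tw}}}$: for $\a\in\Phi^+$ one has $\<\a,z_0(\chi)\>=\<z_0\i(\a),\chi\>$; if $z_0\i(\a)\in\Phi^+$ use weak dominance of $\chi$, and if $z_0\i(\a)\in\Phi^-$ then necessarily $z_0\i(\a)\in\Phi_{J_{\bar\nu_{\tw}}}$ (an element of $W_{J_{\bar\nu_{\tw}}}$ only inverts roots of $\Phi_{J_{\bar\nu_{\tw}}}$), so $J_{\bar\nu_{\tw}}$-minusculity gives $\<z_0\i(\a),\chi\>\ge-1$. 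So the answer to the question you raise is yes: \emph{every} $z_0\in W_{J_{\bar\nu_{\tw}}}$ preserves weak dominance here, but for the reason above, not because of any $W_{J_{\bar\nu_{\tw}}}$-equivariance of the alcove condition, and the burden of the proof is shifted entirely onto constructing $z_0$ inside $W_{J_{\bar\nu_{\tw}}}$ in the first place.
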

\begin{proof}
Let $V_{\tw^\diamond}=\{q \in Y_\QQ; \tw^\diamond\s(q)=q+\bar \nu_{\tw}\}$, which is a nonempty affine subspace. Following \cite[Section 3]{HN2}, we set $V_{\tw^\diamond}'=\{q-e; q \in V_{\tw^\diamond}\}$, where $e$ is an arbitrary point of $V_{\tw^\diamond}$. Then $V_{\tw^\diamond}'$ is the (linear) subspace parallel to $V_{\tw^\diamond}$. We choose a point $v_0 \in V_{\tw^\diamond}'$ lying in a sufficiently small neighborhood of $\nu_{\tw^\diamond} =\bar \nu_{\tw}$ which is also generic in $V_{\tw^\diamond}'$, that is, for any root $\a \in \Phi$, $\<\a, v_0\>=0$ implies that $\<\a, v'\>=0$ for each $v' \in V_{\tw^\diamond}'$. Let $J=J_{\bar v_0}$ and let $z_0$ be the unique element in ${}^{J_{\bar v_0}} W_0$ such that $z_0(v_0)=\bar v_0$. Since $v_0$ lies in a sufficient small neighborhood of the dominant vector $\bar \nu_{\tw}$, $z_0(\bar \nu_{\tw})$ is also dominant. So $z_0(\bar \nu_{\tw})=\bar \nu_{\tw}$ and $z_0 \in W_{J_{\bar \nu_{\tw}}}$.

(a) Note that $\tw^\diamond \in \Omega_{J_{\bar \nu_{\tw}}}$. Then (a) follows from \cite[Lemma 3.1]{HN2}.

(b) Assume $\tw^\diamond \in t^\chi W_0$ for some $\chi \in Y$. Then $\tw_0 \in t^{z_0(\chi)} W_0$. By Lemma \ref{diamond}, $\chi$ is weakly dominant and $J_{\bar \nu_{\tw}}$-minuscule. So $\chi_0=z_0(\chi)$ is weakly dominant as $z_0 \in W_{J_{\bar \nu_{\tw}}}$.

(c) It follows directly from the inclusion $z_0 \in W_{J_{\bar \nu_{\tw}}}$.
\end{proof}

\begin{cor}\label{ideal}
Let $\l \in Y^+$ and $b \in G(L)$. If $X_{\preceq \l}(b) \neq \emptyset$. Then there exists $J \subseteq S_0$ with $\s(J)=J$ such that

(i) $(J, b)$ is admissible and $b$ is superbasic in $M_J$;

(ii) $\bar I_{\l, J, b}$ contains a weakly dominant coweight.
\end{cor}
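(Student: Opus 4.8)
The plan is to reduce the statement to the combinatorial results on $\s$-straight elements established in Lemmas \ref{diamond} and \ref{exist}, by first finding a suitable representative of $[b]$ whose image in $\tW$ is $\s$-straight, and then transferring the conclusions of those lemmas back to the reductive group. First I would use the nonemptiness assumption and the description of $\pi\i(X_{\preceq\l}(b))$ from the proof of Corollary \ref{basic}: since $C=\cup_{\l'\preceq\l}W_0 t^{\l'}W_0$ is closed under the Bruhat order, Proposition \ref{flag} produces a $\s$-straight element $x\in C$ with $X_x(b)\neq\emptyset$, hence (by \cite[Theorem 3.7]{He}) with $[\dot x]=[b]$. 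So we may pick $\tw:=x\in\tW$, $\s$-straight, with $\bar\nu_{\tw}=\nu_{[b]}^G=:\nu$ dominant, and with a lift $\dot\tw\in N_T(L)$ representing $[b]$. After replacing $b$ by $\dot\tw^\diamond$ (which lies in the same $\s$-conjugacy class, since $\tw^\diamond=z\tw\s(z)$ is obtained by $\s$-conjugation), we are reduced to the case $b=\dot\tw^\diamond$ with $\tw^\diamond$ as in Lemma \ref{diamond}.

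Next I would invoke Lemma \ref{exist} applied to $\tw$: it yields $J\subseteq J_{\bar\nu_{\tw}}\subseteq S_0$ with $\s(J)=J$, an element $z_0\in W_{J_{\bar\nu_{\tw}}}$, and $\tw_0=z_0\tw^\diamond\s(z_0)\i\in t^{\chi_0}W_J$ with $\chi_0$ weakly dominant, $\tw_0$ $\s$-superbasic in $\tW_J$, and $\nu_{\tw_0}=\bar\nu_{\tw}=\nu$. Again $\tw_0$ is $\s$-conjugate to $\tw^\diamond$, so after one more replacement we may assume $b=\dot\tw_0$ with $p(b)=\tw_0\in\Omega_J$. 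Now I claim $(J,b)$ is admissible: condition (a) of \S\ref{setup4} holds because $\bar\nu_{\tw}=\nu$ is dominant and $J\subseteq J_{\bar\nu_{\tw}}$, so $\<\a,\nu\>=0$ for $\a\in\Phi_J$; condition (b) holds because $\tw_0\in\Omega_J$ gives $\dot\tw_0\in M_J(L)$ with $\nu_{\tw_0}^{M_J}=\nu=\nu_{[b]}^G$, and by \cite[Theorem 3.3]{HN1} applied to $\tW_J$ the $\s$-conjugacy class $[\dot\tw_0]_{M_J}$ is the required $[b]_{J,\dom}$. That $b$ is superbasic in $M_J$ is precisely the statement that $\tw_0$ is $\s$-superbasic in $\tW_J$ combined with Lemma \ref{ss}. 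This gives (i).

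For (ii), the point is that $\mu_0:=\chi_0$ (or rather its $J$-dominant $J$-minuscule conjugate under $W_J$, which by Lemma \ref{diamond}(b) and $z_0\in W_{J_{\bar\nu_{\tw}}}$ stays weakly dominant) should lie in $\bar I_{\l,J,b}$. I would check the two defining conditions: $\k_J(t^{\mu_0})=\k_J(b)$ holds because $b=\dot\tw_0\in t^{\chi_0}W_J$ forces $\eta_J(b)=\eta_J(t^{\chi_0})$ in $\pi_1(M_J)$, and passing to the $J$-dominant $J$-minuscule representative does not change the class in $\pi_1(M_J)_\s$; and $\mu_0\preceq\l$ must be extracted from the fact that $x\in C$, i.e. from $x\in W_0 t^{\l'}W_0$ for some $\l'\preceq\l$, tracing through the chain $x\rightsquigarrow\tw^\diamond\rightsquigarrow\tw_0$ and using that translation parts behave predictably under the conjugations by $z$ and $z_0$ (both in $W_0$), together with property (a) of \S\ref{setup3} ($\preceq_J$ implies $\preceq$). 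The main obstacle I anticipate is this last verification that $\mu_0\preceq\l$: one must control how the translation part $\chi_0$ of $\tw_0$ compares to $\l$ after the two Weyl-conjugations and after replacing $\chi_0$ by its $J$-minuscule representative, which is the technical heart of the argument and where the weak dominance of $\chi_0$ — guaranteed by Lemma \ref{diamond}(b) and Lemma \ref{exist}(b) — is exactly what is needed to keep the estimate under control.
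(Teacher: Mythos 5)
Your proposal is correct and follows essentially the same route as the paper: produce a $\s$-straight element of $C=\cup_{\l'\preceq\l}W_0t^{\l'}W_0$ representing $[b]$ (the paper cites the proof of \cite[Proposition 1.5]{N} rather than Proposition \ref{flag}, but this is immaterial), feed it into Lemma \ref{exist} to get $J$ and $\tw_0\in\Omega_J$, and read off admissibility, superbasicity and the weakly dominant element $\chi_0$ of $\bar I_{\l,J,b}$. The step you flag as the technical heart is in fact the paper's one-liner: $W_0\tw_0W_0=W_0\tw W_0$ because all conjugations are by elements of $W_0$, and since $\tw$ lies in (or below) $W_0t^{\l'}W_0$ with $\l'\preceq\l$, one gets $\bar\chi_0\le\l'$ and hence $\chi_0\preceq\l$ directly; moreover $\tw_0\in\Omega_J$ already forces $\chi_0$ to be $J$-dominant and $J$-minuscule, so no further $W_J$-conjugation is needed.
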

\begin{proof}
Since $X_{\preceq \l}(b) \neq \emptyset$, there exists $\tu \in W_0 t^{\l'} W_0$ with $\l' \preceq \l$ such that $X_{\tu}(b) \neq \emptyset$. By the proof of \cite[Proposition 1.5]{N}, there exists a $\s$-straight element $\tw \in \tW$ such that $\tw \le \tu$ and $[\dot\tw]=[b]$. Let $J \subseteq S_0$ and $\tw_0 \in \Omega_J$ be defined as in Lemma \ref{exist} (with respect to $\tw$). Then $\tw_0$ is $\s$-superbasic in $W_J$ with $\nu_{\tw_0}=\bar \nu_{\tw}=\nu_{[b]}^G$. So $[\dot \tw_0]_J=[b]_{\dom, J}$ and (i) follows. Let $\chi_0 \in Y$ be such that $\tw_0 \in t^\mu W_J$, which is $J$-dominant, $J$-minuscule and weakly dominant by Lemma \ref{exist}. Moreover, $\chi_0 \preceq \l$ since $W_0 \tw_0 W_0=W_0 \tw W_0$ and $\tw \le \tu$. So $\chi_0 \in \bar I_{\l, J, b}$ and (ii) follows.
\end{proof}

\section{Proof of Theorem \ref{main'}}
The aim of this section is to prove Theorem \ref{main'}. The strategy is the same as in \cite[\S 4.1]{CKV}. We rephrase it here for completeness. Throughout the section, we assume $G$ is adjoint and simple. We fix $\l \in Y^+$, $b \in G(L)$ such that $X_{\preceq \l}(b) \neq \emptyset$.

\subsection{} \label{5.1} In this subsection, we fix $J \subseteq S_0$ such that $\s(J)=J$, $(J, b)$ is admissible and $b$ is superbasic in $M_J$. By Lemma \ref{ss}, we can assume:

(A) $b \in N_T(L) \cap [b]_{J, \dom}$ such that $\nu_{p(b)}=\nu_{[b]}^G$ and $p(b) \in \Omega_J$ is $\s$-superbasic in $\tW_J$.

By lifting the natural projection $N_T^{M_J}(L) \to \tW_J$, we also view, by (A), $\Omega_J^\s=\{x \in \Omega_J; \s(x)=x\}=\pi_1(M_J)^\s$ as a subset of $N_T^{M_J}(L)^{b\s}=J_b^{M_J}(F) \cap N_T^{M_J}(L)$, where $N_T^{M_J}$ is the normalizer of $T$ in $M_J$.

\begin{prop} \rm{(cf. \cite[Theorem 4.1.12]{CKV})} \label{k1}
The image of the natural map $$\pi_0(X_{\mu_x}^{M_J}(b)) \to \pi_0(X_{\preceq \l}(b))$$ induced by the embedding $X_{\mu_x}^{M_J}(b) \hookrightarrow X_{\preceq \l}(b)$  dose not depend on the choice of $x \in \bar I_{\l, J. b}$.
\end{prop}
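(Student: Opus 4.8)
The plan is to show that for any two $x, x' \in \bar I_{\l, J, b}$, every connected component of $X_{\mu_x}^{M_J}(b)$ can be joined inside $X_{\preceq \l}(b)$ to a point of $X_{\mu_{x'}}^{M_J}(b)$, and conversely. It clearly suffices to treat the case where $\mu_{x'}$ is obtained from $\mu_x$ by a single elementary step, since $\bar I_{\l, J, b}$ is connected under such steps (any two $J$-minuscule coweights below $\l$ with the same Kottwitz point are linked by a chain); the natural elementary step is $\mu_{x'} = \mu_x - \a^\vee$ (or rather the $J$-dominant representative of $s_\a(\mu_x)$) for a suitable root $\a$. The core of the argument, as flagged in the introduction, is the construction of explicit affine lines $\phi: \AA^1 \to X_{\preceq \l}(b)$ connecting a given point of $X_{\mu_x}^{M_J}(b)$ to a point of $X_{\mu_{x'}}^{M_J}(b)$, indexed by the set $\Theta(\mu_x, \mu_{x'}, \l)$ introduced in \S 6.

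First I would set up the bookkeeping: writing $b = \dot{w}_b$ with $p(b) \in \Omega_J$ $\s$-superbasic, a point of $X_{\mu_x}^{M_J}(b)$ is represented by $g K$ with $g \in M_J(L)$ and $g^{-1} b \s(g) \in K_J t^{\mu_x} K_J$. I would then use Corollary \ref{basic}(i) applied inside $M_J$ to reduce each such point, within its connected component of $X_{\mu_x}^{M_J}(b) = X_{\preceq_J \mu_x}^{M_J}(b)$, to a point lying over $X_{\mu_x}(b)$ at Iwahori level, i.e. to a point whose type is $\s$-straight in $\tW_J$; since $p(b)$ is $\s$-superbasic this forces the Iwahori-level representative into $I_{M_J} \dot{b}_x I_{M_J}$ with $b_x \in \Omega_J$. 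This is where the hypothesis that $b$ is superbasic in $M_J$ is essential and where the machinery of \S 3 (Proposition \ref{flag}, Corollary \ref{basic}) feeds in. Having a rigid normal form for the points, I would then produce the affine line: for an appropriate affine root $a = (\a, k)$ with $\a \in \Phi^+ - \Phi_J$ (chosen so that the relevant inequalities defining $\Theta(\mu_x, \mu_{x'}, \l)$ hold), the map $\phi(s) = g U_a(s) \dot{\tau} K$ for a suitable $\tau \in \tW$ traces out a curve lying entirely in $X_{\preceq \l}(b)$ — this is checked by computing $\phi(s)^{-1} b \s(\phi(s))$ and verifying its type is $\preceq \l$ using property \ref{setup3}(b) — with $\phi(0)$ in the component of the given point and $\phi(\infty) \in X_{\mu_{x'}}^{M_J}(b)$. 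Iterating over a chain of elementary steps connects $X_{\mu_x}^{M_J}(b)$ and $X_{\mu_{x'}}^{M_J}(b)$, and symmetry in $x, x'$ gives that the two images in $\pi_0(X_{\preceq \l}(b))$ coincide.

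The main obstacle, and the part that genuinely requires the new input of \S 6--\S 7, is the second step: verifying that the curve $\phi$ actually stays inside the \emph{closed} affine Deligne-Lusztig variety $X_{\preceq \l}(b)$, i.e. that the $K$-double coset of $\phi(s)^{-1} b \s(\phi(s))$ is $\preceq \l$ for all $s$, and simultaneously that the endpoint lands in the correct stratum $X_{\mu_{x'}}^{M_J}(b)$ rather than some uncontrolled smaller one. When $\l$ is minuscule one exploits that every element of $\bar I_{\l, J, b}$ is $W_0$-conjugate to $\l$, which keeps the combinatorics trivial; for general $\l$ this fails and one must instead run the algorithm encoded in $\Theta(\mu_x, \mu_{x'}, \l)$, choosing the affine roots $a$ and translation parts of $\tau$ step by step so that the length/dominance estimates (properties \ref{setup3}(a), (b)) close up. I expect the proof of Proposition \ref{k1} as given in the paper to be essentially a careful induction on this combinatorial data, with the weak dominance of the chosen $\mu \in \bar I_{\l, J, b}$ (guaranteed by Corollary \ref{ideal}) ensuring the base case and the inductive inequalities remain valid.
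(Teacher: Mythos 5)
Your overall architecture --- link the elements of $\bar I_{\l,J,b}$ by a chain of elementary steps, then join the corresponding strata $X_{\mu_x}^{M_J}(b)$ inside $X_{\preceq\l}(b)$ by explicit affine lines built from affine root subgroups, with the superbasic normal form $g_x\i b\s(g_x)=\dot b_x$ as the rigidifying input --- is indeed how the paper proceeds (Proposition \ref{k1} is deduced by combining Lemma \ref{convv} with Proposition \ref{main}). However, your choice of elementary step is wrong in a way that would break the proof. It cannot be $\mu_{x'}=\mu_x-\a^\vee$: subtracting a single coroot changes the class in $\pi_1(M_J)_\s$ in general (e.g.\ already for $J=\emptyset$ and $\s$ acting trivially on the root datum), so the target leaves $\bar I_{\l,J,b}$, which requires $\k_J(x)=\k_J(x')$; no curve in $X_{\preceq\l}(b)$ for the fixed $b$ can connect strata with different $\k_J$. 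The correct elementary move is $x'=x+\a^\vee-\s^r(\a^\vee)=x+(1-\s)\bigl(\sum_{i=0}^{r-1}\s^i(\a^\vee)\bigr)$, which preserves $\k_J$, and accordingly the affine line is not a single $g\,U_a(s)\dot\tau K$ but a product of $r$ twisted factors $g_x\,U_\a(zt\i)\,{}^{\dot b_x\s}U_\a(zt\i)\cdots{}^{(\dot b_x\s)^{r-1}}U_\a(zt\i)\,K$, whose endpoints at $z=0$ and $z=\infty$ differ by $\sum_{i=0}^{r-1}\s^i(\a^\vee)$ in $\pi_1(M_J)$.

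The second gap is your assertion that connectivity of $\bar I_{\l,J,b}$ under elementary steps ``clearly suffices'' because any two such coweights are linked by a chain. That chain lemma is precisely the hard new content of \S 6--7: one must produce $x=x_0,\dots,x_m=x'$ so that at every step all four coweights $\mu_{x_j}$, $\mu_{x_j+\a_j^\vee}$, $\mu_{x_j-\s^{r_j}(\a_j^\vee)}$, $\mu_{x_{j+1}}$ remain $\preceq\l$, and with $r_j$ bounded in terms of $h$. This is where $\Theta(\mu,\mu',\l)$ actually enters (Proposition \ref{con'} shows it is non-empty, and Lemma \ref{conv}, Corollary \ref{conv'} and Lemma \ref{convv} extract the chain from it), whereas you place $\Theta$ in the verification that the curve stays in $X_{\preceq\l}(b)$ --- the latter is the comparatively routine local computation of Lemmas \ref{control} and \ref{bound} once the four $\preceq\l$ conditions are in hand. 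Finally, weak dominance of some $\mu\in\bar I_{\l,J,b}$ plays no role in Proposition \ref{k1}; it is needed only for Proposition \ref{k2}, via Lemma \ref{lattice}.
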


The proof will be given in Section \ref{sec k1}.

\

For $\a \in \Phi$, we set $y_\a=\sum_{\b \in \co_\a} \b^\vee \in Y/ \ZZ \Phi_J^\vee=\Omega_J^\s = \pi_1(M_J)^\s$. Here $\co_\a=\{\s^k(\a); k \in \ZZ\}$. Let $\ca_{\l, J, b, x}$ be the subset of $\Phi$ such that $\a \in \ca_{\l, J, b, x}$ if and only if there exist $Q, P \in X_{\mu_x}^{M_J}(b)$ satisfying $Q \sim_{\l, b} P$ and $\eta_J(Q)-\eta_J(P)=y_\a$.
\begin{lem}\label{k21}
Let $x \in \bar I_{\l, J, b}$. Then

(i) $\ca_{\l, J, b, x} = - \ca_{\l, J, b, x}$;

(ii) $\ca_{\l, J, b, x'}=\ca_{\l, J, b, x}$ for any $x' \in \bar I_{\l, J, b}$;

(iii) If $\a \in \ca_{\l, J, b, x}$, then for any $Q \in  X_{\mu_x}^{M_J}(b)$, there exists $P \in  X_{\mu_x}^{M_J}(b)$ such that $Q \sim_{\l, b} P$ and $\eta_J(Q)-\eta_J(P)=y_\a \in \pi_1(M_J)^\s$.
\end{lem}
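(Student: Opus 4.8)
The plan is to deduce all three parts from the definition of $\ca_{\l, J, b, x}$ together with the transitivity statements for $J_b(F)$ that are available once $b$ is superbasic in $M_J$, and from Proposition \ref{k1}. Throughout, recall that by assumption (A) of \S\ref{5.1} we may take $b \in N_T(L) \cap [b]_{J,\dom}$ with $p(b) \in \Omega_J$ $\s$-superbasic in $\tW_J$, so that Corollary \ref{basic}(ii),(iii) applied inside $M_J$ give that $J_b^{M_J}(F)$ acts transitively on $\pi_0(X_{\mu_x}^{M_J}(b))$ and that $\pi_0(X_{\mu_x}^{M_J}(b)) \cong \pi_1(M_J)^\s$ via $\eta_J$; moreover $\Omega_J^\s = \pi_1(M_J)^\s$ sits inside $J_b^{M_J}(F) \cap N_T^{M_J}(L)$ acting on $X_{\mu_x}^{M_J}(b)$ and permuting its connected components simply transitively via translation of $\eta_J$-values.

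First I would prove (iii), since (i) will follow from it formally. Given $\a \in \ca_{\l, J, b, x}$, by definition there are $Q_0, P_0 \in X_{\mu_x}^{M_J}(b)$ with $Q_0 \sim_{\l, b} P_0$ and $\eta_J(Q_0) - \eta_J(P_0) = y_\a$. Now for an arbitrary $Q \in X_{\mu_x}^{M_J}(b)$, pick $g \in \Omega_J^\s \subseteq J_b^{M_J}(F)$ with $g Q_0 \sim_{J, \mu_x, b} Q$ (possible by transitivity of the $\Omega_J^\s$-action on $\pi_0(X_{\mu_x}^{M_J}(b))$; concretely choose $g$ so that $\eta_J(g Q_0) = \eta_J(Q)$). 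Set $P = g P_0 \in X_{\mu_x}^{M_J}(b)$. Since $g$ also acts on $X_{\preceq \l}(b)$ by left multiplication preserving connected components, $Q_0 \sim_{\l, b} P_0$ gives $g Q_0 \sim_{\l, b} g P_0$, hence $Q \sim_{\l, b} P$; and $\eta_J(Q) - \eta_J(P) = \eta_J(g Q_0) - \eta_J(g P_0) = \eta_J(Q_0) - \eta_J(P_0) = y_\a$ because left multiplication by $g$ shifts $\eta_J$ by the fixed amount $\eta_J(g)$. This proves (iii). For (i): if $\a \in \ca_{\l, J, b, x}$, take the witnesses $Q, P$; then $P \sim_{\l, b} Q$ and $\eta_J(P) - \eta_J(Q) = -y_\a$. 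But $-y_\a = \sum_{\b \in \co_\a} (-\b^\vee) = \sum_{\b \in \co_{-\a}} \b^\vee = y_{-\a}$ in $\pi_1(M_J)^\s$, using that $\co_{-\a} = -\co_\a$. Hence the pair $(P, Q)$ witnesses $-\a \in \ca_{\l, J, b, x}$, so $\ca_{\l, J, b, x} \subseteq -\ca_{\l, J, b, x}$; applying this twice gives equality.

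For (ii), the point is to transport witnesses from parameter $x$ to parameter $x'$ using Proposition \ref{k1}. By Proposition \ref{k1}, the images of $\pi_0(X_{\mu_x}^{M_J}(b))$ and of $\pi_0(X_{\mu_{x'}}^{M_J}(b))$ in $\pi_0(X_{\preceq \l}(b))$ coincide; combined with the fact that $\eta_J$ identifies each $\pi_0(X_{\mu_x}^{M_J}(b))$ with $\pi_1(M_J)^\s$ compatibly (both being $\Omega_J^\s$-torsors, and the maps to $\pi_0(X_{\preceq\l}(b))$ being $\Omega_J^\s$-equivariant), one gets a commuting square allowing one to match a connected component of $X_{\mu_x}^{M_J}(b)$ with a connected component of $X_{\mu_{x'}}^{M_J}(b)$ that maps to the same component of $X_{\preceq \l}(b)$, and such that the matching shifts $\eta_J$-labels by a constant (the difference $x - x'$ of the two base points). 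Concretely, given $\a \in \ca_{\l, J, b, x}$ with witnesses $Q, P \in X_{\mu_x}^{M_J}(b)$: choose $Q', P' \in X_{\mu_{x'}}^{M_J}(b)$ lying in the same components of $X_{\preceq \l}(b)$ as $Q, P$ respectively (using Proposition \ref{k1}); then $Q' \sim_{\l,b} Q \sim_{\l, b} P \sim_{\l, b} P'$, and $\eta_J(Q') - \eta_J(P') = \eta_J(Q) - \eta_J(P) = y_\a$ because the identification from Proposition \ref{k1} is $\Omega_J^\s$-equivariant, i.e. changes each of $\eta_J(Q)$, $\eta_J(P)$ by the same constant $x' - x$. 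Hence $\a \in \ca_{\l, J, b, x'}$, giving $\ca_{\l, J, b, x} \subseteq \ca_{\l, J, b, x'}$, and by symmetry equality.

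The main obstacle is making the equivariance claim in (ii) precise: one must check that the map on $\pi_0$ produced by Proposition \ref{k1}, when read through the identifications $\pi_0(X_{\mu_x}^{M_J}(b)) \cong \pi_1(M_J)^\s \cong \pi_0(X_{\mu_{x'}}^{M_J}(b))$ coming from Corollary \ref{basic}(iii), really is translation by the single element $x - x'$ rather than some more complicated bijection. This should follow because every connected component of $X_{\mu_x}^{M_J}(b)$ is obtained from a single fixed one by the simply transitive $\Omega_J^\s$-action (by left multiplication), and left multiplication by $\Omega_J^\s$ commutes with the inclusions into $X_{\preceq \l}(b)$ and with $\eta_J$ up to the expected shift; so both the source and target $\pi_0$-sets are $\Omega_J^\s$-torsors and the comparison map is forced to be an affine (translation) map of torsors. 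I would isolate this torsor-equivariance as a short lemma or remark and then (i), (ii), (iii) all drop out as above. Parts (i) and (iii) are essentially formal given the $\Omega_J^\s$-action; only (ii) genuinely uses Proposition \ref{k1}.
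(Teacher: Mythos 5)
Parts (i) and (iii) are handled exactly as in the paper: (i) is definitional, and (iii) is the transitivity of the $J_b^{M_J}(F)$-action on $\pi_0(X_{\mu_x}^{M_J}(b))$ from Corollary \ref{basic}(ii), implemented via left multiplication by an element of $\Omega_J^\s$; your verification that left multiplication preserves $X_{\mu_x}^{M_J}(b)$ and shifts $\eta_J$ by a constant is correct.

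In (ii), however, the step as literally written has a gap. You choose $Q',P'\in X_{\mu_{x'}}^{M_J}(b)$ lying in the same components of $X_{\preceq\l}(b)$ as $Q,P$ and then assert $\eta_J(Q')-\eta_J(P')=\eta_J(Q)-\eta_J(P)$. Proposition \ref{k1} only says the two maps $\pi_0(X_{\mu_x}^{M_J}(b))\to\pi_0(X_{\preceq\l}(b))$ and $\pi_0(X_{\mu_{x'}}^{M_J}(b))\to\pi_0(X_{\preceq\l}(b))$ have the same image; it does not produce a bijection of components, and these maps need not be injective. So ``lying in the same component as $Q$'' does not determine $\eta_J(Q')$, and for a bad choice of $Q',P'$ the asserted identity fails. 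You do flag this and sketch a torsor argument; that argument can be completed (both maps are $\pi_1(M_J)^\s$-equivariant maps from torsors onto a common transitive abelian orbit, hence differ by precomposition with a translation, so a \emph{compatible} choice of $Q',P'$ exists), but it is not carried out, and the phrase ``the comparison map is forced to be a translation'' presupposes a well-defined comparison map that does not exist in general.

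The paper sidesteps this entirely by not choosing $P'$ independently: take any $Q'\in X_{\mu_{x'}}^{M_J}(b)$ with $Q'\sim_{\l,b}Q$ (Proposition \ref{k1}) and set $P':=y_\a^{-1}Q'$, so that $\eta_J(Q')-\eta_J(P')=y_\a$ holds by construction and $P'\in X_{\mu_{x'}}^{M_J}(b)$ automatically. It then remains only to check $P'\sim_{\l,b}Q'$, which follows from the chain
$$y_\a^{-1}Q'\sim_{\l,b}y_\a^{-1}Q\sim_{J,\mu_x,b}P\sim_{\l,b}Q\sim_{\l,b}Q',$$
where the middle relation uses Corollary \ref{basic}(iii) (components of $X_{\mu_x}^{M_J}(b)$ are separated by $\eta_J$) and the first uses that $y_\a^{-1}\in J_b(F)$ preserves connected components of $X_{\preceq\l}(b)$. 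I recommend replacing your choice of $P'$ by this one; with that change your argument coincides with the paper's.
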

\begin{proof}
(i) it follows directly by definition.

(ii) By symmetry, it suffices to show $\ca_{\l, J, b, x} \subseteq \ca_{\l, J, b, x'}$. Let $\a \in \ca_{\l, J, b, x}$. Then there exist $Q, P \in X_{\mu_x}^{M_J}(b)$ such that $Q \sim_{\l, b} P$ and $\eta_J(Q)-\eta_J(P)=y_\a \in \pi_1(M_J)^\s$. By Proposition \ref{k1}, there exists $Q' \in X_{\mu_{x'}}^{M_J}(b)$ such that $Q' \sim_{\l, b} Q$. Note that $y_\a\i Q \sim_{J, \mu_x, b} P$ by Corollary \ref{basic} (iii).
Thus $Q' \sim_{\l, b} Q \sim_{\l, b} P \sim_{J, \l, b} y_\a\i Q \sim_{\l, b} y_\a\i Q'$ and the proof is finished.

(iii) It follows from the fact that the action of $J_b^{M_J}(F)$ on $\pi_0(X_{\mu_x}^{M_J}(b))$ is transitive, see Corollary \ref{basic} (ii).
\end{proof}

Thanks to Lemma \ref{k21}, we can define $\ca_{\l,J,b}=\ca_{\l,J,b,x}$, which is independent of the choice $x \in \bar I_{\l,J,b}$. Let $\cl_{\l, b, J} \subseteq \pi_1(M_J)^\s$ be the sublattice spanned by $y_\a$ for $\a \in \ca_{\l,J,b}\}$.

\begin{prop} \label{k2}
Assume furthermore that $(\l, b)$ is HN-irreducible (see Introduction) and $\bar I_{\l, J, b}$ contains a weakly dominant cocharacter. Then $\cl_{\l, b, J}$ is the kernel of the natural surjective projection $\pi_1(M_J)^\s \twoheadrightarrow \pi_1(G)^\s$.
\end{prop}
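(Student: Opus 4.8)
I want to show that $\cl_{\l,b,J}$, the sublattice of $\pi_1(M_J)^\s$ spanned by the classes $y_\a = \sum_{\b\in\co_\a}\b^\vee$ for $\a\in\ca_{\l,J,b}$, equals $\ker(\pi_1(M_J)^\s \twoheadrightarrow \pi_1(G)^\s)$. The inclusion $\cl_{\l,b,J}\subseteq\ker$ is free: if $Q\sim_{\l,b}P$ then $\eta_G(Q)=\eta_G(P)$ since $X_{\preceq\l}(b)\to\pi_1(G)$ is locally constant, so each $y_\a$ dies in $\pi_1(G)=Y/\ZZ\Phi^\vee$ and hence in $\pi_1(G)^\s$. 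The content is the reverse inclusion. I would first identify the target kernel explicitly: since $G$ is adjoint and simple, $\pi_1(G)=0$, so $\ker$ is all of $\pi_1(M_J)^\s$ — wait, that cannot be, so in fact the statement forces $\pi_1(G)^\s$ to be computed from $\Omega = \tW/W^a$, and for $G$ adjoint $\pi_1(M_J)^\s\twoheadrightarrow\pi_1(G)^\s$ has kernel generated precisely by the $\s$-orbit sums $y_\a$ of simple coroots $\a$ outside $\Phi_J$ (these are the relations one adds going from $\Phi_J$-coroots to all coroots). Thus the concrete goal is: \emph{every simple root $\a\in\Pi_0\setminus\Pi_J$ lies in $\ca_{\l,J,b}$}, i.e.\ one can realize the reflection class $y_\a$ by connecting two points of $X_{\mu_x}^{M_J}(b)$ inside $X_{\preceq\l}(b)$.

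**Key steps.** (1) Reduce to constructing, for each simple $\a\notin\Pi_J$, an explicit affine line in $X_{\preceq\l}(b)$ realizing the $\eta_J$-difference $y_\a$. This is where the hypotheses bite. (2) Use that $\bar I_{\l,J,b}$ contains a weakly dominant cocharacter $\mu$ — call it $x$ via $\mu=\mu_x$; by Lemma \ref{k21}(ii) it is harmless to compute $\ca_{\l,J,b}$ at this particular $x$, and by Lemma \ref{k21}(iii) it suffices to exhibit \emph{one} connecting line starting from a well-chosen $Q\in X_\mu^{M_J}(b)$. (3) Pick a base point $Q = g_b I_{M_J}$ with $g_b$ built from $J_b^{M_J}(F)$, and for the simple root $\a$ consider the map $x\mapsto g_b\, U_{-\a}(x)\, t^{\mu'} K$ or a similar one-parameter family, where $\mu'$ is obtained from $\mu$ by a Weyl-conjugation moving across the wall of $\a$. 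Weak dominance of $\mu$, i.e.\ $\<\b,\mu\>\ge -1$ for all $\b\in\Phi^+$, is exactly what guarantees that the relevant $t^{\mu}$-conjugate of the root subgroup $U_{-\a}$ lands inside $K$ (cf.\ Lemma \ref{lattice} as alluded in \S4), so that the $\PP^1$ actually maps into $X_{\preceq\l}(b)$ and not out of it; the use of HN-irreducibility ($\l^\diamond-\nu_{[b]}^G$ has strictly positive coefficient on every simple coroot) ensures $\mu - \a^\vee \preceq \l$ (via property (b) in \S\ref{setup3}) so that the endpoint still lies in the closed ADLV. (4) Compute the $\eta_J$-difference of the two endpoints: one endpoint has image $\mu$, the other $s_\a(\mu)$ or $\mu-\a^\vee$ up to $W_J$, and the difference in $\pi_1(M_J)=Y/\ZZ\Phi_J^\vee$ is $\a^\vee$, whose $\s$-orbit sum is $y_\a$ after averaging to land in $\pi_1(M_J)^\s$ — this matches the definition of $\ca_{\l,J,b}$. (5) Conclude that $\{y_\a : \a\in\Pi_0\setminus\Pi_J\}\subseteq\cl_{\l,b,J}$; together with the fact that these classes generate $\ker(\pi_1(M_J)^\s\to\pi_1(G)^\s)$ and the trivial inclusion $\cl_{\l,b,J}\subseteq\ker$, equality follows.

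**Main obstacle.** The hard part is step (3)–(4): constructing the affine line and verifying \emph{both} that its generic part stays in $X_{\preceq\l}(b)$ \emph{and} that the special fiber at $\infty$ lands in some $X_{\mu'}^{M_J}(b)$ with $\mu'$ again in $\bar I_{\l,J,b}$. The subtlety is that the endpoint need not a priori be $J$-minuscule, so one must further apply the superbasic/Corollary \ref{basic}(i) reduction \emph{inside} $M_J$ to slide it down to a $J$-minuscule representative while tracking the $\eta_J$-class (which is $\s$-invariant and unchanged under $\preceq_J$-moves since those change $Y$ by $\ZZ\Phi_J^\vee$). I expect that making this compatible across \emph{all} simple roots $\a\notin\Pi_J$ simultaneously — rather than one at a time — and checking the weak-dominance bound is never violated after the $W_J$-adjustments is the genuinely delicate bookkeeping; this is presumably why \S4 was devoted to securing a weakly dominant element of $\bar I_{\l,J,b}$ and why the argument is deferred to \S8–\S9. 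I would organize the proof around a single lemma: ``for $\mu$ weakly dominant in $\bar I_{\l,J,b}$ and $\a\in\Pi_0\setminus\Pi_J$, there is $P\in X^{M_J}_{\preceq_J\l}(b)$ with $P\sim_{\l,b} Q$ and $\eta_J(Q)-\eta_J(P)=y_\a$'', prove it by the explicit $\PP^1$, and then assemble.
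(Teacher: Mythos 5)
Your overall shape is right --- the inclusion $\cl_{\l,b,J}\subseteq\ker$ is indeed immediate, the kernel is generated by the orbit-sums $y_\a$, and the content is to realize enough $y_\a$'s by affine lines using HN-irreducibility and the weakly dominant element of $\bar I_{\l,J,b}$. But there is a genuine gap in your step (1)--(3): you propose to construct, for \emph{each simple root} $\a\in\Pi_0\setminus\Pi_J$, a one-parameter family realizing $y_\a$ directly. This fails for two reasons. First, HN-irreducibility only yields that for each such $\a$ \emph{some} $\th\in\co_\a$ satisfies $\mu_x+\th^\vee\le\l$ --- and $\le$ is strictly weaker than $\preceq$; converting $\le$ to $\preceq$ (Lemma \ref{key}, Lemma \ref{Gashi}) in general forces you to replace $\th$ by a different root $\g$ with $s_\th(\g^\vee)$ short, and to further replace it by its $J$-anti-dominant $J$-minuscule representative $\b_J$ (Lemma \ref{minu}). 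So the roots you can actually attach lines to form the set $C_{\l,J,b,x}$, not $\Pi_0\setminus\Pi_J$, and one then needs the separate lattice argument (Lemma \ref{gen}) that $\ZZ\Phi^\vee$ is spanned by $\Phi_J^\vee$ and $\cup_{\b\in C_{\l,J,b,x}}\co_\b^\vee$. Second, a single $U_{-\a}(x)$-family only produces the class $\a^\vee$, which is generally not $\s$-invariant; to land in $\pi_1(M_J)^\s$ one must use the $\s$-twisted product $g_x\,U_\a(zt\i)\,{}^{\dot b_x\s}U_\a(zt\i)\cdots$ over the whole orbit, and whether this stays in $\cup_{\l'\preceq\l}Kt^{\l'}K$ depends delicately on the position of $\co_\a$ relative to $\Phi_{J,\a}$ (the type I/II/III trichotomy of Section 9); ``averaging to land in $\pi_1(M_J)^\s$'' is not an available operation.

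The deepest missing piece is the case the paper isolates as claim (b) in its proof: when $\s$ has order $2h$ or $3h$ and a simple root $\a$ outside the subgroup already generated is fixed by $\s^h$, the generation argument via $C_{\l,J,b}$ alone breaks down, and one must either produce a $\s^h$-fixed $\vartheta$ with $\vartheta^\vee-\a^\vee\in\ZZ\Phi_{J_0}^\vee$ lying in $C_{\l,J,b}$, or exhibit \emph{two} moves $x\overset{(\b,h)}\to x'$ and $x\overset{(\b+\vartheta,h)}\to x'$ with the same target and subtract them. This is exactly where the weakly dominant cocharacter is consumed (via the orthogonal set $D$ of Lemma \ref{weak} and the inductive descent of Lemma \ref{lattice}); your proposal uses weak dominance only for an integrality check on root subgroups and has no mechanism for this case. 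In short: the two-layer structure (Proposition \ref{k2'} giving $C_{\l,J,b,x}\subseteq\ca_{\l,J,b}$, then the $J_0$-maximality contradiction with Lemma \ref{lattice}) is not optional bookkeeping but the substance of the proof, and collapsing it to ``one line per simple root'' does not go through.
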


The proof will be given in Section \ref{sec k2}.

\

\subsection{} Now we are ready to prove our main result.
\begin{proof} [Proof of Theorem \ref{main'}]
By Corollary \ref{ideal}, there exists $J$ with $\s(J)=J$ such that $(J, b)$ is admissible, $b$ is superbasic in $M_J(L)$ and $\bar I_{\l, J, b}$ contains a weakly dominant cocharacter. Moreover, we can assume $b=\dot b_x$ for some $x \in \bar I_{\l, J, b}$. So \S \ref{5.1} (A) holds. Since $\eta_G(b)=\eta_G(t^\l)$, it suffices to show the map $g \mapsto \eta_G(g)$ induces a bijection $\pi_0(X_{\preceq \l}(b)) \cong \pi_1(G)^\s$.

By Corollary \ref{basic} (iii), the map $g \mapsto g K_J$ induces a $\pi_1(M_J)^\s$-equivariant  surjection $$\pi_1(M_J)^\s \cong J_b^{M_J}(F) / (J_b^{M_J}(F) \cap I_{M_J}) \twoheadrightarrow \pi_0(X_{\mu_x}^{M_J}(b)).$$ Let $C$ be the image of the natural map $\pi_0(X_{\mu_x}^{M_J}(b)) \to \pi_0(X_{\preceq \l}(b))$. Then we obtain a natural $\pi_1(M_J)^\s$-equivariant map $$\varphi_x: \pi_1(M_J)^\s \twoheadrightarrow \pi_0(X_{\mu_x}^{M_J}(b))  \twoheadrightarrow C.$$ Since $\pi_1(M_J)^\s$ is an abelian group, the stabilizer $\stab(\varphi_x):=\varphi_x\i(c)$ is independent of the choice of $c \in C$. We claim that

(i) $\stab(\varphi_x)=\ker(\pi_1(M_J)^\s \twoheadrightarrow \pi_1(G)^\s)$.

(ii) $C=\pi_0(X_{\preceq \l}(b))$.

Assuming (i) and (ii), we see that $\varphi_x$ induces a bijection from $\pi_1(G)^\s =\pi_1(M_J)^\s / \stab(\varphi_x)$ to $\pi_0(X_{\preceq \l}(b))$, as desired.

We prove (i) first. Let $Q \in X_{\mu_x}^{M_J}(b)$. By Proposition \ref{k2}, it suffices to show $Q \sim_{\l, b} y_\a Q$ for each $\a \in \ca_{\l, J, b}=\ca_{\l, J, b, x}$. Applying Lemma \ref{k21} (iii), there exist $P \in X_{\mu_x}^{M_J}(b)$ such that $Q \sim_{\l, b} P$ and $\eta_J(Q)-\eta_J(P)=-y_\a \in \pi_1(M_J)^\s$. On the other hand, since $\eta_J(P)=\eta_J(y_\a Q)$, we have $y_\a Q \sim_{J, \mu_x, b} P$ by Corollary \ref{basic} (iii). So $Q \sim_{\l, b} y_\a Q$ and (i) is proved.

Now we show (ii). By Corollary \ref{red'} and Proposition \ref{k1}, $\varphi_x$ is surjective modulo the action of $J_b(F) \cap N(L)$ on $\pi_0(X_{\preceq \l}(b))$. So it suffices to show $Q \sim_{\l, b} j Q$ for any $j \in J_b(F) \cap N(L)$ and any $Q \in X_{\mu_x}^{M_J}(b)$. Choose $\d \in \ZZ \Phi^\vee$ (depending on $j$ and $Q$) such that $\d$ is central on $\Phi_J$, $\s(\d)=\d$ and $t^\d j t^{-\d} Q=Q$. Note that $\d \in \ker$ and $t^\d \in J_b^{M_J}(F)$. Then we have $t^\d Q \sim_{\l, b} Q$ by (i). Therefore, $$t^\d j Q=t^\d j t^{-\d} t^\d Q \sim_{\l, b} t^\d j t^{-\d} Q=Q \sim_{\l, J} t^\d Q,$$ which means $j Q \sim_{\l, b} Q$ as desired.
\end{proof}

\section{The set $\Theta (\mu, \mu', \l)$}
This section is devoted to the study of the set $\Theta (\mu, \mu', \l)$ define below, which plays a crucial role in the proof of Proposition \ref{k1}.

\

Let $\mu, \mu' \in Y$ and $\l \in Y^+$ such that $\mu, \mu' \preceq \l$. We Define \begin{align*} \Theta(\mu, \mu', \l)&=\{\a \in \Phi; \<\a, \mu-\mu'\> \ge 2, \mu-\a^\vee, \mu'+\a^\vee \preceq \l\}; \\ \Xi(\mu, \mu') &= \{\a \in \Phi; \<\a, \mu\>, -\<\a, \mu'\> \ge 1\}; \\ \Xi_1(\mu, \mu') &= \{\a \in \Phi; \<\a, \mu\>=-\<\a, \mu'\>=1\}. \end{align*}

\begin{lem}\label{diff} \label{act}
We have the following properties:

(a) $\Xi(\mu, \mu') \subseteq \Theta(\mu, \mu', \l)$;

(b) $\Theta(\mu, \mu', \l)=-\Theta(\mu', \mu, \l)$;

(c) $w(\Theta(\mu, \mu', \l))=\Theta(w(\mu), w(\mu'), \l)$, $w(\Xi(\mu, \mu'))=\Xi(w(\mu), w(\mu'))$ and $w(\Xi_1(\mu, \mu'))=\Xi_1(w(\mu), w(\mu'))$, where $w \in W_0$.
\end{lem}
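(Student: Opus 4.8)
The statement to prove is Lemma~\ref{diff}, which collects three elementary properties of the sets $\Theta(\mu,\mu',\l)$, $\Xi(\mu,\mu')$ and $\Xi_1(\mu,\mu')$. My plan is to verify each part directly from the definitions, using only the basic facts (a) and (b) of \S\ref{setup3} on the partial order $\preceq$.

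\medskip

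\textbf{Part (a).} Let $\a \in \Xi(\mu,\mu')$, so $\langle\a,\mu\rangle \ge 1$ and $\langle\a,\mu'\rangle \le -1$. Then $\langle\a,\mu-\mu'\rangle = \langle\a,\mu\rangle - \langle\a,\mu'\rangle \ge 2$, which is the first defining condition of $\Theta(\mu,\mu',\l)$. For the remaining two conditions, I note $\langle\a,\mu\rangle \ge 1$ gives $\mu - \a^\vee \preceq \mu \preceq \l$ by property (b) of \S\ref{setup3} (applied with $J = S_0$), and similarly $\langle -\a, \mu'\rangle = -\langle\a,\mu'\rangle \ge 1$ gives $\mu' + \a^\vee = \mu' - (-\a)^\vee \preceq \mu' \preceq \l$. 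Hence $\a \in \Theta(\mu,\mu',\l)$.

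\medskip

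\textbf{Part (b).} This is a symmetry computation. For $\a \in \Phi$, observe that $\langle -\a, \mu'-\mu\rangle = \langle\a, \mu-\mu'\rangle$, so the inequality $\langle\a,\mu-\mu'\rangle \ge 2$ is equivalent to $\langle -\a, \mu'-\mu\rangle \ge 2$. Moreover $(-\a)^\vee = -\a^\vee$, so the conditions ``$\mu - \a^\vee \preceq \l$ and $\mu' + \a^\vee \preceq \l$'' read, with $\a$ replaced by $-\a$ and $\mu \leftrightarrow \mu'$ swapped, as ``$\mu' - (-\a)^\vee = \mu' + \a^\vee \preceq \l$ and $\mu + (-\a)^\vee = \mu - \a^\vee \preceq \l$'', which is the same pair of conditions. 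Therefore $\a \in \Theta(\mu,\mu',\l)$ if and only if $-\a \in \Theta(\mu',\mu,\l)$, i.e.\ $\Theta(\mu,\mu',\l) = -\Theta(\mu',\mu,\l)$.

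\medskip

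\textbf{Part (c).} Here I would use $W_0$-invariance of the pairing and of the coroot map. For $w \in W_0$ and $\a \in \Phi$, one has $\langle w(\a), w(v)\rangle = \langle\a,v\rangle$ for all $v \in Y_\RR$, and $w(\a)^\vee = w(\a^\vee)$, and $w$ preserves $\preceq$ since $w(\nu)$ and $\nu$ have the same dominant representative (so $\bar\l = \bar{w(\l)}$ when $\l$ is already handled as an element of $Y^+$, and more generally $v_1 \preceq v_2 \iff \bar v_1 \le \bar v_2 \iff \overline{w(v_1)} \le \overline{w(v_2)}$). Thus for $\a \in \Theta(\mu,\mu',\l)$: $\langle w(\a), w(\mu) - w(\mu')\rangle = \langle w(\a), w(\mu-\mu')\rangle = \langle\a,\mu-\mu'\rangle \ge 2$, and $w(\mu) - w(\a)^\vee = w(\mu - \a^\vee) \preceq w(\l) $, but since $\l \in Y^+$ and $\mu - \a^\vee \preceq \l$ means $\overline{\mu - \a^\vee} \le \l$, we get $\overline{w(\mu - \a^\vee)} = \overline{\mu - \a^\vee} \le \l$, i.e.\ $w(\mu) - w(\a)^\vee \preceq \l$; similarly $w(\mu') + w(\a)^\vee \preceq \l$. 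This shows $w(\Theta(\mu,\mu',\l)) \subseteq \Theta(w(\mu),w(\mu'),\l)$, and applying the same with $w^{-1}$ gives the reverse inclusion, hence equality. The identities for $\Xi$ and $\Xi_1$ follow the same way from $\langle w(\a), w(\mu)\rangle = \langle\a,\mu\rangle$, matching the defining inequalities exactly.

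\medskip

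I do not expect any genuine obstacle here; the only point requiring a little care is making sure that in part~(c) the asymmetry between ``$\l \in Y^+$'' and general $\mu,\mu' \in Y$ does not cause trouble, which is resolved by the observation that $w$ fixes the dominant representative, so $\preceq$ (which only depends on dominant representatives) is genuinely $W_0$-invariant even though $\l$ itself is fixed rather than moved. Everything else is a one-line substitution into the definitions.
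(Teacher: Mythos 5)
Your proof is correct: the paper states Lemma \ref{act} without proof, treating all three parts as immediate consequences of the definitions, and your direct verification (using property (b) of \S\ref{setup3} for part (a), the substitution $\a \mapsto -\a$ for part (b), and the $W_0$-invariance of the pairing, of the coroot map, and of $\preceq$ for part (c)) is exactly the intended argument. The one point worth being careful about — that $\preceq \l$ is preserved under applying $w$ to the left-hand side because $\overline{w(v)}=\bar v$ while $\l$ stays fixed — is handled correctly in your write-up.
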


\begin{lem} \label{pm}
If $\Xi(\mu, \mu') = \emptyset$, then there exists $w \in W_0$ such that $w(\mu), w(\mu')$ are dominant.
\end{lem}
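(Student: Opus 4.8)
The plan is to argue by contradiction, or rather contrapositively: I will show that if no $w \in W_0$ makes both $w(\mu)$ and $w(\mu')$ dominant, then $\Xi(\mu, \mu')$ is nonempty. The natural device is to replace $\mu$ by its dominant representative $\bar\mu = w_0(\mu)$ for a suitable $w_0 \in W_0$, and then examine the vector $w_0(\mu')$. If $w_0(\mu')$ happens to be dominant we are done, so suppose it is not; then there is a simple root $\a \in \Pi_0$ with $\langle \a, w_0(\mu')\rangle \le -1$. The first key step is to massage the choice of $w_0$: among all $w$ with $w(\mu)$ dominant (these are exactly the $w$ in the coset $W_{0,\mu} w_0$, where $W_{0,\mu}$ is the stabilizer of $\mu$), I want to pick one that makes $w(\mu')$ "as dominant as possible," e.g. one maximizing $\langle 2\rho, w(\mu')\rangle$ or minimizing the number of positive roots sent to negative roots. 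With such an optimal choice, any simple root $\a$ witnessing the non-dominance of $w(\mu')$ must be orthogonal to $\mu$, i.e. $s_\a \in W_{0,\mu}$; otherwise applying $s_\a$ would keep $w(\mu)$ dominant while strictly increasing $\langle 2\rho, w(\mu')\rangle$ (since $\langle \a, w(\mu')\rangle < 0$), contradicting optimality.

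Granting this, fix $w$ optimal and a simple root $\a$ with $\langle \a, w(\mu)\rangle = 0$ and $\langle \a, w(\mu')\rangle \le -1$. Then $\a \in \Xi(w(\mu), w(\mu'))$ would require also $\langle \a, w(\mu)\rangle \ge 1$, which fails — so I have not yet produced an element of $\Xi$. The point is that $\Xi(\mu,\mu')$ being empty is a $W_0$-invariant condition by Lemma \ref{act}(c), so it suffices to derive a contradiction from: $w(\mu)$ dominant, $w(\mu')$ not dominant, $w$ optimal, and $\Xi(w(\mu),w(\mu')) = \emptyset$. The second key step is to iterate or escalate: having $\langle \a, w(\mu)\rangle = 0$ but $\langle \a, w(\mu')\rangle < 0$, I consider the element $s_\a w$. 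It still sends $\mu$ to a dominant vector, it is still optimal (it gives the same value of $\langle 2\rho, \cdot\rangle$ on $w(\mu')$ up to the sign flip on $\a$ — actually it strictly increases it unless $\langle\a,w(\mu')\rangle=0$, so in fact such $\a$ cannot exist under true optimality, meaning $w(\mu')$ is dominant after all and we are done). So the real content is that the optimality argument already forces $w(\mu')$ to be dominant: any failure of dominance is witnessed by a simple root $\a$ with $\langle\a,w(\mu)\rangle \ge 1$ (else we could improve $w$), and then $\a \in \Xi(w(\mu),w(\mu'))$, contradiction.

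So the cleaner writeup is: choose $w \in W_0$ with $w(\mu)$ dominant and $\langle 2\rho, w(\mu')\rangle$ maximal among such $w$. If $w(\mu')$ is not dominant, pick a simple root $\a$ with $\langle \a, w(\mu')\rangle \le -1$. Since $s_\a$ fixes all simple roots except $\a$ and sends $\a \mapsto -\a$, and $s_\a(\Phi^+ \setminus \{\a\}) = \Phi^+ \setminus\{\a\}$, we get $\langle 2\rho, s_\a w(\mu')\rangle = \langle 2\rho, w(\mu')\rangle - 2\langle \a, w(\mu')\rangle > \langle 2\rho, w(\mu')\rangle$; so by maximality $s_\a w(\mu)$ is not dominant, which (as $w(\mu)$ is dominant and $s_\a$ only affects the $\a$-coordinate) forces $\langle \a, w(\mu)\rangle \ge 1$. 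Combined with $-\langle \a, w(\mu')\rangle \ge 1$ this gives $\a \in \Xi(w(\mu), w(\mu'))$, so $\Xi(w(\mu),w(\mu')) \ne \emptyset$, and by Lemma \ref{act}(c) $\Xi(\mu,\mu') \ne \emptyset$, contrary to hypothesis. Hence $w(\mu')$ is dominant, proving the lemma. The main obstacle is purely the bookkeeping around the optimality step — making sure the chosen $w$ exists (finiteness of $W_0$) and that "$s_\a w(\mu)$ not dominant" genuinely yields $\langle\a, w(\mu)\rangle \ge 1$ rather than merely $\ge 0$; this uses that $w(\mu)$ dominant means $\langle\beta, w(\mu)\rangle \ge 0$ for all simple $\beta$, and $s_\a w(\mu)$ differs from $w(\mu)$ only in the $\a$-direction, so its sole possible defect is $\langle\a, s_\a w(\mu)\rangle = -\langle\a,w(\mu)\rangle < 0$.
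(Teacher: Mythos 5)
Your proof is correct. The paper states Lemma \ref{pm} without proof, so there is no argument to compare against, but your maximization argument is a clean and standard way to establish it: choosing $w$ with $w(\mu)$ dominant and $\langle 2\rho, w(\mu')\rangle$ maximal, any simple root $\a$ witnessing non-dominance of $w(\mu')$ yields $\langle 2\rho, s_\a w(\mu')\rangle > \langle 2\rho, w(\mu')\rangle$, so maximality forces $s_\a w(\mu)$ non-dominant, hence $\langle \a, w(\mu)\rangle \ge 1$ (since $s_\a$ permutes $\Phi^+\setminus\{\a\}$, the only possible defect of $s_\a w(\mu)$ is at $\a$), and then $\a \in \Xi(w(\mu), w(\mu'))$, contradicting $\Xi(\mu,\mu')=\emptyset$ via Lemma \ref{act}(c). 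The exploratory first two paragraphs of your writeup contain some back-and-forth, but the final paragraph is a complete and valid argument; all the steps (finiteness of $W_0$ for the existence of a maximizer, $\langle 2\rho,\a^\vee\rangle = 2$ for simple $\a$, and integrality of $\langle\a,\mu\rangle$ to pass from $>0$ to $\ge 1$) check out.
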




\begin{lem} \label{contraction}
Let $\d, \d' \in \NN \Pi_0^\vee$. Then there exists an index set $\L$ of roots such that

(1) $\d'-\d=\sum_{k \in \L} \g_k^\vee$;

(2) $\sum_{k \in \Sigma} \g_k^\vee \le \d'$ and $-\sum_{k \in \Sigma} \g_k^\vee \le \d$ for any subset $\Sigma \subseteq \L$;

(3) $\<\g_i, \g_j^\vee\> \ge 0$ for any $i, j \in \L$.
\end{lem}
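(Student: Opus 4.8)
\emph{Strategy.} The plan is to reduce first to the case that $\d$ and $\d'$ have disjoint simple support, and then to prove the following special case as the technical core: every $\chi \in \NN\Pi_0^\vee$ can be written as a sum $\chi = \sum_k \g_k^\vee$ of positive coroots with $\g_k^\vee \le \chi$ for each $k$ and $\<\g_i, \g_j^\vee\> \ge 0$ for all $i, j$. Here I write $\h(\eta) = \sum_s c_s$ for the height of $\eta = \sum_s c_s \a_s^\vee \in \NN\Pi_0^\vee$ (the sum over $\a_s \in \Pi_0$).

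\emph{Step 1: reduction to disjoint support.} Writing $\d = \sum_s \d_s \a_s^\vee$ and $\d' = \sum_s \d'_s \a_s^\vee$, I would set $\z = \sum_s \min(\d_s, \d'_s)\,\a_s^\vee$, $\bar\d = \d - \z$, $\bar\d' = \d' - \z$. Then $\bar\d, \bar\d' \in \NN\Pi_0^\vee$, $\bar\d' - \bar\d = \d' - \d$, and $\supp\bar\d \cap \supp\bar\d' = \emptyset$, since $(\bar\d)_s = \max(0, \d_s - \d'_s)$ and $(\bar\d')_s = \max(0, \d'_s - \d_s)$ are never both positive. Granting the special case, I would decompose $\bar\d' = \sum_{k\in\L_+}\g_k^\vee$ and $\bar\d = \sum_{k\in\L_-}\b_k^\vee$ into positive coroots, each family with pairwise nonnegative internal pairings, and take $\L = \L_+ \sqcup \L_-$ with $\g_k := -\b_k$ for $k \in \L_-$. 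Property (1) is then immediate. For (2), for any $\Sigma \subseteq \L$ one has $\sum_{k\in\Sigma}\g_k^\vee \le \sum_{k\in\L_+}\g_k^\vee = \bar\d' \le \d'$ (the omitted terms are $\NN$-combinations of positive coroots and $\d' - \bar\d' = \z \ge 0$), and symmetrically $-\sum_{k\in\Sigma}\g_k^\vee \le \bar\d \le \d$. For (3), the internal pairings are nonnegative by construction (using $\<-\b_i, -\b_j^\vee\> = \<\b_i, \b_j^\vee\>$), and for $i \in \L_+$, $j \in \L_-$ we have $\supp\g_i = \supp\g_i^\vee \subseteq \supp\bar\d'$ and $\supp\b_j \subseteq \supp\bar\d$, which are disjoint, so $\<\g_i, \b_j^\vee\>$ is an $\NN$-combination of the numbers $\<\a_s, \a_t^\vee\>$ with $s \ne t$, all $\le 0$; hence $\<\g_i, \g_j^\vee\> = -\<\g_i, \b_j^\vee\> \ge 0$, and likewise $\<\g_j, \g_i^\vee\> \ge 0$.

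\emph{Step 2: the special case, and the main obstacle.} I would prove the special case by induction on $\h(\chi)$, with $\chi = 0$ giving the empty decomposition. For $\chi \ne 0$ I would choose a positive coroot $\g^\vee \le \chi$ of maximal height (these exist: e.g. any simple coroot in $\supp\chi$ works), apply the inductive hypothesis to $\chi - \g^\vee$ to obtain $\chi - \g^\vee = \g_2^\vee + \cdots + \g_r^\vee$ with all pairings $\ge 0$ and each $\g_k^\vee \le \chi - \g^\vee$, and set $\g_1 = \g$. The only thing left to check is $\<\g_1, \g_k^\vee\> \ge 0$ for $k \ge 2$ (the reversed pairing has the same sign). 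If this failed, then $\g_1^\vee, \g_k^\vee$ would be non-proportional positive coroots with negative mutual pairing, so $\g_1^\vee + \g_k^\vee$ would again be a positive coroot; but $\g_1^\vee + \g_k^\vee \le \chi$ and $\h(\g_1^\vee + \g_k^\vee) = \h(\g_1^\vee) + \h(\g_k^\vee) > \h(\g^\vee)$, contradicting maximality. I expect this greedy argument to be the crux: it is the only point using root-system structure, namely that two non-proportional positive coroots with negative pairing have a coroot as their sum, together with the additivity of height that propagates the bound. By contrast, Step 1 is essentially bookkeeping --- though it is important to realize that the disjoint-support reduction is exactly what allows condition (2) to be verified for \emph{all} subsets $\Sigma$, not just for initial segments of a chosen ordering.
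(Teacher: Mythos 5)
Your proof is correct, but it is organized quite differently from the paper's. The paper first quotes \cite[Lemma 4.2.5]{CKV} to write $\d=-\sum_{i\in\G}\a_i^\vee$ and $\d'=\sum_{j\in\G'}\b_j^\vee$ with $\a_i\in\Phi^-$, $\b_j\in\Phi^+$ and nonnegative pairings \emph{within} each family, and then deals with the cross pairings by a descending induction: whenever some $\<\a_{i_0},\b_{i_0'}^\vee\>\le -1$, it replaces $\a_{i_0}^\vee$ and $\b_{i_0'}^\vee$ by their sum $\th^\vee\in\Phi^\vee\cup\{0\}$, producing a strictly smaller pair $(\d_1,\d_1')$ with $\d_1'-\d_1=\d'-\d$, and recurses. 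You eliminate that second induction entirely: by subtracting the common part $\z$ you arrange $\supp\bar\d\cap\supp\bar\d'=\emptyset$, so the cross pairings $\<\g_i,-\b_j^\vee\>$ are automatically nonnegative because off-diagonal Cartan entries are $\le 0$ and $\supp\g=\supp\g^\vee$. In exchange you must prove the one-sided decomposition (the content of CKV 4.2.5) yourself, which your greedy maximal-height argument does correctly --- the key step, that a negative pairing between two non-proportional positive coroots forces their sum to be a positive coroot of strictly larger height below $\chi$, is exactly the same root-system fact the paper's merging step uses, just deployed at a different place. Both arguments verify condition (2) for arbitrary $\Sigma$ by the same observation that the omitted terms form an $\NN$-combination of positive (resp.\ negative) coroots. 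Net effect: your proof is self-contained where the paper's leans on \cite{CKV}, and your disjoint-support reduction is a genuine simplification of the cross-term bookkeeping; the paper's version is shorter on the page only because it outsources the first half.
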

\begin{proof}
We argue by induction on the partial order $\le$ on $\NN \Pi_0^\vee$. By \cite[Lemma 4.2.5]{CKV}, we can write $\d=-\sum_{i \in \G} \a_i^\vee$ (resp. $\d'=\sum_{j \in \G'} \b_j^\vee$) such that $\a_i \in \Phi^-$ (resp. $\b_{i'} \in \Phi^+$) and $\<\a_i, \a_j^\vee\> \ge 0$ (resp. $\<\b_{i'}, \b_{j'}^\vee\> \ge 0$) for any $i, j \in \G$ (resp. $i', j' \in \G'$). If $\<\a_i, \b_{i'}^\vee\> \ge 0$ for any $i \in \G$ and $i' \in \G'$, we may take $\L=\G \sqcup \G'$. Otherwise, there exists $i_0 \in \G$ and $i_0' \in \G'$ such that $\<\a_{i_0}, \b_{i_0'}^\vee\> \le -1$. Then $\th^\vee:=\a_{i_0}^\vee+\b_{i_0'}^\vee \in \Phi^\vee \cup \{0\}$. Without loss of generality, we may assume $\th^\vee > 0$. Set $\d_1= -\sum_{i \in \G \setminus \{i_0\}} \a_j^\vee < \d$ and $\d_1'=\th^\vee + \sum_{j' \in \G' \setminus \{i_0'\}} \b_{j'}^\vee < \d'$. Note that $\d_1'-\d_1=\d'-\d$. The statement now follows from the induction hypothesis on the pair $(\d_1, \d_1')$.
\end{proof}

\begin{lem} \label{key}
Let $\mu \in Y$, $\l \in Y^+$ and $\a \in \Phi$. If $\mu+\a^\vee \le \l$ and $\mu+\a^\vee \npreceq \l$, then there exists $\g \in \Phi^+$ such that $\<\g, \mu+\a^\vee\> \le -2$. If, moreover, $\mu \in Y^+$, then $\<\g, \mu_x\> \le 1$, $\<\g, \a^\vee\> \le -2$ and either

(a) $\g=-\a$; or

(b) $\g$ is a long root of $\Phi$ and $\mu+\a^\vee+\g^\vee \le \l$.
\end{lem}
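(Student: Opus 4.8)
The plan is to start from the hypothesis that $\mu+\a^\vee \le \l$ but $\mu+\a^\vee \npreceq \l$. By property \S\ref{setup3}(a) (taking $J=S_0$), the failure of $\mu+\a^\vee \preceq \l$ means that $\overline{\mu+\a^\vee}$, the dominant $W_0$-representative, satisfies $\overline{\mu+\a^\vee} \not\le \l$, even though $\mu+\a^\vee \le \l$ in the $\le$ order. First I would invoke the standard ``saturation'' characterization of $\preceq$: for a dominant $\l$, one has $\nu \preceq \l$ if and only if $\overline{\nu} \le \l$, and $\nu \npreceq \l$ forces the existence of some $w \in W_0$ and some positive root $\g$ with $\<\g, w(\mu+\a^\vee)\>$ large negative; more precisely, the obstruction to $\nu \le \l$ with $\nu \le \l$ already failing to be dominant-dominated is detected by a positive root $\g$ with $\<\g, \mu+\a^\vee\> \le -2$. (When $\<\g,\mu+\a^\vee\> \in \{-1,0\}$ for all $\g \in \Phi^+$, then $\mu+\a^\vee$ is weakly dominant and a short reflection argument shows $\overline{\mu+\a^\vee} \le \mu+\a^\vee + (\text{something} \le 0) \le \l$, contradicting $\npreceq$.) This produces the first $\g \in \Phi^+$ with $\<\g, \mu+\a^\vee\> \le -2$.

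Next, assuming in addition $\mu \in Y^+$, so $\<\g, \mu\> \ge 0$: from $\<\g,\mu\> + \<\g,\a^\vee\> \le -2$ and $\<\g,\mu\>\ge 0$ we immediately get $\<\g,\a^\vee\> \le -2$, and then $\<\g,\mu\> = \<\g,\mu+\a^\vee\> - \<\g,\a^\vee\> \le -2 - \<\g,\a^\vee\>$. To get the bound $\<\g, \mu\> \le 1$ I would argue that if $\<\g,\mu\>\ge 2$ then we could replace $\mu$ by $\mu - \g^\vee$ (still $\le \l$ since $\g \in \Phi^+$, and by \S\ref{setup3}(b) still $\preceq$-comparable appropriately) and iterate, contradicting minimality or the precise numerics; alternatively $\<\g,\a^\vee\> \le -2$ combined with the fact that $\a^\vee,\g^\vee$ span at most a rank-two root subsystem restricts $\<\g,\a^\vee\>$ to the set $\{-2,-3\}$ (only in types $B,C,F_2,G_2$), and a direct rank-two analysis pins down $\<\g,\mu\>$. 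The cleanest route is the rank-two reduction: $\<\g,\a^\vee\>\le -2$ forces $\a,\g$ to generate a non-simply-laced rank-two system, $\a^\vee + \g^\vee$ or a small combination is again a coroot, and $\g$ must be the long root of that subsystem (the condition $\<\g,\a^\vee\> \le -2$ is asymmetric and only the long root pairs that way against the short coroot $\a^\vee$). This is where alternative (b) — $\g$ long and $\mu+\a^\vee+\g^\vee \le \l$ — comes from: $\g^\vee$ is short, so adding it moves us back toward $\l$ and one checks $\<\b, \g^\vee\> \le \<\b, \a^\vee\>$-type inequalities keep $\mu+\a^\vee+\g^\vee \le \l$. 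Alternative (a), $\g = -\a$, is the degenerate case where $\a$ itself is (anti)long and $\<\a,\a^\vee\>=2$ already gives $\<-\a,\a^\vee\> = -2$, i.e. $\a \in \Phi^-$; then $\g=-\a \in \Phi^+$ works with $\<\g,\a^\vee\> = -2$ exactly.

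The main obstacle I expect is the case analysis producing the dichotomy (a)/(b) cleanly, together with verifying $\mu+\a^\vee+\g^\vee \le \l$ in case (b). The inequality $\mu+\a^\vee \le \l$ is an inequality in $\NN\Pi_0^\vee$, and I would use Lemma \ref{contraction} (applied to $\d = \l - \mu - \a^\vee \notin \NN\Pi_0^\vee$ is the wrong sign, so rather to a suitable pair) to decompose the difference $\l - \mu$ into coroots with nonnegative mutual pairings, which lets me ``peel off'' $\a^\vee$ and then $\g^\vee$ while staying below $\l$. The other delicate point is justifying the very first step — extracting $\g$ with pairing $\le -2$ rather than just $\le -1$ — which requires the sharp form of the characterization of $\preceq$ versus $\le$; I would handle this by noting that if every $\<\g,\mu+\a^\vee\> \ge -1$ then $\mu+\a^\vee$ is weakly dominant, and for weakly dominant $\nu$ with $\nu \le \l$ one has $\overline{\nu} \le \l$ directly (since dominance is reached by subtracting positive coroots, each step staying $\le \nu \le \l$), giving $\nu \preceq \l$, a contradiction.
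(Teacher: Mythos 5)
Your first step is sound, and it is actually a cleaner route to the bare existence statement than the paper's: if $\<\g, \mu+\a^\vee\> \ge -1$ for every $\g \in \Phi^+$ then $\mu+\a^\vee$ is weakly dominant, and Lemma \ref{Gashi} (an external citation to Gashi, so there is no circularity) together with $\mu+\a^\vee \le \l$ gives $\mu+\a^\vee \preceq \l$, a contradiction. The numerical consequences under $\mu \in Y^+$ are also fine: $\<\g,\mu\> \ge 0$ forces $\<\g,\a^\vee\> \le -2$, the Cartan-integer bound $\<\g,\a^\vee\> \ge -3$ then gives $\<\g,\mu\> \le 1$, and $\<\g,\a^\vee\> \le -2$ with $\g \neq -\a$ forces $\g$ to be the long root of a non-simply-laced rank-two subsystem, which is the (a)/(b) dichotomy.

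The genuine gap is the last claim of (b), namely $\mu+\a^\vee+\g^\vee \le \l$. This is not a property of an arbitrary $\g \in \Phi^+$ with $\<\g, \mu+\a^\vee\> \le -2$: the order $\le$ compares coefficients along simple coroots, so adding the positive coroot $\g^\vee$ can overshoot $\l$ in directions where $\l-\mu-\a^\vee$ has small coefficients, and your non-constructive extraction of $\g$ carries no information ruling this out. Neither of your two suggestions closes this: ``$\g^\vee$ is short, so adding it moves us back toward $\l$'' is backwards (adding a positive coroot moves you up, i.e., away from satisfying $\le \l$), and Lemma \ref{contraction} decomposes a difference of two elements of $\NN\Pi_0^\vee$ with nonnegative mutual pairings, which does not obviously produce the needed inequality $\g^\vee \le \l-\mu-\a^\vee$. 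The paper gets around this precisely by constructing $\g$ with control: it sets $\mu_0=\mu+\a^\vee$, runs the chain $\mu_k=s_k\cdots s_1(\mu_0)$ along a reduced word for the minimal $w$ with $w(\mu_0)=\bar\mu_0$, picks the first index $i$ with $\mu_{i-1}\le\l$ but $\mu_i \nleqslant \l$, and takes $\g=s_1\cdots s_{i-1}(\a_i)$; the failure at step $i$ forces $\<\a_i,\mu_{i-1}\>\le -2$ (hence $\<\g,\mu_0\>\le -2$) while $\mu_{i-1}+\a_i^\vee\le\l$ still holds, and the telescoping estimate $\mu_0+\g^\vee \le \mu_0+\a_1^\vee+\cdots+\a_i^\vee \le \cdots \le \mu_{i-1}+\a_i^\vee \le \l$ (using that $\g$ is long in case (b)) is exactly the missing inequality. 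To repair your argument you would have to replace the weak-dominance extraction of $\g$ by such a controlled choice, or otherwise prove that the $\g$ you obtain can be modified to one satisfying $\mu+\a^\vee+\g^\vee\le\l$.
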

\begin{proof}
Set $\mu_0=\mu+\a^\vee$. We define $$\Phi^+(\mu_0)=\{\th \in \Phi^+; \<\th, \mu_0\> \le -2\}.$$ Let $w \in W_0$ be the unique minimal element satisfying $w(\mu_0)=\bar \mu_0$. Let $w=s_r \cdots s_1$ be a reduced expression of $w$ with each $s_i \in S_0$. Let $\mu_k=s_k \cdots s_1(\mu_0)$. Note that $\<\a_k, \mu_{k-1}\> \le -1$ and hence $\mu_{k-1}+\a_k^\vee \le \mu_k$ for $k \in [1, r]$, where $\a_k$ denotes the simple root corresponding to $s_k$. Since $\mu_0 \le \l$ and $\mu_r=\bar \mu_0 \nleqslant \l$, there exists $i \in [1, r]$ such that $\mu_{i-1} \le \l$ and $\mu_i=s_i(\mu_{i-1}) \nleqslant \l$. Write $\l = \mu_{i-1}+ \sum_{\a \in \Pi} n_{\a} \a^\vee$, where each $n_\b \in \NN$. We have $n_{\a_{i}} \ge 1$ since $\<\a_i, \l\> \ge 0$. So $\mu_{i-1} + \a_i^\vee \le \l$. If $\<\a_i, \mu_{i-1}\> =-1$, then $\mu_i=\mu_{i-1}+\a_i^\vee \le \l$, which contradicts the choice of $i$. Therefore, $\<\a_i, \mu_{i-1}\> \le -2$, that is, $\g:=s_1 \cdots s_{i-1}(\a_{i}) \in \Phi^+(\mu_0)$.

Assume, moreover, $\mu$ is dominant. We have $\<\g, \mu\> \le 1$ and $\<\g, \a^\vee\> \le -2$ because $\<\g, \a^\vee\> \ge -3$. If $\g \neq -\a$, then $\g$ and $\a_i$ are long roots of $\Phi$. Therefore, \begin{align*} \mu+\a^\vee+\g^\vee= \mu_0+\g^\vee &\le \mu_0 + \a_1^\vee+\cdots +\a_i^\vee \\ &\le \mu_1+ \a_2^\vee + \cdots + \a_i^\vee  \\ &\le \cdots \le \mu_{i-1} + \a_i^\vee \le \l. \end{align*} The proof is finished.
\end{proof}

\begin{lem} \cite[Proposition 2.2]{Ga} \label{Gashi}
Let $\mu \in Y$ and $\l \in Y^+$. If $\mu \le \l$ and $\mu$ is weakly dominant, then $\mu \preceq \l$. In particular, If $\mu \in Y^+$ and $\g \in \Phi^+$ is a positive long root such that $\mu+\g^\vee \le \l$, then $\mu+\g^\vee \preceq \l$.
\end{lem}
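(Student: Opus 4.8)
Lemma \ref{Gashi}: if $\mu \in Y$, $\l \in Y^+$, $\mu \le \l$ and $\mu$ is weakly dominant, then $\mu \preceq \l$; consequently, if $\mu \in Y^+$ and $\g \in \Phi^+$ is a long root with $\mu+\g^\vee \le \l$, then $\mu + \g^\vee \preceq \l$.

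The plan is to reduce the ``in particular'' clause to the main assertion and then prove the main assertion by induction on $\h(\l-\mu):=\sum_{\a\in\Pi_0}n_\a$, where $\l-\mu=\sum_{\a\in\Pi_0}n_\a\a^\vee\in\NN\Pi_0^\vee$. If $\mu\in Y^+$, then $\bar\mu=\mu$ and $\bar\l=\l$, so $\mu\le\l$ already gives $\bar\mu\le\bar\l$, i.e. $\mu\preceq\l$; this settles the case $\h(\l-\mu)=0$ and, more generally, every case in which $\mu$ is dominant. So suppose $\mu$ is not dominant. Then there is a simple root $\a\in\Pi_0$ with $\<\a,\mu\><0$, and weak dominance forces $\<\a,\mu\>=-1$, so that $s_\a(\mu)=\mu+\a^\vee$.

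The heart of the induction is the claim that $\mu':=s_\a(\mu)=\mu+\a^\vee$ is again weakly dominant and still satisfies $\mu'\le\l$. For weak dominance: $\<\a,\mu'\>=-\<\a,\mu\>=1$, and for $\b\in\Phi^+\setminus\{\a\}$ one has $\<\b,\mu'\>=\<\b,s_\a(\mu)\>=\<s_\a(\b),\mu\>\ge-1$, since $s_\a$ permutes $\Phi^+\setminus\{\a\}$ and $\mu$ is weakly dominant. For $\mu'\le\l$: $\l$ being dominant gives $\<\a,\l-\mu\>=\<\a,\l\>+1\ge1$, and expanding $\l-\mu=\sum_\b n_\b\b^\vee$ with $\<\a,\b^\vee\>\le0$ for simple $\b\neq\a$ yields $2n_\a\ge\<\a,\l-\mu\>\ge1$, hence $n_\a\ge1$ and $\l-\mu'=(\l-\mu)-\a^\vee\in\NN\Pi_0^\vee$. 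Since $\mu'$ lies in the $W_0$-orbit of $\mu$ we have $\overline{\mu'}=\bar\mu$, and $\h(\l-\mu')=\h(\l-\mu)-1$, so the induction hypothesis applies to $\mu'$ and gives $\bar\mu=\overline{\mu'}\le\l$, that is $\mu\preceq\l$.

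For the ``in particular'' clause, with $\mu\in Y^+$ and $\g\in\Phi^+$ a long root, it suffices by the main assertion to check that $\mu+\g^\vee$ is weakly dominant. For $\b\in\Phi^+$ we have $\<\b,\mu+\g^\vee\>=\<\b,\mu\>+\<\b,\g^\vee\>\ge\<\b,\g^\vee\>$, and $\<\b,\g^\vee\>\ge-1$: indeed if $\b=\g$ this pairing equals $2$, while if $\b\neq\g$ then $\b\neq\pm\g$ and the Cartan integer $\<\b,\g^\vee\>$ has absolute value at most $1$ because $\g$ is long (a value $\le-2$ would force $\b$ to be strictly longer than $\g$). Hence $\mu+\g^\vee$ is weakly dominant and $\mu+\g^\vee\preceq\l$. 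The only steps I expect to need care are the preservation of weak dominance under $s_\a$ — which is exactly where the permutation property of a simple reflection on $\Phi^+\setminus\{\a\}$ enters — and the elementary root-system fact that a long root pairs with every positive root with value $\ge-1$; the remaining arguments are routine bookkeeping with the height function.
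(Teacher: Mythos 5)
Your proof is correct. Note that the paper does not prove this lemma at all—it cites it as \cite[Proposition 2.2]{Ga}—so there is no internal argument to compare against; your induction on the height of $\l-\mu$, using that a simple reflection at a wall where $\<\a,\mu\>=-1$ preserves weak dominance (this is exactly Lemma \ref{minus} of the paper) and that $n_\a\ge 1$ follows from $\<\a,\l-\mu\>\ge 1$, is the standard proof of Gashi's result, and your reduction of the ``in particular'' clause via the fact that a long root $\g$ satisfies $\<\b,\g^\vee\>\ge-1$ for all $\b\in\Phi^+\setminus\{\g\}$ is also sound.
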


\begin{lem} \label{minus}
Let $\mu \in Y$ be a weakly dominant coweight and $\a \in \Pi_0$ such that $\<\a, \mu\> \ge 1$ (resp. $\<\a, \mu\>=-1$). Then $\mu-\a^\vee$ (resp. $\mu+\a^\vee$) is weakly dominant.
\end{lem}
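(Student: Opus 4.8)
The plan is to verify the defining inequality $\<\b,\nu\>\ge -1$ for every $\b\in\Phi^+$ directly, where $\nu=\mu-\a^\vee$ in the first case and $\nu=\mu+\a^\vee$ in the second. In both cases the only roots $\b$ for which the value $\<\b,\nu\>$ could drop below $\<\b,\mu\>$ by more than the needed margin are those on which $\a^\vee$ pairs very negatively. Since $\a$ is \emph{simple}, the key structural fact I would use is that for $\b\in\Phi^+$ the integer $\<\b,\a^\vee\>$ is bounded below by $-1$ unless $\b$ is proportional to $\a$; more precisely, if $\b\in\Phi^+$ and $\b\ne\a$ then $s_\a(\b)=\b-\<\b,\a^\vee\>\a\in\Phi^+$, so $\<\b,\a^\vee\>\le 1$ in the simply-laced directions and in general $\<\b,\a^\vee\>\ge -1$ whenever $\b\notin\RR\a$ (a nonsimple positive root cannot have $\a$-coefficient pushed to a negative value under $s_\a$). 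This reduces the whole problem to a short case check.

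First I would treat the case $\<\a,\mu\>\ge 1$, $\nu=\mu-\a^\vee$. For $\b\in\Phi^+$ with $\b\ne\a$ we have $\<\b,\a^\vee\>\ge -1$, hence $\<\b,\nu\>=\<\b,\mu\>-\<\b,\a^\vee\>\ge -1-1$ is not yet enough, so I refine: if $\<\b,\a^\vee\>=-1$ then $s_\a(\b)=\b+\a\in\Phi^+$ and $\<s_\a(\b),\mu\>\ge -1$ since $\mu$ is weakly dominant, giving $\<\b,\mu\>\ge -1+\<\a,\mu\>\ge 0$, whence $\<\b,\nu\>=\<\b,\mu\>+1\ge 1\ge -1$. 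If $\<\b,\a^\vee\>\ge 0$ then $\<\b,\nu\>\ge\<\b,\mu\>-\text{(large)}$ is handled the same way by writing $\<\b,\mu\>$ in terms of $\<s_\a(\b),\mu\>$ and $\<\a,\mu\>$: concretely $\<\b,\nu\>=\<s_\a\b,\mu\>+(\<\b,\a^\vee\>-1)\<\a,\mu\>\ge -1+0$ using $\<\b,\a^\vee\>\ge 1$ in the remaining subcase (when $\<\b,\a^\vee\>=0$ it is immediate). Finally $\b=\a$ gives $\<\a,\nu\>=\<\a,\mu\>-2\ge -1$ exactly when $\<\a,\mu\>\ge 1$, which is the hypothesis. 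The second case $\<\a,\mu\>=-1$, $\nu=\mu+\a^\vee$, is the mirror image: for $\b\ne\a$ one uses $\<\b,\a^\vee\>\le 1$ (from $s_\a(\b)\in\Phi^+$) together with weak dominance of $\mu$ applied to $s_\a(\b)$, and for $\b=\a$ one gets $\<\a,\nu\>=\<\a,\mu\>+2=1\ge -1$.

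I expect the main (and really only) obstacle to be bookkeeping the sign conventions and the precise bound on $\<\b,\a^\vee\>$ for $\b$ a positive root and $\a$ simple — in particular handling the long/short root length ratios so that the estimate $\<\b,\a^\vee\>\ge -1$ (resp.\ $\le 1$) for $\b\notin\RR\a$ is applied correctly, and making sure that in the borderline subcases the weak dominance of $\mu$ is invoked on the right transformed root $s_\a(\b)\in\Phi^+$. Once that elementary reflection estimate is in place, every inequality reduces to adding $\pm1$ to $\<\a,\mu\>\ge 1$ or $\<\a,\mu\>=-1$, so the argument is routine.
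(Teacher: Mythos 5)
Your strategy is essentially the paper's own argument in coordinates: the paper notes that $\mu-\a^\vee$ lies in the convex hull of $\mu$ and $s_\a(\mu)$ and that both endpoints pair $\ge -1$ with every $\b\in\Phi^+\setminus\{\a\}$, the second because $\<\b, s_\a(\mu)\>=\<s_\a(\b),\mu\>$ and $s_\a$ permutes $\Phi^+\setminus\{\a\}$ — which is exactly the reflection trick you invoke. However, two things in your write-up are wrong and should be repaired. First, the ``key structural fact'' is false: for $\a$ simple and $\b\in\Phi^+\setminus\{\a\}$ one can have $\<\b,\a^\vee\>=-2$ or $-3$ (e.g. the long simple root paired with the short simple coroot in $B_2$ or $G_2$), and dually $\<\b,\a^\vee\>$ can equal $2$ or $3$; so the case split you announce is not exhaustive as stated. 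Fortunately this costs nothing: when $\<\a,\mu\>\ge 1$ the subcase $\<\b,\a^\vee\>\le 0$ is trivial, since then $\<\b,\mu-\a^\vee\>\ge\<\b,\mu\>\ge -1$, and the only subcase that needs $s_\a(\b)$ is $\<\b,\a^\vee\>\ge 1$ (symmetrically, when $\<\a,\mu\>=-1$ the subcase $\<\b,\a^\vee\>\ge 0$ is trivial and only $\<\b,\a^\vee\>\le -1$ needs work). Second, your identities have sign slips: from $\<s_\a(\b),\mu\>\ge -1$ with $s_\a(\b)=\b+\a$ one gets $\<\b,\mu\>\ge -1-\<\a,\mu\>$, not $\ge -1+\<\a,\mu\>$ (harmless there, since $\<\b,\mu\>\ge -1$ already gives $\<\b,\mu-\a^\vee\>=\<\b,\mu\>+1\ge 0$), and the correct identity in the remaining subcase is $\<\b,\mu-\a^\vee\>=\<s_\a(\b),\mu\>+\<\b,\a^\vee\>\bigl(\<\a,\mu\>-1\bigr)\ge -1+0$, not the one you display. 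With these corrections the argument closes in both cases and coincides with the paper's proof.
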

\begin{proof}
Assume $\<\a, \mu\> \ge 1$. Let $\b \in \Phi^+$. If $\b=\a$, then $\<\a, \mu-\a^\vee\> \ge -1$. Otherwise, we have $\<\b, \mu\> \ge -1$ and $\<\b, s_\a(\mu)\> = \<s_\a(\b), \mu\> \ge -1$  since $s_\a(\b) \in \Phi^+$ and $\mu$ is weakly dominant. Note that $\mu-\a^\vee$ lies in the convex hull of $\mu$ and $s_\a(\mu)$, which means $\<\b, \mu-\a^\vee\>=-1$. Therefore, $\mu-\a^\vee$ is weakly dominant.

The case that $\<\a, \mu\> = -1$ can be handled similarly.
\end{proof}

\begin{lem} \label{con}
Let $\mu \neq \mu', \l \in Y^+$ with $\mu, \mu' \preceq \l$. Then $\Theta(\mu, \mu', \l) \neq \emptyset$.
\end{lem}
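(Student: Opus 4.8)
The plan is to produce an element of $\Theta(\mu,\mu',\l)$ in two stages: first use Lemma \ref{contraction} to find roots $\g$ with $\<\g,\mu-\mu'\>\ge 2$ and $\mu-\g^\vee,\mu'+\g^\vee\le\l$, and then upgrade the relation $\le$ to $\preceq$ with the help of Lemma \ref{key} and Lemma \ref{Gashi}. Note that the cheap supply of elements of $\Theta$ coming from $\Xi(\mu,\mu')$ via Lemma \ref{diff}(a) is unavailable here: as $\mu$ and $\mu'$ are both dominant, $\Xi(\mu,\mu')=\emptyset$, so the coroot combinatorics is genuinely needed. For the first stage I would apply Lemma \ref{contraction} to $\d=\l-\mu$ and $\d'=\l-\mu'$, which lie in $\NN\Pi_0^\vee$ since $\mu,\mu'\preceq\l$ are dominant. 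Because $\mu\neq\mu'$ the resulting index set $\L$ is nonempty, and one obtains roots $\g_k$ ($k\in\L$) with $\mu-\mu'=\sum_k\g_k^\vee$, with $\<\g_i,\g_j^\vee\>\ge 0$ for all $i,j$, and, taking singleton subsets in property~(2) of that lemma, with $\mu-\g_k^\vee\le\l$ and $\mu'+\g_k^\vee\le\l$. Combining $\<\g_k,\g_k^\vee\>=2$ with the non-negativity of the remaining pairings gives $\<\g_k,\mu-\mu'\>=\sum_{j\in\L}\<\g_k,\g_j^\vee\>\ge 2$ for every $k\in\L$. Thus each $\g_k$ already satisfies the numerical condition of $\Theta(\mu,\mu',\l)$ together with the two ``$\le\l$'' weakenings of its dominance conditions, and it remains to find one $k$ — or a suitable replacement root — for which $\mu-\g_k^\vee\preceq\l$ and $\mu'+\g_k^\vee\preceq\l$.

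For the second stage I would analyse the obstructions with Lemma \ref{key}. If $\mu-\g_k^\vee\npreceq\l$, then since $\mu\in Y^+$ and $\mu-\g_k^\vee\le\l$, Lemma \ref{key} applied with $\a=-\g_k$ produces $\g\in\Phi^+$ with $\<\g,\mu\>\le 1$, $\<\g,\g_k^\vee\>\ge 2$, and either $\g=\g_k$ or $\g$ a long root with $\mu-\g_k^\vee+\g^\vee\le\l$. The case $\g=\g_k$ cannot occur, since $\g_k\in\Phi^+$ and $\mu'$ dominant force $\<\g_k,\mu\>\ge\<\g_k,\mu-\mu'\>\ge 2$. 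So every obstruction comes packaged with a long root, and Lemma \ref{Gashi} (a positive long coroot never obstructs ``$\preceq\l$'') is precisely the tool for disposing of it: a short root-length computation rewrites $\g^\vee-\g_k^\vee$ as the coroot of the long root $s_{\g_k}(\g)$, so that $\mu-\g_k^\vee+\g^\vee$ has the shape $\mu+(\text{coroot of a long root})$, hence is $\preceq\l$. A symmetric analysis — Lemma \ref{key} with $\mu'$ in place of $\mu$ and $\a=\g_k$ — governs the obstruction $\mu'+\g_k^\vee\npreceq\l$, again replacing $\g_k$ by a long root.

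The main obstacle, and where I expect the genuine work to lie, is to turn these two one-sided curings into a single statement: to exhibit one root $\a$ for which all three defining conditions of $\Theta(\mu,\mu',\l)$ hold simultaneously. I anticipate an iterative refinement of the decomposition above, feeding the auxiliary long roots produced by Lemma \ref{key} back in to replace the $\g_k$'s and terminating because each replacement strictly simplifies the decomposition, all the while tracking the signs and lengths of the auxiliary roots (including the pairings peculiar to type $G_2$). Lemma \ref{minus} and the $W_0$-equivariance of Lemma \ref{diff}(c) are likely to be needed to normalize the auxiliary roots en route, for instance to reduce to simple roots. It is this combinatorial bookkeeping, rather than any individual application of the preceding lemmas, that constitutes the heart of the proof.
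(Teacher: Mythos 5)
Your first stage coincides exactly with the paper's: apply Lemma \ref{contraction} to $\d=\l-\mu$, $\d'=\l-\mu'$, pick any $\g_{k_0}=:\a$ from the nonempty index set, and observe $\<\a,\mu-\mu'\>\ge 2$ and $\mu-\a^\vee,\mu'+\a^\vee\le\l$; your use of Lemma \ref{key} to detect the obstruction to $\preceq$, your exclusion of the case $\g=\pm\a$, and your observation that the obstruction is always accompanied by a long root are also all correct and match the paper. But the second stage is where the actual content lies, and you have not supplied it: you explicitly defer the step of producing \emph{one} root satisfying all three conditions of $\Theta(\mu,\mu',\l)$ simultaneously to an anticipated ``iterative refinement'' whose replacement rule, invariant, and termination you do not specify. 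As written, your argument only shows that each one-sided failure can be repaired by some cocharacter of the form $\mu+(\text{long coroot})\preceq\l$; this neither produces a common root $\b$ with $\mu-\b^\vee\preceq\l$ \emph{and} $\mu'+\b^\vee\preceq\l$, nor addresses whether the replacement still satisfies $\<\b,\mu-\mu'\>\ge 2$. That is precisely the lemma, so the gap is genuine.

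For comparison, no iteration is needed. Assuming (by the symmetry $\Theta(\mu,\mu',\l)=-\Theta(\mu',\mu,\l)$ of Lemma \ref{diff}) that the failure is $\mu'+\a^\vee\npreceq\l$, Lemma \ref{key} applied to $\mu'$ gives a long $\g\in\Phi^+$ with $\<\g,\a^\vee\>\le-2$, $\<\g,\mu'\>\le 1$ and $\mu'+\a^\vee+\g^\vee\le\l$. The paper then sets $\b=s_\a(\g)$ in a single step; since $\g$ is long and $\<\g,\a^\vee\>\le -2$ forces $\<\a,\g^\vee\>=-1$, one gets $\b^\vee=\a^\vee+\g^\vee$, whence $\mu'+\b^\vee=\mu'+\a^\vee+\g^\vee\le\l$ and $\mu-\b^\vee\le\mu-\a^\vee\le\l$. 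A short computation gives $\<\b,\mu-\mu'\>\ge 3$, and because $\b$ is long, $\mu-\b^\vee$ and $\mu'+\b^\vee$ are checked directly to be weakly dominant, so the first (weakly dominant) form of Lemma \ref{Gashi} upgrades both $\le$ to $\preceq$. In other words, the resolution is a one-shot reflection of the original candidate, verified by the weak-dominance criterion, not a descent on the decomposition; you would need to either carry out your proposed induction in full (with a well-founded measure) or switch to this direct replacement to complete the proof.
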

\begin{proof}
Let $\d, \d' \in \NN \Pi_0^\vee$ such that $\l=\mu+\d=\mu'+\d'$. We write $\mu-\mu'=\d'-\d=\sum_{k \in \L} \g_k^\vee$, where $\L$ is as in Lemma \ref{contraction}. Since $\mu \neq \mu'$, $\L$ is nonempty. Choose $k_0 \in \L$ and set $\a=\g_{k_0}$. Then $\<\a, \d'-\d\>=\<\a, \sum_{k \in \L} \g_k^\vee\> \ge 2$ and $\mu-\a^\vee, \mu'+\a^\vee \le \l$. If $\mu-\a^\vee, \mu'+\a^\vee \preceq \l$, the proof is finished. Otherwise, by symmetry, we may assume $\mu'+\a^\vee \npreceq \l$. Applying Lemma \ref{key}, there exists $\g \in \Phi^+$ such that $\<\g, \mu'+\a^\vee\> \le -2$. In particular, $\<\g, \a^\vee\> \le -2$ and $\<\g, \mu'\> \le 1$. We claim that $\g \neq -\a$. Otherwise, we have $2 \le \<\a, \mu-\mu'\>=-\<\g, \mu\>+\<\g, \mu'\> \le 0+1=1$, which is a contradiction and the claim is proved.

Therefore, by Lemma \ref{key} (b), $\g$ is a long coroot of $\Phi$ and $\mu+\a^\vee+\g^\vee \le \l$. Define $\b=s_\a(\g)$, which is again a long root. We show that $\b \in \Theta(\mu, \mu', \l)$. First note that \begin{align*}\<\b, \mu-\mu'\> &=\<\g-\<\g,\a^\vee\> \a, \mu-\mu'\> &\ge \<\g, \mu\> - \<\g, \mu'\> + 2\<\a,\mu-\mu'\> \\ & \ge 0-1+4=3. \end{align*} It remains to show $\mu-\b^\vee, \mu'+\b^\vee \preceq \l$. Note that $\mu'+\b^\vee=\mu'+\a^\vee+\g^\vee \le \l$ and $\mu-\b^\vee \le \mu-\a^\vee \le \l$. Thanks to Lemma \ref{Gashi}, it suffices to show $\mu-\b^\vee, \mu'+\b^\vee$ are weakly dominant. Let $\g' \in \Phi^+ - \{\pm \b\}$, then $\<\g',\mu'-\b^\vee\> \ge 0-1=-1$ since $\b$ is a long root. Similarly, $\<\g', \mu+\b^\vee\> \ge -1$. Without loss of generality, we may assume $-\b \in \Phi^+$, the other case can be handled similarly. Then $\<-\b, \mu-\b^\vee\> \ge 0+2=2$ and $\<-\b, \mu'+\b^\vee\>=\<-\b, \mu+\b^\vee\>+\<-\b, \mu'-\mu\> \ge -2+3 =1$. So $\mu-\b^\vee, \mu'+\b^\vee$ are weakly dominant as desired. Therefore, $\b \in \Theta(\mu, \mu', \l)$.
\end{proof}

\begin{prop} \label{con'}
Let $\mu \neq \mu' \in Y$ and $\l \in Y^+$ such that $\mu, \mu' \preceq \l$. Then $\Theta(\mu, \mu', \l) \neq \emptyset$.
\end{prop}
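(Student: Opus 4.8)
The plan is to reduce Proposition~\ref{con'} to the dominant case already treated in Lemma~\ref{con} by using the $W_0$-equivariance recorded in Lemma~\ref{act}(c). The key issue is that in Proposition~\ref{con'} neither $\mu$ nor $\mu'$ is assumed dominant, so we cannot directly invoke Lemma~\ref{con}. The natural dichotomy is whether $\Xi(\mu,\mu')$ is empty or not, which is exactly the hypothesis appearing in Lemma~\ref{pm}.

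First I would treat the case $\Xi(\mu,\mu')=\emptyset$. By Lemma~\ref{pm} there is $w\in W_0$ with $w(\mu)$ and $w(\mu')$ both dominant. Since $\mu\ne\mu'$ we also have $w(\mu)\ne w(\mu')$, and $w(\mu),w(\mu')\preceq\l$ because $\preceq$ only depends on the dominant representative (so it is $W_0$-invariant in each argument). Hence Lemma~\ref{con} applies to give $\Theta(w(\mu),w(\mu'),\l)\ne\emptyset$, and then Lemma~\ref{act}(c) gives $\Theta(\mu,\mu',\l)=w^{-1}\bigl(\Theta(w(\mu),w(\mu'),\l)\bigr)\ne\emptyset$.

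Second I would treat the case $\Xi(\mu,\mu')\ne\emptyset$. Pick $\a\in\Xi(\mu,\mu')$, so $\<\a,\mu\>\ge1$ and $-\<\a,\mu'\>\ge1$, whence $\<\a,\mu-\mu'\>\ge2$. It remains only to check that $\mu-\a^\vee$ and $\mu'+\a^\vee$ are $\preceq\l$ in order to conclude $\a\in\Theta(\mu,\mu',\l)$. Using property~\ref{setup3}(b) applied in the full Weyl group (the case $J=S_0$): since $\<\a,\mu\>\ge1$ we get $\mu-\a^\vee\preceq\mu\preceq\l$, and since $\<-\a,\mu'\>\ge1$ we likewise get $\mu'+\a^\vee=\mu'-(-\a)^\vee\preceq\mu'\preceq\l$. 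Transitivity of $\preceq$ then finishes this case, so $\Xi(\mu,\mu')\subseteq\Theta(\mu,\mu',\l)$, recovering Lemma~\ref{act}(a) along the way.

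The only subtle point — and the step I would double-check most carefully — is the reduction in the first case: one must be sure that the relation $\mu\preceq\l$ is genuinely $W_0$-invariant in the first argument (it is, since by definition $\mu\preceq\l$ means $\bar\mu\le\bar\l$, and $\overline{w(\mu)}=\bar\mu$), and that passing to $w(\mu),w(\mu')$ does not collapse them to a single point, which is immediate as $w$ is injective. Everything else is a direct application of results already in hand: Lemma~\ref{pm}, Lemma~\ref{con}, Lemma~\ref{act}(c) in the first case, and property~\ref{setup3}(b) together with transitivity of $\preceq$ in the second. So I do not expect a serious obstacle; the proof is essentially a case split plus bookkeeping.
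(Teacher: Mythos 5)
Your proposal is correct and is essentially identical to the paper's proof: the same dichotomy on whether $\Xi(\mu,\mu')$ is empty, with Lemma \ref{pm}, Lemma \ref{con} and the $W_0$-equivariance of $\Theta$ in one case, and the inclusion $\Xi(\mu,\mu')\subseteq\Theta(\mu,\mu',\l)$ (which the paper simply cites as Lemma \ref{diff}(a), and which you re-derive correctly from property \S\ref{setup3}(b)) in the other. No issues.
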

\begin{proof}
If  $\Xi(\mu, \mu') \neq \emptyset$, then $\Theta(\mu, \mu', \l) \supseteq \Xi(\mu, \mu')$ is nonempty. Otherwise, by Lemma \ref{pm}, there exists $w \in W_0$ such that $w(\mu), w(\mu') \in Y^+$. Then by Lemma \ref{con}, $\Theta(\mu, \mu', \l)=w\i(\Theta(w(\mu), w(\mu'), \l))$ is again nonempty.
\end{proof}

\

Let $D \subseteq \Phi$ be a subset. We say $D$ is orthogonal if $\<\a, \b^\vee\> =0$ for any $\a \neq \b \in D_\mu$.
\begin{lem} \label{weak}
Assume $\Phi$ is simply laced. Let $\mu \in Y$ be weakly dominant. Then there exists an orthogonal subset $D_\mu \subseteq \Phi^+$ such that

(a) $\<\g, \mu\> =-1$ for any $\g \in D_\mu$;

(b) $\bar \mu - \mu =\sum_{\a \in D_\mu} \a^\vee$.
\end{lem}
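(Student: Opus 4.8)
The plan is to argue by induction on $\h(\bar\mu-\mu)$, where $\h$ denotes the height of an element of $\NN\Pi_0^\vee$; recall that $\bar\nu-\nu\in\NN\Pi_0^\vee$ for every $\nu\in Y$. If $\mu$ is dominant we take $D_\mu=\emptyset$. So assume $\mu$ is weakly dominant but not dominant, and set $N=\{\g\in\Phi^+;\ \<\g,\mu\>=-1\}$, which is nonempty by weak dominance.

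First I would record two elementary facts about $N$, using that $\Phi$ is simply laced (so $\<\b,\g^\vee\>=\<\g,\b^\vee\>\in\{-1,0,1\}$ for non-proportional $\b,\g\in\Phi$, and $\<\b,\g^\vee\>=-1$ forces $\b+\g\in\Phi$). First, for distinct $\g,\g'\in N$ one has $\<\g,\g'^\vee\>\ge 0$: otherwise $\g+\g'\in\Phi^+$ and $\<\g+\g',\mu\>=-2$, contradicting weak dominance. Second, for every $\g\in N$ the coweight $\mu+\g^\vee=s_\g(\mu)$ is again weakly dominant: for $\b\in\Phi^+\cap N$ we get $\<\b,\mu+\g^\vee\>\ge-1+0=-1$ by the first fact, while for $\b\in\Phi^+\setminus N$ we have $\<\b,\mu\>\ge 0$ and $\<\b,\g^\vee\>\ge-1$, so again $\<\b,\mu+\g^\vee\>\ge-1$. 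Note $\overline{\mu+\g^\vee}=\bar\mu$ and $\bar\mu-(\mu+\g^\vee)=(\bar\mu-\mu)-\g^\vee$, so passing from $\mu$ to $\mu+\g^\vee$ drops $\h(\bar\mu-\mu)$ by $\h(\g^\vee)\ge1$.

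Now choose $\g\in N$ of maximal height as a root, and put $\mu'=\mu+\g^\vee$. By induction there is an orthogonal $D'\subseteq\Phi^+$ with $\<\d,\mu'\>=-1$ for all $\d\in D'$ and $\bar\mu-\mu'=\sum_{\d\in D'}\d^\vee$. The key point is that every $\d\in D'$ is orthogonal to $\g$. Indeed, any $\b\in\Phi^+$ with $\<\b,\mu'\>=-1$ satisfies $\<\b,\mu\>+\<\b,\g^\vee\>=-1$ with $\<\b,\mu\>\ge-1$; if $\<\b,\mu\>=-1$ then $\<\b,\g^\vee\>=0$, while if $\<\b,\mu\>=0$ then $\<\b,\g^\vee\>=-1$, forcing $\b+\g\in\Phi^+$ with $\<\b+\g,\mu\>=-1$, i.e.\ $\b+\g\in N$ of strictly larger height than $\g$ — impossible. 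Hence $D'$ is orthogonal to $\g$. Setting $D_\mu=D'\cup\{\g\}$ then finishes the proof: $\g\notin D'$ because $\<\g,\mu'\>=1\neq-1$; $D_\mu$ is orthogonal since $D'$ is and is orthogonal to $\g$; for $\d\in D'$ orthogonality gives $\<\d,\mu\>=\<\d,\mu'\>=-1$, and $\<\g,\mu\>=-1$, proving (a); and $\sum_{\a\in D_\mu}\a^\vee=\g^\vee+\sum_{\d\in D'}\d^\vee=\g^\vee+(\bar\mu-\mu')=\bar\mu-\mu$, proving (b).

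The only genuinely delicate step is the choice of $\g$: taking it of maximal height in $N$ is precisely what prevents ``new'' roots with negative pairing against $\g$ from entering $\{\b\in\Phi^+;\ \<\b,\mu'\>=-1\}$, and hence guarantees that the inductively produced $D'$ is orthogonal to the root we peel off. Without this precaution $D'$ need not be orthogonal to $\g$ — already visible in type $A_2$, where peeling simple roots one at a time fails while the single root $\a_1+\a_2$ works. Everything else is routine bookkeeping with standard simply-laced root-system identities.
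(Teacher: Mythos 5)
Your proof is correct, but the inductive step is genuinely different from the paper's. The paper peels off a \emph{simple} root $\a$ with $\<\a,\mu\>=-1$, applies the induction hypothesis to $\upsilon=\mu+\a^\vee$, and then must repair orthogonality: the inductive set $D_\upsilon$ splits into the part pairing to $0$ with $\a^\vee$ (kept as is) and the part $D_{\upsilon,0}$ pairing to $-1$ with $\a^\vee$, and all roots of $D_{\upsilon,0}$ are merged with $\a$ into the single new root $\g=\a+\sum_{\b\in D_{\upsilon,0}}\b$. You instead peel off a root $\g$ of \emph{maximal height} among $N=\{\b\in\Phi^+;\ \<\b,\mu\>=-1\}$ and show that no repair is needed, because maximality forbids any $\b$ with $\<\b,\mu+\g^\vee\>=-1$ from pairing to $-1$ with $\g^\vee$ (such a $\b$ would produce $\b+\g\in N$ of strictly larger height). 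Your route buys a cleaner inductive step at the cost of a non-canonical choice; the paper's route uses the canonical simple root but then needs the merging construction, which is exactly the compensation for the failure of orthogonality that your $A_2$ remark points to. One small omission in your key claim: you treat only the cases $\<\b,\mu\>\in\{-1,0\}$, so you should add that $\<\b,\mu\>\ge 1$ is impossible, since it would force $\<\b,\g^\vee\>\le -2$, hence $\b=-\g$ in a simply laced system, contradicting $\b,\g\in\Phi^+$. With that line added the argument is complete.
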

\begin{proof}
We argue by induction on $\bar \mu -\mu \in \NN \Pi_0^\vee$. If $\bar \mu -\mu=0$, then $D_\mu=\emptyset$ and there is nothing to prove. Otherwise, there exists a simple root $\a \in \Pi_0$ such that $\<\a, \mu\> \le -1$, and hence $\<\a, \mu\>=-1$ since $\mu$ is weakly dominant. Let $\upsilon=s_\a(\mu)=\mu+\a^\vee$, which is also weakly dominant by Lemma \ref{minus}. Noticing that $\bar \upsilon - \upsilon < \bar \mu - \mu$. So there exists, by induction hypothesis, an orthogonal subset $D_{\upsilon} \subseteq \Phi^+$ with properties (a) and (b). Let $$D_{\upsilon, -}=\{\b \in D_{\upsilon}; \<\b, \mu\>=-1\}=\{\b \in D_{\upsilon}; \<\b, \a^\vee\>=0\}$$ and $$D_{\upsilon, 0}=\{\b \in D_{\upsilon}; \<\b, \mu\>=0\}=\{\b \in D_{\upsilon}; \<\b, \a^\vee\>=-1\}.$$ Since $\mu$ is weakly dominant and $\Phi$ is simply laced, we have $D_{\upsilon}=D_{\upsilon,-} \cup D_{\upsilon,0}$. By the orthogonality of $D_{\upsilon}$, $\g:=\a+\sum_{\b \in D_{\upsilon,0}} \b \in \Phi^+$. Now we set $D_\mu=D_{\upsilon, -} \cup \{\g\}$, which satisfies (a) and (b).
\end{proof}

\begin{lem} \label{special} \label{orth}
Assume $\Phi$ is simply laced. Let $\mu, \upsilon, \l \in Y$ with $\mu, \upsilon \preceq \l$. If $\Theta(\mu, \upsilon, \l)=\Xi_1(\mu, \upsilon)$, then $\mu$ and $\upsilon$ are conjugate under $W_0$. Moreover, there exists an orthogonal subset $\D_{\mu, \upsilon} \subseteq \Xi_1(\mu, \upsilon)$ such that $\mu-\upsilon=\sum_{\b \in \D_{\mu, \upsilon}} \b^\vee$.
\end{lem}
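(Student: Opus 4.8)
The hypothesis $\Theta(\mu, \upsilon, \l) = \Xi_1(\mu, \upsilon)$ is very restrictive: it says that \emph{every} root $\a$ with $\<\a, \mu - \upsilon\> \ge 2$ and $\mu - \a^\vee, \upsilon + \a^\vee \preceq \l$ must in fact satisfy $\<\a, \mu\> = -\<\a, \upsilon\> = 1$. The plan is first to reduce to the dominant case, then to extract the orthogonal set $\D_{\mu, \upsilon}$ by an induction that peels off one element of $\Xi_1(\mu, \upsilon)$ at a time, using Lemma \ref{weak} as the engine and Proposition \ref{con'} (together with Lemma \ref{con}) to guarantee non-emptiness at each stage.

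First I would handle the reduction. If $\Xi(\mu, \upsilon) = \emptyset$, then by Lemma \ref{pm} some $w \in W_0$ makes $w(\mu), w(\upsilon)$ dominant; by Lemma \ref{act}(c) the sets $\Theta$ and $\Xi_1$ transform equivariantly, so without loss of generality I may assume $\mu, \upsilon \in Y^+$. But if $\mu, \upsilon \in Y^+$ are distinct, Lemma \ref{con} forces $\Theta(\mu, \upsilon, \l) \ne \emptyset$, hence $\Xi_1(\mu, \upsilon) \ne \emptyset$, which would require some $\a$ with $\<\a, \mu\> = 1 > 0$ and $\<\a, \upsilon\> = -1 < 0$ — impossible for two dominant coweights unless... actually this shows the dominant case is essentially degenerate, so the real content is when $\Xi(\mu, \upsilon) \ne \emptyset$. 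In that case I would instead argue directly: pick $\a \in \Xi(\mu, \upsilon)$; since $\Theta = \Xi_1$ and $\Xi \subseteq \Theta$ (Lemma \ref{act}(a)), every element of $\Xi(\mu, \upsilon)$ already lies in $\Xi_1(\mu, \upsilon)$, so $\Xi(\mu, \upsilon) = \Xi_1(\mu, \upsilon)$, i.e. $\<\a, \mu\> \le 1$ and $\<\a, \upsilon\> \ge -1$ for all roots with $\<\a,\mu\>\ge 1, \<\a,\upsilon\>\le -1$.

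The heart of the argument is the construction of $\D_{\mu,\upsilon}$. I would set $\nu = \mu - \upsilon \in \ZZ\Phi^\vee$ and argue by induction on the ``height'' of the positive part of $\nu$. The key claim is that if $\mu \ne \upsilon$ there exists $\b \in \Xi_1(\mu, \upsilon)$ with $\mu - \b^\vee, \upsilon + \b^\vee \preceq \l$; granting this, replace $(\mu, \upsilon)$ by $(\mu - \b^\vee, \upsilon + \b^\vee)$, whose difference is $\nu - 2\b^\vee$, check via Lemma \ref{minus}-type reasoning and Lemma \ref{Gashi} that the hypothesis $\Theta = \Xi_1$ is preserved for the new pair, apply the induction hypothesis to get an orthogonal $\D' \subseteq \Xi_1(\mu - \b^\vee, \upsilon + \b^\vee)$, and then show $\D_{\mu,\upsilon} = \D' \cup \{\b\}$ is orthogonal and sits inside $\Xi_1(\mu,\upsilon)$ — the orthogonality of $\b$ against $\D'$ should follow because simply-lacedness forces $\<\b, \g^\vee\> \in \{0, \pm 1\}$ and a nonzero pairing would, combined with $\<\b, \mu - \b^\vee - (\upsilon+\b^\vee)\>$ bookkeeping, produce a root violating $\Theta = \Xi_1$. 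The statement that $\mu$ and $\upsilon$ are $W_0$-conjugate then follows since $\mu - \upsilon = \sum_{\b \in \D_{\mu,\upsilon}} \b^\vee$ with $\D_{\mu,\upsilon}$ orthogonal and each $\<\b, \upsilon\> = -1$, so $\mu = \big(\prod_{\b \in \D_{\mu,\upsilon}} s_\b\big)(\upsilon)$.

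The main obstacle I expect is proving the key claim — the existence of $\b \in \Xi_1(\mu,\upsilon)$ with $\mu - \b^\vee, \upsilon + \b^\vee \preceq \l$ — in a way that feeds cleanly into the induction. Proposition \ref{con'} gives \emph{some} $\a \in \Theta(\mu,\upsilon,\l) = \Xi_1(\mu,\upsilon)$, so $\<\a,\mu\> = 1$ and $\mu - \a^\vee, \upsilon + \a^\vee \preceq \l$ by definition of $\Theta$; that is already exactly the claim, so in fact the induction step is cheaper than feared. The genuinely delicate point is verifying that the hypothesis $\Theta(\mu - \a^\vee, \upsilon + \a^\vee, \l) = \Xi_1(\mu - \a^\vee, \upsilon + \a^\vee)$ is inherited — here I would need to relate roots $\b$ with large pairing against $(\mu - \a^\vee) - (\upsilon + \a^\vee) = (\mu - \upsilon) - 2\a^\vee$ back to roots with large pairing against $\mu - \upsilon$, using $\<\b, \a^\vee\> \in \{0, \pm 1\}$ and the weak-dominance/$\preceq$ machinery of Lemmas \ref{minus}, \ref{Gashi}, \ref{key} to control whether $\mu - \b^\vee$ and $\upsilon + \b^\vee$ stay $\preceq \l$. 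That case analysis on the pairings $\<\b, \a^\vee\>$, $\<\b,\mu\>$, $\<\b,\upsilon\>$ is where the real work lies, but it is bounded (finitely many cases in a simply-laced system) and routine in spirit.
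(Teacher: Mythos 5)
There is a genuine gap: the induction you propose does not close, for two reasons.

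First, the bookkeeping is wrong. Peeling off $\b$ by passing from $(\mu,\upsilon)$ to $(\mu-\b^\vee,\upsilon+\b^\vee)$ decreases the difference by $2\b^\vee$, not by $\b^\vee$. If the induction hypothesis gives $(\mu-\b^\vee)-(\upsilon+\b^\vee)=\sum_{\g\in\D'}\g^\vee$, then $\D'\cup\{\b\}$ sums to $\mu-\upsilon-\b^\vee$, not to $\mu-\upsilon$. Worse, $\D'$ need not sit inside $\Xi_1(\mu,\upsilon)$ at all: already for $\mu-\upsilon=\b_1^\vee+\b_2^\vee$ with $\b_1\perp\b_2$ orthogonal elements of $\Xi_1(\mu,\upsilon)$, the new pair has difference $\b_2^\vee-\b_1^\vee$ and its natural orthogonal decomposition inside $\Xi_1(\mu-\b_1^\vee,\upsilon+\b_1^\vee)$ is $\{\b_2,-\b_1\}$, so your union $\D'\cup\{\b_1\}$ contains $\pm\b_1$ and sums to $\b_2^\vee$. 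To make a one-sided peeling work you would have to pass to $(\mu,\upsilon+\b^\vee)$ or $(\mu-\b^\vee,\upsilon)$, but then the pair no longer has the symmetric structure your argument relies on.

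Second, and more seriously, the step you defer as ``routine in spirit'' --- that $\Theta(\mu',\upsilon',\l)=\Xi_1(\mu',\upsilon')$ is inherited by the new pair --- is the actual content of the lemma and there is no reason to expect it. Since $\mu'-\upsilon'=\mu-\upsilon-2\a^\vee$, any root $\b$ with $\<\b,\a^\vee\>=-1$ gains $2$ in its pairing against the difference, so roots with $\<\b,\mu-\upsilon\>=0$, about which the original hypothesis says nothing, can enter $\Theta(\mu',\upsilon',\l)$; controlling whether $\mu'-\b^\vee,\upsilon'+\b^\vee\preceq\l$ for such $\b$ is exactly the kind of global $\preceq$-condition that cannot be checked by local pairing computations. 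The paper avoids this entirely by a different route: after the same $W_0$-reduction (which makes $\mu$ dominant and forces $\upsilon$ to be weakly dominant with $\{\a\in\Phi^+;\<\a,\upsilon\>\le-1\}=\Xi_1(\mu,\upsilon)$), it applies Lemma \ref{weak} to decompose $\bar\upsilon-\upsilon$ orthogonally, assumes $\mu\neq\bar\upsilon$, picks $\g\in\Theta(\mu,\bar\upsilon,\l)$ via Proposition \ref{con'}, and manufactures from $\g$ and the decomposition a single root $\a$ that must lie in $\Theta(\mu,\upsilon,\l)=\Xi_1(\mu,\upsilon)$ yet forces $\<\g,\mu\>=-\<\g,\bar\upsilon\>=1$, contradicting dominance of $\mu$ and $\bar\upsilon$. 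The ``moreover'' part is then an immediate application of Lemma \ref{weak} with $\bar\upsilon=\mu$, with no induction on pairs needed. Your opening reduction (the case $\Xi(\mu,\upsilon)=\emptyset$ forces $\mu=\upsilon$) is correct, but the main induction as designed would not produce a proof.
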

\begin{proof}
By Lemma \ref{act}, the lemma dose not change if we replace the pair $(\mu, \upsilon)$ with $(w(\mu), w(\upsilon))$ for any $w \in W_0$. So we can assume $\mu \in Y^+$ and $\{\a \in \Phi^+; \<\a, \upsilon\> \le -1\} \subseteq \Xi(\mu, \upsilon)$. On the other hand, we have, by assumption, $\Xi(\mu, \upsilon) \subseteq \Theta(\mu, \upsilon, \l)=\Xi_1(\mu, \upsilon)$. Therefore, $$\{\a \in \Phi^+; \<\a, \upsilon\> \le -1\}=\Xi_1(\mu, \upsilon).$$ In particular, $\upsilon$ is weakly dominant.

We have to show $\mu= \bar \upsilon$. Assume otherwise. By lemma \ref{weak}, there exists an orthogonal subset $$D_{\upsilon} \subseteq \{\a \in \Phi^+; \<\a, \upsilon\> = -1\}=\Xi_1(\mu, \upsilon)$$ such that $\bar \upsilon - \upsilon=\sum_{\b \in D_{\upsilon}} \b^\vee$.

By Proposition \ref{con'}, $\Theta(\mu, \bar \upsilon, \l) \neq \emptyset$. Choose $\g \in \Theta(\mu, \bar \upsilon, \l)$. We claim

(a) $\pm \g \notin D_{\upsilon} \subseteq \Xi_1(\mu, \upsilon)$.

Indeed, if $-\g \in D_{\upsilon}$, one computes that \begin{align*}\<-\g, \mu-\upsilon\> &=\<-\g, \mu-\bar\upsilon\>+\<-\g, \bar\upsilon-\upsilon\> \\ &=\<-\g, \mu-\bar\upsilon\>+\<-\g, \sum_{\b \in D_{\upsilon}} \b^\vee\> \\ &=\<-\g, \mu-\bar\upsilon\> +2 \le -2+2=0,\end{align*} contradicting the inclusion $\g \in \Xi_1(\mu, \upsilon)$. If $\g \in D_{\upsilon}$, one computes similarly that $\<\g, \mu-\upsilon\> \ge 2+2=4$, which also contradicts the inclusion $\g \in \Xi_1(\mu, \upsilon)$. So (a) is proved.

Thanks to (a) and that $\Phi$ is simply laced, we have $$D_{\upsilon, \g}:=\{\b \in D_{\upsilon}; \<\g, \b^\vee\> \le -1\}=\{\b \in D_{\upsilon}; \<\g, \b^\vee\> = -1\}.$$ Moreover, by the orthogonality of $D_\upsilon$, we have $\a:=\g+\sum_{\b \in D_{\upsilon, \g}} \b \in \Phi$. Note that $\<\b, \a^\vee\> \ge 0$ for $\b \in D_{\upsilon}-D_{\upsilon, \g}$ and $\<\b', \a^\vee\> =1 $ for $\b' \in D_{\upsilon, \g}$. We claim that

(b) $\upsilon+\a^\vee, \mu-\a^\vee \preceq \l$.

We number the roots in $D_{\upsilon}-D_{\upsilon, \g}$ as $\b_1, \b_2, \dots, \b_r$ for some $r \in \NN$. We set $\upsilon_i=\upsilon+\a^\vee+\sum_{k=1}^i \b_k^\vee$. Note that $\upsilon_r = \bar \upsilon+\g^\vee \preceq \l$ since $\g \in \Theta(\mu, \bar \upsilon, \l)$. To show $\upsilon+\a^\vee \preceq \l$, it suffices to show $\upsilon_{i-1} \preceq \upsilon_i$ for $i \in [1, r]$, which follows from the observation: $\<\b_i, \upsilon_i\>=\<\b_i, \upsilon+\a^\vee+\b_i^\vee\> \ge -1+0+2= 1$. Now we show $\mu-\a^\vee \preceq \l$. Similarly, we number the roots in $D_{\upsilon, \g}$ as $\b_1', \b_2', \dots, \b_{r'}'$ and set $\mu_i=\mu-\g^\vee-\sum_{k=1}^i {\b_k'}^\vee$. Note that $\mu_{r'}=\mu-\a^\vee$ and $\mu_0=\mu-\g^\vee \preceq \l$. Now it suffices to show $\mu_{i-1} \succeq \mu_i$ for $i \in [1, r']$, which follows from the observation: $\<\b_i', \mu_{i-1}\>=\<\b_i', \mu-\g^\vee\> = 2$. The claim (b) is proved.

Now one computes that \begin{align*}\tag c \<\a, \mu-\upsilon\> &=\<\a, \mu-\bar \upsilon\> +\<\a, \bar \upsilon -\upsilon\> \\ &=\<\g, \mu-\bar \upsilon\> + \sharp D_{\upsilon, \g} + \sum_{\b \in D_{\upsilon}-D_{\upsilon, \g}}\<\a, \b^\vee\>,\end{align*} where the second equality follow from the fact that $\<\b, \mu\>=\<\b, \bar \upsilon\>=\<\b', \a^\vee\>=1$ for any $\b \in D_{\upsilon}$ and any $\b' \in D_{\upsilon, \g}$. So $\<\a, \mu-\upsilon\> \ge \<\g, \mu-\bar \upsilon\> \ge 2$ and $\a \in \Theta(\mu, \upsilon, \l)=\Xi_1(\mu, \upsilon)$. Thus $\<\a, \upsilon\>=-1$ and $\<\a, \mu-\upsilon\>=2$. By (c), we deduce that $\<\b, \g^\vee\>=0$ for any $\b \in D_{\upsilon}$. Therefore, $\a=\g$ and $\<\g, \mu\>=-\<\g, \upsilon\>=-\<\g, \bar \upsilon\> =1$, which contradicts that $\mu, \bar \upsilon \in Y^+$. Therefore, we must have $\mu = \bar \upsilon$. Now the ``Moreover" part follows from Lemma \ref{weak} by noticing that $\upsilon$ is weakly dominant and that $\{\a \in \Phi^+; \<\a, \upsilon\> \le -1\}=\Xi_1(\mu, \upsilon)$.
\end{proof}

\section{Proof of Proposition \ref{k1}} \label{sec k1}
The aim of this section is to prove Proposition \ref{k1}. Through out this section, we assume $G$ is adjoint and simple; the root system $\Phi$ of $G$ has $h$ connected components, on which $\s$ acts transitively.

\begin{lem} \label{equal}
Let $x, x' \in \bar I_{\l, J, b}$. Then there exist some $W_J$-conjugates $\mu, \mu' \in Y$ of $\mu_x$ and $\mu_{x'}$ respectively such that $\mu- \mu' \in (1-\s) \ZZ\Phi^\vee$.
\end{lem}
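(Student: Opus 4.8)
Recall that $x, x' \in \bar I_{\l, J, b}$ means $\k_J(x) = \k_J(x') = \k_{[b]_{J,\dom}}^{M_J}$, so $x - x' \in (1-\s)\pi_1(M_J)$. Write $x - x' = (1-\s)(z)$ for some $z \in \pi_1(M_J) = Y/\ZZ\Phi_J^\vee$, and lift $z$ arbitrarily to $\zeta \in Y$. The plan is to compare $\mu_x - \mu_{x'}$ (the canonical $J$-minuscule $J$-dominant representatives) with $(1-\s)(\zeta)$. By definition, $\eta_{M_J}(t^{\mu_x}) - \eta_{M_J}(t^{\mu_{x'}}) = x - x' = (1-\s)(z)$ in $\pi_1(M_J)$, so $\mu_x - \mu_{x'} - (1-\s)(\zeta) \in \ZZ\Phi_J^\vee$. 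The goal is therefore to adjust $\zeta$, and to replace $\mu_x, \mu_{x'}$ by $W_J$-conjugates $\mu, \mu'$, so that the residual element of $\ZZ\Phi_J^\vee$ is absorbed, i.e.\ so that $\mu - \mu' = (1-\s)(\zeta')$ for some $\zeta' \in \ZZ\Phi^\vee$ exactly.

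**Key steps.** First I would note that $\ZZ\Phi_J^\vee \subseteq \ZZ\Phi^\vee$, so already $\mu_x - \mu_{x'} - (1-\s)(\zeta) \in \ZZ\Phi^\vee$; the issue is only that this difference need not itself lie in $(1-\s)\ZZ\Phi^\vee$, and that $\zeta$ need not lie in $\ZZ\Phi^\vee$. Here is where the structural hypothesis of this section is used: $G$ is adjoint and simple, and $\s$ permutes the $h$ connected components of $\Phi$ transitively. Adjointness gives $\pi_1(G) = Y/\ZZ\Phi^\vee$ finite, so $(1-\s)$ acting on $Y/\ZZ\Phi^\vee$ has the same rank considerations as on $Y_\QQ$; more importantly, transitivity of $\s$ on the $h$ simple factors means $\ZZ\Phi^\vee$ decomposes $\s$-equivariantly as an induced module from one factor, and on such a module $(1-\s)$ is surjective onto its own image in a controlled way — concretely, $(1-\s)(Y) \cap \ZZ\Phi^\vee = (1-\s)(\ZZ\Phi^\vee) + (\text{something killed by the norm})$, but since the factors are cyclically permuted the norm element $1 + \s + \cdots + \s^{h-1}$ on a single factor lands in that factor's coroot lattice plus a central part that is trivial because $G$ is adjoint and simple. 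I would make this precise by working one $\s$-orbit of simple factors at a time, reducing to the case where $\s^h$ stabilizes each factor.

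**The role of the $W_J$-conjugation.** The freedom to replace $\mu_x$ by a $W_J$-conjugate $\mu$ is what lets me modify $\mu_x$ by elements of $\ZZ\Phi_J^\vee$: for $w \in W_J$, $\mu_x - w(\mu_x) \in \ZZ\Phi_J^\vee$, and as $w$ ranges over $W_J$ one gets a full set of coset representatives issues handled by the fact that $\mu_x$ is $J$-minuscule so its $W_J$-orbit has a transitive, well-understood structure. I would choose $w, w'$ so that $\mu = w(\mu_x)$ and $\mu' = w'(\mu_{x'})$ satisfy $\mu - \mu' \equiv (1-\s)(\zeta') \pmod 0$ with $\zeta' \in \ZZ\Phi^\vee$; the point is that the ``error'' term lives in $\ZZ\Phi_J^\vee$ and can be written as $\mu_x - w(\mu_x)$ minus $\mu_{x'} - w'(\mu_{x'})$ for suitable $w, w'$, possibly after also shifting $\zeta$ by an element of $\ZZ\Phi_J^\vee$ (which does not change $z \in \pi_1(M_J)$ only if that element is in $(1-\s)$-image inside $M_J$, so some care is needed — one may instead shift $\zeta$ freely in $Y$ and track the change in $\mu_x - \mu_{x'}$ as a compensating $W_J$-conjugation).

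**Expected main obstacle.** The delicate point is the interplay between the two lattices $\ZZ\Phi_J^\vee \subseteq \ZZ\Phi^\vee$ and the operator $1 - \s$: I need that every element of $\ZZ\Phi^\vee$ which is congruent mod $(1-\s)Y$ to an element of $(1-\s)Y \cap \ZZ\Phi_J^\vee$-type shape can be realized as $(1-\s)$ of a coroot-lattice element after a $W_J$-twist. This is essentially a lemma about $\s$-coinvariants of coroot lattices for adjoint simple $G$ with $\s$ transitive on factors, and proving it cleanly — rather than by a case check over Dynkin types — will be the main work; I expect to reduce it to the statement that for the standard cyclic permutation action on a sum of $h$ copies of a fixed irreducible root lattice (twisted by a diagram automorphism on one copy), the map $1-\s$ on the coroot lattice has cokernel equal to $\pi_1(G)_\s$, combined with the observation that $J$-minusculeness of $\mu_x$ pins down its $W_J$-orbit tightly enough to supply the needed correction terms.
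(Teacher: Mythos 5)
Your proposal does not close the argument: the two steps you yourself flag as ``the main work'' are precisely the content of the lemma, and neither is carried out. (For reference, the paper proves this by citing \cite[Proposition 4.3.1]{CKV}, remarking that only the $J$-minusculeness of $\mu_x,\mu_{x'}$ is used there.) The first gap is the absorption step. After writing $\mu_x-\mu_{x'}-(1-\s)(\zeta)\in\ZZ\Phi_J^\vee$, you propose to realize this error as $(\mu_x-w(\mu_x))-(\mu_{x'}-w'(\mu_{x'}))$ for suitable $w,w'\in W_J$, possibly after re-choosing $\zeta$. But $\{\mu_x-w(\mu_x);\ w\in W_J\}$ is a finite set, whereas the error is only determined modulo $(1-\s)Y\cap\ZZ\Phi_J^\vee$, so there is no a priori reason it falls into the finite set of achievable corrections; ``$J$-minusculeness pins down the $W_J$-orbit tightly enough to supply the needed correction terms'' is an assertion, not an argument. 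Any actual proof has to exploit structure you never invoke: that $x,x'\in\bar I_{\l,J,b}$ forces $\mu_x-\mu_{x'}\in\ZZ\Phi^\vee$ (both are $\preceq\l$), that $b_x=t^{\mu_x}w_x$ and $b_{x'}=t^{\mu_{x'}}w_{x'}$ lie in $\Omega_J$ and are $\s$-conjugate by an element of $\Omega_J$, and that $\nu_{b_x}=\nu_{b_{x'}}=\nu_{[b]}^G$ (Lemma \ref{dominant}). None of $b_x$, $w_x$, or the Newton point appears in your sketch.

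The second gap is the passage from $(1-\s)Y$ to $(1-\s)\ZZ\Phi^\vee$. Your justification --- that the norm on a single factor lands in its coroot lattice ``plus a central part that is trivial because $G$ is adjoint and simple'' --- is backwards: adjointness means $X=\ZZ\Phi$, so $Y$ is the full coweight lattice and $\pi_1(G)=Y/\ZZ\Phi^\vee$ is typically nontrivial. Hence the needed statement (that the relevant element of $(1-\s)Y\cap\ZZ\Phi^\vee$ can be rewritten as $(1-\s)(y)$ with $y\in\ZZ\Phi^\vee$) is itself something to prove, not a consequence of adjointness, and your sketch only ``expects to reduce it'' to a coinvariance statement without doing so. As written, the proposal is a plan whose decisive lemmas are left open.
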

\begin{proof}
The proof is the same as that of \cite[Proposition 4.3.1]{CKV}, noticing that we only use the property that $\mu_x, \mu_{x'}$ are $J$-minuscule.
\end{proof}

Let $\a \in \Phi$, we denote by $\co_\a$ the $\s$-orbit of $\a$.
\begin{lem} \cite[Lemma 4.2.1 \& Example 4.2.2]{CKV} \label{graph}
We have the following properties:

(i) Let $\a \in \Phi$ and $\b \neq \g \in \co_\a$, then $\<\b, \g^\vee\> \le 0$. Moreover, the equality holds unless each connected component of $\Phi$ is of type $A_m$ with $m \in \NN$ even.

(ii) Assume $\a, \b$ are in the same connected component of $\Phi$ and $\<\s^h(\a), \a^\vee\>=\<\s^h(\b), \b^\vee\>=-1$. Then $\<\a, \b^\vee\> \neq 0$.
\end{lem}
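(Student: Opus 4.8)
The plan is to reduce the statement to a single connected component of $\Phi$ and then inspect the short list of diagram automorphisms. Roots lying in distinct connected components of $\Phi$ are orthogonal, so in (i) I may assume $\b,\g$ lie in one component $\Phi_0$; since $\s$ preserves $B\supseteq T$ it permutes $\Pi_0$, hence $\tau:=\s^h|_{\Phi_0}$ is a genuine diagram automorphism of $\Phi_0$, of order $d\in\{1,2,3\}$, with $d=3$ only when $\Phi_0$ has type $D_4$. If $d=1$ then $\co_\a\cap\Phi_0$ is a single root and there is nothing to prove; otherwise $\Phi_0$ has type $A_n$ ($n\ge 2$), $D_n$ ($n\ge 4$) or $E_6$, all simply laced, and $\tau=-w_0$ in type $A$. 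Replacing $(\b,\g)$ by $(\tau^{-k}\b,\tau^{-k}\g)$ I may take $\b=\a_0\in\Phi_0$ and $\g=\tau^m(\a_0)$ with $1\le m<d$; fixing the $\s$-invariant, $W_0$-invariant form normalised so roots have squared length $2$, one has $\<\b,\g^\vee\>=(\a_0,\tau^m\a_0)$.

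The key point I would establish is that $\tau$ negates no root of $\Phi_0$ (in type $A$ because $w_0$ fixes no root; in types $D$ and $E_6$ from the standard models $e_n\mapsto -e_n$ and the order-$2$, order-$3$ automorphisms of $E_6$, $D_4$). Granting this, for a size-$2$ orbit $0\le\|\a_0+\tau\a_0\|^2=4+2(\a_0,\tau\a_0)$ rules out the value $-2$, while $+1$ is excluded because it would make $\a_0-\tau\a_0$ a root negated by $\tau$; hence $(\a_0,\tau\a_0)\in\{-1,0\}$. For a size-$3$ orbit ($\Phi_0=D_4$), $\tau$-symmetry gives $(\a_0,\tau\a_0)=(\a_0,\tau^2\a_0)=:c$ with $6+6c\ge 0$, and a direct look at triality (its size-$3$ orbits are the short-root directions of the folded $G_2$) gives $c=0$. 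Thus $\<\b,\g^\vee\>\le 0$ in all cases, which is the inequality in (i). For the ``moreover'' clause I would check, from the same explicit models together with the classical foldings $D_n\to B_{n-1}$, $E_6\to F_4$, $D_4\to G_2$, that $(\a_0,\tau\a_0)=0$ whenever $\tau\a_0\ne\a_0$ in every type except $A_{2n}$; and that for $A_{2n}$, writing $\Phi_0=\{e_i-e_j\}$ with $\tau:e_i-e_j\mapsto e_{2n+2-j}-e_{2n+2-i}$, the value $-1$ is attained precisely at the $4n$ roots $\pm(e_{n+1}-e_k)$ with $k\ne n+1$, i.e. the roots ``through the middle node''. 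This completes (i).

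For (ii), by (i) the hypothesis $\<\s^h(\a),\a^\vee\>=-1$ can hold only if the common component $\Phi_0$ of $\a$ and $\b$ has type $A_{2n}$, and then, by the computation just recorded, both $\a$ and $\b$ lie among the roots $\pm(e_{n+1}-e_k)$. Any two such roots involve the basis vector $e_{n+1}$ with signs that do not cancel, so their inner product is $\pm 1$ or $\pm 2$; hence $\<\a,\b^\vee\>\ne 0$.

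The step I expect to be the main obstacle is making the ``moreover'' verification in (i) uniform — in particular excluding $\tau$-anti-invariant roots and handling $D_4$-triality cleanly — which I would streamline through the classical foldings $A_{2n}\to BC_n$, $A_{2n+1}\to C_{n+1}$, $D_n\to B_{n-1}$, $E_6\to F_4$, $D_4\to G_2$: only $A_{2n}\to BC_n$ produces a non-reduced system, and this is exactly the exceptional case of (i) and the source of the (nonvacuous) hypothesis of (ii).
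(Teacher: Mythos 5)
Your proof is correct. The paper does not prove this lemma — it is quoted from \cite[Lemma 4.2.1 \& Example 4.2.2]{CKV} — and your argument (reduce to a single component, observe that $\s^h$ restricts to a diagram automorphism of a simply laced component, rule out $\<\b,\g^\vee\>\in\{1,2,-2\}$ via the fact that a diagram automorphism negates no root, and identify the exceptional value $-1$ with the non-reduced folding $A_{2n}\to BC_n$, whence the explicit description $\pm(e_{n+1}-e_k)$ used for (ii)) is essentially the standard one carried out there.
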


\begin{lem} \rm{(cf. \cite[Proposition 4.3.2]{CKV})} \label{conv}
Let $\mu \neq \upsilon \in Y$ and $\l \in Y^+$ such that $\mu-\upsilon \in (1-\s) Y$ and $\mu, \upsilon \preceq \l$. Then there exist $\a \in \Phi$ and $r \in [1, h-1]$ if $\sharp \co_{\a}=h$ (resp. $r \in [1, h]$ if $\sharp \co_{\a}=2h$ and $r \in [1, 2h-1]$ if $\sharp \co_{\a}=3h$) such that

(a) $\s^r(\a) \neq \a$;

(b) either $\mu, \mu-\a^\vee, \mu+\s^r(\a^\vee), \mu-\a^\vee+\s^r(\a^\vee) \preceq \l$ or $\upsilon, \upsilon+\a^\vee, \upsilon-\s^r(\a^\vee), \upsilon+\a^\vee-\s^r(\a^\vee) \preceq \l$;

(c) $\mu-\upsilon-\a^\vee+\s^r(\a^\vee) \prec \mu-\upsilon$.
\end{lem}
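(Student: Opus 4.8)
The plan is to realize the step $\mu-\upsilon\mapsto \mu-\upsilon-\a^\vee+\s^r(\a^\vee)$ as an elementary ``square move''. First note that $-\a^\vee+\s^r(\a^\vee)=-(1-\s^r)(\a^\vee)$ again lies in $(1-\s)Y$, so the operation is compatible with the standing hypothesis $\mu-\upsilon\in(1-\s)Y$, and the four translates in (b) are exactly the corners of the square spanned by $\a^\vee$ and $\s^r(\a^\vee)$ through $\mu$ (resp. $\upsilon$). Thus the whole content is to produce one root $\a$ with $\s^r(\a)\neq\a$ for a suitable $r$ in the prescribed range, such that $\<\a,\mu-\upsilon\>\ge 2$ together with $\mu-\a^\vee,\upsilon+\a^\vee\preceq\l$ — i.e.\ $\a\in\Theta(\mu,\upsilon,\l)$ — and such that, after adjoining $\s^r(\a^\vee)$, the remaining two corners of one of the two squares stay $\preceq\l$. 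The height drop (c) I would dispose of at the outset: once $\<\a,\mu-\upsilon\>$ is large enough — at least $2$, and, exactly as in the proof of Lemma~\ref{con}, in the tight cases one arranges $\ge 3$ — the bounded perturbation $-\a^\vee+\s^r(\a^\vee)$ cannot prevent the dominance order from strictly decreasing.

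To produce $\a$ I would split according to whether $\Theta(\mu,\upsilon,\l)$ is ``slack'' or ``tight''. In the slack case — when $\Theta(\mu,\upsilon,\l)$ contains a root $\a$ with $\<\a,\mu-\upsilon\>\ge 3$, or more generally a root outside the special configuration of Lemma~\ref{special} — there is room to spend: along the orbit one has $\sum_{\b\in\co_\a}\<\b,\mu-\upsilon\>=0$, since the orbit sum of any element of $(1-\s)Y$ pairs to zero, so some $\s^r(\a)$ with $r\ge 1$ pairs non-positively with $\mu-\upsilon$, and taking $r$ extremal in the allowed range gives the control on $\<\s^r(\a),\mu\>$ and $\<\s^r(\a),\upsilon\>$ needed for (b). If instead $\Theta(\mu,\upsilon,\l)=\Xi_1(\mu,\upsilon)$, then $\Phi$ is simply laced and Lemma~\ref{special} applies: $\mu$ and $\upsilon$ are $W_0$-conjugate and $\mu-\upsilon=\sum_{\b\in\D_{\mu,\upsilon}}\b^\vee$ for an orthogonal set $\D_{\mu,\upsilon}\subseteq\Xi_1(\mu,\upsilon)$. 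One then analyzes the $\s$-action: since this sum lies in $(1-\s)Y$ and the $\b$ are mutually orthogonal, the orbits $\co_\b$ must organize into one of three patterns, which is precisely the source of the trichotomy $\sharp\co_\a\in\{h,2h,3h\}$ and the corresponding ranges for $r$; the $2h$ and $3h$ patterns occur only for components of type $A$ and $D$ (resp.\ the triality of $D_4$), where the non-vanishing $\<\s^h(\a),\a^\vee\>=-1$ is supplied by Lemma~\ref{graph}(ii). A non-simply-laced $\Phi$ only meets the case $\sharp\co_\a=h$, handled in the same way, using Lemma~\ref{graph}(i) (with its type-$A_{\mathrm{even}}$ caveat) to control the cross-pairing $\<\a,\s^r(\a)^\vee\>$.

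It then remains to verify (b), that all four corners of the chosen square are $\preceq\l$. Here I would follow the pattern of the proofs of Lemmas~\ref{con} and~\ref{special}: reduce ``$\preceq\l$'' to weak dominance by Lemma~\ref{Gashi} (and, for long roots, use that adjoining a positive long coroot below $\l$ stays $\preceq\l$ automatically), checking weak dominance of each corner via the inner-product estimates of Lemmas~\ref{minus} and~\ref{key}; when a corner fails on the $\mu$-side one passes to the mirror square built on $\upsilon$, the two options in (b) being dual under $\Theta(\mu,\upsilon,\l)=-\Theta(\upsilon,\mu,\l)$ (Lemma~\ref{act}). The degenerate case $\Xi(\mu,\upsilon)=\emptyset$ is reduced, by Lemma~\ref{pm}, to $\mu,\upsilon\in Y^+$ and then to Proposition~\ref{con'}, pulling the resulting root back along the Weyl element.

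I expect the main obstacle to be precisely (b) in the borderline configurations: once $\s^r(\a^\vee)$ is adjoined, the corners $\mu+\s^r(\a^\vee)$ and $\mu-\a^\vee+\s^r(\a^\vee)$ may overshoot $\l$, and ruling this out forces the careful, case-by-case choice of $\a$ and $r$ — especially in the tight simply-laced case, where there is essentially no slack in the inner products and one must exploit the orthogonality of $\D_{\mu,\upsilon}$ together with the precise orbit combinatorics. Everything else — the height drop (c), the genericity of the chosen orbit element, and the passage to weak dominance — should be routine given the lemmas already in place.
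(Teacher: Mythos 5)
Your overall route coincides with the paper's: start from $\Theta(\mu,\upsilon,\l)\neq\emptyset$ (Proposition \ref{con'}), use $\sum_{k}\<\s^k(\g),\mu-\upsilon\>=0$ to find an orbit element pairing negatively with $\mu-\upsilon$, and fall back on Lemma \ref{special} together with Lemma \ref{graph} when $\Theta(\mu,\upsilon,\l)=\Xi_1(\mu,\upsilon)$. But there are two genuine gaps. First, your dismissal of (c) is not correct as stated: the strict decrease $\mu-\upsilon-\a^\vee+\s^r(\a^\vee)\prec\mu-\upsilon$ does not follow from $\<\a,\mu-\upsilon\>$ being large; it requires $\<\s^r(\a),\mu-\upsilon-\a^\vee\>\le -1$, so you must choose $r$ with $\<\s^r(\a),\mu-\upsilon\>\le -1$ (strictly negative, not merely ``non-positive'' as you write) and then correct for $\<\s^r(\a),\a^\vee\>$ — this is exactly why the paper introduces $E_{\g,\mu,\upsilon}$ and splits on whether some $j\in E_{\g,\mu,\upsilon}$ has $\<\s^j(\g),\g^\vee\>=0$ (where the correction vanishes) or whether one is forced into type $A_{\mathrm{even}}$ with $\<\s^h(\g),\g^\vee\>=-1$, in which case one needs the stronger bound $\<\s^h(\g),\mu-\upsilon\>\le-2$ coming from $E_{\g,\mu,\upsilon}=\{h\}$.

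Second, and more seriously, you explicitly defer the verification of (b) in the borderline configurations, and that is where essentially all the work lies. In the tight case $\Theta(\mu,\upsilon,\l)=\Xi_1(\mu,\upsilon)$ with $r=h$ one must (i) when some $\g\in\Theta-\Xi_1$ exists but $\<\s^h(\g),\mu\>=-\<\s^h(\g),\upsilon\>=-1$, switch to $\a=-\s^h(\g)\in\Xi_1\subseteq\Theta$ and rerun the estimate; (ii) when $\<\s^i(\g),\mu-\upsilon\>\ge1$ for some $i\neq h$, extract $\<\s^h(\g),\mu-\upsilon\>\le-3$; and (iii) show that the fully degenerate configuration $\<\s^k(\g),\mu-\upsilon\>=0$ for all $\g\in\Theta$ and all $k\neq h$ simply cannot occur — this last point is proved by combining the orthogonal decomposition $\mu-\upsilon=\sum_{\b\in\D_{\mu,\upsilon}}\b^\vee$ of Lemma \ref{orth} with Lemma \ref{graph}(ii) applied both within a connected component and across components via $\s^{k_0}$. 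You name the right tools but do not carry out any of these steps, and without (iii) in particular the argument does not close. The range restriction on $r$ is also not a consequence of orbit patterns of $\D_{\mu,\upsilon}$ as you suggest; it is obtained by the elementary symmetrization $(\a,j)\mapsto(-\s^j(\a),\sharp\co_\a-j)$ when $j$ falls outside the prescribed interval.
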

\begin{proof}
By Proposition \ref{con'}, $\Theta(\mu, \upsilon, \l) \neq \emptyset$. Let $\g \in \Theta(\mu, \upsilon, \l)$. since $\mu-\upsilon \in (1-\s) Y$, we have $\sum_{k=1}^{\sharp \co_\g}\<\s^k(\g), \mu-\upsilon\> = 0$. Define $$E_{\g, \mu, \upsilon}=\{k \in [1, \sharp \co_\g]; \<\s^k(\g), \mu-\upsilon\> \le -1 \}.$$ Since $\<\g, \mu-\upsilon\> \ge 2$, we have $E_{\g, \mu, \upsilon} \neq \emptyset$.

Case(1): there exists $\g \in \Theta(\mu, \upsilon, \l)$, $j \in E_{\g, \mu, \upsilon}$ such that $\<\s^j(\g), \g^\vee\>=0$. Then $\s^j(\g) \neq \pm \g$. If $\<\s^j(\g), \upsilon\> \ge 1$, then $\<\s^j(\g), \upsilon\>=\<\s^j(\g), \upsilon+\g^\vee\> \ge 1$, which implies $\upsilon-\s^j(\g^\vee) \preceq \upsilon \preceq \l$ and $\upsilon+\g^\vee-\s^j(\g^\vee) \preceq \upsilon+\g^\vee \preceq \l$. Otherwise, we have $\<\s^j(\g), \mu\> \le -1$. Then $\<\s^j(\g), \mu\>=\<\s^j(\g), \mu-\g^\vee\> \le -1$ and we have $\mu+\s^j(\g^\vee) \preceq \mu \preceq \l$ and $\mu-\g^\vee+\s^j(\g^\vee) \preceq \mu-\g^\vee \preceq \l$ similarly. Moreover, note that $\<\s^j(\g), \mu-\upsilon-\g^\vee\>= \<\s^j(\g), \mu-\upsilon\> \le -1$ and $\<\g, \mu-\upsilon\> \ge 2$. One deduces that $$\mu-\upsilon-\g^\vee+\s^j(\g^\vee) \preceq \mu-\upsilon-\g^\vee \prec \mu-\upsilon.$$ If $\sharp \co_\a=h$, then $E_{\g, \mu, \upsilon} \subseteq [1, h-1]$, and we take $\a=\g$ and $r=j \in [1, h-1]$. If $\sharp \co_\g =2h$ (resp. $\sharp \co_\g=3h$). We set $\a=\g$ and $r=j$ if $j \le h$ (resp. $j \le 2h-1$) and set $\a=-\s^j(\g)$ and $r=\sharp \co_\g-j$ otherwise. Now one checks that the lemma follows in this case.

Case(2): Case(1) fails. By Lemma \ref{graph} (i), each connected component of $\Phi$ is of type $A_m$ with $m$ even. Moreover, we have $E_{\g, \mu, \upsilon}=\{h\}$, $\sharp \co_\g=2h$ and $\<\g, \s^h(\g)\>=-1$ for any $\g \in \Theta(\mu, \upsilon, \l)$. We take $r=h$ and choose $\a$ from $\Theta(\mu, \upsilon, \l)$. The precise choice of $\a$ depends on the following subcases. However, (a) always holds since $\<\s^h(\g), \mu-\upsilon\> \le -\<\g, \mu-\upsilon\> \le -2$ for any $\g \in \Theta(\mu, \upsilon, \l)$.

Case(2.1): there exists some $\g \in \Theta(\mu, \upsilon, \l)- \Xi_1(\mu, \upsilon)$. Note that $\<\s^h(\g), \mu-\upsilon\> \le -2$ since $E_{\g, \mu, \upsilon}=\{h\}$. So one of the following two cases occurs:

(2.1.1): $\<\s^h(\g), \mu\> \le -2$ or $\<\s^h(\g), \upsilon\> \ge 2$;

(2.1.2): $\<\s^h(\g), \mu\>=-\<\s^h(\g), \upsilon\>=-1$, that is, $-\s^h(\g) \in \Xi_1(\mu, \upsilon)$.

In (2.1.1), we take $\a=\g$. Without loss of generality, we may assume $\<\s^h(\a), \upsilon\> \ge 2$. Then $\<\s^h(\a), \upsilon\>, \<\s^h(\a), \upsilon+\a^\vee\> \ge 1$ and hence $\upsilon-\s^h(\a^\vee), \upsilon+\a^\vee-\s^h(\a^\vee), \upsilon+\a^\vee \preceq \l$. Therefore, (b) holds.

In (2.1.2), we take $\a=-\s^h(\g) \in \Xi_1(\mu, \upsilon) \subseteq \Theta(\mu, \upsilon, \l)$. Then either $\<\s^h(\a), \mu\> \le -2$ or $\<\s^h(\a), \upsilon\> \ge 2$ since $\<\s^h(\a), \mu-\upsilon\> \le -2$ and $-\s^h(\a)=\g \notin \Xi_1(\mu, \upsilon)$. Therefore, (b) follows similarly as in (2.1.1).

Case (2.2): $\Theta(\mu, \upsilon, \l)=\Xi_1(\mu, \upsilon)$.

(2.2.1): $\<\s^i(\g), \mu-\upsilon\> \ge 1$ for some $\g \in \Theta(\mu, \upsilon, \l)$ and some $i \in [1, 2h-1] - \{h\}$. Then $\<\s^h(\g), \mu-\upsilon\> \le -3$, which implies either $\<\s^h(\g), \mu\> \le -2$ or $\<\s^h(\g), \upsilon\> \ge 2$. Then (b) follows similarly as in (2.1.1) if we take $\a=\g$.

(2.2.2): $\<\s^k(\g), \mu-\upsilon\> =0$ for any $\g \in \Theta(\mu, \upsilon, \l)$ and any $k \in [1, 2h-1] - \{h\}$. We show this case dose not occur. By Corollary \ref{orth}, there exists an orthogonal subset $\D_{\mu, \upsilon} \subseteq \Xi_1(\mu, \upsilon)$ such that $\mu-\upsilon=\sum_{\b \in \D_{\mu, \upsilon}} \b^\vee$. Since $0 \neq \mu-\upsilon \in (1-\s)Y$, we have $\sharp \D_{\mu, \upsilon} \ge 2$. Choose $\g_1 \neq \g_2 \in \D_{\mu, \upsilon}$. If $\g_1, \g_2$ are in the same connected component of $\Phi$, it contradicts Lemma \ref{graph} (ii) in view of the assumption of Case (2). So each connected component of $\Phi$ contains at most one element of $\D_{\mu, \upsilon}$. Now let $k_0 \in [1, 2h-1]-\{h\}$ such that $\s^{k_0}(\g_1), \g_2$ are in the same connected component of $\Phi$. By the assumption of (2.2.2), $\<\s^{k_0}(\g_1), \g_2^\vee\>=\<\s^{k_0}(\g_1), \mu-\upsilon\>=0$, which again contradicts Lemma \ref{graph} (ii).

Finally, it remains to show (c) for Case (2). Since $\a \in \Theta(\mu, \upsilon, \l)$ and $E_{\a, \mu, \upsilon}=\{r\}$ (with $r=h$), we have $\<\s^r(\a), \mu-\upsilon\> \le -2$ and hence $\<\s^r(\a), \mu-\upsilon-\a^\vee\> \le -1$ by (a). Therefore,  $$\mu-\upsilon-\a^\vee+\s^r(\a^\vee) \preceq \mu-\upsilon-\a^\vee \prec \mu-\upsilon$$ and the proof is finished.
\end{proof}

\begin{cor} \label{conv'}
Let $\mu, \upsilon \in Y$ and $\l \in Y^+$ such that $\mu-\upsilon \in (1-\s) Y$ and $\mu, \upsilon \preceq \l$. Then there exist $\mu_j \in Y$, $\a_j \in \Phi$, and $r_j \in [1, h-1]$ if $\sharp \co_{\a_j}=h$ (resp. $r_j \in [1, h]$ if $\sharp \co_{\a_j}=2h$ and $r_j \in [1, 2h-1]$ if $\sharp \co_{\a_j}=3h$) such that $\mu_0=\mu$, $\mu_m=\upsilon$, $\mu_{j+1}-\mu_j=\a_j^\vee-\s^{r_j}(\a_j^\vee)$ and $$\mu_j, \ \mu_j+\a_j^\vee, \ \mu_j-\s^{r_j}(\a_j^\vee), \ \mu_j+\a_j^\vee-\s^{r_j}(\a_j^\vee)  \preceq \l$$  for $j \in [0, m-1]$.
\end{cor}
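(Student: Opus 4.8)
The plan is to obtain the chain by iterating Lemma~\ref{conv}, with part~(c) of that lemma supplying termination. I would argue by induction on $\overline{\mu-\upsilon}\in Y^+$ with respect to the dominance order $\preceq$, which is legitimate because the set of dominant coweights $\preceq\overline{\mu-\upsilon}$ is finite. The base case $\mu=\upsilon$ is trivial (take $m=0$), so assume $\mu\neq\upsilon$ and apply Lemma~\ref{conv} to $(\mu,\upsilon)$: this yields $\a\in\Phi$ and an exponent $r$ in the range dictated by $\sharp\co_\a$, with $\s^r(\a)\neq\a$ and conditions (b) and (c) holding.

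Suppose first that the first alternative of (b) holds. I would set $\mu_1=\mu-\a^\vee+\s^r(\a^\vee)$. Then $\mu_1$ is the last of the four coweights appearing in (b), hence $\mu_1\preceq\l$; also $\mu_1-\upsilon=(\mu-\upsilon)-\a^\vee+\s^r(\a^\vee)\in(1-\s)Y$, since $\a^\vee-\s^r(\a^\vee)=(1-\s)\bigl(\sum_{i=0}^{r-1}\s^i(\a^\vee)\bigr)$; and $\mu_1-\upsilon\prec\mu-\upsilon$ by (c), so the induction parameter $\overline{\mu-\upsilon}$ strictly drops. The induction hypothesis applied to $(\mu_1,\upsilon)$ therefore provides a chain from $\mu_1$ to $\upsilon$ with the properties in the statement, and it remains only to prepend the single step $\mu_0:=\mu\to\mu_1$. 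For this I take $\a_0=-\a$ and $r_0=r$: since $\sharp\co_{-\a}=\sharp\co_\a$ the exponent $r_0$ lies in the required range, one checks $\mu_1-\mu_0=\a_0^\vee-\s^{r_0}(\a_0^\vee)$, and the four coweights $\mu_0,\ \mu_0+\a_0^\vee,\ \mu_0-\s^{r_0}(\a_0^\vee),\ \mu_0+\a_0^\vee-\s^{r_0}(\a_0^\vee)$ work out to $\mu,\ \mu-\a^\vee,\ \mu+\s^r(\a^\vee),\ \mu_1$, i.e.\ precisely the quadruple furnished by the first alternative of (b), so all four are $\preceq\l$.

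If instead the second alternative of (b) holds, the argument is symmetric with the two endpoints interchanged: I set $\upsilon_1=\upsilon+\a^\vee-\s^r(\a^\vee)$, note $\upsilon_1\preceq\l$ (it is the last of the four coweights in the second alternative of (b)), $\mu-\upsilon_1\in(1-\s)Y$ and $\mu-\upsilon_1\prec\mu-\upsilon$ by (c), apply the induction hypothesis to $(\mu,\upsilon_1)$ to get a chain from $\mu$ to $\upsilon_1$, and then append the single step $\upsilon_1\to\upsilon$ with last root $-\a$ and last exponent $r$. Here the associated quadruple of coweights comes out as $\upsilon_1,\ \upsilon-\s^r(\a^\vee),\ \upsilon+\a^\vee,\ \upsilon$, which is again the quadruple of the second alternative of (b), hence all $\preceq\l$. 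This closes the induction.

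I do not expect a genuine obstacle here: the entire root-system case analysis is already contained in Lemma~\ref{conv}, and the corollary is just a bookkeeping iteration of it. The one point that requires attention is the sign and $\s$-power matching --- replacing $\a$ by $-\a$ when prepending or appending --- so that each elementary move has exactly the prescribed shape $\mu_{j+1}-\mu_j=\a_j^\vee-\s^{r_j}(\a_j^\vee)$ and the accompanying four coweights at step $j$ coincide with the quadruple produced by the lemma.
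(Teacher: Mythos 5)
Your proposal is correct and is essentially the paper's own proof: the same induction on $\mu-\upsilon$ via Lemma \ref{conv}, with part (c) driving termination and the two alternatives of (b) handled by prepending a step at the $\mu$-end or appending one at the $\upsilon$-end. The only cosmetic difference is where the sign flip $\a\mapsto-\a$ is performed (the paper restates the lemma with $-\a$ before using it, you keep the lemma's $\a$ and use $-\a$ as the chain root), and you are in fact slightly more explicit than the paper in checking that the new pair still satisfies the $(1-\s)Y$ hypothesis.
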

\begin{proof}
We argue by induction on $\mu-\upsilon$. If $\mu-\upsilon=0$, then $\mu=\upsilon$ and there is nothing to prove. We assume the lemma holds for any pair $(\mu', \upsilon')$ such that $\mu', \upsilon' \preceq \l$ and $\mu'-\upsilon' \prec \mu-\upsilon$. We show it also holds for the pair $(\mu, \upsilon)$. Applying Lemma \ref{conv}, there exists $\a \in \Phi$ and $r \in [1, h-1]$ if $\sharp \co_{\a}=h$ (resp. $r \in [1, h]$ if $\sharp \co_\a=2h$ and $r \in [1, 2h-1]$ if $\sharp \co_\a=3h$) such that $\mu-\upsilon+\a^\vee-\s^r(\a^\vee) \prec \mu-\upsilon$ and one of the following two statements holds:

(a) $\mu, \ \mu+\a^\vee, \ \mu-\s^r(\a^\vee), \ \mu+\a^\vee-\s^r(\a^\vee) \preceq \l$

(b) $\upsilon, \ \upsilon-\a^\vee, \ \upsilon+\s^r(\a^\vee), \ \upsilon-\a^\vee+\s^r(\a^\vee) \preceq \l$.

We set $\eta=\mu+\a^\vee-\s^r(\a^\vee)$ (resp. $\eta=\upsilon-\a^\vee+\s^r(\a^\vee)$) if (a) (resp. (b)) occurs. Then $\eta \preceq \l$ and $\eta-\upsilon \prec \mu-\upsilon$ (resp. $\mu-\eta \prec \mu-\upsilon$). The lemma then follows by induction hypothesis on the pair $(\eta, \upsilon)$ (resp. $(\mu, \eta)$).
\end{proof}

\

Inspired by \cite[Definition 4.4.8]{CKV}, we introduce the following notations, which play a crucial role throughout the paper.

Let $x, x' \in \pi(M_J)$. We write $x \overset {(\a, r)} \to x'$ for some $\a \in \Phi-\Phi_J$ and some $r \in \NN$ if $x'-x=\a^\vee-\s^r(\a^\vee)$ and $\mu_x, \mu_{x+\a^\vee}, \mu_{x-\s^r(\a^\vee)}, \mu_{x'} \preceq \l$. We write $x \overset {(\a, r)} \rightarrowtail x'$ if $x \overset {(\a, r)} \to x'$ and neither $$x \overset {(\a, i)} \to x+\a^\vee-\s^i(\a^\vee) \overset {(\s^i(\a), r-i)} \to x'$$ nor $$x \overset {(\s^i(\a), r-i)} \to x+\s^i(\a^\vee)-\s^r(\a^\vee) \overset {(\a, i)} \to x'$$ for any $i \in [1, r-1]$.

\begin{lem} \label{convv'}
Let $x, x' \in \bar I_{\l, J, b}$ such that $x \overset {(\a, r)} \to x'$ for some $\a \in \Phi-\Phi_J$ and some $r \in \NN$. Then there exist $x_j \in \bar I_{\l ,J, b}$, $\a_j \in \co_\a$ and $r_j \in [1, r]$ such that $x_0=x$, $x_m=x'$ and $x_j \overset {(\a_j, r_j)} \rightarrowtail x_{j+1}$ for $j \in [0, m-1]$. Moreover, we have $\sum_{i=0}^{r-1} \s^i(\a^\vee)=\sum_{j=0}^{m-1} \sum_{k=0}^{r_j-1} \s^k(\a_j^\vee)$.
\end{lem}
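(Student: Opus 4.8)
The plan is to argue by induction on $r$. If $r \le 1$ then the index set $[1, r-1]$ is empty, so the condition defining $\rightarrowtail$ is vacuous and $x \overset {(\a, r)} \to x'$ already gives $x \overset {(\a, r)} \rightarrowtail x'$; the same conclusion holds, for any $r$, whenever $x \overset {(\a, r)} \to x'$ happens to be a $\rightarrowtail$-step. In that case one takes $m=1$, $x_0=x$, $x_1=x'$, $\a_0=\a$, $r_0=r$, and the asserted identity $\sum_{i=0}^{r-1}\s^i(\a^\vee)=\sum_{k=0}^{r_0-1}\s^k(\a_0^\vee)$ is trivial.

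Assume now $r \ge 2$ and that $x \overset {(\a, r)} \to x'$ is not a $\rightarrowtail$-step. By the definition of $\rightarrowtail$ there is an index $i \in [1, r-1]$ admitting one of the two factorizations displayed there; in both situations we obtain an element $y \in \pi_1(M_J)$ and a factorization $x \overset {(\b_1, s_1)} \to y \overset {(\b_2, s_2)} \to x'$ with $\b_1, \b_2 \in \{\a, \s^i(\a)\} \subseteq \co_\a$ and $\{s_1, s_2\}=\{i, r-i\}$, hence $s_1, s_2 \in [1, r-1]$. I would first check $y \in \bar I_{\l, J, b}$: the inequality $\mu_y \preceq \l$ is one of the four conditions built into the first $\to$-step, while $y-x$ equals $\a^\vee-\s^i(\a^\vee)$ or $\s^i(\a^\vee)-\s^r(\a^\vee)$, so it lies in $(1-\s^i)\ZZ\Phi^\vee=(1-\s)(1+\s+\cdots+\s^{i-1})\ZZ\Phi^\vee \subseteq (1-\s)Y$; therefore $\k_J(y)=\k_J(x)=\k_{[b]_{J, \dom}}^{M_J}$. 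Since $\Phi_J$ is $\s$-stable, $\co_\a \subseteq \Phi-\Phi_J$, so the induction hypothesis applies to each of the steps $x \overset {(\b_1, s_1)} \to y$ and $y \overset {(\b_2, s_2)} \to x'$, both with parameter strictly smaller than $r$. Concatenating the two resulting chains produces a chain from $x$ to $x'$ whose intermediate vertices all lie in $\bar I_{\l, J, b}$, whose roots all lie in $\co_{\b_1} \cup \co_{\b_2}=\co_\a$, and whose indices all lie in $[1, s_1] \cup [1, s_2] \subseteq [1, r]$.

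For the additive identity I would combine the identities supplied by the induction hypothesis for the two sub-chains with the telescoping computation
\begin{align*}
\sum_{k=0}^{s_1-1}\s^k(\b_1^\vee)+\sum_{k=0}^{s_2-1}\s^k(\b_2^\vee)
&=\sum_{k=0}^{i-1}\s^k(\a^\vee)+\sum_{k=0}^{r-i-1}\s^{k+i}(\a^\vee) \\
&=\sum_{k=0}^{i-1}\s^k(\a^\vee)+\sum_{l=i}^{r-1}\s^{l}(\a^\vee)=\sum_{k=0}^{r-1}\s^k(\a^\vee),
\end{align*}
valid for the factorization $\b_1=\a$, $s_1=i$, $\b_2=\s^i(\a)$, $s_2=r-i$, with the obvious symmetric variant for the other factorization. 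This gives $\sum_{j=0}^{m-1}\sum_{k=0}^{r_j-1}\s^k(\a_j^\vee)=\sum_{i=0}^{r-1}\s^i(\a^\vee)$, as required. The whole argument is a bookkeeping induction; the only points demanding care are that the intermediate vertices produced by splitting remain in $\bar I_{\l, J, b}$ — which rests precisely on $(1-\s^i)Y \subseteq (1-\s)Y$, so that the Kottwitz invariant is unchanged — and that the splitting parameters strictly decrease, which makes the induction well-founded. I do not anticipate any genuine obstacle.
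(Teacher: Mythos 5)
Your proof is correct and is precisely the argument the paper has in mind: its proof of this lemma is the single line ``It follows directly by induction on $r$,'' and your write-up supplies exactly that induction, including the two points that actually need checking (the intermediate vertex stays in $\bar I_{\l, J, b}$ because $(1-\s^i)Y \subseteq (1-\s)Y$ preserves the Kottwitz invariant, and the splitting parameters strictly decrease).
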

\begin{proof}
It follows directly by induction on $r$.
\end{proof}

\begin{lem} \rm{(cf. \cite[Lemma 4.4.1]{CKV})} \label{minu}
Let $J \subseteq S_0$. Then the following two statements holds.

(a) For $\b \in \Phi-\Phi_J$, there exists a unique $\b_J \in \Phi-\Phi_J$ such that $\b_J^\vee - \b^\vee \in \ZZ \Phi_J^\vee$ and $\b_J^\vee$ is $J$-anti-dominant and $J$-minuscule.

(b) Let $\l \in Y^+$ and $\mu \in Y$ be a $J$-dominant and $J$-minuscule coweight. If $\mu+\b^\vee \preceq \l$, then $\mu+\b_J^\vee \preceq \l$.
\end{lem}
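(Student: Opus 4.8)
For part (a), the plan is to work in the coroot lattice modulo $\ZZ\Phi_J^\vee$. Starting from $\b\in\Phi-\Phi_J$, I would repeatedly subtract simple coroots $\a^\vee$ with $\a\in\Pi_J$ whenever $\langle\a,\b^\vee\rangle\ge 1$ (equivalently, apply simple reflections $s_\a$ for $\a\in\Pi_J$) to push $\b^\vee$ into the $J$-antidominant chamber. Since each such step replaces $\b^\vee$ by $s_\a(\b^\vee)=\b^\vee-\langle\a,\b^\vee\rangle\a^\vee$, which differs from $\b^\vee$ by an element of $\ZZ\Phi_J^\vee$ and lies in the same $W_J$-orbit (hence is still a coroot, and still outside $\Phi_J$ since $\b\notin\Phi_J$ and $W_J$ preserves $\Phi-\Phi_J$), the process terminates at the unique $J$-antidominant element $\b_J^\vee$ of the orbit $W_J\b^\vee$. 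That $\b_J^\vee$ is automatically $J$-minuscule follows because a $J$-antidominant coweight $\mu$ is $J$-minuscule iff $\langle\a,\mu\rangle\ge -1$ for all $\a\in\Phi_J^+$, and for a coroot $\b_J^\vee$ one has $\langle\a,\b_J^\vee\rangle\in\{0,\pm 1,\pm 2,\pm 3\}$ in general but $J$-antidominance together with $\b_J^\vee$ not being a $\ZZ_{\ge 2}$-multiple of anything forces the pairings to lie in $\{-1,0\}$; I would spell this out using that $\b_J^\vee$ is a single coroot, so $\langle\a,\b_J^\vee\rangle\le -2$ would make $\b_J^\vee+\a^\vee$ a coroot strictly larger than $\b_J^\vee$ in the $W_J$-orbit, contradicting antidominance (one can run the same "$s_\a$ increases $J$-dominance" argument in reverse). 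Uniqueness is clear since the $J$-antidominant representative of a $W_J$-orbit is unique.

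For part (b), write $\b_J^\vee-\b^\vee=\sum_{\a\in\Pi_J}n_\a\a^\vee$ with $n_\a\in\ZZ$; actually the chain of simple reflections from (a) exhibits $\b^\vee=w(\b_J^\vee)$ for some $w\in W_J$, and I would use the reduced word $w=s_{\a_1}\cdots s_{\a_\ell}$ to interpolate. The key point is to build, step by step, a chain of cocharacters from $\mu+\b_J^\vee$ down to $\mu+\b^\vee$ (or rather, to show $\mu+\b_J^\vee\preceq\l$ given $\mu+\b^\vee\preceq\l$), each consecutive pair differing by a $\Phi_J$-coroot and comparable under $\preceq_J$. Concretely, I expect to set $\nu_k=\mu+s_{\a_{k+1}}\cdots s_{\a_\ell}(\b_J^\vee)$, so $\nu_\ell=\mu+\b_J^\vee$ and $\nu_0=\mu+\b^\vee$, and to verify $\nu_{k-1}\preceq_J\nu_k$ or $\nu_k\preceq_J\nu_{k-1}$ using property (b) of \S\ref{setup3}: when passing from $\nu_k$ to $\nu_{k-1}=\nu_k-\langle\a_k,\,\cdot\,\rangle\a_k^\vee$ the pairing $\langle\a_k, s_{\a_{k+1}}\cdots s_{\a_\ell}(\b_J^\vee)\rangle$ is controlled because $\b_J^\vee$ is $J$-antidominant and $\mu$ is $J$-minuscule and $J$-dominant, so the total pairing $\langle\a_k,\nu_k\rangle$ stays in a small range. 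Combining this with $\mu+\b^\vee\preceq\l$ and property (a) of \S\ref{setup3} ($\mu'\preceq_J\mu''\Rightarrow\mu'\preceq\mu''$), together with transitivity of $\preceq$, yields $\mu+\b_J^\vee\preceq\l$.

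The main obstacle I anticipate is the bookkeeping in part (b): establishing that each intermediate $\nu_k$ is in fact $\preceq_J$-comparable to its neighbor in the correct direction, and that the "bad" direction never takes us strictly above $\l$. This requires carefully tracking how $J$-minuscularity of $\mu$ interacts with the partial reflections $s_{\a_{k+1}}\cdots s_{\a_\ell}$ applied to $\b_J^\vee$; the pairings $\langle\a_k, s_{\a_{k+1}}\cdots s_{\a_\ell}(\b_J^\vee)\rangle$ can a priori be as large as $\pm 3$ in non-simply-laced types, so I would either invoke \ref{Gashi} (weak dominance) to handle long-root coroots, or do a short case analysis on $\langle\a_k,\nu_k\rangle\in\{-2,-1,0,1,2\}$ using \S\ref{setup3}(b). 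Everything else — part (a) and the reduction of (b) to these comparisons — should be routine.
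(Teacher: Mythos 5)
There is a genuine gap in part (a), and it propagates to part (b). You assert that the $J$-anti-dominant representative of the $W_J$-orbit of $\b^\vee$ is automatically $J$-minuscule, arguing that $\<\a, \b_J^\vee\> \le -2$ for some $\a \in \Phi_J^+$ would produce a coroot $\b_J^\vee + \a^\vee$ strictly larger than $\b_J^\vee$ \emph{in the same $W_J$-orbit}, contradicting anti-dominance. This is false when $\a^\vee$ and $\b^\vee$ have different lengths: the orbit element is $s_\a(\b_J^\vee) = \b_J^\vee - \<\a,\b_J^\vee\>\a^\vee = \b_J^\vee + 2\a^\vee$ (or $+3\a^\vee$), which is perfectly consistent with $\b_J^\vee$ being the minimal element of its orbit, and $\b_J^\vee+\a^\vee$ need not lie in that orbit at all. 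Concretely, take $\Phi$ of type $B_2$ with $\a_1=e_1-e_2$ long, $\a_2=e_2$ short, $J=\{s_{\a_1}\}$, $\b=\a_2$. Then $\b^\vee=2e_2$ is already $J$-anti-dominant but $\<\a_1,\b^\vee\>=-2$, so it is not $J$-minuscule. The correct $\b_J$ here is the long root $e_1+e_2$, whose coroot is $e_1+e_2=\b^\vee+\a_1^\vee$; it is \emph{not} $W_J$-conjugate to $\b$, only congruent to it modulo $\ZZ\Phi_J^\vee$. So the object you construct in (a) is not the right one, and the chain in (b), which stays inside the $W_J$-orbit of $\b^\vee$, cannot reach $\b_J^\vee$ in such cases.

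The paper's proof handles exactly this: it first forms the $J$-anti-dominant conjugate $\b_J'$ (your step), and if that fails to be $J$-minuscule it picks $\g\in\Phi_J^+$ with $\<\g,{\b_J'}^\vee\>\le-2$, observes that $\g$ must be a long root (since $\b\ne-\g$), and replaces $\b_J'$ by $\a=s_{\b_J'}(\g)$, a long root of $\Phi$ with $\a^\vee={\b_J'}^\vee+\g^\vee$ still in $\b^\vee+\ZZ\Phi_J^\vee$; being long, its $J$-anti-dominant conjugate is automatically $J$-minuscule. Part (b) then needs the extra step $\mu+\a^\vee=\mu+{\b_J'}^\vee+\g^\vee\preceq\l$, which follows from $\<\g,\mu+{\b_J'}^\vee\>\le 1-2=-1$ and \S 2.3(b); your interpolation argument covers only the passage from $\mu+\b^\vee$ to $\mu+{\b_J'}^\vee$ and misses this second jump. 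In the simply-laced case your argument does go through (there $\<\a,\b^\vee\>\in\{-1,0,1\}$ for $\a\in\Phi_J$, $\b\notin\Phi_J$, so anti-dominant does imply minuscule), but the lemma is needed in full generality.
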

\begin{proof}
Denote by $\b_J'$ the unique $J$-anti-dominant conjugate of $\b$ under $W_J$. Then $\mu+{\b_J'}^\vee \preceq \mu+\b^\vee \preceq \l$ since $\mu$ is $J$-dominant. If ${\b_J'}^\vee$ is $J$-minuscule, we set $\b_J=\b_J'$ and the proof is finished. Otherwise, $\<\g, {\b_J'}^\vee\> \le -2$ for some $\g \in \Phi_J^+$. Since $\b \notin \Phi_J$, $\b \neq -\g$ and hence $\g$ is a long root in $\Phi$. Let $\a=s_{\b_J'}(\g)$. Note that $\<\g, \mu+{\b_J'}^\vee\> \le 1-2=-1$. We have $\mu+\a^\vee=\mu+{\b_J'}^\vee+\g^\vee \preceq \l$. Set $\b_J=\a_J'$. Then $\b_J^\vee$ is $J$-minuscule since $\a$ is a long root of $\Phi$. Moreover, we have, we have $\mu+\b_J^\vee \preceq \l$ (since $\mu+\a^\vee \preceq \l$) and $\b_J^\vee-\b^\vee \in \ZZ \Phi_J^\vee$, as desired.
\end{proof}

\begin{lem} \rm{(cf. \cite[Proposition 4.4.10]{CKV})} \label{convv}
Let $x \neq x' \in \bar I_{\l, J, b}$. Then there exist $x_j \in \bar I_{\l ,J, b}$, $\a_j \in \Phi-\Phi_J$ and $r_j \in \NN$ for $j \in [0, m-1]$ such that

(1) $\a_j^\vee$ is $J$-anti-dominant and $J$-minuscule;

(2) $r_j \in [1, h]$ if $\sharp \co_{\a_j} \in \{h, 2h\}$ and $r_j \in [1, 2h-1]$ if $\sharp \co_{\a_j}=3h$;

(3) $x_0=x$, $x_m=x'$ and $x_j \overset {(\a_j, r_j)} \to x_{j+1}$.
\end{lem}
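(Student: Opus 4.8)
The plan is to reduce Lemma \ref{convv} to the previously established Corollary \ref{conv'} and Lemma \ref{convv'}, then to "fold" the resulting steps into $J$-anti-dominant $J$-minuscule ones using Lemma \ref{minu}. First I would apply Lemma \ref{equal} to $x, x' \in \bar I_{\l, J, b}$: there exist $W_J$-conjugates $\mu, \mu'$ of $\mu_x, \mu_{x'}$ with $\mu - \mu' \in (1-\s)\ZZ\Phi^\vee \subseteq (1-\s) Y$; since $\mu_x, \mu_{x'} \preceq \l$ and these are $J$-minuscule, their $W_J$-conjugates are still $\preceq \l$. Applying Corollary \ref{conv'} to the pair $(\mu, \mu')$, I get intermediate cocharacters $\mu = \mu_0, \mu_1, \dots, \mu_m = \mu'$ together with roots $\a_j \in \Phi$ and exponents $r_j$ in the prescribed ranges such that $\mu_{j+1} - \mu_j = \a_j^\vee - \s^{r_j}(\a_j^\vee)$ and all four cocharacters $\mu_j, \mu_j + \a_j^\vee, \mu_j - \s^{r_j}(\a_j^\vee), \mu_j + \a_j^\vee - \s^{r_j}(\a_j^\vee)$ are $\preceq \l$.

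Next I would pass to $\pi_1(M_J)$. Since $\mu_x$ and $\mu$ are $W_J$-conjugate, $\eta_J(t^\mu) = \eta_J(t^{\mu_x}) = x$, and similarly $\eta_J(t^{\mu'}) = x'$; moreover the whole chain $\mu_j - \mu_{j+1} \in \ZZ\Phi^\vee$ stays in one class modulo $(1-\s)\ZZ\Phi_J^\vee$ only after reduction, so I need to be careful: the natural move is to observe that for each $j$, because the $J$-minuscule $J$-dominant representative $\mu_{x_j}$ of $x_j := \eta_J(t^{\mu_j})$ satisfies $\mu_{x_j} \preceq_J \bar{\mu_j}^J$ (they are in the same $W_J$-orbit up to $\ZZ\Phi_J^\vee$, and $\mu_{x_j}$ is the minuscule one) hence $\mu_{x_j} \preceq \l$, so $x_j \in \bar I_{\l, J, b}$ provided $\k_J(t^{\mu_j}) = \k_J(b)$, which holds since all $\mu_j$ differ by elements of $\ZZ\Phi^\vee$ and $x, x' \in \bar I_{\l, J, b}$. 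But the relation $x_j \overset{(\a_j, r_j)}{\to} x_{j+1}$ requires $\mu_{x_j}, \mu_{x_j + \a_j^\vee}, \mu_{x_j - \s^{r_j}(\a_j^\vee)}, \mu_{x_{j+1}} \preceq \l$; the outer two follow from the four $\preceq \l$ conditions on the $\mu_j$'s together with property \ref{setup3}(a) ($\mu \preceq_J \nu \Rightarrow \mu \preceq \nu$) applied in $M_J$ and the fact that $\mu_{x_j + \a_j^\vee}$ is the $J$-minuscule representative of a $W_J$-orbit meeting $\{\mu_j + \a_j^\vee\}$'s orbit — but since $\a_j \in \Phi$ may lie in $\Phi_J$, I must first discard or absorb such steps: if $\a_j \in \Phi_J$ then $\a_j^\vee - \s^{r_j}(\a_j^\vee) \in \ZZ\Phi_J^\vee$ does not change the image in $\pi_1(M_J)$, so $x_j = x_{j+1}$ and that step can simply be deleted. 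Thus after deleting the trivial steps I obtain a chain in $\bar I_{\l, J, b}$ with each $\a_j \in \Phi - \Phi_J$.

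Finally I would replace each $\a_j$ by $(\a_j)_J$ from Lemma \ref{minu}(a), the unique $J$-anti-dominant $J$-minuscule coroot with $(\a_j)_J^\vee - \a_j^\vee \in \ZZ\Phi_J^\vee$. Since $x_j \overset{(\a_j, r_j)}{\to} x_{j+1}$ gives in particular $\mu_{x_j} + \a_j^\vee \preceq \l$, Lemma \ref{minu}(b) yields $\mu_{x_j} + (\a_j)_J^\vee \preceq \l$; and because $(\a_j)_J^\vee - \a_j^\vee \in \ZZ\Phi_J^\vee$ the images in $\pi_1(M_J)$ are unchanged, so the move $x_j \overset{((\a_j)_J, r_j)}{\to} x_{j+1}$ is valid once I also check $\mu_{x_j - \s^{r_j}((\a_j)_J^\vee)} \preceq \l$; for this I would use that $\s$ permutes the $J$-minuscule $J$-anti-dominant coroots (as $\s(J) = J$) so $\s^{r_j}((\a_j)_J)$ is again $J$-minuscule, and argue as before, possibly after applying Lemma \ref{convv'} to break $x_j \overset{((\a_j)_J, r_j)}{\to} x_{j+1}$ into $\rightarrowtail$-steps where the range $r_j \in [1, h]$ (or $[1, 2h-1]$) is automatic. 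The $\co_{\a_j}$-size bookkeeping in (2) follows because $\co_{(\a_j)_J}$ has the same size as $\co_{\a_j}$ up to the $\ZZ\Phi_J^\vee$-twist, and $h$-periodicity of $\s$ on connected components forces $\sharp\co \in \{h, 2h, 3h\}$. The main obstacle I anticipate is the careful verification that each intermediate $x_j$ genuinely lies in $\bar I_{\l, J, b}$ (i.e. that the $J$-minuscule representative is $\preceq \l$, not merely that some $W_J$-conjugate is), together with the bookkeeping needed to keep all four $\preceq \l$ conditions intact through the two substitutions ($W_J$-conjugation and $\a \mapsto \a_J$); this is where properties \ref{setup3}(a), \ref{setup3}(b) and Lemma \ref{minu}(b) must be invoked in exactly the right order.
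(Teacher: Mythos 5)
Your proposal is correct and follows essentially the same route as the paper: Lemma \ref{equal} plus Corollary \ref{conv'} to produce a chain of cocharacters, descent to $\pi_1(M_J)$, and Lemma \ref{minu} to replace each root by its $J$-anti-dominant $J$-minuscule counterpart, with the same $r_j$ bookkeeping via $h$-periodicity. The only (harmless) difference is mechanical: you delete steps with $\a_j\in\Phi_J$, whereas the paper jumps to the maximal index with the same image in $\pi_1(M_J)$, which makes the $x_j$ pairwise distinct and thereby forces $\a_j\in\Phi-\Phi_J$; both devices, combined with the observation that the four $\preceq\l$ conditions are phrased in terms of the canonical $J$-minuscule representatives and hence are insensitive to the substitution $\a_j\mapsto(\a_j)_J$, yield the claim.
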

\begin{proof}
By Lemma \ref{equal}, there exist some $W_J$-conjugates $\mu, \upsilon \in Y$ of $\mu_x, \mu_{x'}$ respectively such that $\mu- \upsilon \in (1-\s) \ZZ\Phi^\vee$. Note that $\mu, \upsilon \preceq \l$. By Corollary \ref{conv'}, there exist $\mu_j \in Y$, $\b_j \in \Phi$ and $t_j \in [1, h-1]$ if $\sharp \co_{\b_j}=h$ (resp. $t_j \in [1, h]$ if $\sharp \co_{\b_j}=2h$, and $t_j \in [1, 2h-1]$ if $\sharp \co_{\b_j}=3h$) such that $\mu_0=\mu$, $\mu_{m'}=\upsilon$, $\mu_{j+1}-\mu_j=\b_j^\vee-\s^{t_j}(\b_j^\vee)$ and $\mu_j, \mu_j+\b_j^\vee, \mu_j-\s^{t_j}(\b_j^\vee) \preceq \l$ for $j \in [0, m'-1]$.

Set $x_0=\mu_0 \in \pi_1(M_J)$. Assuming $x_k$ and $\a_{k-1}$ are already constructed for $k \in [0, j]$, we now construct $x_{j+1} \in \pi_1(M_J)$ and $\a_j \in \Phi-\Phi_J$ such that $x_j \overset {(\a_j, t_j)} \to x_{j+1}$. If $x_j=\mu_{m'} \in \pi_1(M_J)$, the construction is finished. Otherwise, let $i_j=\max\{i \in [1, m']; \mu_i=x_j \in \pi_1(M_J)\}$ and set $x_{j+1}=\mu_{i_j+1}$ and $\a_j=(\b_{i_j})_J$, where $(\b_{i_j})_J$ is defined (for $\b_{i_j}$) as in Lemma \ref{minu}. In this way, we obtain $$x=x_0 \overset {(\a_0, t_0)} \to x_1 \overset {(\a_1, t_1)} \to \cdots \overset {(\a_{m-1}, t_{m-1})} \to x_m=x'$$ such that $x_0, x_1, \dots, x_m$ are distinct elements of $\pi_1(M_J)$. In particular, $\a_j \in \Phi - \Phi_J$ for $i \in [0, m]$. By Lemma \ref{minu} (a), $\s^i(\a_j)=\a_j$ if $\s^i(\b_j)=\b_j$. Thus either $\sharp \co_{\a_j}=\sharp \co_{\b_j}$ or $\sharp \co_{\a_j}=h < \sharp \co_{\b_j}$. Define $r_j=t_j$ if $t_j \le h$ or $\sharp \co_{\a_j}=\sharp \co_{\b_j}$ and define $r_j=t_j-h$ otherwise. One checks that (2) and (3) are satisfied.
\end{proof}

\begin{prop} \rm{(cf. \cite[Proposition 4.5.4]{CKV})}\label{main}
Let $x, x' \in \bar I_{\l, J, b}$, $\a \in \Phi-\Phi_J$ and $r \in \NN$ such that

(1) $\a^\vee$ is $J$-anti-dominant and $J$-minuscule;

(2) $r \in [1, h]$ if $\sharp \co_\a \in \{h, 2h\}$ and $r \in [1, 2h-1]$ if $\sharp \co_\a=3h$;

(3) $x \overset {(\a, r)} \to x'$.

Then for any $P \in X_{\mu_x}^{M_J}(b)$, there exists $P' \in X_{\mu_{x'}}^{M_J}(b)$ satisfying $P \sim_{\l, b} P'$ and $\eta_J(P)-\eta_J(P')=\sum_{i=0}^{r-1} \s^i(\a^\vee) \in \pi_1(M_J)$.
\end{prop}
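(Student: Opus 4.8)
The plan is to construct an explicit family of affine lines inside $X_{\preceq \l}(b)$ connecting a given point $P \in X_{\mu_x}^{M_J}(b)$ to a point $P' \in X_{\mu_{x'}}^{M_J}(b)$, realizing the prescribed change $\sum_{i=0}^{r-1}\s^i(\a^\vee)$ in the $\eta_J$-invariant. The starting observation is that the relation $x \overset{(\a,r)}\to x'$ records exactly the combinatorial data $x'-x = \a^\vee - \s^r(\a^\vee)$ together with the four ``intermediate'' cocharacters $\mu_x, \mu_{x+\a^\vee}, \mu_{x-\s^r(\a^\vee)}, \mu_{x'}$ all lying $\preceq\l$; this is precisely what one needs to guarantee that the relevant rank-one root subgroups $U_\b$ (for $\b$ ranging over $\co_\a$ and its $\s$-translates) move the point $P$ while staying inside $X_{\preceq\l}(b)$.

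First I would set up the local picture: write $P = gK$ with $g^{-1}b\s(g) \in Kt^{\mu_x}K$, and use the $J$-anti-dominant $J$-minuscule normalization of $\a^\vee$ (hypothesis (1)) to pin down a canonical representative so that the affine root subgroup $U_{(\a,0)}$ (or an appropriate $U_{(\a,k)}$) lies in $K$ and acts on the coset $g K$; the key point is that conjugating by $b\s(\cdot)$ sends $U_\a$ to $U_{\s(\a)}$, shifted by the action of $b$, so iterating produces the chain $\a, \s(\a), \dots, \s^{r-1}(\a)$ and the telescoping sum $\sum_{i=0}^{r-1}\s^i(\a^\vee)$ appears naturally in the $\eta_J$-computation. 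Concretely I would build a morphism $\phi\colon \AA^1 \to G(L)/K$ of the form $u \mapsto g\, U_{(\a, k)}(u)\, K$ (possibly composed over a sequence of such elementary moves, one for each $i \in [0,r-1]$), check using the minuscule/anti-dominant hypotheses and the bound (2) on $r$ that $\phi(\AA^1) \subseteq X_{\preceq\l}(b)$ — this is where the four conditions ``$\mu_x, \mu_{x+\a^\vee}, \mu_{x-\s^r(\a^\vee)}, \mu_{x'} \preceq \l$'' get consumed, via properties \S\ref{setup3}(a),(b) and the combinatorics of $\Theta$ — and verify $\phi(0) \in X_{\mu_x}^{M_J}(b)$ while $\phi(\infty) \in X_{\mu_{x'}}^{M_J}(b)$. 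Since $\PP^1$ is connected, $\phi(0) \sim_{\l,b} \phi(\infty)$, and tracking $\eta_J$ along the construction (each elementary move contributes one $\s^i(\a^\vee)$) yields the asserted formula.

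The main obstacle, as the author signals in the introduction, is precisely the containment $\phi(\AA^1) \subseteq X_{\preceq\l}(b)$ when $\l$ is not minuscule: one cannot simply transport a single affine line across, because the intermediate points of the moving line are not translates of $t^\l$ but of various $t^{\mu'}$ with $\mu' \preceq \l$, and one must show at every stage that multiplying by the root subgroup element keeps us in the union $\cup_{\l'\preceq\l} Kt^{\l'}K/K$. This is exactly what the hypotheses recorded in the notation $x\overset{(\a,r)}\to x'$ are designed to certify, and what the preparatory combinatorial lemmas on $\Theta(\mu,\mu',\l)$, the $J$-minuscule reduction in Lemma \ref{minu}, and the range restrictions on $r$ in (2) are there to make available. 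So the proof reduces to a careful bookkeeping: decompose the move into $r$ (or fewer) elementary steps, and for each step invoke the relevant $\preceq$-inequality from the hypothesis list together with \S\ref{setup3}(b) to see that the affine line in that step stays in $X_{\preceq\l}(b)$; the $\s$-equivariance of conjugation by $b\s(\cdot)$ then assembles the steps and produces the telescoping $\eta_J$-sum.
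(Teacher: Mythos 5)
Your proposal correctly identifies the general shape of the argument (an affine line built from root subgroups $U_{\s^i(\a)}$, with $b\s$-conjugation producing the chain $\a,\s(\a),\dots,\s^{r-1}(\a)$ and the telescoping sum in $\eta_J$, and with the four conditions in $x \overset{(\a,r)}\to x'$ consumed to stay in $\cup_{\l'\preceq\l}Kt^{\l'}K$). But the core mechanism is missing, and the fallback you suggest would fail. You propose to ``decompose the move into $r$ (or fewer) elementary steps'' and verify each step separately. That decomposition is exactly what is \emph{not} available in the hard case: the intermediate classes $x+\a^\vee-\s^i(\a^\vee)$ need not lie in $\bar I_{\l,J,b}$, which is why the paper introduces the refined relation $x \overset{(\a,r)}\rightarrowtail x'$ (no intermediate factorization exists) and first reduces to it via Lemma \ref{convv'}. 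In that irreducible case the construction must be a \emph{single} morphism
$$\textsl{g}(z)=g_x\, U_{\a}(zt\i)\, {}^{\dot b_x\s}U_{\a}(zt\i)\cdots {}^{(\dot b_x\s)^{r-1}}U_{\a}(zt\i)\,K,$$
all factors depending on the same parameter, so that in $\textsl{g}(z)\i b\s(\textsl{g}(z))$ the inner factors cancel and only the first and last survive; one then lands in $K U_\a(-zt\i)\,t^{\mu_x}\,U_{w_x(\s^r(\a))}(c\s^r(z)t\i)\dot w_x K$ and invokes the rank-$\le 2$ computation of Lemma \ref{bound} with the four $\preceq$-conditions. The cancellation itself is not free: it requires $w_x(\s^i(\a))=\s^i(\a)$ and $\<\s^i(\a),\mu_x\>=0$ for the intermediate $i$, which is precisely what Corollary \ref{central''} extracts from the assumption $x \overset{(\a,r)}\rightarrowtail x'$. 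Your proposal never isolates this input, and without it neither the telescoping nor any step-by-step variant goes through.

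Two further reductions in the paper are also absent from your sketch but are needed to make the statement as strong as claimed: Lemma \ref{simple} shows it suffices to produce \emph{one} pair $(P,P'')$ with the right $\eta_J$-difference (transitivity of $J_b^{M_J}(F)$ on $\pi_0(X_{\mu_x}^{M_J}(b))$ then gives the statement for every $P$), and the range $r\in[h+1,2h-1]$ (which occurs only for $\sharp\co_\a=3h$, i.e.\ $D_4$ with triality) requires a separate, more delicate case with a different ordering of the factors and additional commutation checks (2-a)--(2-c). As written, your argument is a plausible outline of the easy part of Case (1) but does not constitute a proof.
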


\begin{proof}[Proof of Proposition \ref{k1}]
It follows by combining Lemma \ref{convv} with Proposition \ref{main}.
\end{proof}

\

The rest of this section is devoted to the proof of Proposition \ref{main}.

\begin{lem} \cite[Lemma 4.4.7]{CKV} \label{central'}
Let $x \in \pi_1(M_J)$ and $\b \in \Phi-\Phi_J$ such that $\b^\vee$ is $J$-anti-dominant.

(a) $\<w(\b), \mu_x\> \le \<w_x(\b), \mu_x\>$ for any $w \in W_J$;

(b) $w_x(\b)$ is the unique minimal element in $\{w(\b); w \in W_J, \<w(\b), \mu_x\>=\<w_x(\b), \mu_x\>\}$;

(c) If $w_x(\b)=\b$, then $\mu_x$ is central on the connected components of $\Phi_J$ on which $\b^\vee$ is not central;

(d) in particular, (c) happens if $\<\b, \mu_x\>=\<w_x(\b), \mu_x\>$.

Here $w_x \in W_J$ satisfies $t^{\mu_x} w_x \in \Omega_J$, see \S \ref{setup4}.
\end{lem}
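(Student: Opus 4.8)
The plan is to deduce all four parts from two elementary observations. The first is that $\b^\vee$ is $J$-anti-dominant if and only if $\b$ itself is $J$-anti-dominant, i.e. $\<\b,\g^\vee\>\le 0$ for all $\g\in\Pi_J$: for a root $\g$ the integers $\<\b,\g^\vee\>$ and $\<\g,\b^\vee\>$ are both nonnegative rational multiples of the value of a $W_0$-invariant symmetric form on the pair $(\g,\b)$, so they have the same sign. The second is the description $w_x=u_x w_J$ from \S\ref{setup4}, where $w_J$ is the longest element of $W_J$ and $u_x$ the longest element of $\stab_{W_J}(\mu_x)=W_{P_x}$ with $P_x=\{s\in J;\ s(\mu_x)=\mu_x\}$. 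Set $\b^+:=w_J(\b)$. Since $\mu_x$ is $J$-dominant and $\b$ is $J$-anti-dominant, $\b^+$ is the unique $J$-dominant element of the orbit $W_J\b$; and since $u_x$ fixes $\mu_x$ we get $w_x(\b)=u_x(\b^+)$ together with $\<w_x(\b),\mu_x\>=\<\b^+,\mu_x\>$, which is the largest value of $\<\,\cdot\,,\mu_x\>$ on $W_J\b$.

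With these in hand, (a) is immediate: for $w\in W_J$ we have $\b^+-w(\b)\in\NN\Pi_J$ (the $J$-dominant representative is the maximum of an orbit in the dominance order), and pairing with the $J$-dominant $\mu_x$ gives $\<w(\b),\mu_x\>\le\<\b^+,\mu_x\>=\<w_x(\b),\mu_x\>$. For (b), writing $\b^+-w(\b)=\sum_{\g\in\Pi_J}c_\g\g$ with $c_\g\ge 0$ and using $\<\g,\mu_x\>\ge 0$, equality in (a) holds exactly when $c_\g=0$ for every $\g$ with $\<\g,\mu_x\>>0$, i.e. exactly when $\b^+-w(\b)\in\ZZ\Phi_{P_x}$. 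Here one invokes the standard fact that an element of $W_J\b^+$ congruent to $\b^+$ modulo $\ZZ\Phi_{P_x}$ already lies in $W_{P_x}\b^+$ (equivalently, within a $W_J$-orbit the $W_{P_x}$-orbits are distinguished by their classes modulo $\ZZ\Phi_{P_x}$); together with the reverse inclusion, which is clear because $W_{P_x}$ fixes $\mu_x$, this identifies the set in (b) with the $W_{P_x}$-orbit of the $P_x$-dominant weight $\b^+$. Its unique minimal element for the dominance order is its $P_x$-anti-dominant representative, namely $u_x(\b^+)=w_x(\b)$, proving (b).

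For (d), and then (c), I would first note that by (a) the hypothesis $\<\b,\mu_x\>=\<w_x(\b),\mu_x\>$ says precisely that $w\mapsto\<w(\b),\mu_x\>=\<\b,w\i(\mu_x)\>$ is constant on $W_J$. Testing against simple reflections, $\<s_\g w(\b),\mu_x\>-\<w(\b),\mu_x\>=-\<w(\b),\g^\vee\>\<\g,\mu_x\>$ for $\g\in\Pi_J$, so constancy is equivalent to $\<w(\b),\g^\vee\>=0$ for all $w\in W_J$ whenever $\<\g,\mu_x\>\neq 0$. For such a $\g$, lying in a connected component $\Phi_J^{(i)}$ of $\Phi_J$, the $W_J$-translates of $\g^\vee$ span the coroot space of $\Phi_J^{(i)}$, which is irreducible, so the condition is equivalent to $\b^\vee$ being central on every component of $\Phi_J$ on which $\mu_x$ is non-central (using the sign comparison once more to replace $\b$ by $\b^\vee$). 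The contrapositive of this statement is exactly the conclusion of (c), which gives (d); and since $w_x(\b)=\b$ trivially forces $\<\b,\mu_x\>=\<w_x(\b),\mu_x\>$, part (c) follows from (d).

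The step that needs the most care is the parabolic-orbit fact used in (b); I would prove it by passing to the $P_x$-dominant representative of the $W_{P_x}$-orbit and comparing it with $\b^+$ via a short argument with norms (the two are $W_J$-conjugate, hence of equal length) together with the congruence modulo $\ZZ\Phi_{P_x}$. Everything else reduces to $J$-dominance of $\mu_x$ and the sign comparison between $\<\g,\b^\vee\>$ and $\<\b,\g^\vee\>$; in particular, $J$-minusculeness of $\mu_x$, although available in our situation, is not needed for this lemma.
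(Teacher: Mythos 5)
The paper offers no proof of this lemma --- it is quoted verbatim from \cite[Lemma 4.4.7]{CKV} --- so there is nothing internal to compare against; judged on its own terms, your argument is correct and complete. The two pillars you isolate are right: $J$-anti-dominance of $\b^\vee$ and of $\b$ are equivalent via a $W_0$-invariant form, and $w_x(\b)=u_x(\b^+)$ with $\<w_x(\b),\mu_x\>=\<\b^+,\mu_x\>$ because $u_x$ fixes $\mu_x$; parts (a), (d), (c) then follow exactly as you say (for (d), the key point, which you use implicitly, is that $\b$ itself realizes the \emph{minimum} of $\<w(\b),\mu_x\>$ over $W_J$ since it is the $J$-anti-dominant representative, so equality with the maximum forces constancy). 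The one step that genuinely needs writing out is the parabolic-orbit fact in (b): if $w(\b)\in W_J\b^+$ and $\b^+-w(\b)\in\NN\Pi_{P_x}$ then $w(\b)\in W_{P_x}\b^+$. Your proposed norm argument does close: with $\mu'$ the $P_x$-dominant representative of $W_{P_x}w(\b)$ one has $\nu:=\b^+-\mu'\in\ZZ\Phi_{P_x}\cap\NN\Pi_J=\NN\Pi_{P_x}$, and $(\nu,\b^+ +\mu')=(\b^+,\b^+)-(\mu',\mu')=0$ with both summands $(\nu,\b^+)$ and $(\nu,\mu')$ nonnegative, whence $\nu=0$. (An induction on the height of $\b^+-w(\b)$, applying a simple reflection $s_\g$ with $\g\in\Pi_{P_x}$ and $\<w(\b),\g^\vee\><0$, works equally well.) You are also right that $J$-minusculeness of $\mu_x$ plays no role here; only $J$-dominance is used.
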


\begin{lem} \cite[Lemma 4.4.5]{CKV} \label{minuscule}
Let $x \in \pi_1(M_J)$ and $\d, \d' \in Y$ which are $J$-anti-dominant and $J$-minuscule. Then $\mu_x+\d$, $\mu_x-w_x(\d')$ and $\mu_x+\d-w_x(\d')$ are $J$-minuscule. In particular, they are conjugate to $\mu_{x+\d}$,$\mu_{x-\d'}$ and $\mu_{x-\d+\d'}$ under $W_J$ respectively.
\end{lem}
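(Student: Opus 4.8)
The plan is to reduce the three ``$J$-minuscule'' assertions to the elementary criterion that a coweight $\nu\in Y$ is $J$-minuscule if and only if $\<\b,\nu\>\in\{-1,0,1\}$ for every $\b\in\Phi_J$; since replacing $\b$ by $-\b$ changes nothing, it is enough to check this for $\b\in\Phi_J^+$. The single genuine input is the explicit shape of $w_x$. Put $J_x=\{s\in J;\,s(\mu_x)=\mu_x\}$, so that, by \S\ref{setup4}, $u_x$ is the longest element of $W_{J_x}$ and $w_J$ the longest element of $W_J$; both are involutions, hence $w_x^{-1}=w_Ju_x$.

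I would first record two facts that dovetail. \emph{Fact} (1): for $\b\in\Phi_J^+$ one has $\<\b,\mu_x\>=0$ exactly when $\b\in\Phi_{J_x}$, and $\<\b,\mu_x\>=1$ otherwise. Indeed $W_{J_x}$ fixes $\mu_x$, so $\<\b,\mu_x\>=0$ on $\Phi_{J_x}$; conversely, as $\mu_x$ is $J$-dominant, any positive root pairing trivially against $\mu_x$ must be supported on the simple roots killing $\mu_x$, i.e.\ on $J_x$; and the remaining values equal $1$ because $\mu_x$ is $J$-minuscule. \emph{Fact} (2): $w_x^{-1}$ sends $\Phi_{J_x}^+$ into $\Phi_J^+$ and sends $\Phi_J^+\setminus\Phi_{J_x}$ into $\Phi_J^-$ --- because $u_x\in W_{J_x}$ preserves $\Phi_J^+\setminus\Phi_{J_x}$ and sends $\Phi_{J_x}^+$ into $\Phi_{J_x}^-$, after which $w_J$ sends all of $\Phi_J^+$ into $\Phi_J^-$. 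Combining Fact (2) with the fact that $\delta'$ is $J$-anti-dominant and $J$-minuscule gives $\<w_x^{-1}(\b),\delta'\>\in\{-1,0\}$ when $\b\in\Phi_{J_x}^+$ and $\<w_x^{-1}(\b),\delta'\>\in\{0,1\}$ when $\b\in\Phi_J^+\setminus\Phi_{J_x}$; likewise $\<\b,\delta\>\in\{-1,0\}$ for every $\b\in\Phi_J^+$.

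The minusculity of $\mu_x+\delta-w_x(\delta')$ is then a two-line case check over $\b\in\Phi_J^+$. If $\b\in\Phi_{J_x}^+$ then $\<\b,\mu_x+\delta-w_x(\delta')\>=0+\<\b,\delta\>-\<w_x^{-1}(\b),\delta'\>$ with $\<\b,\delta\>\in\{-1,0\}$ and $\<w_x^{-1}(\b),\delta'\>\in\{-1,0\}$, hence it lies in $\{-1,0,1\}$. If $\b\in\Phi_J^+\setminus\Phi_{J_x}$ then $\<\b,\mu_x+\delta-w_x(\delta')\>=1+\<\b,\delta\>-\<w_x^{-1}(\b),\delta'\>$ with $\<\b,\delta\>\in\{-1,0\}$ and $\<w_x^{-1}(\b),\delta'\>\in\{0,1\}$, hence it again lies in $\{-1,0,1\}$. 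This proves $\mu_x+\delta-w_x(\delta')$ is $J$-minuscule; running the same computation with the $w_x(\delta')$-term deleted proves it for $\mu_x+\delta$ (there even more directly, as $\<\b,\mu_x\>\in\{0,1\}$ and $\<\b,\delta\>\in\{-1,0\}$), and with the $\delta$-term deleted proves it for $\mu_x-w_x(\delta')$.

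For the ``in particular'' clause, $\eta_J$ is constant on $W_J$-orbits (because $w(\nu)-\nu\in\ZZ\Phi_J^\vee$ for $w\in W_J$), so any $J$-minuscule coweight $\nu$ is $W_J$-conjugate to $\mu_y$ with $y=\eta_J(t^\nu)\in\pi_1(M_J)$; and, using $w_x(\delta')-\delta'\in\ZZ\Phi_J^\vee$, the classes of $\mu_x+\delta$, $\mu_x-w_x(\delta')$ and $\mu_x+\delta-w_x(\delta')$ are seen to be $x+\eta_J(t^\delta)$, $x-\eta_J(t^{\delta'})$ and $x+\eta_J(t^\delta)-\eta_J(t^{\delta'})$, which pins down the three conjugates. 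The only step that takes real thought is Fact (2): the reshuffling of $\Phi_J^+$ effected by $w_x^{-1}=w_Ju_x$ is exactly ``anti-correlated'' with the locus $\{\b\in\Phi_J^+;\,\<\b,\mu_x\>=1\}$ of Fact (1), and it is this anti-correlation that confines every pairing to $\{-1,0,1\}$.
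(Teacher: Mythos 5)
The paper offers no proof of this lemma — it is quoted directly from \cite[Lemma 4.4.5]{CKV} — so there is no internal argument to compare against; your proof is a correct and complete direct verification, and it rests on exactly the two observations one expects (and that underlie the computation in \emph{loc.\ cit.}): for $\b\in\Phi_J^+$ the pairing $\<\b,\mu_x\>$ is $0$ or $1$ according to whether $\b\in\Phi_{J_x}$, and $w_x^{-1}=w_Ju_x$ sends $\Phi_{J_x}^+$ into $\Phi_J^+$ while sending $\Phi_J^+\setminus\Phi_{J_x}$ into $\Phi_J^-$, which is precisely the anti-correlation needed to keep every pairing in $\{-1,0,1\}$. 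One remark: your ``in particular'' step correctly identifies the class of $\mu_x+\d-w_x(\d')$ as $x+\eta_J(t^{\d})-\eta_J(t^{\d'})$, so the conjugate is $\mu_{x+\d-\d'}$; the subscript $x-\d+\d'$ in the statement as printed is a typo, which your computation silently corrects.
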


\begin{lem} \cite[Lemma 4.5.1]{CKV} \label{central}
Let $x \in \pi_1(M_J)$ and $\b, \g \in \Phi$ such that $\mu_{x+\b^\vee}, \mu_{x-\g^\vee} \preceq \l$. If $\a \in \Phi-\Phi_J$ satisfies

(i) $\mu_{x+\b^\vee-\a^\vee}, \mu_{x+\a^\vee-\g^\vee} \npreceq \l$;

(ii) neither $\b$ nor $\g$ lies in the connected component of $\Phi$  containing $\a$.

Then $\<w(\a), \mu_x\>=0$ for any $w \in W_J$. In particular, $w_x(\a)=\a$ and $\<\a, \mu_x\>=0$.
\end{lem}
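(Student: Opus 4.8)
I would adapt the argument of \cite[Lemma 4.5.1]{CKV}, feeding in Lemma \ref{key} wherever the purely minuscule combinatorics used there must be replaced by an input valid for arbitrary dominant $\l$. First I would normalize $\a$: the assertion ``$\<w(\a),\mu_x\>=0$ for all $w\in W_J$'' is unchanged if $\a$ is replaced by a $W_J$-translate $v(\a)$, and so are hypotheses (i) and (ii), since $v(\a^\vee)\equiv\a^\vee\pmod{\ZZ\Phi_J^\vee}$ leaves the classes $x+\b^\vee-\a^\vee$ and $x+\a^\vee-\g^\vee$ in $\pi_1(M_J)$ intact while $v$ preserves the connected component of $\Phi$ through $\a$. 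Thus I may take $\a^\vee$ $J$-anti-dominant on the $\b$-side of (i) (and $J$-dominant on the $\g$-side), and, using Lemma \ref{minu}, replace $\b,\g$ by $\b_J,\g_J$ so that by Lemma \ref{minuscule} the coweights $\mu_{x+\b^\vee}$ and $\mu_{x-\g^\vee}$ become $W_J$-conjugate to $\mu_x+\b_J^\vee$ and $\mu_x-\g_J^\vee$; hypothesis (ii) survives because the $\Phi_J$-part of $\b_J^\vee-\b^\vee$ lies in the component of $\b$. With $\a^\vee$ $J$-anti-dominant, $\mu_{x+\b^\vee}-\a^\vee$ (a sum of two $J$-dominant cocharacters) is $J$-dominant, hence $\succeq_J\mu_{x+\b^\vee-\a^\vee}$ (the $J$-minuscule representative being $\le_J$-minimal in its class), so by transitivity of $\preceq$ the failure $\mu_{x+\b^\vee-\a^\vee}\npreceq\l$ upgrades to $\mu_{x+\b^\vee}-\a^\vee\npreceq\l$, and symmetrically $\mu_{x-\g^\vee}+\a^\vee\npreceq\l$; finally $\mu_{x+\b^\vee}\preceq\l$ forces $\mu_{x+\b^\vee}\le\overline{\mu_{x+\b^\vee}}\le\l$, putting these coweights in range of Lemma \ref{key}.

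Next I would extract obstruction roots. Applying Lemma \ref{key} to $\mu_{x+\b^\vee}-\a^\vee\npreceq\l$ (passing to the $W_0$-dominant representative where the ``moreover'' clause demands it) produces a positive root $\d$ with $\<\d,\cdot\>\le-2$ against the offending coweight, together with $\<\d,\a^\vee\>\le-2$, $\<\d,\mu_x\>\le1$, and the dichotomy ``$\d=-\a$, or $\d$ is long and one further coroot still fits below $\l$''. Here hypothesis (ii) is decisive: since $\b\notin C_0$, the component of $\a$, the coroot $\b^\vee$ pairs trivially with $C_0$, so inside $C_0$ the offending coweight agrees with $\mu_x$ up to $-\a^\vee$; this, together with the long-root correction of Lemma \ref{key}(b), lets me take $\d\in C_0$, in particular $\<\d,\b^\vee\>=\<\d,\g^\vee\>=0$. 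The $\g$-side of (i) produces a second such root.

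To conclude I would translate these inequalities back through the normalization, where they say that the $J$-anti-dominant translate $\a'$ of $\a$ maximizes $\<w(\a),\mu_x\>$ over $w\in W_J$, i.e. $\<\a',\mu_x\>=\<w_x(\a'),\mu_x\>$, and dually on the $\g$-side. By Lemma \ref{central'}(d) and (c) this forces $\mu_x$ to be central on every connected component of $\Phi_J$ met by $\a$; in particular $\<w(\a),\mu_x\>$ is constant on the $W_J$-orbit of $\a$, and the remaining bounds coming from the two sides of (i) (one giving ``$\ge$'', the other ``$\le$'') pin this common value to $0$. Then $w_x(\a)=\a$ as well, by Lemma \ref{central'}(b).

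The step I expect to be the main obstacle is forcing the obstruction root into $C_0$ and collapsing the inequalities into the exact equality $\<\a,\mu_x\>=0$. One must distinguish cases according to whether the root $\d$ of Lemma \ref{key} equals $-\a$ or is strictly longer, whether $\a$ is long or short (hence whether $\a^\vee$ is automatically $J$-minuscule), and according to the exceptional behaviour of non-simply-laced components; in each case the two halves of (i) have to be played against one another, one yielding an upper and the other a lower bound for $\<\a,\mu_x\>$, with (ii) pinning both to $0$. This is essentially the bookkeeping carried out in \cite{CKV} in the minuscule case, and the role of Lemma \ref{key} is to supply the one extra geometric input that makes it go through for general $\l$.
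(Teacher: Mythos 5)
First, a point of reference: the paper gives no proof of this lemma at all --- it is quoted from \cite[Lemma 4.5.1]{CKV}, whose short argument applies verbatim for general $\l$. That argument is: since $\b$ lies outside the connected component $C_0$ of $\a$ and (for $G$ adjoint) $\pi_1(M_J)$ and the $J$-dominant $J$-minuscule representatives decompose along connected components of $\Phi$, one has $\<w(\a),\mu_{x+\b^\vee}\>=\<w(\a),\mu_x\>$ for every $w\in W_J$; if this were $\ge 1$ for some $w$, then $\mu_{x+\b^\vee}-w(\a^\vee)\preceq\mu_{x+\b^\vee}\preceq\l$ by the $S_0$-analogue of \S 2.3(b), and since $\mu_{x+\b^\vee}-w(\a^\vee)$ represents the class $x+\b^\vee-\a^\vee$ and the $J$-minuscule representative is $\preceq_J$-minimal in its coset, $\mu_{x+\b^\vee-\a^\vee}\preceq\l$, contradicting (i). The $\g$-half gives $\<w(\a),\mu_x\>\ge0$ symmetrically, and Lemma \ref{central'} finishes. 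No further input is needed.

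Your proposal has the right skeleton (use (ii) to transfer pairings to $\mu_x$, play the two halves of (i) against each other, finish with Lemma \ref{central'}), but the step that actually produces the bounds $\<w(\a),\mu_x\>\le0$ and $\ge0$ is missing: you route it through Lemma \ref{key}, which is the wrong tool here and does not apply as you invoke it. Lemma \ref{key} requires $\mu_{x+\b^\vee}-\a^\vee\le\l$, whereas you only establish $\mu_{x+\b^\vee}\le\l$ (and $-\a^\vee$ need not be a positive coroot after your $J$-anti-dominant normalization, so the $\le\l$ can be destroyed); its ``moreover'' clause requires the base coweight to be $G$-dominant, not merely $J$-dominant, and conjugating by $W_0$ to arrange this moves $\a$ as well. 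More fundamentally, even granting an obstruction root $\d$ with $\<\d,\mu_{x+\b^\vee}-\a^\vee\>\le-2$ and $\<\d,\a^\vee\>\le-2$, no argument is given for how this yields $\<w(\a),\mu_x\>\le0$ for all $w\in W_J$ --- if anything, the case $\d=-\a$ produces a \emph{lower} bound on $\<\a,\mu_{x+\b^\vee}\>$, the wrong direction --- and the concluding ``one gives $\ge$, the other $\le$, pinned to $0$'' is asserted with the derivation deferred to ``the bookkeeping carried out in CKV''. That bookkeeping is the entire content of the lemma, so as written the proposal is not a proof; the fix is to discard the Lemma \ref{key} detour and argue directly as above.
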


\begin{cor}\rm{(cf. \cite[Remark 4.5.2]{CKV})} \label{central''}
Let $x, x' \in \bar I_{\l, J, b}$ such that $x \overset {(\a, r)} \rightarrowtail x'$ for some $\a \in \Phi-\Phi_J$ and some $r \in \NN$. Then $w_x(\s^i(\a))=\s^i(\a)$ and $\<\s^i(\a), \mu_x\>=0$ if $i \in [1, r-1]$ and $i, i-r \notin \ZZ h$.
\end{cor}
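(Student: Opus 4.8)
The plan is to apply Lemma \ref{central} to suitable choices of $\b$ and $\g$ built from the defining condition $x \overset{(\a,r)}\rightarrowtail x'$. First I would fix $i \in [1,r-1]$ with $i, i-r \notin \ZZ h$, so that $\s^i(\a)$ lies in a connected component of $\Phi$ different from the one containing $\a$ and different from the one containing $\s^r(\a)$ (here I use that $\s$ acts transitively on the $h$ connected components, so $\s^i$ moves the component of $\a$ exactly when $i \notin \ZZ h$, and likewise $\s^{i-r}$ moves the component of $\s^r(\a)$ exactly when $i - r \notin \ZZ h$). Set $\b = \a$ and $\g = \s^r(\a)$. Then $\mu_{x+\b^\vee} = \mu_{x+\a^\vee} \preceq \l$ and $\mu_{x-\g^\vee} = \mu_{x-\s^r(\a^\vee)} \preceq \l$ both hold by the definition of $x \overset{(\a,r)}\to x'$, so the hypotheses on $\b,\g$ in Lemma \ref{central} are met, as is hypothesis (ii) there with the role of $\a$ played by $\s^i(\a)$.

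The crux is to verify hypothesis (i) of Lemma \ref{central}: that $\mu_{x+\a^\vee-\s^i(\a^\vee)} \npreceq \l$ and $\mu_{x+\s^i(\a^\vee)-\s^r(\a^\vee)} \npreceq \l$. This is exactly where the stronger relation $\rightarrowtail$ (as opposed to merely $\to$) enters. Indeed, if $\mu_{x+\a^\vee-\s^i(\a^\vee)} \preceq \l$ then, since also $\mu_x, \mu_{x+\a^\vee} \preceq \l$ and $\mu_{(x+\a^\vee-\s^i(\a^\vee)) - \s^i(\s^i(\a^\vee))}$ — wait, more carefully: one would check that $x \overset{(\a,i)}\to x+\a^\vee-\s^i(\a^\vee)$ holds (the required four coweights are $\mu_x, \mu_{x+\a^\vee}, \mu_{x-\s^i(\a^\vee)}, \mu_{x+\a^\vee-\s^i(\a^\vee)}$, all $\preceq\l$, the third because $\mu_{x - \s^i(\a^\vee)}$ is $W_J$-conjugate to something $\preceq \mu_{x}$ — this needs Lemma \ref{minuscule}/\ref{minu} type bookkeeping) and that $x+\a^\vee-\s^i(\a^\vee) \overset{(\s^i(\a), r-i)}\to x'$ holds similarly, contradicting the definition of $\rightarrowtail$. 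The symmetric assumption $\mu_{x+\s^i(\a^\vee)-\s^r(\a^\vee)} \preceq \l$ is ruled out the same way using the second forbidden factorization $x \overset{(\s^i(\a), r-i)}\to x+\s^i(\a^\vee)-\s^r(\a^\vee) \overset{(\a,i)}\to x'$. Having confirmed (i) and (ii), Lemma \ref{central} yields $\<w(\s^i(\a)), \mu_x\> = 0$ for all $w \in W_J$; specializing to $w = 1$ and $w = w_x$ gives $\<\s^i(\a), \mu_x\> = 0$ and $w_x(\s^i(\a)) = \s^i(\a)$ (the latter via Lemma \ref{central'}(b), since $\s^i(\a)$ is then itself the minimal element in its relevant $W_J$-orbit slice).

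The main obstacle I anticipate is the bookkeeping in checking that the two forbidden factorizations are genuinely available as relations $\to$, i.e. verifying that all the intermediate coweights $\mu_y$ appearing in $x \overset{(\a,i)}\to \cdot \overset{(\s^i(\a),r-i)}\to x'$ satisfy $\mu_y \preceq \l$ given only the data of $x \overset{(\a,r)}\to x'$. One has to be careful that $\a^\vee$ being $J$-anti-dominant and $J$-minuscule forces the relevant $\mu$'s to be $W_J$-conjugates of the "obvious" candidates (Lemma \ref{minuscule}), and that $\preceq$ behaves well under the partial steps (using \S\ref{setup3}(b) and Lemma \ref{minu}(b)). Once that combinatorial lemma is in place — it is essentially the content of "$x \overset{(\a,r)}\to x'$ and not $x\overset{(\a,r)}\rightarrowtail x'$" unwinding correctly — the corollary follows immediately from Lemma \ref{central}.
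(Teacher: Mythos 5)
The paper gives no written proof of Corollary \ref{central''} (it is justified only by the ``cf.\ \cite[Remark 4.5.2]{CKV}'' citation), so the comparison has to be with the argument pattern the paper uses for the analogous statements, namely Lemma \ref{z1}(a)--(b) and step (ii) in the proof of Lemma \ref{z3}(a). Measured against that, your proposal has a genuine gap at exactly the point you flagged as ``bookkeeping'': the claim that $\mu_{x+\a^\vee-\s^i(\a^\vee)}\preceq\l$ already forces the forbidden factorization $x \overset{(\a,i)}\to x+\a^\vee-\s^i(\a^\vee) \overset{(\s^i(\a),r-i)}\to x'$. Unwinding the definition of $\to$, this chain requires \emph{six} coweights to be $\preceq\l$; beyond the three already supplied by $x\overset{(\a,r)}\to x'$ and the one you assume, you still need $\mu_{x-\s^i(\a^\vee)}\preceq\l$ and $\mu_{x+\a^\vee-\s^i(\a^\vee)-\s^r(\a^\vee)}\preceq\l$, and neither follows from $\mu_{x+\a^\vee-\s^i(\a^\vee)}\preceq\l$ alone (e.g.\ if $\<\s^i(\a),\mu_x\>=\<w_x(\s^i(\a)),\mu_x\>=0$, then $\mu_{x-\s^i(\a^\vee)}\preceq\l$ is equivalent to $\mu_{x+\s^i(\a^\vee)}\preceq\l$, which is independent data). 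Consequently $\rightarrowtail$ does \emph{not} imply hypothesis (i) of Lemma \ref{central} --- the failure of each factorization is only a disjunction over three coweights --- and the route ``verify (i), then invoke Lemma \ref{central}'' cannot be completed as stated.

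The repair is to reverse the logic and condition on the sign of the pairing, as in Lemma \ref{z1}. Since $i,i-r\notin\ZZ h$ and $W_J$ preserves connected components of $\Phi$, every $W_J$-translate of $\s^i(\a)$ is orthogonal to $\a^\vee$ and to every $W_J$-translate of $\s^r(\a^\vee)$. If $\<w_x(\s^i(\a)),\mu_x\>\ge 1$, this orthogonality together with \S\ref{setup3}(b), Lemma \ref{minuscule} and Lemma \ref{central'}(a) (applied at $x$, $x+\a^\vee$ and $x'$) yields all six conditions of the first factorization, contradicting $\rightarrowtail$; if $\<\s^i(\a),\mu_x\>\le -1$ one gets the second factorization symmetrically. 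In the remaining case $0\le\<\s^i(\a),\mu_x\>\le\<w_x(\s^i(\a)),\mu_x\>\le 0$, so both pairings vanish and Lemma \ref{central'}(b),(d) give $w_x(\s^i(\a))=\s^i(\a)$. So the conclusion is correct and the ingredients you cite (Lemma \ref{central'}, Lemma \ref{minuscule}, the component separation) are the right ones, but the pivot of the argument must be the sign of $\<\s^i(\a),\mu_x\>$ rather than hypothesis (i) of Lemma \ref{central}.
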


\

\begin{lem} \label{control}
Let $\b \in \Phi$, $\mu \in Y$ and $x \in \bold{k}[[t]]^\times$. Then $U_\b(x t\i) t^\mu$ lies in $ K t^\mu K$ if $\<\b, \mu\> \le -1$ and lies in $K t^{\b^\vee+\mu} K$ otherwise. Similarly, $t^\mu U_\b(x t\i) $ lies in $ K t^\mu K$ if $\<\b, \mu\> \ge 1$ and lies in $K t^{\mu-\b^\vee} K$ otherwise.
\end{lem}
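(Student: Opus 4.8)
The plan is to reduce both statements to the torus--conjugation rule together with the rank-one Bruhat identity, all computations taking place inside $K=G(\bold k[[t]])$. Recall that $t^\nu U_\b(y)t^{-\nu}=U_\b(t^{\<\b,\nu\>}y)$ for $\nu\in Y$; writing $n=\<\b,\mu\>$ this gives $U_\b(xt\i)t^\mu=t^\mu U_\b(xt^{-1-n})$ and $t^\mu U_\b(xt\i)=U_\b(xt^{\,n-1})t^\mu$. If $n\le -1$ then $xt^{-1-n}\in\bold k[[t]]$, so $U_\b(xt^{-1-n})\in K$ and $U_\b(xt\i)t^\mu\in t^\mu K\subseteq Kt^\mu K$; symmetrically, if $n\ge 1$ then $U_\b(xt^{\,n-1})\in K$ and $t^\mu U_\b(xt\i)\in Kt^\mu K$. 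This settles the two ``diagonal'' cases of the lemma.

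For the remaining cases I would invoke the classical identity, valid in the rank-one subgroup of $G$ generated by $T$, $U_\b$ and $U_{-\b}$: for invertible $z$,
$$U_\b(z)=U_{-\b}(z\i)\,\b^\vee(-z)\,\dot s_\b\,U_{-\b}(z\i),$$
where $\dot s_\b\in G(\bold k)\subseteq K$ is a fixed lift of $s_\b$. Assume first $n\ge 0$, set $m=n+1\ge 1$ and $z=xt^{-m}$, so that $z\i=x\i t^m$ and $\b^\vee(-z)=\b^\vee(-x)\,t^{-m\b^\vee}$ with $\b^\vee(-x)\in T(\bold k[[t]])\subseteq K$. Substituting into $U_\b(xt\i)t^\mu=t^\mu U_\b(xt^{-m})$ and commuting the leading $t^\mu$ rightward through $U_{-\b}(x\i t^m)$, the torus factor, and $\dot s_\b$, the left $U_{-\b}$-factor turns into $U_{-\b}(x\i t)$ (because $m-n=1$), the right one remains $U_{-\b}(x\i t^m)$, both lying in $K$, and the torus powers amalgamate into the single exponent $t^{\,s_\b(\mu)+m\b^\vee}=t^{\mu+\b^\vee}$, using $s_\b(\mu)=\mu-n\b^\vee$ and $s_\b(\b^\vee)=-\b^\vee$. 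Hence $U_\b(xt\i)t^\mu\in Kt^{\b^\vee+\mu}K$. The case $n\le 0$ of the second assertion is entirely parallel: with $m=1-n\ge 1$ one rewrites $t^\mu U_\b(xt\i)=U_\b(xt^{-m})t^\mu$, and the same manipulation leaves the $K$-factors $U_{-\b}(x\i t^m)$, $\b^\vee(-x)$ on one side and $\dot s_\b$, $U_{-\b}(x\i t)$ on the other of the single power $t^{\,s_\b(\mu)-m\b^\vee}=t^{\mu-\b^\vee}$, whence $t^\mu U_\b(xt\i)\in Kt^{\mu-\b^\vee}K$.

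I do not expect a genuine obstacle here. The only points deserving attention are: the sign conventions in the rank-one identity (immaterial, since any scalar is swallowed by $\b^\vee(\cdot)\in K$); the remark that $x\in\bold k[[t]]^\times$ forces $x\i\in\bold k[[t]]^\times$, so that every $U_{\pm\b}$-argument claimed to lie in $K$ genuinely does; and the boundary value $n=0$, which belongs to the ``otherwise'' branch of the first assertion and is covered verbatim by the $m=1$ instance of the computation above.
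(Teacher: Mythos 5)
Your proposal is correct and follows essentially the same route as the paper: the easy cases by the torus-conjugation rule, and the remaining cases via the rank-one identity $U_\b(z)\in U_{-\b}(z\i)\,\dot s_\b\,\b^\vee(z\i)\,t^{\pm\b^\vee}\,U_{-\b}(z\i)$ with all auxiliary factors absorbed into $K$. The only (immaterial) difference is bookkeeping order — you commute $t^\mu$ through first and then expand, while the paper expands $U_\b(xt\i)$ directly and then commutes $U_{-\b}(x\i t)$ past $t^\mu$ — and your exponent computations $s_\b(\mu)+m\b^\vee=\mu+\b^\vee$, $s_\b(\mu)-m\b^\vee=\mu-\b^\vee$ check out.
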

\begin{proof}
We only need to prove the first statement. If $\<\b, \mu\> \le -1$, then $U_\b(x t\i) t^\mu= t^\mu U_\b(x t^{-\<\b, \mu\>-1}) \in K t K$. Otherwise, we have $$U_\b(x t\i) t^\mu \in U_{-\b}(x\i t) s_\b \b^\vee(x\i) t^{\b^\vee} U_{-\b}(x\i t) t^\mu K \subseteq K t^{\mu+\b^\vee} K$$ as desired.
\end{proof}

\begin{lem} \label{bound}
Let $\mu \in Y$, $\l \in Y^+$ and $\a, \b \in \Phi$ such that the root system $\Phi \cap (\ZZ \a + \ZZ \b)$ is of type $A_2$ or $A_1 \times A_1$ or $A_1$, and $$\mu, \mu+\a^\vee, \mu-\b^\vee, \mu+\a^\vee-\b^\vee \preceq \l.$$ Then $U_\a(y t\i) t^\mu U_\b(z t\i) \in \cup_{\l' \preceq \l} K t^{\l'} K$, where $y, z \in \bold{k}[[t]]$.
\end{lem}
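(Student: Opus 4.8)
The plan is to reduce the assertion to an explicit Cartan computation inside the rank-$\le 2$ reductive subgroup $H\subseteq G$ generated by $T$ and the root subgroups $U_\gamma$ with $\gamma\in\Psi:=\Phi\cap(\ZZ\a+\ZZ\b)$, and then to compute the double cosets met by $U_\a(yt\i)t^\mu U_\b(zt\i)$ case by case according to the type of $\Psi$. First observe that if $y$ is not a unit then $yt\i\in\bold k[[t]]$, so $U_\a(yt\i)\in K$ and the claim follows from Lemma \ref{control} and the hypothesis $\mu,\mu-\b^\vee\preceq\l$; similarly if $z$ is not a unit. So assume $y$ and $z$ are units. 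Next, set $g:=U_\a(yt\i)t^\mu U_\b(zt\i)\in H(L)$ and put $K_H:=K\cap H(L)$, which is hyperspecial in $H(L)$. Since $K_H t^\nu K_H\subseteq Kt^\nu K=Kt^{\overline\nu}K$ for every cocharacter $\nu$ of $T$, it suffices to show that the Cartan cocharacter of $g$ computed in $H(L)$ is a $W_\Psi$-conjugate of one of $\mu$, $\mu+\a^\vee$, $\mu-\b^\vee$, $\mu+\a^\vee-\b^\vee$; for then the Cartan cocharacter of $g$ in $G(L)$ is the dominant conjugate of one of these, which is $\preceq\l$ by assumption.

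Suppose first $\Psi$ has type $A_1$ or $A_1\times A_1$. Then $H$ surjects onto a product of at most two copies of $PGL_2$, in which the image of $g$ factors as a product of an element of the form $U_{\pm\a}(\ast\, t\i)t^{(\ast)}$ and one of the form $t^{(\ast)}U_{\pm\b}(\ast\, t\i)$ (using that $U_\a$ and $U_\b$ commute when $\langle\a,\b^\vee\rangle=0$). Applying Lemma \ref{control} in each factor — equivalently, the underlying $2\times2$ Smith normal form computation via the rank-one identity from the proof of Lemma \ref{control} — shows that the Cartan cocharacter of $g$ in $H(L)$ is a $W_\Psi$-conjugate of $\mu+\epsilon_\a\a^\vee-\epsilon_\b\b^\vee$ for suitable $\epsilon_\a,\epsilon_\b\in\{0,1\}$, which is one of the four listed cocharacters (in the $A_1$ case one uses $\mu-\b^\vee=\mu+\a^\vee$ and $\mu+\a^\vee-\b^\vee=\mu+2\a^\vee$ when $\b=-\a$). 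Now suppose $\Psi$ has type $A_2$; the computation then reduces to $GL_3$. In the configuration $\a\leftrightarrow E_{12}$, $\b\leftrightarrow E_{23}$ (the other positions of $\a,\b$ in $A_2$ being analogous), $g$ becomes the explicit upper-triangular matrix obtained from $\diag(t^{m_1},t^{m_2},t^{m_3})$, where $m_1-m_2=\langle\a,\mu\rangle$ and $m_2-m_3=\langle\b,\mu\rangle$, by the left modification $U_{E_{12}}(yt\i)$ and the right modification $U_{E_{23}}(zt\i)$. Its Smith normal form is determined by the minimal valuations $d_1$ of its entries and $d_2$ of its $2\times2$ minors (and $d_3$ of its determinant); the crucial point is that the $(\{1,2\},\{2,3\})$-minor vanishes identically, so that no term of valuation below the level allowed by the four cocharacters survives. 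A short case distinction on the signs of $\langle\a,\mu\rangle$ and $\langle\b,\mu\rangle$ then shows that the Cartan cocharacter of $g$ is in every case one of $\overline\mu$, $\overline{\mu+\a^\vee}$, $\overline{\mu-\b^\vee}$, $\overline{\mu+\a^\vee-\b^\vee}$.

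As all four of $\mu$, $\mu+\a^\vee$, $\mu-\b^\vee$, $\mu+\a^\vee-\b^\vee$ are $\preceq\l$ by hypothesis, this proves $U_\a(yt\i)t^\mu U_\b(zt\i)\in\cup_{\l'\preceq\l}Kt^{\l'}K$ in all cases. The main obstacle is the $A_2$ case: it cannot be reduced further through quotients as in the rank-one cases, so one has to carry out the $GL_3$ Smith normal form computation in full and check, through the sign case analysis, that the cancellation of minors confines the Cartan cocharacter to the four prescribed cocharacters rather than letting it drift up to something like $\overline{\mu+\a^\vee+\b^\vee}$.
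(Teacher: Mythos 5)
Your proposal is correct, but it takes a genuinely different route from the paper's. The paper reduces at once to the $A_2$ case and argues directly with affine root subgroups, splitting into exactly two cases: if $\<\a,\mu\>\le-2$, or $\<\a,\mu\>=-1$ with $\<\a,\b^\vee\>\ge0$, it moves $U_\a(yt\i)$ across $t^\mu$ (where it becomes integral) and commutes it past $U_\b(zt\i)$; otherwise it applies the rank-one identity $U_\a(yt\i)=U_{-\a}(y\i t)s_\a\a^\vee(y\i)t^{\a^\vee}U_{-\a}(y\i t)$ to absorb $U_\a(yt\i)t^\mu$ into $Kt^{\mu+\a^\vee}$ times an integral factor. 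In each case the only possible commutator term, $U_{\a+\b}(\cdots)$ or $U_{\b-\a}(\cdots)$, is checked to lie in $K$, and Lemma \ref{control} applied to $t^{\mu}U_\b(zt\i)$ or $t^{\mu+\a^\vee}U_\b(zt\i)$ finishes the proof; the configuration of $\a,\b$ enters only through the sign of $\<\a,\b^\vee\>$. Your route instead computes the Cartan invariant of $g$ inside the rank-two subgroup $H$ by elementary divisors. The reduction to $H$ is sound, and the key observation --- that the $(\{1,2\},\{2,3\})$-minor vanishes identically, which is precisely what keeps $d_2\ge\min(m_1+m_2-1,\,m_1+m_3,\,m_2+m_3-1)$ rather than $2m_2-2$ and so pins the invariant to the four allowed multisets --- is correct; spot-checking the sign configurations confirms the claimed identity. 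What your approach costs is that both the ``analogous'' configurations and the ``short case distinction'' conceal real work: the identity of the vanishing minor depends on the relative position of $\a$ and $\b$ in $A_2$, so each configuration needs its own matrix and sign analysis, whereas the paper's two cases are uniform. What it buys is transparency (the assertion to verify is a finite multiset identity for elementary divisors) and, incidentally, your opening reduction to unit $y,z$ makes explicit a hypothesis the paper leaves tacit when it substitutes $y\i$ in the rank-one identity.
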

\begin{proof}
Without loss of generality, we may assume $\Phi \cap (\ZZ \a + \ZZ \b)$ is of type $A_2$. In particular, $\a \neq \pm \b$.

First, we suppose either $\<\a, \mu\> \le -2$ or $\<\a, \mu\> = -1$ and $\<\a, \b^\vee\> \ge 0$. Note that $\Phi \cap (\ZZ \a + \ZZ \b)$ is of type $A_2$. One computes that \begin{align*} & \quad\ U_\a(y t\i) t^\mu U_\b(z t\i) \\ & = t^\mu U_\a(y t^{-1-\<\a, \mu\>}) U_\b(z t\i) \\ &=\begin{cases} t^\mu U_\b(z t\i)  U_\a(y t^{-1-\<\a, \mu\>}), & \text{ if } \<\a, \b^\vee\> \ge 0; \\ t^\mu U_\b(z t\i)  U_\a(y t^{-1-\<\a, \mu\>}) U_{\a+\b}(c_{\a, \b} y z t^{-2-\<\a, \mu\>}), & \text{ otherwise.} \end{cases} \\ & \subseteq K t^\mu K \cup K t^{\mu-\b^\vee} K, \end{align*} Here $c_{\a, \b} \in \bold{k}[[t]]$ is some constant and the inclusion follows from Lemma \ref{control}.

Now we suppose otherwise, that is, either $\<\a, \mu\> \ge 0$ or $\<\a, \mu\> = -1$ and $\<\a, \b^\vee\> = -1$. One checks that \begin{align*} & \quad\ U_\a(y t\i) t^\mu U_\b(z t\i) \\ & = U_{-\a}(y\i t) s_\a \a^\vee(y\i) t^{\mu+\a^\vee} U_{-\a}(y\i t^{1+\<\a, \mu\>}) U_\b(z t\i) \\ & \in \begin{cases} K t^{\mu+\a^\vee} U_\b(z t\i)  U_{-\a}(y\i t^{1+\<\a, \mu\>}), & \text{ if } \<\a, \b^\vee\> \le 0; \\ K t^{\mu+\a^\vee} U_\b(z t\i)  U_{-\a}(y\i t^{1+\<\a, \mu\>}) U_{\b-\a}(c_{-\a, \b} y\i z t^{\<\a, \mu\>}), & \text{ otherwise. } \end{cases} \\ & \subseteq K t^{\mu+\a^\vee} K \cup  K t^{\mu+\a^\vee-\b^\vee} K,  \end{align*} where $c_{-\a, \b} \in \bold{k}[[t]]$ is some constant and the last inclusion follows from Lemma \ref{control}. The proof is finished.
\end{proof}

\begin{lem}\label{simple}
Let $x, x' \in \bar I_{\l, J, b}$, $\a \in \Phi$ and $r \in \NN$ such that $x'-x=\a^\vee-\s^r(\a^\vee) \in \pi_1(M_J)$. Then the following two statements are equivalent:

(a) For any $P \in X_{\mu_x}^{M_J}(b)$, there exists $P' \in X_{\mu_{x'}}^{M_J}(b)$ satisfying $P \sim_{\l, b} P'$ and $\eta_J(P)-\eta_J(P')=\sum_{i=0}^{r-1} \s^i(\a^\vee) \in \pi_1(M_J)$.

(b) There exist $P \in X_{\mu_x}^{M_J}(b)$ and $P'' \in X_{\mu'}^{M_J}(b)$ such that $P \sim_{\l, b} P''$ and $\eta_J(P)-\eta_J(P'')=\sum_{i=0}^{r-1} \s^i(\a^\vee)$ for some $\mu' \in Y$.
\end{lem}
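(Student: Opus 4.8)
Note first that (a)$\Rightarrow$(b) is trivial: given (a), take $\mu'=\mu_{x'}$, pick any $P\in X_{\mu_x}^{M_J}(b)$, and let $P''=P'$ with $P'$ as provided by (a). For (b)$\Rightarrow$(a), assume (b), with witnesses $P_0\in X_{\mu_x}^{M_J}(b)$, $P_0''\in X_{\mu'}^{M_J}(b)$ satisfying $P_0\sim_{\l,b}P_0''$ and $\eta_J(P_0)-\eta_J(P_0'')=\sum_{i=0}^{r-1}\s^i(\a^\vee)$. Two elementary facts drive the argument. First, for any $\nu\in Y$ and $Q=gK_J\in X_\nu^{M_J}(b)$, applying $\eta_J$ to $g\i b\s(g)\in K_J t^\nu K_J$ yields $(\s-1)\eta_J(Q)=\eta_J(t^\nu)-\eta_J(b)$ in $\pi_1(M_J)$. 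Second, $\eta_J=\eta_{M_J}$ is locally constant on $M_J(L)/K_J$ (being the projection onto $\pi_1(M_J)$, the set of connected components of the affine Grassmannian of $M_J$), hence constant on connected components of any subvariety. Applying the first fact to $P_0$ and $P_0''$, subtracting, and using $(\s-1)\sum_{i=0}^{r-1}\s^i(\a^\vee)=\s^r(\a^\vee)-\a^\vee$ together with $x'=x+\a^\vee-\s^r(\a^\vee)$, one computes $\eta_J(t^{\mu'})=x'$; in particular $\mu'-\mu_{x'}\in\ZZ\Phi_J^\vee$. Also, since $P_0''\in X_{\preceq\l}(b)$, comparing Cartan double cosets shows $\overline{\mu'}\preceq\l$, and hence $\overline{\mu'}^{J}\preceq\l$, so that $X_{\preceq_J\overline{\mu'}^{J}}^{M_J}(b)\subseteq X_{\preceq\l}(b)$ (cf. \S\ref{setup3} and the compatibility of $\preceq_J$ with $\preceq$).

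Next I would transport the witness along the $\s$-centralizer. Let $P\in X_{\mu_x}^{M_J}(b)$ be arbitrary. As $b$ is superbasic in $M_J$, Corollary \ref{basic}(ii) applied to $M_J$ gives that $J_b^{M_J}(F)$ acts transitively on $\pi_0(X_{\mu_x}^{M_J}(b))$, so there is $j\in J_b^{M_J}(F)$ with $jP_0$ in the connected component of $X_{\mu_x}^{M_J}(b)$ containing $P$. Since $\mu_x\preceq\l$ this gives $P\sim_{\l,b}jP_0$, and constancy of $\eta_J$ on components gives $\eta_J(P)=\eta_J(jP_0)$. Because $j\in J_b(F)$, left multiplication by $j$ preserves $X_{\preceq\l}(b)$ and its connected components, and it preserves $X_{\mu'}^{M_J}(b)$; hence $jP_0''\in X_{\mu'}^{M_J}(b)$, $P\sim_{\l,b}jP_0\sim_{\l,b}jP_0''$, and $\eta_J(P)-\eta_J(jP_0'')=\eta_J(jP_0)-\eta_J(jP_0'')=\eta_J(P_0)-\eta_J(P_0'')=\sum_{i=0}^{r-1}\s^i(\a^\vee)$.

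Finally I would replace $jP_0''$ by a point of $X_{\mu_{x'}}^{M_J}(b)$. Apply Corollary \ref{basic}(i) to $M_J$, $b$ and the $J$-dominant cocharacter $\Lambda:=\overline{\mu'}^{J}$: the connected component of $X_{\preceq_J\Lambda}^{M_J}(b)$ containing $jP_0''$ meets $X_{\mu''}^{M_J}(b)$, where $\mu''$ is the unique $J$-dominant $J$-minuscule coweight with $\mu''\preceq_J\Lambda$. But $\mu''\preceq_J\Lambda$ with $\mu''$, $\Lambda$ both $J$-dominant forces $\mu''-\Lambda\in\ZZ\Phi_J^\vee$, so $\eta_J(t^{\mu''})=\eta_J(t^{\Lambda})=\eta_J(t^{\mu'})=x'$, whence $\mu''=\mu_{x'}$. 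Taking $P'$ in this intersection, the inclusion $X_{\preceq_J\Lambda}^{M_J}(b)\subseteq X_{\preceq\l}(b)$ gives $jP_0''\sim_{\l,b}P'$, and constancy of $\eta_J$ on components gives $\eta_J(P')=\eta_J(jP_0'')$. Therefore $P\sim_{\l,b}P'$ with $P'\in X_{\mu_{x'}}^{M_J}(b)$ and $\eta_J(P)-\eta_J(P')=\sum_{i=0}^{r-1}\s^i(\a^\vee)$, which is exactly (a).

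The step I expect to need the most care is the exact identity $\eta_J(t^{\mu'})=x'$ — not merely agreement modulo $\pi_1(M_J)^{\s}$ — since it is precisely this that pins the target down to $X_{\mu_{x'}}^{M_J}(b)$ rather than to $X_{\mu_y}^{M_J}(b)$ for some other $y\in\bar I_{\l,J,b}$; the rest is the routine bookkeeping of the orders $\preceq$, $\preceq_J$, $\le_J$ underlying $X_{\preceq_J\Lambda}^{M_J}(b)\subseteq X_{\preceq\l}(b)$ and the standard facts about $\eta_{M_J}$ and the $J_b^{M_J}(F)$-action recalled above.
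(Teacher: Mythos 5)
Your proposal is correct and follows essentially the same route as the paper: reduce (b)$\Rightarrow$(a) to a single witness via the transitivity of $J_b^{M_J}(F)$ on $\pi_0(X_{\mu_x}^{M_J}(b))$ from Corollary \ref{basic}(ii), deduce $\eta_J(t^{\mu'})=x'$ and $\mu'\preceq\l$ from the hypotheses of (b), and then move $P''$ into $X_{\mu_{x'}}^{M_J}(b)$ inside $X_{\preceq_J\overline{\mu'}^J}^{M_J}(b)$ using Corollary \ref{basic}(i). The only difference is that you spell out the computation $(\s-1)\eta_J(Q)=\eta_J(t^\nu)-\eta_J(b)$ forcing $\mu'=\mu_{x'}$ in $\pi_1(M_J)$, which the paper states without proof.
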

\begin{proof}
We only need to show (b) $\Rightarrow$ (a). Since $J_b^{M_J}(F)$ acts transitively on $\pi_0(X_{\mu_x}^{M_J}(b))$, it suffices to show there exists $P' \in X_{\mu_{x'}}^{M_J}(b)$ such that $P \sim_{\l, b} P'$ and $\eta_J(P)-\eta_J(P')=\sum_{i=0}^{r-1} \s^i(\a^\vee) \in \pi_1(M_J)$. In view of the conditions of (b), we have $\mu'=\mu_{x'} \in \pi_1(M_J)$ and $\mu' \preceq \l$. By Corollary \ref{basic} (i), there exists $P' \in X_{\mu_{x'}}^{M_J}(b)$ with $P'' \sim_{J, \mu', b} P'$. Then we have $P \sim_{\l, b} P'$ and $\eta_J(P)-\eta_J(P')=\sum_{i=0}^{r-1} \s^i(\a^\vee)$ as desired.
\end{proof}

Now we are ready to prove Proposition \ref{main}.
\begin{proof}[Proof of Proposition \ref{main}]
Thanks to Lemma \ref{convv'}, we can and do assume $x \overset {(\a, r)} \rightarrowtail x'$. Moreover, by Lemma \ref{simple}, it suffices to show

(i) there exist $P \in X_{\mu_x}^{M_J}(b)$ and $P'' \in X_{\mu'}^{M_J}(b)$ such that $P \sim_{\l, b} P''$ and $\eta_J(P)-\eta_J(P'')=\sum_{i=0}^{r-1} \s^i(\a^\vee)$ for some $\mu' \in Y$.

Let $g_x \in M_J(L)$ such that $g_x\i b \s(g_x)=\dot b_x=t^{\mu_x} \dot w_x$, where $b_x=t^{\mu_x} w_x \in \Omega_J$ is define in \S \ref{setup4}. By Corollary \ref{central''}, we have

(a) $w_x(\s^i(\a))=\s^i(\a)$, $\<\s^i(\a), \mu_x\>=0$ and hence $\<\s^i(\a), \b\>=0$ (by Lemma \ref{central'} (c)) if $i \in [1, r-1]$ and $i, i-r \notin \ZZ h$.

Case(1): $r \in [1, h]$. Define $\textsl{g}: \PP^1=\AA^1 \cup \{\infty\} \to G(L) / K$ such that $$\textsl{g}(z)=g_x \ U_{\a}(z t\i) \ {}^{\dot b_x\s} U_{\a}(z t\i) \cdots {}^{(\dot b_x\s)^{r-1}} U_{\a}(z t\i) K.$$ One computes that \begin{align*} & \quad \ \textsl{g}(z)\i b\s \textsl{g}(z) \\ &= K {}^{(\dot b_x\s)^{r-1}} U_{\a}(-z t\i)) \cdots U_{\a}(-z t\i) g_x\i b\s g_x U_{\a}(z t\i) \cdots {}^{(\dot b_x\s)^{r-1}} U_{\a}(z t\i) K \\ &=K {}^{(\dot b_x\s)^{r-1}} U_{\a}(-z t\i) \cdots U_{\a}(-z t\i) \dot b_x\s U_{\a}(z t\i) \cdots {}^{(\dot b_x\s)^{r-1}} U_{\a}(z t\i) K \\ &= K U_{\a}(-z t\i) \ b_x\s \ {}^{(\dot b_x\s)^{r-1}} U_{\a}(z t\i) K \\ &= K U_{\a}(-z t\i) t^{\mu_x} U_{w_x(\s^r(\a))}(c \s^r(z) t\i) \dot w_x \s K \\ & \in \cup_{\l' \preceq \l} K t^{\l'}K \s \subseteq K \backslash G(L) / K,\end{align*} where $c \in \bold{k}[[t]]^\times$ is some constant; the third equality follows from that $\a, \s^i(\a)$ lie in different connected components of $\Phi$ for $i \in [1, r-1]$; the fourth equality follows from (ii); the last inclusion follows from Lemma \ref{minuscule} and Lemma \ref{bound}. Moreover, one computes that \begin{align*} P'':=\textsl{g}(\infty) &= \lim_{z \to \infty} g_x \prod_{i=0}^{r-1} U_{-\s^i(\a)}(c_i \s^i(z\i) t) t^{-\s^i(\a^\vee)} K \\ &= g_x \ t^{-\sum_{i=0}^{r-1} \s^i(\a^\vee)} K  \in M_J(L) / (M_J(L) \cap K), \end{align*} where $c_i \in \bold{k}[[t]]^\times$ is some constant. So $P'' \in X_{\mu_{x'}}^{M_J}(b)$, $P=\textsl{g}(0) \sim_{\l, b} \textsl{g}(\infty)=P''$ and $\eta_J(P)-\eta_J(P'')=\sum_{i=0}^{r-1} \s^i(\a^\vee)$ as desired.

\

Case(2): $r \in [h+1, 2h-1]$. Then $\sharp \co_\a=3h$ and each connected component of $\Phi$ is of type $D_4$. Since $\a^\vee$ is $J$-anti-dominant and $J$-minuscule, we have either $J = \emptyset$ or $\Pi_J = \co_\b$, where $\b$ is the unique simple root in $\Pi_J$ such that $\s^h(\b)=\b$ and $\b, \a$ are in the same connected component of $\Phi$.

We claim that

(2-a) If $\<\b, \a^\vee\>=-1$ and $\<\s^r(\b), \mu_x\>=1$, then $\<\s^r(\b), \mu_{x'}\>=0$.

Indeed, if $\<\s^r(\b), \mu_x\>=1$, then $w_x(\s^i(\a))=\s^i(\a)+\s^r(\b)$ for $i \in r+\ZZ h$. Using the relation $x'=x+\a^\vee-\s^r(\a^\vee)$, we have \begin{align*} \<\s^r(\b), \mu_{x'}\> & =\<\s^r(\b), s_\b(\mu_x+\a^\vee-w_x(\s^r(\a^\vee)))\> \\ & =\<\s^r(\b), \mu_x-\s^r(\a^\vee)-\s^r(\b^\vee)\>=0\end{align*} and (2-a) is verified. 

Note that $x' \overset {(-\a, r)} \rightarrowtail x$ is equivalent to $x \overset {(\a, r)} \rightarrowtail x'$. Therefore, by Lemma \ref{simple}, the proposition dose not change if we exchange the roles of $x$ and $x'$. Thus, we may and do assume

(2-b) $\<\s^r(\b), \mu_x\>=0$ if $\<\b, \a^\vee\>=-1$.

Then we show that

(2-c) $\<\s^{r-h}(\a), \mu_x\>=\<\s^h(\a), \mu_x\>=0$.

Note that $\<\b, \a^\vee\> \in \{0, -1\}$. So by (2-b) if it applies, we always have $$\mu_x-\s^r(\a)=\mu_x-w_x(\s^r(\a)) \preceq \l.$$ Then $\<\s^{r-h}(\a), \z\>=\<\s^{r-h}(\a), \mu_x\>$, where $\z \in A(x, \a, r):=\{\mu_x, \mu_x+\a^\vee, \mu_x-w_x(\s^r(\a^\vee)), \mu_x+\a^\vee-w_x(\s^r(\a^\vee))\}$. By Lemma \ref{minuscule}, we know that $A(x, \a, r) \preceq \l$. Thus one checks that \begin{align*} & x \overset {(\a, r-h)} \to x+\a^\vee-\s^{r-h}(\a^\vee) \overset {(\s^{r-h}(\a), h)} \to x' \text{ if } \<\s^{r-h}(\a), \mu_x\> \ge 1; \\ & x \overset {(\s^{r-h}(\a), h)} \to x+\s^{r-h}(\a^\vee)-\s^r(\a^\vee) \overset {(\a, r-h)} \to x' \text{ if } \<\s^{r-h}(\a), \mu_x\> \le -1. \end{align*} Hence $\<\s^{r-h}(\a), \mu_x\>=0$ since $x \overset {(\a, r)} \rightarrowtail x'$. The equality $\<\s^h(\a), \mu_x\>=0$ follows similarly and (2-c) is proved.

Case(2.1): $\<\b, \a^\vee\>=-1$ and $\<\b, \mu_x\>=1$. To show (i), we define $\textsl{g}: \PP^1 \to G(L)/K$ such that $$\textsl{g}(z)=g_x \ {}^{(\dot b_x\s)^{r-1}} U_{\a}(z t\i) \ {}^{(\dot b_x\s)^{r-2}} U_{\a}(z t\i) \cdots U_{\a}(z t\i) K.$$ Let $P''=\textsl{g}(\infty) \in M_J(L) / (K \cap M_J(L))$. By computations on on page 45-47 of \cite{CKV}, we have $\eta_J(P)-\eta_J(P')=\sum_{i=0}^{r-1} \s^i(\a^\vee) \in \pi_1(M_J)$. In view of (i), it remains to show $\textsl{g}\i b \s(\textsl{g}) \subseteq \cup_{\l' \preceq \l} K t^{\l'} K \subseteq K \backslash G(L) / K$, which holds, by (a), (2-b) and (2-c) (see page 45 of \cite{CKV}), if

(2.1.1) $U_\a(y t\i) t^{\mu_x} \in \cup_{\l' \preceq \l} K t^{\l'} K$ for $y \in \bold{k}[[t]]$;

(2.1.2) $U_{\s^{r-h}(\a)}(y t\i) U_{\s^r(\a)+\s^r(\b)}(z t^{\<\s^r(\a), \mu_x\>}) U_{\s^{r-h}(\a)}(-y t\i) \in K$ for $y, z \in \bold{k}[[t]]$.

Since $\mu_x, \mu_x+\a^\vee \preceq \l$, (2.1.1) follows from Lemma \ref{bound}. To verify (2.1.2), it suffices to show $\<\s^r(\a), \mu_x\> \ge 1$. Indeed, assume $\<\s^r(\a), \mu_x\> \le 0$. Then $\<\s^r(\a+\b), \mu_x-\s^r(\a^\vee)-\s^r(\b^\vee)-\s^{r-h}(\a^\vee)\> \le -1$, which implies $$\mu_x-\s^{r-h}(\a^\vee) \preceq \mu_x-\s^r(\a^\vee)-\s^r(\b^\vee)-\s^{r-h}(\a^\vee) = s_{\s^{r-h}(\b+\a)}(\mu_x-\s^r(\a^\vee)) \preceq \l,$$ where the second equality follows from $\<\s^r(\b)+\s^{r-h}(\a), \mu_x-\s^r(\a^\vee)\>=1$. Thus $$\mu_x+\a^\vee-\s^{r-h}(\a^\vee), \mu_x+\a^\vee-\s^r(\a^\vee)-\s^r(\b^\vee)-\s^{r-h}(\a^\vee) \preceq \l$$ since $\a, \s^r(\a)$ are not in the same connected component of $\Phi$. Then $$x \overset {(\a, r-h)} \to x+\a^\vee-\s^{r-h}(\a^\vee) \overset {(\s^{r-h}(\a), h)} \to x',$$ contradicting the assumption $x \overset {(\a, r)} \rightarrowtail x'$. So we have $\<\s^r(\a), \mu_x\> \ge 1$ as desired.

Case(2.2): either $\<\b, \a^\vee\>$ or $\<\b, \mu_x\>=0$. By (2-b) if it applies, $w_x$ fixes $\s^{r-h}(\a)$ and $\s^h(\a)$. Combining (a) and (2-c), we have $\<\s^j(\a), \mu_x\>=0$ and $w_x(\s^j(\a))=\s^j(\a)$ for $j \in [1, r-1]$. Noticing that $\<\s^{i-h}(\a), \s^i(\a^\vee)\>=0$ for $i \in \ZZ$, the proposition follows similarly as in Case(1).
\end{proof}

\section{Proof of Proposition \ref{k2}} \label{sec k2}
The main aim of this section is to prove Proposition \ref{k2}. Through out this section, we assume $G$ is adjoint and simple; the root system $\Phi$ of $G$ has $h$ connected components, on which $\s$ acts transitively.

\

For $\mu, \l \in Y$, we define $\Upsilon^+(\mu, \l)=\{\a \in \Phi^+; \mu+\a^\vee \preceq \l\}$.
\begin{lem}\rm{(cf. \cite[Proposition 4.1.9]{CKV})}\label{gen}
Let $J \subseteq S_0$ with $\s(J)=J$, $b \in G(L)$ and $\l \in Y^+$ such that $(J, b)$ is admissible and $(\l, b)$ is HN-irreducible. For $x \in \bar I_{\l, J, b}$, we define $$ C_{\l, J, b, x}=\{\b \in \Upsilon^+(\mu_x, \l); \text{$\b^\vee$ is $J$-minuscule and $J$-anti-dominant}\}.$$ Then the coroot lattice $\ZZ \Phi^\vee$ is spanned by $\Phi_J^\vee$ and $\cup_{\b \in C_{\l, J, b, x}} \co_\b^\vee$.

\

If, moreover, $\Phi$ is a union root systems of type $G_2$ and $J=\Pi_{\text{short}, G_2}$. Then $\ZZ \Phi^\vee$ is still generated by $\Pi_J^\vee$ and $\cup_{\a \in C_{\l, J, b, x} \cap \Pi_{\text{long}, G_2}} \co_\a^\vee$. Here $\Pi_{\text{short}, G_2}$ (resp. $\Pi_{\text{long}, G_2}$) denotes the single $\s$-orbit of simple short (resp. long) roots in $\Pi_0$.
\end{lem}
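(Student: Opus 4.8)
The plan is to show that the sublattice $L:=\ZZ\Phi_J^\vee+\sum_{\b\in C_{\l,J,b,x}}\ZZ\co_\b^\vee$ of $\ZZ\Phi^\vee$ is everything, by reducing the question to a single domination statement about simple coroots, in the spirit of \cite[Proposition 4.1.9]{CKV} but with the extra work forced by the fact that $\mu_x$ is only $J$-minuscule rather than minuscule. First note that $L$ is $\s$-stable (the union $\bigcup_\b\co_\b^\vee$ is $\s$-stable by construction) and contains $\ZZ\Phi_J^\vee$. Since $\s$ permutes the $h$ connected components of $\Phi$ transitively and the projection $\ZZ\Phi^\vee\to\ZZ\Phi^\vee/L$ is $\s$-equivariant, it suffices to prove $\a^\vee\in L$ for each simple root $\a$ of one fixed connected component $\Phi^{(0)}$ of $\Phi$; for $\a\in\Pi_J$ this is immediate, so from now on fix $\a$ simple with $\a\notin\Pi_J$.

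The heart of the matter is the estimate $\mu_x+\a^\vee\preceq\l$. Granting it, Lemma \ref{minu} provides a root $\a_J\in\Phi^+\setminus\Phi_J$ with $\a_J^\vee-\a^\vee\in\ZZ\Phi_J^\vee$, with $\a_J^\vee$ both $J$-minuscule and $J$-anti-dominant, and with $\mu_x+\a_J^\vee\preceq\l$; hence $\a_J\in\Upsilon^+(\mu_x,\l)$, so $\a_J\in C_{\l,J,b,x}$, so $\a_J^\vee\in\co_{\a_J}^\vee\subseteq L$, and then $\a^\vee=\a_J^\vee-(\a_J^\vee-\a^\vee)\in L$ since $\ZZ\Phi_J^\vee\subseteq L$. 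This yields the first assertion.

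To prove $\mu_x+\a^\vee\preceq\l$ I would combine $\mu_x\preceq\l$ with HN-irreducibility through Lemma \ref{dominant}. Since $b_x=t^{\mu_x}w_x\in\Omega_J$ and $\nu_{b_x}=\nu_{[b]}^G$ by Lemma \ref{dominant}, the Newton point $\nu_{[b]}^G$ is the average of $\mu_x$ over the finite cyclic group $\<w_x\s\>$; on the other hand HN-irreducibility says $\l^\diamond-\nu_{[b]}^G$ has strictly positive coefficient on every simple coroot. After clearing the denominators introduced by the two averaging operations, the strict positivity in the $\a^\vee$-direction should produce a whole copy of $\a^\vee$ of room in $\l-\mu_x\in\NN\Pi_0^\vee$, i.e. $\mu_x+\a^\vee\le\l$. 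One then upgrades $\le$ to $\preceq$: if $\mu_x+\a^\vee\not\preceq\l$, then by Lemma \ref{key} there is a positive long root $\g$ with $\<\g,\mu_x+\a^\vee\>$ very negative and $\mu_x+\a^\vee+\g^\vee\preceq\l$, and iterating this, or passing to a weakly dominant representative and applying Lemma \ref{Gashi} (with Lemma \ref{weak} in the simply-laced case), gives a contradiction. (For instance, if $\mu_x$ is weakly dominant then $\<\a,\mu_x\>\le-2$ cannot occur and only $\<\a,\mu_x\>\in\{-1\}\cup\NN$ remain, the value $-1$ being trivial since then $\mu_x+\a^\vee=s_\a\mu_x$ is $W_0$-conjugate to $\mu_x$.) The anomalous behaviour of $\s$-orbit coroots in type $A_{2n}$ is controlled with Lemma \ref{graph}. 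I expect this estimate to be the main obstacle: it is the only place where HN-irreducibility is used, and it requires both the nontrivial passage from a rational inequality on Newton points to an integral $\preceq$-statement and a case analysis by root lengths.

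Finally, for the ``moreover'' assertion, let $\Phi$ be a union of root systems of type $G_2$ and $J=\Pi_{\text{short},G_2}$; then $\Pi_0\setminus\Pi_J=\Pi_{\text{long},G_2}$, consisting of one long simple root $\a$ per component. Since $\<\a_{\text{short}},\a^\vee\>=-1$, the coroot $\a^\vee$ is already $J$-minuscule and $J$-anti-dominant, so $\a_J=\a$ in the notation of Lemma \ref{minu}; the estimate above then places $\a$ in $C_{\l,J,b,x}\cap\Pi_{\text{long},G_2}$. As the coroot lattice of $G_2$ is generated by its two simple coroots, $\ZZ\Phi^\vee$ is generated by $\Pi_J^\vee$ together with $\bigcup_{\a\in C_{\l,J,b,x}\cap\Pi_{\text{long},G_2}}\co_\a^\vee$, which is the refined statement.
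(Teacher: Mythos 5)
Your reduction to simple coroots via $\s$-stability of $L$ and the use of Lemma \ref{minu} to pass to $J$-anti-dominant $J$-minuscule representatives match the paper's strategy, but the claim on which everything rests --- that $\mu_x+\a^\vee\preceq\l$ for \emph{each} simple root $\a\notin\Pi_J$ --- is false, and you correctly sense that this is where the difficulty sits. HN-irreducibility controls only the $\s$-\emph{average}: the coefficient of $\a^\vee$ in $\l^\diamond-\nu_{[b]}^G$ is a sum over the whole orbit $\co_\a$, which spreads across all $h$ connected components, so positivity of the average yields only that \emph{some} $\th\in\co_\a$ satisfies $\mu_x+\th^\vee\le\l$ (this is statement (a) in the paper's proof). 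For an individual simple $\a$ the coefficient of $\a^\vee$ in $\l-\mu_x$ can be zero (e.g.\ for $G=\mathrm{Res}_{E/F}H$ with one component of $\l$ minuscule and the others large), so even $\mu_x+\a^\vee\le\l$ fails. Your fallback of using $\s$-stability of $L$ to move from $\th$ back to $\a$ would repair this first issue, but a second one remains: even for the good orbit element $\th$, the upgrade from $\le$ to $\preceq$ can genuinely fail, and the paper does \emph{not} conclude $\th\in\Upsilon^+(\mu_x,\l)$ in that case. Instead (Subcase 2.2) it applies Lemma \ref{key}(b) to produce a long root $\g$ with $\mu+\g^\vee$ and $\mu+\th^\vee+\g^\vee=\mu+s_\th(\g^\vee)$ both short-coroot translates that are $\le\l$, hence $\preceq\l$ by Lemma \ref{Gashi}, and then puts $\th^\vee=s_\th(\g^\vee)-\g^\vee$ into the lattice as a \emph{difference} of two elements of $\Upsilon^+$. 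Your argument has no mechanism for generating $\th^\vee$ indirectly in this way.

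There is also a structural point your sketch glosses over: Lemma \ref{key}'s ``moreover'' clause and Lemma \ref{Gashi} require dominance (or weak dominance) of the coweight, whereas $\mu_x$ is only $J$-dominant and $J$-minuscule. The paper handles this by introducing the maximal $J_0\supseteq J$ with $\Phi_{J_0}^\vee$ contained in the lattice $A$ already generated, replacing $\mu_x$ by its $J_0$-dominant conjugate $\mu=w(\mu_x)$ with $w\in W_{J_0}$ (so that conjugation does not change classes modulo $A$), and splitting into the cases $\mu\notin Y^+$ (which immediately enlarges $J_0$, a contradiction) and $\mu\in Y^+$ (where Lemmas \ref{key} and \ref{Gashi} apply). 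Your ``pass to a weakly dominant representative'' needs exactly this bookkeeping, since conjugating changes which roots are positive and $J$-anti-dominant and hence changes membership in $C_{\l,J,b,x}$. The same gap affects your ``moreover'' paragraph for $G_2$: the paper only shows that \emph{some} element of $\Pi_{\text{long},G_2}=\co_\a$ lies in $C_{\l,J,b,x}$ (splitting on whether $\mu_x$ is dominant), not that the fixed simple long root $\a$ does.
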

\begin{proof}
By Lemma \ref{dominant} and that $(\l, b)$ is HN-irreducible, we have $$\l^\diamond- \nu_{b_x}=\l^\diamond- \nu_{[b]}^G=\sum_{\a \in \Pi_0} c_\a \a^\vee,$$ where $b_x \in \Omega_J$ is defined in \S \ref{setup4} and each $c_\a \in \QQ$ is strictly positive. Therefore,

(a) for each $\a \in \Pi_0 - \Pi_J$, there exists $\th \in \co_\a$ such that $\mu_x+\th^\vee \le \l$.

We claim that

(b) $\ZZ \Phi^\vee$ is generated by $\Phi_J^\vee$ and $\cup_{\a \in \Upsilon^+(\mu_x, \l)} \co_\a^\vee$.

Assume (b). For $\a \in \Upsilon^+(\mu_x, \l) - \Phi_J$, let $\a_J$ be defined as in Lemma \ref{minu}. Then $\a_J \in C_{\l, J, b, x}$ and $\a^\vee-\a_J^\vee \in \ZZ \Phi_J^\vee$. Therefore, $\ZZ \Phi^\vee$ is also spanned by $\ZZ \Phi_J^\vee$ and $\co_{\a_J}^\vee \subseteq \cup_{\b \in C_{\l, J, b, x}} \co_\b^\vee$ for $\a \in \Upsilon^+(\mu_x, \l)$, as desired.

It remains to show (b). Let $A$ be the lattice spanned by $\Phi_J^\vee$ and and $\cup_{\a \in \Upsilon^+(\mu_x, \l)} \co_\a^\vee$. Let $J_0 \subseteq S_0$ be the maximal subset such that $\Phi_J^\vee \subseteq A$. Note that $J \subseteq J_0$ and $\s(J_0)=J_0$. We have to show that $J_0=S_0$. Assume otherwise, we choose $w \in W_{J_0}$ such that $\mu=w(\mu_x)$ is $J_0$-dominant.

Case(1): $\mu \notin Y^+$. Then there exists $\b \in \Pi - \Pi_{J_0}$ such that $\<\b, \mu\> \le -1$, which implies $w\i(\b) \in \Upsilon^+(\mu_x, \l)$. Since $w \in W_{J_0}$ and $\Phi_{J_0}^\vee \subseteq A$, we deduce that $\b^\vee \in A$, contradicting the maximality of $J_0$.

Case(2): $\mu \in Y^+$. Choose $\a \in \Pi - \Pi_{J_0}$. Since $\mu-\mu_x \in \ZZ \Phi_{J_0}^\vee$ and $\mu \le \l$, there exists, by (a), $\th \in \co_\a$ such that $\mu+\th^\vee \le \l$.

Subcase(2.1): $\mu+\th^\vee \preceq \l$. Then we have $w\i(\th) \in \Upsilon^+(\mu_x, \l)$ and $\th^\vee \in A$ as in Case(1), a contradiction.

Subcase(2.2): $\mu+\th^\vee \npreceq \l$. Notice that $\th \in \Phi^+$. Thus Lemma \ref{key} (b) applies, that is, there exists $\g \in \Phi^+$ such that $\mu+\g^\vee \le \mu+\th^\vee+\g^\vee \le \l$ and $\g^\vee, s_\th(\g^\vee)=\th^\vee+\g^\vee $ are short positive coroots. By Lemma \ref{Gashi}, $\mu+\g^\vee, \mu+\th^\vee+\g^\vee \preceq \l$ and hence $w\i(\g), w\i(s_\th(\g)) \in \Upsilon^+(\mu_x, \l)$. Therefore, we deduce that $\th^\vee \in A$, a contradiction. So (b) is proved.

\

Now we turn to the ``moreover" part. Since $\ZZ\Phi^\vee$ is spanned by $\Pi_{\text{short}, G_2}^\vee$ and $\Pi_{\text{long}, G_2}^\vee$, it suffices to show

(c) $C_{\l, J, b, x} \cap \Pi_{\text{long}, G_2} \neq \emptyset$.

If $\mu_x \notin Y^+$, (c) follows similarly as in Case(1) above. Assume $\mu_x \in Y^+$. By (a), there exists $\th \in \Pi_{\text{long}, G_2}$ such that $\mu_x+\th^\vee \le \l$. Hence $\mu_x+\th^\vee \preceq \l$ by Lemma \ref{Gashi}. So (c) still holds.
\end{proof}

\begin{cor}\label{mod} Keep the assumptions of Lemma \ref{gen}. We modify the definition of $C_{\l, J, b, x}$ slightly by resetting $$C_{\l, J, b, x}=\{\a \in \Pi_{\text{long}, G_2}; \a \text{ is $J$-antidominant and $\mu_x+\a^\vee$} \}$$ if $\Phi$ is a union of root systems of type $G_2$ and $\Pi_J=\Pi_{\text{short}, G_2}$. Then $\ZZ \Phi^\vee$ is still generated by $\Pi_J^\vee$ and $\cup_{\a \in C_{\l, J, b, x}} \co_\a^\vee$.
\end{cor}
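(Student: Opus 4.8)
The plan is to reduce Corollary~\ref{mod} to Lemma~\ref{gen} with essentially no new computation; the whole content is the observation that the redefined set $C_{\l,J,b,x}$ coincides with $C_{\l,J,b,x}\cap\Pi_{\text{long},G_2}$ for the \emph{original} $C_{\l,J,b,x}$, so that the ``moreover'' part of Lemma~\ref{gen} applies directly.

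First I would dispose of the cases in which $C_{\l,J,b,x}$ is not redefined, i.e.\ $\Phi$ is not a union of copies of $G_2$ or $\Pi_J\neq\Pi_{\text{short},G_2}$: there the assertion is verbatim the first part of Lemma~\ref{gen} and there is nothing to prove. So assume $\Phi$ is a union of root systems of type $G_2$ and $\Pi_J=\Pi_{\text{short},G_2}$, and write $C^{\mathrm{new}}$ for the redefined set and $C^{\mathrm{old}}=\{\b\in\Upsilon^+(\mu_x,\l);\ \b^\vee\text{ is }J\text{-minuscule and }J\text{-anti-dominant}\}$ for the set appearing in Lemma~\ref{gen}.

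Next I would check that $C^{\mathrm{new}}=C^{\mathrm{old}}\cap\Pi_{\text{long},G_2}$. A simple long root $\a$ is positive, so ``$\a\in\Upsilon^+(\mu_x,\l)$'' means exactly ``$\mu_x+\a^\vee\preceq\l$''; and $J$-anti-dominance of $\a$ and of $\a^\vee$ coincide, since $\<\g,\a^\vee\>$ and $\<\a,\g^\vee\>$ are positive multiples of $(\a,\g)$ for every $\g\in\Pi_J$. It remains to see that when $\Pi_J=\Pi_{\text{short},G_2}$ every simple long coroot is automatically $J$-minuscule: a simple long root $\a$ meets exactly one connected component of $\Phi$, on which $\Phi_J$ is the rank-one subsystem spanned by the short simple root $\g$ of that component, and $|\<\g,\a^\vee\>|=1$ since $\a^\vee$ is a short coroot paired against the short simple root $\g$. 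Thus the extra requirement ``$\a^\vee$ is $J$-minuscule'' in $C^{\mathrm{old}}$ is vacuous on $\Pi_{\text{long},G_2}$, giving the equality; in particular $\bigcup_{\a\in C^{\mathrm{new}}}\co_\a^\vee=\bigcup_{\a\in C^{\mathrm{old}}\cap\Pi_{\text{long},G_2}}\co_\a^\vee$.

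The corollary then follows at once: by the ``moreover'' part of Lemma~\ref{gen}, $\ZZ\Phi^\vee$ is generated by $\Pi_J^\vee$ together with $\bigcup_{\a\in C^{\mathrm{old}}\cap\Pi_{\text{long},G_2}}\co_\a^\vee$, hence by $\Pi_J^\vee$ together with $\bigcup_{\a\in C^{\mathrm{new}}}\co_\a^\vee$; conversely this subgroup lies in $\ZZ\Phi^\vee$ because every $\co_\a^\vee$ consists of coroots. I do not expect a genuine obstacle here, since Corollary~\ref{mod} is purely cosmetic: it repackages the $G_2$ exception of Lemma~\ref{gen} so that from Section~\ref{sec k2} on one can use a single uniform description of $C_{\l,J,b,x}$. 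The one step deserving care is the short-versus-long Cartan-integer computation showing that simple long coroots are $J$-minuscule for $\Pi_J=\Pi_{\text{short},G_2}$; one may also remark, via step (c) in the proof of Lemma~\ref{gen}, that $C^{\mathrm{new}}$ is nonempty, though this is not needed for the statement.
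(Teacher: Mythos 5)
Your proof is correct and matches the paper's (implicit) argument: the paper states Corollary \ref{mod} with no separate proof precisely because, as you observe, the redefined set coincides with $C_{\l,J,b,x}\cap\Pi_{\text{long},G_2}$ for the original definition (the $J$-minuscule and $J$-anti-dominance conditions being automatic for simple long coroots when $\Pi_J=\Pi_{\text{short},G_2}$), so the ``moreover'' part of Lemma \ref{gen} applies verbatim. Your Cartan-integer check $\<\g,\a^\vee\>=-1$ is the right justification for that identification.
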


This modified definition will be used to prove Proposition \ref{k2'} below.

\

\begin{prop} \rm{(cf. \cite[Proposition 4.1.10]{CKV})} \label{k2'}
Let $J \subseteq S_0$ with $\s(J)=J$, $b \in G(L)$ and $\l \in Y^+$ such that $(J, b)$ is admissible. For $x \in \bar I_{\l, J, b}$ we have $C_{\l, J, b, x} \subseteq \ca_{\l, J, b}$, where $\ca_{\l, J, b}$ is defined in \S \ref{5.1}.
\end{prop}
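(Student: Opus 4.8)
The plan is to verify the inclusion one root at a time: fix $\b \in C_{\l, J, b, x}$, so $\b \in \Phi^+$, the coroot $\b^\vee$ is $J$-minuscule and $J$-anti-dominant, and $\mu_x + \b^\vee \preceq \l$; I will show $\b \in \ca_{\l, J, b}$. By Lemma \ref{k21} one has $\ca_{\l, J, b} = \ca_{\l, J, b, x}$, so by the very definition of $\ca_{\l,J,b,x}$ it is enough to produce $Q, P \in X_{\mu_x}^{M_J}(b)$ with $Q \sim_{\l, b} P$ and $\eta_J(Q) - \eta_J(P) = y_\b = \sum_{\g \in \co_\b} \g^\vee \in \pi_1(M_J)^\s$. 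The idea is to walk once around the $\s$-orbit of $\b$ in blocks of length $h$, applying Proposition \ref{main} at each block.

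Set $n = \sharp \co_\b$. As $\s$ permutes the $h$ connected components of $\Phi$ transitively, $\s^h$ fixes each of them and acts on the one containing $\b$ through a diagram automorphism; hence $n = mh$ with $m \in \{1, 2, 3\}$, where $m = 3$ occurs only in the triality situation on $D_4$. For $k = 0, \dots, m$ put $x_k = x + \b^\vee - \s^{kh}(\b^\vee) \in \pi_1(M_J)$, so $x_0 = x_m = x$, and put $\a_k = \s^{kh}(\b)$, whose coroot is again $J$-minuscule and $J$-anti-dominant since $\s(J) = J$. Assuming for the moment that $x_k \overset{(\a_k, h)}{\to} x_{k+1}$ holds for every $k$ in the sense of \S\ref{sec k1}, Proposition \ref{main} applies: $h$ lies in the admissible range for $\sharp \co_{\a_k} = n$, so starting from any $Q \in X_{\mu_x}^{M_J}(b)$ it yields points $P_k \in X_{\mu_{x_k}}^{M_J}(b)$ with $P_0 = Q$, $P_k \sim_{\l, b} P_{k+1}$ and $\eta_J(P_k) - \eta_J(P_{k+1}) = \sum_{i=0}^{h-1}\s^i(\a_k^\vee) = \sum_{i=kh}^{(k+1)h-1}\s^i(\b^\vee)$. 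Taking $P = P_m \in X_{\mu_x}^{M_J}(b)$ and summing over $k$ gives $Q \sim_{\l, b} P$ and $\eta_J(Q) - \eta_J(P) = \sum_{i=0}^{n-1}\s^i(\b^\vee) = y_\b$, which is exactly $\b \in \ca_{\l, J, b}$.

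It thus remains to check the relations $x_k \overset{(\a_k, h)}{\to} x_{k+1}$. Since $\b^\vee$ and $\s^{kh}(\b^\vee)$ have equal image in $\pi_1(M_J)_\s$, one has $\k_J(x_k) = \k_J(x)$, so $x_k \in \bar I_{\l,J,b}$ reduces to $\mu_{x_k} \preceq \l$; besides this the relation asks for $\mu_{x_k + \a_k^\vee} \preceq \l$, which holds because $x_k + \a_k^\vee = x + \b^\vee$, and for $\mu_{x_k - \s^h(\a_k^\vee)} \preceq \l$, where $x_k - \s^h(\a_k^\vee) = x + \b^\vee - \s^{kh}(\b^\vee) - \s^{(k+1)h}(\b^\vee)$. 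Hence every coweight still to be bounded has the form $\mu_{x + \b^\vee - \d}$, with $\d$ a sum of at most two coroots from $\co_\b$, all lying in the connected component of $\b$; by Lemma \ref{minuscule} these coweights are $J$-minuscule, and the remaining work is to deduce $\mu_{x + \b^\vee - \d} \preceq \l$ from $\mu_x + \b^\vee \preceq \l$. This is the main obstacle. When the coroots involved sit in distinct $\s$-orbit-components of $\Phi$ one argues componentwise and falls back on $\mu_x \preceq \l$, but here they are $\s^h$-conjugate inside a single component, so the stronger hypothesis must really be used. I would do this by replacing the coweights by their $W_J$-conjugate representatives $\mu_x + \b^\vee - w_x(\s^{kh}(\b^\vee))$ (Lemma \ref{minuscule}), using Lemma \ref{central'} to control $w_x$ on these coroots, and then invoking Lemma \ref{Gashi} once the relevant translates of $\mu_x + \b^\vee$ are seen to be weakly dominant; the description in Lemma \ref{graph} of the pairings among $\co_\b$ (pairwise orthogonal outside type $A_m$ with $m$ even) keeps the case analysis short, the triality case on $D_4$ being handled as in Case (2) of the proof of Proposition \ref{main}.
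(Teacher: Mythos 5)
Your reduction to Lemma \ref{k21} and the telescoping bookkeeping ($y_\b=\sum_k\sum_{i=kh}^{(k+1)h-1}\s^i(\b^\vee)$, with $r=h$ always in the admissible range of Proposition \ref{main}) are fine, but the argument has a genuine gap exactly at the point you flag as "the main obstacle", and that obstacle is not a technicality that weak dominance and Lemma \ref{Gashi} can dispose of. To invoke Proposition \ref{main} on a block you need the full relation $x_k \overset{(\a_k,h)}{\to} x_{k+1}$, i.e.\ \emph{all four} of $\mu_{x_k},\,\mu_{x_k+\a_k^\vee},\,\mu_{x_k-\s^h(\a_k^\vee)},\,\mu_{x_{k+1}}\preceq\l$. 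From $\a\in C_{\l,J,b,x}$ you only know $\mu_x,\mu_x+\a^\vee\preceq\l$; the conditions $\mu_{x-\s^h(\a^\vee)}\preceq\l$ (already needed when $\sharp\co_\a=h$, where the block relation is $x\overset{(\a,h)}{\to}x$ and demands $\mu_{x-\a^\vee}\preceq\l$) and $\mu_{x+\a^\vee-\s^h(\a^\vee)}\preceq\l$ simply fail in general. The paper's Section 9 is organized around precisely this dichotomy: Lemmas \ref{z1}, \ref{z3}--\ref{z7} treat the case where \emph{no} relation $x\overset{(\th,r)}{\to}x'$ with $\th\in\co_\a$ exists, and in that case the conclusion $\a\in\ca_{\l,J,b}$ is obtained not by chaining Proposition \ref{main} but by writing down an explicit morphism $\textsl{g}:\PP^1\to G(L)/K$ as a product of affine root subgroups over the whole orbit and verifying $\textsl{g}^{-1}b\s(\textsl{g})\subseteq\cup_{\l'\preceq\l}Kt^{\l'}K$ directly, using the precise values of $\<w_x(\s^i(\a)),\mu_x\>$ and of $w_x$ on the orbit supplied by those lemmas. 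Those configurations are unreachable by your block decomposition, since its hypotheses are exactly what is assumed to fail there.

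A second, related omission: the paper's case division is not by $\sharp\co_\a/h\in\{1,2,3\}$ but by the type I/II/III of $\co_\a$ relative to $d$, the minimal integer with $\a,\s^d(\a)$ in the same component of $\Phi_{J,\a}=\Phi\cap\ZZ(\Phi_J\cup\co_\a)$. In types II and III the decisive object is the composite root $\tta$ (e.g.\ $\a+\b+\s^d(\a)$ with $\b$ the common $\Pi_J$-neighbor), and the whole analysis splits according to whether some $\mu_y+\s^k(\tta^\vee)\preceq\l$ (reducing to type I) or not (forcing, e.g., $\<w_x(\s^d(\a)),\mu_x\>=1$ and $w_x(\s^d(\a))=\s^d(\a)+\b$ as in Lemma \ref{z4}, after which the curve construction of \cite[\S 4.7.14]{CKV} is used). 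None of this structure is visible from the $h$-block picture, and your appeal to "Case (2) of the proof of Proposition \ref{main}" for the $D_4$ triality case does not apply, because that case assumes the relation $x\overset{(\a,r)}{\rightarrowtail}x'$ as a hypothesis rather than proving it. Even in type I the paper does not simply verify your intermediate inequalities: when some relation $x\overset{(\th,r)}{\to}x'$ with $r\in[1,d-1]$ does exist, it runs an induction on $r$ with its own curve construction (Case (1.2)), and when none exists it uses Lemma \ref{z1} plus a rank-two analysis (Lemma \ref{bound}) on the pair $\a$, $w_x(\a)$. So the proposal as written establishes the proposition only in the (unidentified) cases where all the block relations happen to hold.
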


The proof will be given in Section \ref{sec k2'}.

\

Now we are ready to prove Proposition \ref{k2}. The statement (b) and its usage in the proof below are suggested by Miaofen Chen, which play a fundamental role in our arguments.
\begin{proof}[Proof of Proposition \ref{k2}]
By \cite[Corollary 2.5.12]{CKV}, the kernel of the natural map $\pi_1(M_J)^\s \to \pi_1(G)^\s$ is spanned by $y_\a=\sum_{\b \in \co_\a} \b^\vee \in \pi_1(M_J)^\s$ for $\a \in \Phi$. Let $J_0 =\{s_\a \in S_0; \a \in \Pi_0, y_\a \in \cl_{\l,J,b}\}$, where $\cl_{\l, J, b}$ is the lattice spanned by $y_\a$ for $\a \in \ca_{\l, J, b}$. It suffices to show $J_0=S_0$. Note that $\s(J_0)=J_0$ and $J \subseteq J_0$. By assumption of the proposition, there exists $\mu \in \bar I_{\l, J, b}$, which is $J$-dominant, $J$-minuscule and weakly dominant.

Since $\s$ acts transitively on the $h$ connected components of $\Phi$, $\{\sharp \co_\g; \g \in \Pi_0\}=\{n_\Phi, m_\Phi\}$ for some $n_\Phi \le m_\Phi \in \NN$ with $m_\Phi / n_\Phi \in \{1,2,3\}$. Let $\a \in \Phi_0$. Then $\sharp \co_\a=h$ if $\sharp \co_\a < m_{\Phi_\a}$. First we claim

(a) if $\sharp \co_\a=m_\Phi$, then $\a \in \Pi_{J_0}$.

By Corollary \ref{mod}, $\a^\vee$ lies in the lattice generated by $\ZZ \Phi_J^\vee$ and $\cup_{\a \in C_{\l, J, b, \mu}} \co_\a^\vee$. Moreover, since $\sharp \co_\a=m_{\Phi}$, we have $y_\a=\sum_{i=1}^r y_{\g_i} \in \pi_1(M_J)^\s$ with each $\g_i \in \pm C_{\l, J, b, \mu}$. Hence $y_\a \in \cl_{\l,J,b}$ by Proposition \ref{k2'}. Therefore, $\a \in \Pi_{J_0}$ and (a) is proved.

If $J_0 =S_0$, there is nothing to prove. Assume $J_0 \neq S_0$. We show this assumption will lead to some contradiction, and this complete our proof. By (a), $\s$ is of order $2h$ or $3h$ and each simple root of $\Pi-\Pi_{J_0}$ is fixed by $\s^h$. In particular, $\Phi$ is simply laced. We claim that

(b) there exist $\a \in \Pi_0-\Pi_{J_0}$, $\vartheta \in \Phi^+$ with $\s^h(\vartheta)=\vartheta$ such that $\vartheta^\vee-\a^\vee \in \ZZ \Phi_{J_0}^\vee$ and either\\
(b1) $\vartheta \in C_{\l, J, b}:=\cup_{x \in \bar I_{\l, J, b}} C_{\l, J, b, x}$; or \\
(b2) $x \overset {(\b, h)} \to x+\b^\vee-\s^h(\b^\vee)$ and $x \overset {(\b+\vartheta, h)} \to x+\b^\vee-\s^h(\b^\vee)$ for some $\b \in \Phi_{J_0}$ and $x \in \bar I_{\l, J, b}$.

Assuming (b), we now can finish the proof of the proposition. If (b1) occurs, we have, by Proposition \ref{k2'}, $y_\vartheta \in \cl_{\l,J,b}$. Since $\vartheta^\vee-\a^\vee \in \ZZ \Phi_{J_0}^\vee$ and $\sharp \co_\a = \sharp \co_\vartheta=h$, we have $y_\a-y_\vartheta \in (\ZZ \Phi_{J_0}^\vee)^\s \subseteq \cl_{\l,J,b}$, which means $y_\a \in \cl_{\l,J,b}$ and hence $\a \in \Pi_{J_0}$, a contradiction. If (b2) occurs, by Proposition \ref{main}, there exist $Q \in X_{\mu_x}^{M_J}(b)$ and $P_1, P_2 \in X_{\mu_{x+\b^\vee-\s^{h}(\b^\vee)}}^{M_J}(b)$ such that $Q \sim_{\l, b} P_1$ (resp. $Q \sim_{\l, b} P_2$) and $\eta_J(Q)-\eta_J(P_1)=\sum_{i=0}^{h-1} \s^i(\b^\vee)$ (resp. $\eta_J(Q)-\eta_J(P_2)=\sum_{i=0}^{h-1} \s^i(\vartheta^\vee+\b^\vee)$). Therefore, $P_1 \sim_{\l, b} P_2$ and $\eta_J(P_1)-\eta_J(P_2)=y_\vartheta$, that is, $\vartheta \in \ca_{\l, J, b}$ and hence $y_\vartheta \in \cl_{\l,J,b}$, which again is a contradiction by the same argument in the case (b1).

It remains to prove (b).

Case (1): $\s$ is of order $3h$. Then each connected component of $\Phi$ is of type $D_4$. We can write $S_0=F \sqcup H$, where $F$ and $H$ are the two distinct $\s$-orbits on $S_0$. Assume $\s^h$ acts trivially on $H$. Then $F = J_0$ by (a) and the assumption $J_0 \neq S_0$. Note that $(\l, b)$ is NH-irreducible. So there exists $\a \in H=\Pi_0-\Pi_{J_0}$ such that

(1-a) $\mu+\a^\vee \le \l$.

We remark that

(1-b) Let $\mu' \in Y$ such that $\mu' \preceq \l$ and $\mu' + \a^\vee \le \l$. Assume $\<\a, \mu'\> \le -1$ or $\mu'$ is $J_0$-dominant, then $\mu'+\a^\vee \preceq \l$.

Indeed, Since $\mu' \preceq \l$, We can assume $\Phi$ has only one connected component. If $\<\a, \mu'\> \le -1$, the statement is obvious. Now we assume $\mu$ is $J_0$-dominant and $\<\a, \mu'\> \ge 0$, that is, $\mu'$ is dominant (since $\Phi$ is of type $D_4$). Then the statement follows from Lemma \ref{Gashi}. So (1-b) is proved.

Case (1.1): $J=F=J_0$. Then $\mu$ is $J_0$-dominant and (b1) follows from (1-b) by taking $\vartheta=\a$.

Case (1.2): $J=\emptyset$. Choose $\b \in \Pi_{J_0}$ such that $\<\b, \a^\vee\>=-1$. We claim that

(1.2-a) there exists $\mu_0 \in \bar I_{\l, J, b}$ such that $\mu_0+\a^\vee \le \l$ and $\<\b+\s^h(\b)+\s^{2h}(\b), \mu_0\> \ge 0$.

Let $E=\{\mu' \in \bar I_{\l, J, b}; \mu' \text{ is weakly dominant and } \mu'+\a^\vee \le \l \}$. Then $\mu \in E \neq \emptyset$. Let $c_0= \max \{\<\b+\s^h(\b)+\s^{2h}(\b), \mu'\>; \mu' \in E\}$. We have to show $c_0 \ge 0$. Assume otherwise. Let $\mu_0 \in E$ such that $\<\b+\s^h(\b)+\s^{2h}(\b), \mu_0\>=c_0 \le -1$. By Lemma \ref{dominant}, $\nu_{t^{\mu_0}}=\frac{1}{3h}\sum_{i=0}^{3h-1} \s^i(\mu_0)$ is dominant. In particular, $\sum_{\g \in \co_\b} \<\g, \mu_0\> \ge 0$. So there exist $\g' \in \{\b, \s^h(\b), \s^{2h}(\b)\}$ and $\g'' \in \co_\b-\{\b, \s^h(\b), \s^{2h}(\b)\}$ such that $\<\g', \mu_0\> \le -1$ and $\<\g'', \mu_0\> \ge 1$. Let $\mu_0'=\mu_0+{\g'}^\vee-{\g''}^\vee \in \bar I_{\l, J, b}$. Then $\mu_0' \preceq \l$ is weakly dominant by Lemma \ref{minus}. Moreover, $\mu_0'+\a^\vee \le \l$ since $\mu_0+\a^\vee, \mu_0' \le \l$ and $\a$ is a simple root. Thus, $\mu_0' \in E$. However, $\<\b+\s^h(\b)+\s^{2h}(\b), \mu_0'\>=c_0+2$, a contradiction. So (1.2-a) is proved.

Thanks to (1-b), we can assume, by (1.2-a) and the symmetry among $\b$, $\s^h(\b)$ and $\s^{2h}(\b)$, that $\<\mu_0, \a\> \ge 0$, $\<\b, \mu_0\>=-1$ and one of the following cases occurs:

(1.2.1): $\<\s^{2h}(\b), \mu_0\>=-1$ and $\<\s^h(\b), \mu_0\> \ge 2$;

(1.2.2): $\<\s^h(\b), \mu_0\>, \<\s^{2h}(\b), \mu_0\> \ge 0$.

Assume (1.2.1) occurs. Then $\mu_0':=\mu_0+\s^{2h}(\b^\vee)-\s^h(\b^\vee) \preceq \l$. Note that $\mu_0'+\b^\vee \preceq \l$ is $J_0$-dominant. So $\mu_0'+\b^\vee+\a^\vee \le \l$ since $\mu_0+\a^\vee \le \l$. Therefore, $\mu_0'+\b^\vee+\a^\vee \preceq \l$ by (1-b). Moreover, we have $\mu_0'-\s^{2h}(\b), \mu_0'-\s^{2h}(\b)-\a^\vee \preceq \l$ since $\<\s^{2h}(\b), \mu_0'\>, \<\s^{2h}(\b)+\a, \mu_0'\> \ge 1$. Thus we deduce that $\mu_0'+\b^\vee-\s^{2h}(\b^\vee) \overset {(\s^{2h}(\b), h)} \to \mu_0'$ and $\mu_0'+\b^\vee-\s^{2h}(\b^\vee) \overset {(\a+\s^{2h}(\b), h)} \to \mu_0'$.

Assume (1.2.2) occurs. Again we have $\mu_0+\b^\vee+\a^\vee \preceq \l$ by (1-b). If $\<\s^{2h}(\b), \mu_0\> \ge 1$, then $\mu_0+\b^\vee-\s^{2h}(\b^\vee) \overset {(\s^{2h}(\b), h)} \to \mu_0$ and $\mu_0+\b^\vee-\s^{2h}(\b^\vee) \overset {(\a+\s^{2h}(\b), h)} \to \mu_0$. If $\<\s^{h}(\b), \mu_0\> \ge 1$, then $\mu_0 \overset {(\b, h)} \to \mu_0+\b^\vee-\s^h(\b^\vee)$ and $\mu_0 \overset {(\a+\b, h)} \to \mu_0+\b^\vee-\s^h(\b^\vee)$. So (b) holds when $\s$ is of order $3h$.

\

Case (2): $\s$ is of order $2h$. As in the proof of Lemma \ref{gen}, there exists $\a \in \Pi-\Pi_{J_0}$ such that $\bar \mu^{J_0}+\a^\vee \preceq \l$ (by using Lemma \ref{Gashi} and that $\Phi$ is simply laced when $\bar \mu^{J_0}$ is dominant). In particular, $\mu+\a^\vee \le \l$. We claim that

(2-a) There exists an orthogonal subset $D \subseteq \Phi_{J_0}^+$ satisfying\\
(2-a-1) $\<\g, \mu\>=\<\g, \a^\vee\>=-1$ for any $\g \in D$; \\
(2-a-2) $\mu+\a^\vee+\sum_{\g \in D} \g^\vee \preceq \l$.

Indeed, since $\mu$ is weakly dominant, there exists, by Lemma \ref{weak}, an orthogonal subset $D_{\mu, J_0} \subseteq \Phi_{J_0}^+$ such that $\bar \mu^{J_0}-\mu=\sum_{\b \in D_{\mu, J_0}} \b^\vee$ and $\<\b, \mu\>=-1$ for any $\b \in D_{\mu, J_0}$. Let $$D=\{\b \in D_{\mu, J_0}; \<\b, \a^\vee\> \le -1\}=\{\b \in D_{\mu, J_0}; \<\b, \a^\vee\> = -1\}.$$ Note that $D_{\mu, J_0}-D=\{\b \in D_{\mu, J_0}; \<\b, \a^\vee\> \ge 0\}$. So $$\mu+\a^\vee+\sum_{\g \in D} \g^\vee \preceq \mu+\a^\vee+\sum_{\g \in D} \g^\vee + \sum_{\b \in D_{\mu, J_0}-D} \b^\vee=\bar \mu^{J_0}+\a^\vee \preceq \l.$$ Thus (2-a) is proved.

Now (b) follows, in the case that $\s$ is of order $2h$, from (2-a) by Lemma \ref{lattice} below.

In a word, the proof of (b) is finished.
\end{proof}

\begin{lem} \label{lattice}
Let $\l \in Y^+$ and $J \subseteq J_0 \subseteq S_0$ with $\s(J_0)=J_0$. Assume $\Phi$ is simply laced and $\s^{2h}=\Id$ and there exist $\mu \in \bar I_{\l, J, b}$ which is weakly dominant, $\th \in \Phi^+ - \Phi_{J_0}$ with $\s^h(\th)=\th$ and an orthogonal subset $D \subseteq \Phi_{J_0}^+$ such that

(1) $\<\g, \mu\>=\<\g, \th^\vee\>=-1$ for any $\g \in D$;

(2) $\mu+\th^\vee+\sum_{\g \in D} \g^\vee \preceq \l$.

Then there exists $\vartheta \in \Phi^+$ such that $\s^h(\vartheta)=\vartheta$, $\vartheta^\vee-\theta^\vee \in \ZZ \Phi_{J_0}^\vee$ and one of the following holds:

(i) either $\vartheta \in C_{\l, J, b}:=\cup_{x \in \bar I_{\l, J, b}} C_{\l, J, b, x}$ or

(ii) $x \overset {(\a, h)} \to x+\a^\vee-\s^h(\a^\vee)$ and $x \overset {(\a+\vartheta, h)} \to x+\a^\vee-\s^h(\a^\vee)$ for some $\a \in \Phi_{J_0}$ and some $x \in \bar I_{\l, J, b}$.
\end{lem}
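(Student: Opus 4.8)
The goal is to produce a root $\vartheta\in\Phi^+$ with $\s^h(\vartheta)=\vartheta$ and $\vartheta^\vee-\th^\vee\in\ZZ\Phi_{J_0}^\vee$ satisfying (i) or (ii). The idea is to ``fold'' the coroot $\th^\vee+\sum_{\g\in D}\g^\vee$ into a single coroot using the orthogonality of $D$ and the values $\<\g,\th^\vee\>=-1$: I would set $\vartheta=\th+\sum_{\g\in D}\g$ (or an appropriate $\s$-stable variant of it — note that since $\s^h(\th)=\th$ and $\s^{2h}=\Id$, the set $D$ splits into $\s^h$-fixed roots and $\s^h$-swapped pairs, and one takes $\vartheta$ to be the sum over a suitable $\s^h$-orbit-closed subset so that $\s^h(\vartheta)=\vartheta$). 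Because $\Phi$ is simply laced and $D$ is orthogonal with $\<\g,\th^\vee\>=-1$, a direct check shows $\vartheta\in\Phi^+$, that $\vartheta^\vee=\th^\vee+\sum_{\g\in D}\g^\vee$, and $\vartheta^\vee-\th^\vee\in\ZZ\Phi_{J_0}^\vee$; moreover by hypothesis (2), $\mu+\vartheta^\vee=\mu+\th^\vee+\sum_{\g\in D}\g^\vee\preceq\l$.

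First I would dispose of the easy case: if $\vartheta^\vee$ happens to be $J$-anti-dominant (equivalently $\<\g,\vartheta^\vee\>\le 0$ for all $\g\in\Pi_J$), then since $\mu$ is $J$-minuscule and $\vartheta^\vee$ is a simply-laced coroot, $\vartheta^\vee$ is automatically $J$-minuscule, so $\vartheta\in C_{\l,J,b,\mu}\subseteq C_{\l,J,b}$ and (i) holds. The substantive case is when $\vartheta^\vee$ is not $J$-anti-dominant, i.e.\ there is $\a\in\Pi_J$ with $\<\a,\vartheta^\vee\>\ge 1$; here I aim for (ii). The plan is to run the construction producing $(\a,h)$-moves: using $\<\a,\vartheta^\vee\>\ge 1$ together with $\<\a,\mu\>$ (which is $0$ or $1$ since $\mu$ is $J$-dominant and $J$-minuscule and $\a\in\Pi_J$), and the fact that $\s^h(\th)=\th$ so $\s^h$ fixes enough of the data, one checks that both $\mu+\a^\vee-\s^h(\a^\vee)$ and all the intermediate coweights $\mu$, $\mu+\a^\vee$, $\mu-\s^h(\a^\vee)$ lie $\preceq\l$ — giving $x\overset{(\a,h)}{\to}x+\a^\vee-\s^h(\a^\vee)$ — and simultaneously that replacing $\a$ by $\a+\vartheta$ (which is again a root, by simply-lacedness and $\<\a,\vartheta^\vee\>=1$) the same four coweights $\mu$, $\mu+(\a+\vartheta)^\vee$, $\mu-\s^h(\a^\vee+\vartheta^\vee)$, $\mu+(\a+\vartheta)^\vee-\s^h(\a^\vee+\vartheta^\vee)$ all lie $\preceq\l$, using hypothesis (2) to control the appearance of $\vartheta^\vee$. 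This yields $x\overset{(\a+\vartheta,h)}{\to}x+\a^\vee-\s^h(\a^\vee)$ (note $(\a+\vartheta)^\vee-\s^h((\a+\vartheta)^\vee)=\a^\vee-\s^h(\a^\vee)$ in $\pi_1(M_J)$ since $\vartheta^\vee-\s^h(\vartheta^\vee)\in(1-\s^h)\ZZ\Phi^\vee$ and... actually one needs $\vartheta^\vee=\s^h(\vartheta^\vee)$, which holds by construction), giving (ii) with $x=\mu$.

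The main obstacle, as I see it, is verifying the various $\preceq\l$ relations for the twisted moves with $\a+\vartheta$ in place of $\a$ — in particular ensuring that the intermediate coweight $\mu-\s^h((\a+\vartheta)^\vee)=\mu-\s^h(\a^\vee)-\vartheta^\vee$ (using $\s^h\vartheta=\vartheta$) stays $\preceq\l$. This requires combining hypothesis (2) (which controls $\mu+\vartheta^\vee$) with the weak-dominance of $\mu$ via Lemma \ref{minus} and the comparison lemmas (b) of \S\ref{setup3}, plus the short-root/simply-laced facts from Lemma \ref{Gashi}. A secondary technical point is making the choice of $\vartheta$ genuinely $\s^h$-stable when $D$ is not itself $\s^h$-stable; I expect one handles this by choosing, among the $\s^h$-orbits meeting $D$, those contributing consistently — or by first reducing (via the $\s$-transitivity on connected components of $\Phi$ and the fact that $\s^h$ fixes $\th$ and each $\a\in\Pi_0-\Pi_{J_0}$) to the situation inside the $\s^h$-fixed subsystem, where the construction is the ``untwisted'' one of Lemma \ref{weak}. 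Once these compatibilities are nailed down, assembling (i) or (ii) is immediate from the definitions.
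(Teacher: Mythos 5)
Your opening move is the same as the paper's in the untwisted situation: since $D$ is orthogonal and $\<\g,\th^\vee\>=-1$ for all $\g\in D$, the element $\bigl(\prod_{\g\in D}s_\g\bigr)(\th)=\th+\sum_{\g\in D}\g$ is a root whose coroot is $\th^\vee+\sum_{\g\in D}\g^\vee$, and hypothesis (2) puts $\mu$ plus this coroot $\preceq\l$. When $\s^h=\Id$ (or more generally when every $\g\in D$ is $\s^h$-fixed) this does give (i) after replacing the root by its $J$-anti-dominant $J$-minuscule representative via Lemma \ref{minu} — note that this replacement, not a jump to alternative (ii), is how one handles failure of $J$-anti-dominance; your dichotomy ``anti-dominant $\Rightarrow$ (i), otherwise (ii)'' mislocates where (ii) is actually needed.

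The genuine gap is that everything you defer as a ``secondary technical point'' or ``main obstacle'' is the entire content of the lemma. When $\s^h$ has order $2$ and some $\g\in D$ satisfies $\s^h(\g)\neq\g$, there is in general no $\s^h$-fixed root $\vartheta$ in the class of $\th^\vee$ mod $\ZZ\Phi_{J_0}^\vee$ with $\mu+\vartheta^\vee\preceq\l$, and that is precisely why alternative (ii) exists: in the paper's Case(1.1), when $\<\s^h(\g),\mu+\g^\vee\>\ge 1$ one cannot fold $\g$ and $\s^h(\g)$ into $\th$, and instead one takes $\vartheta=\th$ itself and $\a=\g\in D\subseteq\Phi_{J_0}^+$ (not $\a\in\Pi_J$ as you propose) and verifies the four $\preceq\l$ relations for both the $(\g,h)$-move and the $(\th+\g,h)$-move directly from $\<\s^h(\g),\mu\>\ge 1$ and weak dominance of $\mu$. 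The cases $\sharp D=2,3$ then require a further delicate reduction (computing the pairings $\<\s^h(\g_i),\g_j^\vee\>$, showing some must equal $1$, and peeling off a root $\d=\g_1-\s^h(\g_2)$ to drop to smaller $\sharp D$), and $\sharp D\ge 4$ is excluded by simply-lacedness. Your proposal acknowledges both the $\s^h$-stability problem and the verification of the intermediate coweight $\mu-\s^h(\a^\vee)-\vartheta^\vee\preceq\l$, but supplies no argument for either; the suggested fixes (``choose $\s^h$-orbit-closed subsets contributing consistently'', ``reduce to the $\s^h$-fixed subsystem'') do not work as stated, because the obstructing roots $\g$ with $\s^h(\g)\neq\g$ cannot simply be discarded — they are forced on you by hypothesis (1)–(2) and must be absorbed either into a modified $\th$ or into the double move of (ii). As it stands the proposal proves only the case where $D$ is pointwise $\s^h$-fixed.
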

\begin{proof}
We employ the notation used in Lemma \ref{minu}.

Case(0): $\s^h=\Id$. Let $\b=(\prod_{\g \in D} s_\g)(\th)$ and take $\vartheta=\b_J$, where $\b_J$ is defined as in Lemma \ref{minu}. Then $\vartheta \in C_{\l,J,b,\mu}$ as desired.

Case(1): $\s^h$ is of order $2$.

Case(1.0): $D=\emptyset$. Then $\mu+\th^\vee \preceq \l$. Let $\vartheta=\th_J$, then $\s^h(\vartheta)=\vartheta$ since $\s^h(\th)=\th$. Then the lemma follows similarly as in Case(0).

Case(1.1): $\sharp D=1$. Write $D=\{\g\}$ for some $\g \in \Phi_{J_0}^+$. If $\g=\s^h(\g)$, then we return to Case(1.0) by replacing $\th$ with $\th+\g$. Now assume $\g \neq \s^h(\g)$. If $\<\s^h(\g), \mu+\g^\vee\> \le 0$, then $\<\s^h(\g), \mu+\th^\vee+\g^\vee\> \le \<\s^h(\g), \th^\vee\>=\<\g, \th^\vee\>=-1$. Thus $\th+\g+\s^h(\g) \in \Phi^+$ and $\mu+\th+\g+\s^h(\g) \preceq \l$. So we return to Case(1.0) by replacing $\th$ with $\th+\g+\s^h(\g)$. Assume $\<\s^h(\g), \mu+\g^\vee\> \ge 1$. In particular, $\<\s^h(\g), \mu\> \ge 1$. We have $\mu-\s^h(\g^\vee) \preceq \mu \preceq \l$ and $\mu+\g^\vee-\s^h(\g^\vee) \preceq \mu+\g^\vee \preceq \mu \preceq \l$. Moreover, $$\<\s^h(\th+\g), \mu\>=\<\th+\s^h(\g), \mu\> \ge \<\th+\g, \mu\>+2 \ge -1+2=1,$$ where the last inequality follows from the assumption that $\mu$ is weakly dominant. So $\mu-\s^h(\th^\vee+\g^\vee) \preceq \mu \preceq \l$. Therefore, $\mu \overset {(\th+\g, h)} \to \mu+\g-\s^h(\g)$ and $\mu \overset {(\g, h)} \to \mu+\g-\s^h(\g)$. So (ii) is satisfied.

Case(1.2): $\sharp D=2$. Write $D=\{\g_1, \g_2\}$. We have $\s^h(\g_i) \neq \g_j$ since otherwise, we would return to Case(1.0) or Case(1.1). As a consequence, $\e_{i,j}:=\<\s^h(\g_i), \g_j^\vee\>=\<\s^h(\g_j), \g_i^\vee\> \in \{-1, 0, 1\}$ for $i, j \in \{1,2\}$, and moreover, $\e_{i, i}=0$ since $-1 \le \<\s^h(\g_i), \th^\vee+\g_i^\vee\>=-1+\e_{i, i} \le -1$. If $\e_{1,2} \le 0$, then $\<\s^h(\th+\g_1+\g_2), \th^\vee+\g_1^\vee+\g_2^\vee\> \le 0+2(\e_{1,2}-1) \le -2$, which is impossible. So $\e_{1,2}=1$. By symmetry, we may assume $\d:=\g_1-\s^h(\g_2) \in \Phi^+$. Then $\th+\g_1+\g_2=\th+\g_2+\s^h(\g_2)+\d$ and $$\<\d, \mu+\th^\vee+\g_2^\vee+\s^h(\g_2^\vee)+\d^\vee\>=\<\d, \mu\>+1.$$ If $\<\d, \mu\> \ge 0$, then $\<\d, \mu+\th^\vee+\g_2^\vee+\s^h(\g_2^\vee)+\d^\vee\> \ge 1$ and hence $$\mu+\th^\vee+\g_2^\vee+\s^h(\g_2^\vee) \preceq \mu+\th^\vee+\g_2^\vee+\s^h(\g_2^\vee)+\d^\vee = \mu+\th^\vee+\g_1^\vee+\g_2^\vee \preceq \l.$$ So we return to Case(1.0). Assume $\<\d, \mu\>=-1$ (since $\mu$ is weakly dominant). We have $\<\d, \th^\vee+\g_2^\vee+\s^h(\g_2^\vee)\>=-1$ and hence return to Case(1.1).

Case(1.3): $\sharp D=3$. Write $D=\{\g_1, \g_2, \g_3\}$. As in Case(1.2), we can assume $\s^h(\g_i) \neq \g_j$ for $i, j \in \{1,2,3\}$. Set $\e_{i,j}=\<\s^h(\g_i), \g_j^\vee\>=\<\s^h(\g_j), \g_i^\vee\> \in \{-1, 0, 1\}$ for $i, j \in \{1, 2, 3\}$. Then $\e_{i, i}=0$ for $i \in \{1, 2, 3\}$ as above. One computes that $$\<\th+\s^h(\g_i), \th^\vee+\g_1^\vee+\g_2^\vee+\g_3^\vee\>=-2+\<\s^h(\g_i), \g_1^\vee+\g_2^\vee+\g_3^\vee\> \in \{-1, 0, 1\},$$ which implies $\e_{i, j} \ge 0$ if $i \neq j$.

If $\e_{i,j}=0$ for any $i \neq j \in \{1,2,3\}$, then $\<\th+\g_1+\g_2+\g_3, \th^\vee+\g_1^\vee+\g_2^\vee+\g_3^\vee\> \le -4$, which is impossible. By symmetry, we can assume $\e_{1,2}=1$ and $\d:=\g_1-\s^h(\g_2) \in \Phi^+$. Then $\th+\g_1+\g_2+\g_3=\th+\g_2+\s^h(\g_2)+\g_3+\d$. Since $\mu$ is weakly dominant, we have

(b) $\<\d, \mu\>=-1-\<\s^h(\g_2), \mu\> \le 0$ and $\<\d, \g_3^\vee\>=-\e_{2,3} \le 0$.

Case(1.3.1): $\<\d, \g_3^\vee\>=-\e_{2,3}=0$. One computes that $$\<\d, \mu+\th^\vee+\g_2^\vee+\s^h(\g_2^\vee)+\g_3^\vee+\d^\vee\>=\<\d, \mu\>+1.$$ We apply the arguments in Case(1.2). If $\<\d, \mu\> \ge 0$, then $\mu+\th^\vee+\g_2^\vee+\s^h(\g_2^\vee)+\g_3^\vee \preceq \l$ and we return to Case(1.1). Otherwise,$$\<\g_3, \th^\vee+\g_2+\s^h(\g_2)\>=\<\d, \th^\vee+\g_2+\s^h(\g_2)\>=\<\d, \mu\>=-1$$ and we return to Case(1.2).

Case(1.3.2): $\<\d, \g_3^\vee\>=-1$. Then $\e_{2,3}=1$. Since $\mu$ is weakly dominant, $\<\d+\g_3, \mu\> \ge -1$, which implies $\<\d, \mu\>=0$ by (b). One computes that $\<\g_3+\d, \th^\vee+\g_2^\vee+\s^h(\g_2^\vee)\>=\<\g_3+\d, \mu\>=-1$ and we return to Case(1.1).

Case(1.4): $\sharp D \ge 4$. This case dose not occur since $\<\th+\sum_{\g \in D} \g, \th^\vee\> \le -2$, contradicting the fact that $\Phi$ is simply laced.
\end{proof}

\section{Proof of Proposition \ref{k2'}} \label{sec k2'}
The aim of this section is to prove Proposition \ref{k2'}, which completes the proof of Proposition \ref{k2}. We follow Chen-Kisin-Viehmann \cite[\S 4.7]{CKV} closely (but not exactly) and use their ideas and constructions freely. We will see that most of the results therein can be generalized in our setting. However, the arguments become more subtle. Throughout this section, we assume $G$ is adjoint and simple; the root system $\Phi$ of $G$ has $h$ connected components, on which $\s$ acts transitively. We also fix $\l \in Y^+$, $b \in G(L)$ and a $\s$-stable subset $J \subseteq S_0$ such that $(J, b)$ is admissible and $b$ is superbasic in $M_J(L)$.

\

For $\a \in \Phi-\Phi_J$, we set $\Phi_{J, \a}=\Phi \cap \ZZ(\Phi_J \cup \co_\a)$. Thanks to \cite[Proposition 4.2.11]{CKV}, the set of simple root of $\Phi_{J, \a}$ is $\Pi_J \cup \co_\a$ if $\Phi$ is simply laced.

Following \cite[\S 4.7]{CKV}, we say $\co_\a$ is of type I, II, or III (with respect to $J$) if $\sharp \co_\a$ is equal to $d$, $2d$ or $3d$ respectively. Here $d$ is the minimal positive integer such that $\a, \s^d(\a)$ are in the same connected component of $\Phi_{J, \a}$. We define \begin{align*} \tta=\begin{cases} \a, & \text{ if $\co_\a$ is of type I; } \\ \a + \b +\s^d(\a), &\text{ if $\co_\a$ is of type II;} \\ \a+\s^d(\a)+\s^{2d}(\a)+\b, &\text{ if $\co_\a$ is of type III, } \end{cases}\end{align*} where $\b$ is the unique common neighbor (might be zero in type II case) of $\co_\a$ in he Dynkin diagram of $\Phi_{J, \a}$. We emphasize that $\tilde \a$ is root. Moreover, $\tilde \a$ is $J$-anti-dominant and $J$-minuscule if so is $\a$.

\

Now we fix $x \in \bar I_{\l, J, b}$ and $\a \in C_{\l,J,b,x}$. Set $n=\sharp \co_\a$. To prove Proposition \ref{k2'}, we show $\a \in \ca_{\l, J, b}$ separately according to the type of $\co_\a$.

\begin{lem} \label{z1}
Let $x \in \bar I_{\l, J, b}$ and $\a \in C_{\l, J, b, x}$. If there do not exist $x' \in \bar I_{\l,J,b}$, $\th \in \co_\a$ and $j \in [1, n-1]$ such that $x \overset{(\th, j)} \to x'$. Then $w_x(\s^k(\a))=\s^k(\a)$ and $\<\s^k(\a), \mu_x\>=0$ for $k \in [1, n]-\NN d$.
\end{lem}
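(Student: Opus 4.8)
The plan is to argue by contradiction on the failure of the conclusion, exploiting the hypothesis that no single arrow $x\overset{(\th,j)}\to x'$ exists for $\th\in\co_\a$ and $j\in[1,n-1]$. Concretely, suppose for some $k\in[1,n]-\NN d$ we have either $w_x(\s^k(\a))\neq\s^k(\a)$ or $\<\s^k(\a),\mu_x\>\neq0$. First I would record the basic structural facts: since $\a\in C_{\l,J,b,x}$ the coroot $\a^\vee$ is $J$-anti-dominant and $J$-minuscule and $\mu_x+\a^\vee\preceq\l$, and by Lemma \ref{minuscule} the coweights $\mu_x+\s^i(\a^\vee)$, $\mu_x-w_x(\s^j(\a^\vee))$, and $\mu_x+\s^i(\a^\vee)-w_x(\s^j(\a^\vee))$ are again $J$-minuscule, conjugate under $W_J$ to $\mu_{x+\s^i(\a^\vee)}$ etc. So whenever I can verify the relevant $\preceq\l$ inequalities I obtain an arrow $x\overset{(\s^i(\a),\,j-i)}\to x'$ or $x\overset{(\s^{j}(\a),\cdot)}\to\cdot$, contradicting the hypothesis. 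The goal is therefore to show that a violation of the conclusion forces exactly such an inequality.

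The key step is to translate the two possible violations into the existence of an arrow. If $\<\s^k(\a),\mu_x\>\ge1$, then using that $\a,\s^k(\a)$ lie in different connected components of $\Phi$ when $k\notin\NN d$ (so $\<\a^\vee,\s^k(\a)\>=0$), one checks that $\mu_x-\s^k(\a^\vee)\preceq\mu_x\preceq\l$ and $\mu_x+\a^\vee-\s^k(\a^\vee)\preceq\mu_x+\a^\vee\preceq\l$ via property \S\ref{setup3}(b) on $J$-minuscule coweights, yielding $x\overset{(\a,k)}\to x+\a^\vee-\s^k(\a^\vee)$; similarly $\<\s^k(\a),\mu_x\>\le-1$ gives the arrow $x\overset{(\s^k(\a),n-k)}\to\cdot$ after passing to $-\s^k(\a)$, and $w_x(\s^k(\a))\neq\s^k(\a)$ combined with Lemma \ref{central'}(a),(b) forces $\<\s^k(\a),\mu_x\>$ to be strictly smaller than $\<w_x(\s^k(\a)),\mu_x\>$, which one can leverage to produce a nonzero pairing as in the previous cases. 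In each branch the relevant value of $j\in[1,n-1]$ is $k$ or $n-k$, both of which lie in $[1,n-1]$ precisely because $k\in[1,n]-\NN d$ excludes $k=n$ (and $k=0$), so the hypothesis is contradicted.

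The main obstacle I expect is the bookkeeping around $w_x$: the element $w_x\in W_J$ need not fix $\s^k(\a)$ a priori, so I must consistently work with $w_x(\s^k(\a^\vee))$ rather than $\s^k(\a^\vee)$ when forming the candidate target $x'=x+\a^\vee-\s^k(\a^\vee)$ in $\pi_1(M_J)$, and verify that the $J$-minuscule representatives given by Lemma \ref{minuscule} satisfy the $\preceq\l$ bounds. This is exactly the point where I would invoke Lemma \ref{central'}(c),(d): if $\<\s^k(\a),\mu_x\>=\<w_x(\s^k(\a)),\mu_x\>$ then $\mu_x$ is central on the relevant components of $\Phi_J$, which pins down $w_x(\s^k(\a))$ and lets the calculation close. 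Assembling these cases, every way the conclusion could fail produces an arrow $x\overset{(\th,j)}\to x'$ with $\th\in\co_\a$ and $j\in[1,n-1]$, contradicting the hypothesis, so $w_x(\s^k(\a))=\s^k(\a)$ and $\<\s^k(\a),\mu_x\>=0$ for all $k\in[1,n]-\NN d$, as claimed.
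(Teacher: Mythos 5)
Your first half matches the paper: from $\<w_x(\s^k(\a)),\mu_x\>\ge 1$ (and hence from $\<\s^k(\a),\mu_x\>\ge 1$, via Lemma \ref{central'}(a)) one gets the arrow $x\overset{(\a,k)}\to x+\a^\vee-\s^k(\a^\vee)$, because $\<\s^k(\a),\,\cdot\,\>$ is unchanged by adding $\a^\vee$ when $k\notin\NN d$; this forces $\<w_x(\s^k(\a)),\mu_x\>\le 0$. The endgame via Lemma \ref{central'}(d) (equality of the two pairings at $0$ pins down $w_x(\s^k(\a))=\s^k(\a)$) is also the paper's.

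The gap is in the other direction, where you claim that $\<\s^k(\a),\mu_x\>\le-1$ ``similarly'' gives the arrow $x\overset{(\s^k(\a),\,n-k)}\to x+\s^k(\a^\vee)-\a^\vee$. That arrow requires \emph{four} bounds, and the one you never verify is $\mu_{x-\a^\vee}\preceq\l$ (equivalently $\mu_x-w_x(\a^\vee)\preceq\l$). From $\a\in C_{\l,J,b,x}$ you only know $\mu_x+\a^\vee\preceq\l$; the bound $\mu_x-\a^\vee\preceq\l$ follows from this only when $\<\a,\mu_x\>\ge 0$ (via \S\ref{setup3}(b) or the reflection $s_\a$). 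When $\<\a,\mu_x\>\le-1$ nothing of the sort is available, and this is precisely where the paper does real work: it invokes Lemma \ref{dominant} (dominance of $\nu_{b_x}$) and averages $\<(w_x\s)^{-i}(\a),\mu_x\>$ over a full $(w_x\s)$-period, using the already-established inequality $\<w_x(\s^i(\a)),\mu_x\>\le 0$ for $i\notin\NN d$ together with Lemma \ref{central'}(a), to force $\<w_x(\s^i(\a)),\mu_x\>\ge 1$ for $i\in\NN d$. Only then does one get $\mu_x-w_x(\s^i(\a^\vee))\preceq\l$ and hence arrows of the form $x\overset{(\s^k(\a),\,i-k)}\to x+\s^k(\a^\vee)-\s^i(\a^\vee)$ contradicting the hypothesis. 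Without this global (Newton-point) input your case analysis does not close, so the proof as written is incomplete.
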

\begin{proof}
Let $k \in [1, n]-\NN d$. Then $\a, \s^k(\a)$ are in different connected components of $\Phi_{J,\a}$. First we claim that

(a) $\<w_x(\s^k(\a)), \mu_x\> \le 0$ for $k \in [1, n]-\NN d$.

Indeed, otherwise, $\<w_x(\s^k(\a)), \mu_x+\a^\vee\>=\<w_x(\s^k(\a)), \mu_x\> \ge 1$, which means $x \overset{(\a, k)} \to x+\a^\vee-\s^k(\a^\vee)$, contradicting the assumption of the lemma. So (a) is proved.

Assume $\<\a, \mu_x\> \ge 0$. Then $\mu_x-\a^\vee \preceq \l$ since $\mu_x, \mu_x+\a^\vee \preceq \l$. Thus, we have $\<\s^k(\a), \mu_x\> \ge 0$. Indeed, otherwise we would have $x  \overset{(\s^k(\a), n-k)} \to x+\s^k(\a^\vee)-\a^\vee$, a contradiction. By (a) and Lemma \ref{central'}, we have $\<w_x(\s^k(\a)), \mu_x\>=\<\s^k(\a), \mu_x\>=0$ and $w_x(\s^k(\a))=\s^k(\a)$ as desired.

Now assume $\<\a, \mu_x\> \le -1$. We adopt the following argument in \cite[Lemma 4.7.10]{CKV}. Since $\nu_{b_x} \in Y_\QQ$ is dominant (see Lemma \ref{dominant}), for any positive $m \in \NN d$ with $(w_x \s)^m=1$, we have $$0 \le m \<\a, \nu_{b_x}\> =\sum_{i=0}^{m-1} \<(w_x \s)^{-i}(\a), \mu_x\>=\sum_{i=0}^{m-1} \<w_i(\s^i(\a)), \mu_x\>$$ for some $w_i \in W_J$ with $w_0=1$. However, by (a), we have $\<w_i(\s^i(\a))\> \le \<w_x(\s^i(\a)), \mu_x\>=0$ for $i \in [0, m-1]-\NN d$. Using Lemma \ref{central'} (a) again, we obtain $$0 \le \sum_{i \in [0, m-1] \cap \NN d} \<w_i(\a), \mu_x\> \le \<\a, \mu_x\>+(\frac{m}{d}-1)\<w_x(\a), \mu_x\> \le \frac{m}{d} \<w_x(\a), \mu_x\>,$$ which implies

(b) $\<w_x(\s^i(\a)), \mu_x\> \ge 1$ and hence $\mu_x-w_x(\s^i(\a^\vee)) \preceq \l$.

By (b), we have $x \overset{(\s^k(\a), i-k)} \to x+\s^k(\a^\vee)-\s^i(\a^\vee)$ if $k < i$ and $x \overset{(\s^k(\a), n+i-k)} \to x+\s^k(\a^\vee)-\s^i(\a^\vee)$ if $i < k$. Now the lemma follows similarly as in the above paragraph.
\end{proof}

\begin{proof}[Proof of Proposition \ref{k2'} when $\co_\a$ is of type I] \

Case(1): $x \overset {(\th, r)} \to x'$ for some $x' \in \bar I_{\l, J, b}$, $\th \in \co_\a$ and $r \in [1, d-1]$. We claim that

(1-a) there exist $Q \in X_{\mu_x}^{M_J}(b)$ and $Q' \in X_{\mu_{x'}}^{M_J}(b)$ such that $P \sim_{\l, b} Q$ and $\eta_J(P)-\eta_J(Q)=\sum_{i=0}^{r-1} \s^i(\th^\vee)$.

First we show the proposition follows from (1-a). Indeed, by symmetry, we have $x' \overset {(\s^r(\a), d-r)} \to x$. Thus by (1-a) and that $J_b^{M_J}(F)$ acts on $\pi_0(X_{\mu_{x'}}^{M_J}(b))$ transitively, there exist $P' \in X_{\mu_{x'}}^{M_J}(b)$ and $P \in X_{\mu_x}^{M_J}(b)$ such that $Q' \sim_{J, \l, b} P' \sim_{\l, b} P$ and $\eta_J(P')-\eta_J(P)=\sum_{i=0}^{d-r-1} \s^{i+r}(\th^\vee)$. Therefore, $Q \sim_{\l,b} P$ and $\eta_J(Q)-\eta_J(P)=y_\th=y_\a \in \pi_1(M_J)^\s$. So $\a \in \ca_{\l,J,b}$ as desired.

Now we prove (1-a) by induction on $r$. If $r=1$, then $x \overset {(\th, r)} \rightarrowtail x'$ and the lemma follows from Proposition \ref{main}. Assume the lemma holds for $r < r_0 \in [1, d]$. We show it holds for $r=r_0$.

Case(1.1): there exist $u \in W_J$ and $j \in [1, r-1]$ such that $\<u(\s^j(\th)), \mu_x\> \neq 0$. Without loss of generality, we may assume that $\<u(\s^j(\th)), \mu_x\> \le -1$, the other case can be handled similarly. Let $x''=x+\s^j(\th^\vee)-\s^r(\th^\vee) \in \pi_1(M_J)$. Since $\co_\a$ is of type I, we have $\<\s^j(\th), w_x(\s^r(\th^\vee))\>=\<\s^j(\th), \th^\vee\>=0$ and hence \begin{align*} \<\s^j(\th), \mu_x+\th^\vee-w_x(\s^r(\th^\vee))\>  =\<\s^j(\th), \mu_x\> \le \<u(\s^j(\th)), \mu_x\> \le -1, \end{align*} which means $x \overset {(-\s^j(\th), r-j)} \to x'' \overset {(-\th, j)} \to x'$. By induction hypothesis, there exist $P'' \in X_{\mu_{x''}}^{M_J}$ and $P' \in X_{\mu_{x'}}^{M_J}$ such that $P \sim_{\l, b} P'' \sim_{\l, b} P'$ and $\eta_J(P)-\eta_J(P'')=\sum_{i=0}^{r-j-1} \s^{i+j}(\th^\vee)$ and $\eta_J(P'')-\eta_J(P')=\sum_{i=0}^{j-1} \s^i(\th^\vee)$. Therefore, we deduce that $P \sim_{\l, b} P'$ and $\eta_J(P)-\eta_J(P')=\sum_{i=0}^{r-1} \s^i(\th^\vee)$ as desired.

Case(1.2): $\<u(\s^i(\th)), \mu_x\> = 0$ for any $u \in W_J$ and any $i \in [1, r-1]$. By Lemma \ref{central'}, $w_x(\s^i(\th))=\s^i(\th)$ and $\<\s^i(\th), \mu_x\>=0$ for any $i \in [1, r-1]$. Define $\textsl{g}: \PP^1 \to G(L)/K$ by $$\textsl{g}(z)=g_x \ U_{\th}(z t\i) \ {}^{b_x\s} U_{\th}(z t\i) \cdots {}^{(b_x\s)^{r-1}} U_{\th}(z t\i) K,$$ where $g_x \in M_J(L)$ such that $g_x\i b \s(g_x)=\dot b_x=t^{\mu_x} \dot w_x$ and $b_x=t^{\mu_x} w_x \in \Omega_J$ is defined in \S \ref{setup4}. Note that $\th, \s^i(\th)$ are not in the same connected component of $\Phi_{J, \a}$ for $i \in [1, r-1]$ since $\co_\a$ is of type I. One checks that $\eta_J(\textsl{g}(0))-\eta_J(\textsl{g}(\infty))=\sum_{k=0}^{r-1} \s^k(\th^\vee)$ and $$\textsl{g}(z)\i b \s(\textsl{g}(z))=K U_\th(z t\i) t^{\mu_x} U_{w_x(\s^r(\th))}(c \s^r(z) t\i) \dot w_x K,$$ where $c \in \bold{k}[[t]]^\times$ is some constant. In view of Lemma \ref{simple}, to show (1-a), it remains to verify $\textsl{g}\i b \s(\textsl{g}) \subseteq \cup_{\l' \preceq \l} K t^{\l'} K$. By Lemma \ref{bound}, it suffices to show

(1.2-a) $\Phi \cap (\ZZ \th + \ZZ w_x(\s^r(\th)))$ is of type $A_1$ or $A_1 \times A_1$ or $A_2$.

If $\Phi$ is simply laced, (1.2-a) follows since all the roots of $\Phi$ are of the same length. Otherwise, (1.2-a) follows since $\th, w_x(\s^r(\th)))$ are in different connected components of $\Phi$.

Therefore, (1-a) is proved.

\

Case(2): There do not exist $x' \in \bar I_{\l,J,b}$, $\th \in \co_\a$ and $r \in [1, d-1]$ satisfying $x \overset{(\th, r)} \to x'$. By Lemma \ref{z1}, we have

(2-a) $w_x(\s^k(\a))=\s^k(\a)$ and $\<w_x(\s^k(\a)), \mu_x\>=0$ for $k \in [1, d-1]$.

We define $\textsl{g}: \PP^1 \to G(L)/K$ by $$\textsl{g}(z)=g_x \ U_{\a}(z t\i) \ {}^{b_x\s} U_{\a}(z t\i) \cdots {}^{(b_x\s)^{d-1}} U_{\a}(z t\i) K,$$ where $g_x\i b \s(g_x)=\dot b_x=t^{\mu_x} \dot w_x$ as usual. Note that $\a, \s^i(\a)$ are not in the same connected component of $\Phi_{J, \a}$ for $i \in [1, r-1]$ since $\co_\a$ is of type I. As in Case(1.2), we have $\eta_J(\textsl{g}(0))-\eta_J(\textsl{g}(\infty))=\sum_{k=0}^{r-1} \s^k(\a^\vee)=y_\a \in \pi_1(M_J)$ and $$\textsl{g}(z)\i b \s(\textsl{g}(z))=K U_\a(z t\i) t^{\mu_x} U_{w_x(\a)}(c \s^d(z) t\i) \dot w_x K,$$ where $c \in \bold{k}[[t]]^\times$ is some constant. Again, it remains to verify $\textsl{g}\i b \s(\textsl{g}) \subseteq \cup_{\l' \preceq \l} K t^{\l'} K \subseteq K \backslash G(L) / K$. So it suffices to show

(2-b) $U_\a(y t\i) t^{\mu_x} U_{w-x(\a)}(z t\i) \in \cup_{\l' \preceq \l} K t^{\l'} K$ for $y, z \in \bold{k}[[t]]$.

Similar to (a) in the proof of Lemma \ref{z1} (using (2-a)), we have

(2-c) $\<w_x(\a), \mu_x\> \ge 0$. Moreover, $\<w_x(\a), \mu_x\> \ge 1$ if $\<\a, \mu_x\> \le -1$.

Case(2.1): $\Phi \cap (\ZZ \a + \ZZ w_x(\a))$ is of type $A_2$ or $A_1 \times A_1$ or $A_1$. Thanks to Lemma \ref{bound}, to prove (2-b), it suffices to show that $\mu_x+\a^\vee, \mu_x-w_x(\a^\vee), \mu_x+\a^\vee-w_x(\a^\vee) \preceq \l$. Indeed, we already know $\mu_x+\a^\vee \preceq \l$ (since $\a \in C_{\l, J, b, x}$) and $\mu_x+\a^\vee-w_x(\a^\vee) \preceq \mu_x \preceq \l$ (by Lemma \ref{minuscule}). If $\<w_x(\a), \mu_x\> \ge 1$, we have $\mu_x - w_x(\a^\vee) \preceq \l$ as desired. Otherwise, $\<w_x(\a), \mu_x\>=\<\a, \mu_x\>=0$ by (2-c). Thus $w_x(\a)=\a$ by Lemma \ref{central'} and $\mu_x-w_x(\a^\vee) = \mu_x-\a^\vee = s_\a(\mu_x+\a^\vee) \preceq \l$ as desired.

Case(2.2): $\Phi \cap (\ZZ \a + \ZZ w_x(\a))$ is of type $B_2$. We show this case dose not occur. First note that $\a \neq \pm w_x(\a)$ and $\<w_x(\a), \a^\vee\>=0$ since $\a, w_x(\a)$ are of the same length. By the assumption of Case(2.2), we have $\a - w_x(\a) \in \Phi_J$. However, $\<w_x(\a)-\a, \a^\vee\>=-2$, which contradicts that $\a^\vee$ is $J$-minuscule.

Case(2.3): $\Phi \cap (\ZZ \a + \ZZ w_x(\a))$ is of type $G_2$. We show this case also dose not occur. First note that $\Phi \cap (\ZZ \a + \ZZ w_x(\a))$ is a connected component of $\Phi$. So $\Phi_J \cap (\ZZ \a + \ZZ w_x(\a))=\{\pm \xi\}$ for some $\xi \in \Phi_J^+$ such that $\<\xi, \a^\vee\> \neq 0$ since $w_x(\a) \neq \a$. Thus $\<\xi, \a^\vee\>=-1$ since $\a^\vee$ is $J$-anti-dominant and $J$-minuscule. If $\xi$ is a long root of $\Phi$, so is $\a$ since $\<\xi, \a^\vee\>=-1$. Thus $\Phi \cap (\ZZ \a \oplus \ZZ w_x(\a))$ is of type $A_2$, a contradiction. If $\xi$ is a short root of $\Phi$, then $\a$ is again a long root by the modified construction of $C_{\l,J,b,x}$ (since $\Phi$ is a union of root systems of type $G_2$, see Corollary \ref{mod}), leading the same contradiction.
\end{proof}

\

\begin{lem} \rm{(cf. \cite[Lemma 4.7.3]{CKV})} \label{z3}
Assume $\co_\a$ is of type II and $\mu_y+\s^k(\tta^\vee) \npreceq \l$ for any $y \in \bar I_{\l, J, b}$ and any $k \in \NN$. If there exist $\th \in \co_\a$, $r \in [d+1, 2d-1]$ and $x' \in \bar I_{\l, J, b}$ such that $x \overset {(\th, r)} \rightarrowtail x'$, then we have

(a) $w_x(\s^{r-d}(\th))=\s^{r-d}(\th)$, $\<\s^{r-d}(\th), \mu_x\>=\<\s^r(\g), \mu_x\>=0$ and $w_x(\s^r(\g))=\s^r(\g)$;

(b) $w_x(\s^d(\th))=\s^d(\th)+\g$ and $\<w_x(\s^d(\th)), \mu_x\>=1$;

(c) $\g \neq 0$ if and only if $\<\g, \mu_x\>=1$;

(d) $w_x(\s^i(\th))=\s^i(\th)$, $\<\s^i(\g), \mu_x\>=0$ and $\<\s^i(\th), \mu_x\>=0$ for $i \in [1, r-1]-\{r-d, d\}$;

(e) $\<w_x(\s^{r-d}(w_x(\s^d(\th)))), \mu_x\>=\<w_x(\s^r(\th))+\s^r(\g), \mu_x\> \ge 1$.

Here $\g$ is the unique common neighbor of $\th$ and $\s^d(\th)$ in the Dynkin diagram of $\Pi_J \cup \co_\a$.
\end{lem}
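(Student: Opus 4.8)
The plan is to follow \cite[Lemma 4.7.3]{CKV}, the one new feature being that $r\in[d+1,2d-1]$ may exceed $d$, so that there are \emph{two} exceptional indices, $d$ and $r-d$, which turn out to behave asymmetrically. Write $\theta_i=\s^i(\theta)$ throughout. I will use freely: that $\mu_x-w_x(\theta_i^\vee)$ is $J$-minuscule and represents $x-\theta_i^\vee\in\pi_1(M_J)$ (Lemma \ref{minuscule}); that $\nu_{b_x}$ is dominant (Lemma \ref{dominant}); that $\a^\vee$, hence every $\theta_i^\vee$, is $J$-anti-dominant and $J$-minuscule; and, as input on the shape of $\Phi_{J,\a}$, the type-II description of \cite[\S 4.7]{CKV}: $\s$ cyclically permutes the $d$ connected components of $\Phi_{J,\a}$ meeting $\co_\a$, the roots $\theta,\theta_d$ lie in a common one with unique common neighbour $\g\in\Pi_J\cup\{0\}$ (and $\s^d(\g)=\g$ if $\g\neq0$), the roots $\theta_{r-d},\theta_r$ lie in its $\s^{r-d}$-translate with common neighbour $\s^{r-d}(\g)=\s^r(\g)$, and for $i\in[1,r-1]\setminus\{d,r-d\}$ the root $\theta_i$ is ``isolated'', its component meeting no other $\theta_j$ and no relevant translate of $\g$.

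First I would dispose of the isolated indices. Fix $i\in[1,r-1]\setminus\{d,r-d\}$. The hypothesis $x\overset{(\theta,r)}{\to}x'$ already gives $\mu_x,\mu_{x+\theta^\vee},\mu_{x-\theta_r^\vee},\mu_{x'}\preceq\l$. If $\langle w_x(\theta_i),\mu_x\rangle\ge1$ then $\mu_{x-\theta_i^\vee}\preceq\l$ (Lemma \ref{minuscule} plus a convexity argument, using that $w_x(\theta_i)$ is again a positive root outside $\Phi_J$), and since $\theta,\theta_i$ lie in different components one checks that the remaining two inequalities needed for the factorisation $x\overset{(\theta,i)}{\to}x+\theta^\vee-\theta_i^\vee\overset{(\theta_i,r-i)}{\to}x'$ also hold; if instead $\langle w_x(\theta_i),\mu_x\rangle\le-1$ one gets the mirror factorisation $x\overset{(\theta_i,r-i)}{\to}\,\cdot\,\overset{(\theta,i)}{\to}x'$. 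Both contradict $x\overset{(\theta,r)}{\rightarrowtail}x'$, so $\langle w_x(\theta_i),\mu_x\rangle=0$, whence Lemma \ref{central'} gives $w_x(\theta_i)=\theta_i$ and $\langle\theta_i,\mu_x\rangle=\langle\s^i(\g),\mu_x\rangle=0$; this is (d). Running the same argument at $i=r-d$, where $\theta_r$ is the terminal root of the chain so that only the head $\theta$ can be split off on one side, yields $w_x(\theta_{r-d})=\theta_{r-d}$, $\langle\theta_{r-d},\mu_x\rangle=\langle\s^r(\g),\mu_x\rangle=0$ and $w_x(\s^r(\g))=\s^r(\g)$; this is (a).

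The heart of the matter is the index $d$, where the hypothesis $\mu_y+\s^k(\tta^\vee)\npreceq\l$ is used: for type II, $\tta^\vee$ is a $\s$-translate of $\theta^\vee+\g^\vee+\theta_d^\vee$, so this hypothesis forbids precisely the ``long'' completions that would otherwise let us factor through index $d$. Since $\mu_x$ is $J$-dominant and $J$-minuscule, $\langle\g,\mu_x\rangle\in\{0,1\}$, and Lemma \ref{central'}(b)--(c) with $J$-minusculity of $\theta_d^\vee$ gives $w_x(\theta_d)\in\{\theta_d,\theta_d+\g\}$. I would first exclude $w_x(\theta_d)=\theta_d$ (which in particular forces $\g\neq0$): otherwise $\mu_{x-\theta_d^\vee}\preceq\l$ as before, the remaining inequalities for a factorisation through $d$ hold once the $\tta^\vee$-hypothesis rules out the obstructing completion, and we contradict $\rightarrowtail$. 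Hence $w_x(\theta_d)=\theta_d+\g$ with $\g\neq0$; then $\langle w_x(\theta_d),\mu_x\rangle\le1$ ($J$-minusculity of $w_x(\theta_d^\vee)$ against the $J$-dominant $\mu_x$) together with the dominance of $\nu_{b_x}$ forces $\langle w_x(\theta_d),\mu_x\rangle=1$ and $\langle\g,\mu_x\rangle=1$, which is (b) and (c). Finally, for (e): by (b) and (a), $w_x\bigl(\s^{r-d}(w_x(\theta_d))\bigr)=w_x(\theta_r+\s^r(\g))=w_x(\theta_r)+\s^r(\g)$, and $\langle w_x(\theta_r),\mu_x\rangle\ge1$ follows from the averaging identity $m\langle\theta,\nu_{b_x}\rangle=\sum_i\langle(w_x\s)^{-i}(\theta),\mu_x\rangle$ exploited in the proof of Lemma \ref{z1} and in \cite[Lemma 4.7.10]{CKV}, once (a)--(d) kill the generic contributions.

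I expect the analysis at the index $d$ to be the real obstacle. Unlike in \cite[Lemma 4.7.3]{CKV}, the two exceptional indices are genuinely different — (a) versus (b) — and one must deploy the $\tta^\vee$-hypothesis both to exclude $w_x(\theta_d)=\theta_d$ and to pin $\langle\g,\mu_x\rangle$ to exactly $1$. Inside each non-decomposability argument the actual work is to check that \emph{all four} dominance conditions defining an arrow $\overset{(\cdot,\cdot)}{\to}$ hold, not only the one of the form $\mu_x-w_x(\cdot)^\vee\preceq\l$; this is where the type-II combinatorics of $\Phi_{J,\a}$, Lemma \ref{minuscule}, and the positivity of $\nu_{b_x}$ are invoked repeatedly.
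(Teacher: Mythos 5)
Your overall architecture is the right one (upper bounds on $\<w_x(\cdot),\mu_x\>$ from factorization obstructions against $\rightarrowtail$, lower bounds from the hypothesis $\mu_y+\s^k(\tta^\vee)\npreceq\l$, and Lemma \ref{central'} to convert pairing information into $w_x$-fixedness), and your treatment of the isolated indices (d) matches the paper's (which simply quotes Corollary \ref{central''}). But there are two genuine gaps. The serious one is part (a). You treat $i=r-d$ as a one-sided variant of the isolated case, but the index $r-d$ is \emph{not} one-sided in the way you describe: the factorization $x \overset{(\th,r-d)}{\to}\cdot\overset{(\s^{r-d}(\th),d)}{\to}x'$ does rule out $\<w_x(\s^{r-d}(\th)),\mu_x\>\ge 1$, yet the mirror factorization $x\overset{(\s^{r-d}(\th),d)}{\to}\cdot\overset{(\th,r-d)}{\to}x'$ cannot be invoked to rule out $\<\s^{r-d}(\th),\mu_x\>\le-1$, because $\s^{r-d}(\th)$ and $\s^r(\th)=\s^d(\s^{r-d}(\th))$ lie in the \emph{same} connected component of $\Phi_{J,\a}$ and interact through $\s^r(\g)$; the conditions $\mu_{x+\s^{r-d}(\th^\vee)-\s^r(\th^\vee)}\preceq\l$ and $\mu_{x+\th^\vee+\s^{r-d}(\th^\vee)-\s^r(\th^\vee)}\preceq\l$ are exactly what fails to be automatic. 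The paper's Case (a1) excludes $\<\s^{r-d}(\th),\mu_x\>\le-1$ by a multi-step contradiction ((a1.0)--(a1.3)) that pins down $\<w_x(\s^r(\th)),\mu_x\>=1$ and $w_x(\s^r(\th))\in\{\s^r(\th),\s^r(\th)+\s^r(\g)\}$ and then contradicts $\<\s^r(\tta),\mu_{x'}\>\ge 0$ — i.e.\ the $\tta$-hypothesis is applied both at $x$ and at $x'$. Your explicit claim that the $\tta$-hypothesis enters only at the index $d$ is therefore wrong, and as written your proof of (a) does not close.

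The second gap is part (e): the averaging identity over a full period of $w_x\s$ cannot "kill the generic contributions", since (a)--(d) only control $\s^i(\th)$ for $i\in[1,r-1]$ and leave the terms with $i\in[r+1,2d-1]$ (a nonempty range since $r\le 2d-1$) completely unconstrained. The paper argues (e) directly: assuming $\<w_x(\s^r(\th)),\mu_x\>\le 0$ one first shows $\mu_x+\s^r(\th^\vee)\preceq\l$ (splitting on the sign of $\<\s^r(\th),\mu_x\>$), then uses (a) to get $\<\s^{r-d}(\g)+\s^{r-d}(\th),\mu_x+\s^r(\th^\vee)\>=-1$ and hence $\mu_x+\s^{r-d}(\tta^\vee)\preceq\l$, contradicting the hypothesis. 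There are also smaller problems in (b)--(c): the inference "if $w_x(\th_d)=\th_d$ then $\mu_{x-\th_d^\vee}\preceq\l$ as before" is unjustified when $\<w_x(\th_d),\mu_x\>\le 0$, and your plan to "exclude $w_x(\th_d)=\th_d$" conflicts with the case $\g=0$, where (b) asserts precisely $w_x(\th_d)=\th_d$; the paper instead first forces $\<w_x(\th_d),\mu_x\>=1$ via the inequality $\<\s^d(\th)+\g,\mu_x+\th^\vee\>\ge 0$ (a consequence of the $\tta$-hypothesis, not of dominance of $\nu_{b_x}$) and only afterwards identifies $w_x(\th_d)$.
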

\begin{proof}
(a) Since $\mu_y+\s^k(\tta^\vee) \npreceq \l$ for any $y \in \bar I_{\l, J, b}$ and any $k \in \NN$, we have

(i) $\<\s^k(\tta), \mu_y\> \ge 0$ for any $y \in \bar I_{\l, J, b}$ and any $k \in \NN$.

We claim that

(ii) $\<w_x(\s^{r-d}(\th)), \mu_x\> \le 0$.

If $\<w_x(\s^{r-d}(\th)), \mu_x\> \ge 1$, then $\<w_x(\s^{r-d}(\th)), \mu_x+\th^\vee-w_x(\s^r(\th^\vee))\> \ge 1$ and $$x \overset {(\th, r-d)} \to {x+\th^\vee-\s^{r-d}(\th^\vee)} \overset {(\s^{r-d}(\th), d)} \to x',$$ contradicting the assumption of the lemma. So (ii) is proved.

Case(a1): $\<\s^{r-d}(\th), \mu_x\> \le -1$.

Then $\mu_x+\s^{r-d}(\th^\vee), \mu_x+\th^\vee+\s^{r-d}(\th^\vee) \preceq \l$. Moreover, $\<\s^r(\tilde \th), \mu_x\> \ge 0$ by (i), where $\tilde \th=\th+\g+\s^d(\th)$. So we obtain

(a1.0): $\<\s^r(\th), \mu_x\> \ge 0$.

Firstly, we show

(a1.1): $\<w_x(\s^r(\th)), \mu_x+\s^{r-d}(\th^\vee))\> \le 0$. Hence $\<w_x(\s^r(\th)), \mu_x\> \le 1$.

Indeed, otherwise, we have $$\mu_x+\s^{r-d}(\th^\vee)-w_x(\s^r(\th^\vee)) \preceq \mu_x+\s^{r-d}(\th^\vee) \preceq \l,$$ and similarly, $\mu_x+\th^\vee+\s^{r-d}(\th^\vee)-w_x(\s^r(\th^\vee))\preceq \l$, which implies $$\mu_x \overset {(\s^{r-d}(\th), d)} \to {\mu_x+\s^{r-d}(\th^\vee)-\s^r(\th^\vee)} \overset {(\th, r-d)} \to x',$$ contradicting the assumption of the lemma. Thus (a1.1) is proved.

Secondly, we show

(a1.2): $\<w_x(\s^r(\th)), \mu_x\> = 1$.

Assume otherwise, then $0 \le \<\s^r(\th), \mu_x\> \le \<w_x(\s^r(\th)), \mu_x\> \le 0$ by (a1.0) and (a1.1), that is, $\<\s^r(\th), \mu_x\> =\<w_x(\s^r(\th)), \mu_x\>=0$. Applying Lemma \ref{central'} (d) and (c), $w_x(\s^r(\th))=\s^r(\th)$ and $\<\s^r(\g), \mu_x\>=0$ since $\s^r(\g) \in \Pi_J$ and $\<\s^r(\g), \s^r(\th^\vee)\>=-1$. Thus $\<\s^r(\tilde \th), \mu_x\> \le -1$, which contradicts (i). So (a1.2) is proved.

Thirdly, we show

(a1.3): $\<w_x(\s^r(\th)), \mu_x\>=\<\s^r(\th)+\s^r(\g), \mu_x\>=1$ and $w_x(\s^r(\th)) \in \{\s^r(\th), \s^r(\th)+\s^r(\g)\}$.

If $\<\s^r(\th), \mu_x\>=0$, then $\<\s^r(\g), \mu_x\> =1$ since $\<\s^r(\tilde \th), \mu_x\> \ge 0$ and $\mu_x$ is $J$-minuscule. Otherwise, we have $\<\s^r(\th), \mu_x\>=1=\<w_x(\s^r(\th)), \mu_x\>$ by (a1.0) and (a1.2), then $\<\s^r(\g), \mu_x\> =0$ by Lemma \ref{central'} (c). Moreover, If $w_x(\s^r(\th)) \neq \s^r(\th)+\s^r(\g)$, then by Lemma \ref{central'} (b), $\s^r(\th) \le w_x(\s^r(\th)) < s_{\s^r(\g)}(\s^r(\th))=\s^r(\th)+\s^r(\g)$. Hence $w_x(\s^r(\th))=\s^r(\th)$ since $\s^r(\g)$ is a simple root. So (a1.3) is proved.

Now we compute by (a1.3) that $$ \<\s^r(\tilde \th), \mu_{x'}\> = \<\s^r(\tilde \th), \mu_x-w_x(\s^r(\th^\vee))\> \le -1,$$ which is a contradiction. Therefore, Case(a1) dose not occur.

So, by (ii), we have $\<\s^{r-d}(\th), \mu_x\> = \<w_x(\s^{r-d}(\th)), \mu_x\> = 0$. Now (a) follows from Lemma \ref{central'} by noticing that $\<\s^{r-d}(\th), \s^r(\g)\>=-1$ if $\g \neq 0$.

(b) If $\<w_x(\s^d(\th)), \mu_x\> \ge 2$, then $x \overset {(\th, d)} \to {x+\th^\vee-\s^d(\th^\vee)} \overset {(\s^d(\th), r-d)} \to x'$, contradicting the assumption of the lemma. So we have

(b1) $\<w_x(\s^d(\th)), \mu_x\> \le 1$.

We claim that

(b2) $\<\s^d(\th)+\g, \mu_x+\th^\vee\> \ge 0$.

Otherwise, we have $\mu_x+{\tilde \th}^\vee \preceq \mu_x+\th^\vee \preceq \l$, which contradicts the assumption of the lemma. So (b1) is proved.

As a consequence, we have

(b3) $\<\s^d(\th), \mu_x\> \ge 0$.

If $\g=0$, (b3) follows directly from (b2). Otherwise, $\<\g, \th^\vee\>=-1$ and $\<\g, \mu_x\> \le 1$ since $\mu_x$ is $J$-minuscule. So (b3) still follows from (b2).

If $\<w_x(\s^d(\th)), \mu_x\> \le 0$, then $\<\s^d(\th), \mu_x\> = \<w_x(\s^d{\th}), \mu_x\>=0$ by (b3). Hence $\<\g, \mu_x\>=0$ by Lemma \ref{central'} (c). One computes $\<\s^d(\th)+\g, \mu_x+\th^\vee\> \le -1$, which contradicts (b2). Therefore, $\<w_x(\s^d(\th)), \mu_x\> = 1$ by (b1). Combining with (b3), we have $0 \le \<\s^d(\th), \mu_x\> \le 1$. If $\<\s^d(\th), \mu_x\>=0$, then $\<\g, \mu_x\>=1$ by (b2). If $\<\s^d(\th), \mu_x\>=1=\<w_x(\s^d(\th)), \mu_x\>$, then $\<\g, \mu_x\>=0$ by Lemma \ref{central'}. In a word, we have

(b4) $\<w_x(\s^d(\th)), \mu_x\>=\<\s^d(\th)+\g, \mu_x\>=1$.

It remains to show $w_x(\s^d(\th)) = \s^d(\th)+\g$. Assume otherwise. Similarly as (a1.3), we deduce that $w_x(\s^d(\th))=\s^d(\th)$ and $\<\g, \mu_x\>=0$. So $\g \neq 0$ and $\<\s^d(\th), \th^\vee\>=0$. Then $\<w_x(\s^d(\th)), \mu_x+\th^\vee-w_x(\s^r(\th^\vee))\>=\<w_x(\s^d(\th)), \mu_x+\th^\vee\>=1$ by (b4), which implies $$x \overset {(\th, d)} \to {x+\th^\vee-\s^d(\th^\vee)} \overset {(\s^d(\th), r-d)} \to x',$$ which is a contradiction and (b) is proved.

(c) Assume $\<\g, \mu_x\>=0$ and $\g \neq 0$. Then $\<\s^d(\th), \th^\vee\>=0$. By (b), we have $\<\s^d(\th), \mu_x\>=1$, which implies $x \overset {(\th, d)} \to {x+\th^\vee-\s^d(\th^\vee)} \overset {(\s^d(\th), r-d)} \to x'$, a contradiction.

(d) It follows from Corollary \ref{central''} and Lemma \ref{central'}.

(e) First note that $w_x(\s^{r-d}(w_x(\s^d(\th))))=w_x(\s^r(\th))+\s^r(\g)$ by (b) and (a). Assume $\<w_x(\s^{r-d}(w_x(\s^d(\th)))), \mu_x\>=\<w_x(\s^r(\th)), \mu_x\> \le 0$. We claim that $\mu_x+\s^r(\th^\vee) \preceq \l$. If $\<\s^r(\th), \mu_x\> =0$, then $\<w_x(\s^r(\th)), \mu_x\> = \<\s^r(\th), \mu_x\> =0$, which means, by Lemma \ref{central'}, $w_x(\s^r(\th))= \s^r(\th)$ and $\mu_x+\s^r(\th^\vee)=s_{\s^r(\th)}(\mu_x-w_x(\s^r(\th^\vee))) \preceq \l$. If $\<\s^r(\th), \mu_x\> \le -1$, we also have $\mu_x+\s^r(\th^\vee) \preceq \mu_x \preceq \l$ and the claim is proved. By (a), $\<\s^{r-d}(\g)+\s^{r-d}(\th), \mu_x+\s^r(\th^\vee)\>=-1$. Thus $$\mu_x+\s^{r-d}({\tilde \th}^\vee) \preceq \mu_x+\s^r(\th^\vee) \preceq \l,$$ which is a contradiction. So we have $\<w_x(\s^r(\th)), \mu_x\> \ge 1$ as desired.
\end{proof}

\begin{lem} \rm{(cf. \cite[Lemma 4.7.12]{CKV})} \label{z4}
Assume $\co_\a$ is of type II and $\mu_y+\s^k(\tta^\vee) \npreceq \l$ for any $y \in \bar I_{\l, J, b}$ and any $k \in \NN$. If there do not exist $\th \in \co_\a$, $r \in [1, 2d-1]$ and $x' \in \bar I_{\l, J, b}$ satisfying $x \overset {(\th, r)} \to x'$, then

(a) $\<w_x(\s^d(\a)), \mu_x\>=1$;

(b) $w_x(\s^d(\a))=\s^d(\a)+\b$;

(c) $\b=0$ if and only if $\<\b, \mu_x\>=1$;

(d) $\<w_x(\s^d(\a)), \a^\vee\>=-1$.

(e) $\<\a, \mu_x\> \ge -1$.

Here $\b$ is the unique common neighborhood of $\a$ and $\s^d(\a)$ in the Dynkin diagram of $\Pi_J \cup \co_\a$.
\end{lem}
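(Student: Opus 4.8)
The plan is to carry out the same analysis as in Lemma~\ref{z3}, but in the complementary situation: instead of factoring through a reducible step $x \overset {(\theta,r)} \to x'$ (which now does not exist for any $\theta \in \co_\a$ and $r \in [1,2d-1]$), I would use the non‑existence of such steps directly to rule out the degenerate possibilities. Throughout write $\tta = \a + \b + \s^d(\a)$ as in the type~II case, with $\b$ the common neighbour of $\a$ and $\s^d(\a)$ (possibly $0$), noting that $\b \in \Pi_J$ when $\b \neq 0$, that $\{\a,\b,\s^d(\a)\}$ (resp. $\{\a,\s^d(\a)\}$ when $\b=0$) spans a subsystem of $\Phi_{J,\a}$ of type $A_3$ (resp. $A_2$), and hence that $\tta^\vee = \a^\vee + \b^\vee + \s^d(\a^\vee)$ (resp. $\a^\vee + \s^d(\a^\vee)$) and that all inner products among $\a$, $\b$, $\s^d(\a)$, $\tta$ are determined.

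First I would record two preliminary facts. From $\mu_y + \s^k(\tta^\vee) \npreceq \l$ for all $y \in \bar I_{\l,J,b}$, $k \in \NN$, together with the observation that $\<\s^k(\tta),\mu_y\> \le -1$ would force $\mu_y + \s^k(\tta^\vee) \preceq \mu_y \preceq \l$, one gets $\<\s^k(\tta),\mu_y\> \ge 0$ for all such $y$, $k$. And since no reducible step $x \overset {(\theta,r)} \to x'$ exists for $\theta \in \co_\a$, $r \in [1,n-1]=[1,2d-1]$, Lemma~\ref{z1} applies ($n = 2d$) and yields $w_x(\s^k(\a))=\s^k(\a)$ and $\<\s^k(\a),\mu_x\> = 0$ for all $k \in [1,2d-1]\setminus\{d\}$; thus the whole statement concerns only the seam index $k=d$. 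Next I would establish the z4‑analogues of the estimates (b2)--(b3) of Lemma~\ref{z3}: that $\<\s^d(\a)+\b,\mu_x+\a^\vee\> \ge 0$ (otherwise, since $\s^d(\a)+\b$, resp. $\s^d(\a)$, is a root of the $A_3$, resp. $A_2$, subsystem with coroot $\s^d(\a^\vee)+\b^\vee$, one obtains $\mu_x + \tta^\vee \preceq \mu_x + \a^\vee \preceq \l$, contradicting the hypothesis), whence $\<\s^d(\a),\mu_x\> \ge 0$ using the $J$‑minuscularity of $\mu_x$ (so $\<\b,\mu_x\> \le 1$).

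With these in hand, (a)--(c) are obtained following the proof of part~(b) of Lemma~\ref{z3}. If $\<w_x(\s^d(\a)),\mu_x\> \ge 2$, then writing $\mu_x$, $\mu_{x+\a^\vee}$, $\mu_{x-\s^d(\a^\vee)}$, $\mu_{x+\a^\vee-\s^d(\a^\vee)}$ as $W_J$‑conjugates of $\mu_x$, $\mu_x+\a^\vee$, $\mu_x-w_x(\s^d(\a^\vee))$, $\mu_x+\a^\vee-w_x(\s^d(\a^\vee))$ via Lemma~\ref{minuscule}, and using $\a \in C_{\l,J,b,x}$ together with $\<w_x(\s^d(\a)),\a^\vee\> \ge -1$, all four lie $\preceq \l$, producing the forbidden step $x \overset {(\a,d)} \to x + \a^\vee - \s^d(\a^\vee)$ (the target lying in $\bar I_{\l,J,b}$ since $\a^\vee - \s^d(\a^\vee) \in \ker\k_J$), a contradiction; so $\<w_x(\s^d(\a)),\mu_x\> \le 1$. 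If instead $\<w_x(\s^d(\a)),\mu_x\> \le 0$, then the estimate above forces $\<\s^d(\a),\mu_x\> = \<w_x(\s^d(\a)),\mu_x\> = 0$, whence $w_x(\s^d(\a)) = \s^d(\a)$ and $\<\b,\mu_x\> = 0$ by Lemma~\ref{central'}(c),(d), and then a direct computation gives $\<\s^d(\a)+\b,\mu_x+\a^\vee\> = -1 < 0$, contradicting the same estimate. Hence (a): $\<w_x(\s^d(\a)),\mu_x\> = 1$. Given this and $\<\s^d(\a),\mu_x\> \in \{0,1\}$, Lemma~\ref{central'}(b) identifies $w_x(\s^d(\a))$ as the minimal $W_J$‑conjugate of $\s^d(\a)$ of pairing $1$ with $\mu_x$; inside the $A_3$ (resp. $A_2$) subsystem this leaves only $\s^d(\a)+\b$ or $\s^d(\a)$, and the alternative $w_x(\s^d(\a)) = \s^d(\a)$ with $\<\s^d(\a),\mu_x\> = 1$ is excluded by once more forming the forbidden step $x \overset {(\a,d)} \to x + \a^\vee - \s^d(\a^\vee)$; this gives (b), and (c) then follows from (b) with the constraint $\<\b,\mu_x\> \in \{0,1\}$.

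Finally, (d) is the identity $\<\s^d(\a)+\b,\a^\vee\> = -1$ (resp. $\<\s^d(\a),\a^\vee\> = -1$ when $\b=0$) read off from the $A_3$ (resp. $A_2$) subdiagram via (b); and (e) drops out: by Lemma~\ref{central'}(a) one has $\<\s^d(\a),\mu_x\> \le \<w_x(\s^d(\a)),\mu_x\> = 1$, by (a)--(c) the values $\<\s^d(\a),\mu_x\>$ and $\<\b,\mu_x\>$ cannot both equal $1$, and $\<\a,\mu_x\> + \<\b,\mu_x\> + \<\s^d(\a),\mu_x\> = \<\tta,\mu_x\> \ge 0$, so $\<\a,\mu_x\> \ge -1$. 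The step I expect to be most delicate is the identification~(b): excluding every ``wrong'' $W_J$‑conjugate as the value of $w_x(\s^d(\a))$ and keeping the two forbidden‑step arguments airtight across the cases $\b = 0$ / $\b \neq 0$ and $\<\s^d(\a),\mu_x\> = 0$ / $1$ is precisely the bookkeeping that makes the corresponding argument in \cite{CKV} intricate.
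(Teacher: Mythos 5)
Your proposal is correct and follows essentially the same route as the paper, which itself proves this lemma by transplanting the estimates (b1)--(b4) from the proof of Lemma \ref{z3}(b): the bound $\<w_x(\s^d(\a)),\mu_x\>\le 1$ via the forbidden step $x \overset{(\a,d)}\to x+\a^\vee-\s^d(\a^\vee)$, the inequality $\<\s^d(\a)+\b,\mu_x+\a^\vee\>\ge 0$ from the hypothesis on $\tta$, the exclusion of $\<w_x(\s^d(\a)),\mu_x\>\le 0$ via Lemma \ref{central'}, and then (d),(e) as formal consequences. The only place you are terser than you should be is (c): to rule out $\b\neq 0$ with $\<\b,\mu_x\>=0$ one must combine Lemma \ref{central'}(d) (which turns $\<\s^d(\a),\mu_x\>=1=\<w_x(\s^d(\a)),\mu_x\>$ into $w_x(\s^d(\a))=\s^d(\a)$) with the forbidden-step exclusion you already set up for (b), but all the needed ingredients are present in your argument.
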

\begin{proof}
First we have $\<w_x(\s^d(\a)), \mu_x\> \le 1$ and (i): $\<\s^d(\a)+\b, \mu_x+\a^\vee\> \ge 0$ as in the proof of Lemma \ref{z3} (b). In particular, $\<\s^d(\a), \mu_x\> \ge 0$. If $\<w_x(\s^d(\a)), \mu_x\> \le 0$, then $\<\s^d(\a), \mu_x\>=\<w_x(\s^d(\a)), \mu_x\>=\<\b, \mu_x\>=0$ by Lemma \ref{central'}, which contradicts (i). So we have $\<w_x(\s^d(\a)), \mu_x\> =1$ and $0 \le \<\s^d(\a), \mu_x\> \le 1$. Now (a), (b) and (c) follow similarly as in the proof Lemma \ref{z3} (b). (d) follows from (b). Note that $\<\tilde \a, \mu_x\>=\<\a, \mu_x\>+1 \ge 0$. So (e) follows.
\end{proof}

\begin{lem} \rm{(cf. \cite[Lemma 4.7.13]{CKV})} \label{z5}
Keep the assumptions of Lemma \ref{z4}. Then $\<w_x(\tilde \a), \a^\vee\> \ge -1$ and $\<w_x(\tilde \a), \mu_x+\a^\vee\> \ge 1$.
\end{lem}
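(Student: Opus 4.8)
The plan is to reduce everything to the explicit description of $w_x$ on $\a$, $\b$ and $\s^d(\a)$ supplied by Lemma \ref{z4}, and then transport the resulting inequalities across $w_x$. Recall that $\tta=\a+\b+\s^d(\a)$ is a root of $\Phi_{J,\a}$, and since $\a\in C_{\l,J,b,x}$ it is $J$-anti-dominant and $J$-minuscule, so Lemma \ref{central'} applies to $\tta$ as well. By Lemma \ref{z4}(b) we have $w_x(\s^d(\a))=\s^d(\a)+\b$, hence $\tta=\a+w_x(\s^d(\a))$. Using Lemma \ref{z4}(a),(d),(e) this already gives $\<\tta,\a^\vee\>=\<\a,\a^\vee\>+\<w_x(\s^d(\a)),\a^\vee\>=2-1=1$ and $\<\tta,\mu_x\>=\<\a,\mu_x\>+\<w_x(\s^d(\a)),\mu_x\>=\<\a,\mu_x\>+1\ge 0$, so that $\<\tta,\mu_x+\a^\vee\>=\<\a,\mu_x\>+2\ge 1$. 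Thus both inequalities hold for $\tta$ itself; the real content is the passage to $w_x(\tta)$.

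For the first inequality I would argue as follows. Since $\co_\a$ is of type II, $\Phi$, and hence $\Phi_{J,\a}$ on which $w_x\in W_J$ acts, is simply laced; therefore for the root $w_x(\tta)$ one automatically has $\<w_x(\tta),\a^\vee\>\ge -1$ unless $w_x(\tta)=-\a$. To exclude $w_x(\tta)=-\a$ I would pass to $\pi_1(M_J)=Y/\ZZ\Phi_J^\vee$: the class of $\tta^\vee=\a^\vee+\b^\vee+\s^d(\a^\vee)$ equals the class of $w_x(\tta^\vee)$ because $W_J$ acts trivially on $\pi_1(M_J)$, and it equals $\a^\vee+\s^d(\a^\vee)$ since $\b\in\Phi_J$; on the other hand the classes of $\a^\vee$ and $\s^d(\a^\vee)$ form part of a $\ZZ$-basis of $(\ZZ\Phi_{J,\a}^\vee)/(\ZZ\Phi_J^\vee)$ because $\Pi_J\cup\co_\a$ is a system of simple roots for $\Phi_{J,\a}$, so $\a^\vee+\s^d(\a^\vee)\neq -\a^\vee$ in $\pi_1(M_J)$, whence $w_x(\tta)\neq -\a$.

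For the second inequality, $\<w_x(\tta),\mu_x+\a^\vee\>=\<w_x(\tta),\mu_x\>+\<w_x(\tta),\a^\vee\>$, and by Lemma \ref{central'}(a) we have $\<w_x(\tta),\mu_x\>\ge\<\tta,\mu_x\>=\<\a,\mu_x\>+1$. If $\<\a,\mu_x\>\ge 1$ this yields $\<w_x(\tta),\mu_x\>\ge 2$, and together with the first inequality we are done. I expect the essential difficulty to lie in the range $\<\a,\mu_x\>\in\{-1,0\}$ left open by Lemma \ref{z4}(e), where only $\<w_x(\tta),\mu_x\>\ge \<\a,\mu_x\>+1\in\{0,1\}$ is available a priori. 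There I would argue by contradiction: assuming $\<w_x(\tta),\mu_x+\a^\vee\>\le 0$, I would use Lemma \ref{central'}(b),(d) together with Lemma \ref{z4}(c) to pin $w_x(\tta)=w_x(\a)+\s^d(\a)+\b$ down precisely — in particular whether $w_x(\a)=\a$ or $\a$ is enlarged along the component of $\b$ — and then feed the resulting coweight into a Lemma \ref{key}/Lemma \ref{Gashi}-type argument. The point will be that a failure of the bound, via Lemma \ref{minuscule}, produces a chain realizing $x\overset{(\th,r)}{\to}x'$ for some $\th\in\co_\a$, $r\in[1,2d-1]$, $x'\in\bar I_{\l,J,b}$, contradicting the standing hypothesis of Lemma \ref{z4}; the hypothesis $\mu_y+\s^k(\tta^\vee)\npreceq\l$ is used to control the sign of the relevant pairings and to kill the degenerate branch. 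As in \cite[Lemma 4.7.13]{CKV}, the main obstacle is exactly this case analysis, and keeping careful track of which connected component of $\Phi$ each of $\a$, $\b$, $\s^d(\a)$ and $\s^r(\a)$ lies in — they are spread over several components because $\co_\a$ has type II — is what keeps the relevant rank-two root subsystems of type $A_1$, $A_1\times A_1$ or $A_2$ and makes the computations go through.
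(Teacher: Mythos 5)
Your first inequality is fine, and your justification that $w_x(\tta)\neq-\a$ (via $\pi_1(M_J)$) is a legitimate way to make precise what the paper simply asserts. The problem is the second inequality. Your argument only closes the case $\<\a,\mu_x\>\ge 1$; in the remaining range $\<\a,\mu_x\>\in\{-1,0\}$ — which is exactly the range Lemma \ref{z4}(e) leaves open and hence the heart of the matter — you have only $\<w_x(\tta),\mu_x\>\ge\<\tta,\mu_x\>=\<\a,\mu_x\>+1\in\{0,1\}$ together with $\<w_x(\tta),\a^\vee\>\ge-1$, which yields at best $\<w_x(\tta),\mu_x+\a^\vee\>\ge 0$. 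What you offer for this case is a plan ("pin $w_x(\tta)$ down precisely \dots\ feed the resulting coweight into a Lemma \ref{key}/Lemma \ref{Gashi}-type argument \dots\ produce a chain contradicting the hypothesis of Lemma \ref{z4}") rather than an argument: you do not identify which chain $x\overset{(\th,r)}{\to}x'$ would arise, nor verify the four $\preceq\l$ conditions an arrow requires. As written this is a genuine gap.

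The missing idea is a finer control of the difference $w_x(\tta)-\tta$ than the single inequality $\<w_x(\tta),\mu_x\>\ge\<\tta,\mu_x\>$ from Lemma \ref{central'}(a). The paper invokes \cite[Lemma 4.6.1]{CKV}: since $\tta$ is $J$-anti-dominant and $J$-minuscule, there is an \emph{orthogonal} subset $D\subseteq\Phi_J^+$ with $w_x(\tta)-\tta=\sum_{\g\in D}\g^\vee$ and $\<\g,\mu_x\>=1$ for every $\g\in D$. Each summand then satisfies $\<\g,\mu_x+\a^\vee\>=1+\<\g,\a^\vee\>\ge 0$ (as $\g\in\Phi_J$, $\a\notin\Phi_J$ and $\Phi$ is simply laced), so $\<w_x(\tta),\mu_x+\a^\vee\>\ge\<\tta,\mu_x+\a^\vee\>=\<\a,\mu_x\>+2\ge 1$ uniformly in all cases, using only $\<\a,\mu_x\>\ge-1$ from Lemma \ref{z4}(e). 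The point is that pairing against $\mu_x+\a^\vee$ (not just $\mu_x$) term by term along this orthogonal decomposition is what absorbs the $-1$ coming from $\<w_x(\tta),\a^\vee\>$; without some such structural input your case analysis cannot close.
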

\begin{proof}
First we have $\<w_x(\tilde \a), \a^\vee\> \ge -1$ since $w_x(\tilde \a) \neq -\a$ and $\Phi$ is simply laced.  Note that $\tilde \a$ is $J$-anti-dominant and $J$-minuscule. By \cite[Lemma 4.6.1]{CKV}, there exists an orthogonal subset $D \subset \Phi_J^+$ such that $w_x(\tilde \a) - \tilde \a = \sum_{\g \in D} \g^\vee$. Moreover, $\<\g, \mu_x\>=1$ for each $\g \in D$ by Lemma \ref{central'} (a) (see \cite[page 55 of 66]{CKV}). Note that $\<\tilde \a, \mu_x\>=1+\<\a, \mu_x\> \ge 0$ (by Lemma \ref{z4} (a), (b), (e)) and $\<\tilde \a, \a^\vee\>=1$. We have $$\<w_x(\tilde \a), \mu_x+\a^\vee\>=\<\tilde \a + \sum_{\g \in D} \g^\vee, \mu_x+\a^\vee\> \ge \<\tilde \a, \mu_x + \a^\vee\> \ge 1$$ as desired.
\end{proof}

\begin{proof}[Proof of Proposition \ref{k2'} when $\co_\a$ is of type II] \
Note that $\tilde \a$ is $J$-anti-dominant and $J$-minuscule since $\Phi$ is simply laced. If $\mu_y + \s^{k}(\tta^\vee) \preceq \l$ for some $y \in \bar I_{\l, J. b}$ and $k \in \ZZ$, then $\s^k(\tilde \a) \in C_{\l, J, b, y}$ and $\co_{\tilde \a}$ is of type I by noticing that $y_{\s^k(\tilde \a)}=y_\a \in \pi_1(M_J)$. So the proposition follows from the type I case. We assume from now on that $\mu_y + \s^{k}({\tilde \a}^\vee) \npreceq \l$ for any $y \in \bar I_{\l, J. b}$ and any $k \in \ZZ$.

Case(1): There do not exist $\th \in \co_\a$, $r \in [1, 2d-1]$ and $x' \in \bar I_{\l, J, b}$ satisfying $x \overset {(\th, r)} \to x'$.

Case(1.1): $w_x(\tta) \neq \tta$. Let $\textsl{g}: \PP^1 \to G(L)/K$ be defined as in \cite[\S 4.7.14]{CKV}. Following the computation in loc. cit, we obtain, by Lemma \ref{z1}, Lemma \ref{z4} and Lemma \ref{z5}, that $\eta_J(\textsl{g}(0))-\eta_J(\textsl{g}(\infty))=y_\a$ and $$\textsl{g}(z)\i b \s(\textsl{g}(z)) \in K U_\a(-z t\i) U_{w_x(\tta)}(-c_d' \s^d(y) \s^{2d}(y) t^{\<w_x(\tta), \mu_x\>-1}) t^{\mu_x} K,$$ where $c_d' \in \bold{k}[[t]]^\times$ is some constant. To verify $\textsl{g}\i b \s(\textsl{g}) \subseteq \cup_{\l' \preceq \l} K t^{\l'} K \subseteq K \backslash G(L) / K$, it suffices to show

(a) $U_\a(y t\i) U_{w_x(\tta)}(z t^{\<w_x(\tta), \mu_x\>-1}) t^{\mu_x} \in \cup_{\l' \preceq \l} K t^{\l'} K$ for $y, z \in \bold{k}[[t]]$.

If $\<w_x(\tta), \a^\vee\> \ge 0$, then $U_{w_x(\tta)}$ and $U_{\a}$ commutes and \begin{align*} U_\a(y t\i) U_{w_x(\tta)}(z t^{\<w_x(\tta), \mu_x\>-1}) t^{\mu_x} &= U_{w_x(\tta)}(z t^{\<w_x(\tta), \mu_x\>-1}) U_\a(y t\i) t^{\mu_x} \\ & \in K t^{\mu_x} K \cup K t^{\mu_x+\a^\vee} K, \end{align*} where the inclusion follows from $\<w_x(\tta), \mu_x\> \ge 1$ by Lemma \ref{z5}. Otherwise, we have $\<w_x(\tta), \a^\vee\> =-1$ and $\<w_x(\tta), \mu_x\> \ge 2$ by Lemma \ref{z5}. One checks that \begin{align*} U_\a(y t\i) U_{w_x(\tta)}(z t^{\<w_x(\tta), \mu_x\>-1}) t^{\mu_x} & \in K U_{w_x(\tta)+\a}(c y z t^{\<w_x(\tta), \mu_x\>-2}) U_\a(z t\i) t^{\mu_x} \\ & \subseteq K U_\a(z t\i) t^{\mu_x} \\ & \subseteq K t^{\mu_x} K \cup K t^{\mu_x+\a^\vee} K,\end{align*} where $c \in \bold{k}[[t]]$ is some constant. Therefore, (a) is proved.

Case(1.2): $w_x(\tta) = \tta$. If $\<\a, \mu_x\>=-1$, the proposition follows from the construction in \cite[\S 4.7.16]{CKV}. Otherwise, $\<\tta, \mu_x\> \ge 1$ by Lemma \ref{z4} (b) (e), and let $\textsl{g}: \PP^1 \to G(L)/K$ be defined as in \cite[\S 4.7.14]{CKV}. Following the computation in loc. cit, we obtain, by Lemma \ref{z1} and Lemma \ref{z4}, that $\eta_J(\textsl{g}(0))-\eta_J(\textsl{g}(\infty))=y_\a$ and \begin{align*} \textsl{g}(z)\i b \s(\textsl{g}(z)) &= K U_\a(-z t\i) U_{\tta}(-c_d' \s^d(z) \s^{2d}(z) t^{\<\tta, \mu_x\>-1}) t^{\mu_x} K \\ &= K U_\a(-z t\i) t^{\mu_x} K \in K t^{\mu_x} K \cup K t^{\mu_x+\a^\vee} K,\end{align*} where $c_d' \in \bold{k}[[t]]^\times$ is some constant, and the second equality follows from that $U_\a$ commutes with $U_{\tta}$ (since $\<\tta, \a^\vee\>=1$).

Case(2): $x \overset {(\th, r)} \to x'$ for some $\th \in \co_\a$, $r \in [1, 2d-1]$ and $x' \in \bar I_{\l, J, b}$. Similar to the Case(1) in the proof of the type I case, it suffices to show that

(b) there exist $P \in X_{\mu_x}^{M_J}(b)$ and $Q \in X_{\mu_{x'}}^{M_J}(b)$ such that $P \sim_{\l, b} Q$ and $\eta_J(P)-\eta_J(Q)=\sum_{i=0}^{r-1} \s^i(\th^\vee)$.

By Lemma \ref{convv'}, we may assume $x \overset {(\th, r)} \rightarrowtail x'$. If $r \in [1, d]$, (b) follows from Proposition \ref{main}. Assume $r \in [d+1, 2d-1]$. Let $\textsl{g}: \PP^1 \to G(L)/K$ be as in the proof of \cite[Lemma 4.7.2]{CKV}. Following the computation in \cite[\S 4.7.5]{CKV}, we obtain by Lemma \ref{z3} that $\eta_J(\textsl{g}(0))-\eta_J(\textsl{g}(\infty))=\sum_{i=0}^{r-1} \s^i(\th^\vee)$ and \begin{align*}& \quad \ \textsl{g}(z)\i b \s(\textsl{g}(z)) \\ & =K U_\a(-z t\i) U_{\s^{r-d}(\a)}(-c_{r-d} \s^{r-d}(z) t\i) U_{w_x(\s^r(\a))+\s^r(\g)}(c_r t^{\<w_x(\s^r(\a))+\s^r(\g), \mu_x\>}) \\ &\quad \times U_{\s^{r-d}(\a)}(c_{r-d} \s^{r-d}(z) t\i) t^{\mu_x} \dot w_x K \\ & \in \cup_{\l' \preceq \l} K t^{\l'} K,\end{align*} where $c_{r-d}, c_r \in \bold{k}[[t]]^\times$ are some constants and the last inclusion follows from Lemma \ref{z3} (e).
\end{proof}

\begin{lem} \rm{(cf. \cite[Lemma 4.7.6]{CKV})} \label{z6}
Assume $\co_\a$ is of type III and $\mu_y+\s^k(\tta^\vee) \npreceq \l$ for any $y \in \bar I_{\l, J, b}$ and any $k \in \NN$. If there exist $\th \in \co_\a$, $r \in [2d, 3d-1]$ and $x' \in \bar I_{\l, J, b}$ such that $x \overset {(\th, r)} \rightarrowtail x'$, then

(a) If $r=2d$, then $\<\s^d(\th), \mu_x\>=0$ and $\<\s^{2d}(\th), \mu_x\> \ge 1$;

(b) If $r \in [2d+1, 3d-1]$, then $\<\g, \mu_x\>=1$, $\<\s^r(\g), \mu_x\>=0$, $\<\s^r(\th), \mu_x\> \ge 1$ and $\<\s^i(\th), \mu_x\>=0$ for $i \in \{d, 2d, r-d, r-2d\}$;

(c) $w_x(\s^i(\th))=\s^i(\th)$ and $\<\s^i(\g), \mu_x\>=\<\s^i(\th), \mu_x\>=0$ for $i \in [1, r-1]-\{d, 2d, r-d, r-2d\}$.

Here $\g \in \Pi_J$ is the common neighbor of $\th$, $\s^d(\th)$ and $\s^{2d}(\th)$.
\end{lem}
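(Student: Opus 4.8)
The plan is to prove Lemma \ref{z6} by adapting the argument of Lemma \ref{z3} (the type II analogue) to the type III situation, exploiting the structure of the Dynkin diagram $\Pi_J \cup \co_\a$, in which $\th$, $\s^d(\th)$, $\s^{2d}(\th)$ all share the single common neighbor $\g$. The key input is the same as in type II: since $\mu_y + \s^k(\tta^\vee) \npreceq \l$ for all $y \in \bar I_{\l, J, b}$ and all $k$, we obtain the basic positivity inequality $\<\s^k(\tta), \mu_y\> \ge 0$ for all such $y$, $k$, where $\tta = \a + \s^d(\a) + \s^{2d}(\a) + \b$. Applying this to $\mu_{x'}$ as well as $\mu_x$, together with the fact that $x \overset{(\th, r)} \rightarrowtail x'$ rules out the existence of the ``intermediate factorizations'' $x \overset{(\th, i)} \to \cdot \overset{(\s^i(\th), r-i)} \to x'$ (and the symmetric one) for $i \in [1, r-1]$, will be the two levers driving every inequality.

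First I would establish (c), the ``generic'' statement, since it is essentially a direct consequence of Corollary \ref{central''} together with Lemma \ref{z1}: for indices $i \notin \{d, 2d, r-d, r-2d\}$ and $i, i-r \notin \ZZ d$, the roots $\th$ and $\s^i(\th)$ lie in different connected components of $\Phi_{J,\a}$, so $w_x$ fixes $\s^i(\th)$ and $\<\s^i(\th), \mu_x\> = 0$; the vanishing of $\<\s^i(\g), \mu_x\>$ then follows from Lemma \ref{central'}(c), (d) since $\s^i(\g) \in \Pi_J$ is a simple neighbor of $\s^i(\th)$. Next I would handle (a), the case $r = 2d$: here the relevant indices are $d$ and $2d = r$. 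One shows $\<\s^d(\th), \mu_x\> \le 0$ because $\<\s^d(\th), \mu_x\> \ge 1$ would give the factorization $x \overset{(\th,d)}\to x + \th^\vee - \s^d(\th^\vee) \overset{(\s^d(\th), d)}\to x'$, contradicting $\rightarrowtail$; the reverse inequality $\<\s^d(\th),\mu_x\> \ge 0$ comes from combining $\<\s^{2d}(\th), \mu_x\> \ge 0$ (which itself follows from analyzing $\<\tta, \mu_x\> \ge 0$ and $\<\g, \mu_x\> \le 1$ by $J$-minusculity) with the constraint from the symmetric factorization; hence $\<\s^d(\th),\mu_x\> = 0$. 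The strict inequality $\<\s^{2d}(\th),\mu_x\> \ge 1$ then follows by ruling out $\<\s^{2d}(\th),\mu_x\> = 0$: if it vanished, then by Lemma \ref{central'} so would $\<\g,\mu_x\>$ — wait, rather $\<\s^{2d}(\g),\mu_x\>$ — forcing $\<\tta, \mu_x\> < 0$, a contradiction.

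For part (b), $r \in [2d+1, 3d-1]$, the argument is the genuine type III novelty and the main obstacle: now four special indices $\{d, 2d, r-d, r-2d\}$ are in play simultaneously (note $r - 2d \in [1, d-1]$ and $r - d \in [d+1, 2d-1]$, so these interact with both the $\s^d$-shift and the $\s^{2d}$-shift of the orbit), and one must carefully track how $w_x$ acts on each of $\s^d(\th), \s^{2d}(\th), \s^{r-d}(\th), \s^{r-2d}(\th)$ and on $\s^r(\g)$. I would mimic the chain of sub-claims (a1.1)--(a1.3), (b1)--(b4) in the proof of Lemma \ref{z3}: first bound $\<w_x(\s^i(\th)), \mu_x\>$ from above by ruling out the corresponding factorization of $x \rightarrowtail x'$; then bound $\<\s^i(\th), \mu_x\>$ from below using the positivity of $\<\s^k(\tta), \mu_{x}\>$ (applied at the appropriate shift) and of $\<\s^k(\tta), \mu_{x'}\>$; pinch the two bounds; and in the boundary case where a value equals zero, invoke Lemma \ref{central'}(c)/(d) to propagate vanishing to the neighbor $\s^i(\g)$ and reach a contradiction with the $\tta$-positivity. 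The statements $\<\g, \mu_x\> = 1$ and $\<\s^r(\g), \mu_x\> = 0$ should drop out of the analysis at the indices $d$ (or $2d$) and $r$ respectively, exactly as $\<\g,\mu_x\>$ and $\<\s^r(\g),\mu_x\>$ did in Lemma \ref{z3}(c), (a). The subtlety I anticipate is bookkeeping: one must verify that the four special indices are genuinely distinct (which uses $r \ne 2d$ and $d < r - d$, i.e. $r > 2d$) and that the various auxiliary roots such as $w_x(\s^d(\th))$, $w_x(\s^r(\th)) + \s^r(\g)$ remain roots of $\Phi$ — this relies on $\Phi$ being simply laced (forced here since $\co_\a$ has type III, so each component is $D_4$ and $\s$ has order $3d$) and on the explicit $D_4$ Dynkin combinatorics.
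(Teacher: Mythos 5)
Your overall strategy --- combining the positivity $\<\s^k(\tta),\mu_y\>\ge 0$ forced by $\mu_y+\s^k(\tta^\vee)\npreceq\l$ with the factorization obstructions coming from $\rightarrowtail$ and with Lemma \ref{central'}/Corollary \ref{central''} --- is exactly the paper's, and your treatment of (c) is essentially correct (though the citation of Lemma \ref{z1} is inapplicable: its hypothesis is the \emph{non}-existence of any arrow $x\overset{(\th,j)}\to x'$, which contradicts the hypothesis of the present lemma; Corollary \ref{central''} alone is what you need there). The problem is that the proposal stops at the level of a plan precisely where the work is. First, all of your ``a large pairing would give a forbidden factorization'' implications need the preliminary claim (step (i) in the paper's proof of (a)) that the actual cocharacter $\mu_x+\th^\vee-\s^{2d}(\th^\vee)$ --- not merely its $J$-dominant $J$-minuscule representative $\mu_{x'}$ --- satisfies $\preceq\l$. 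This is not automatic: Lemma \ref{minuscule} only identifies $\mu_{x'}$ up to $W_J$ with $\mu_x+\th^\vee-w_x(\s^{2d}(\th^\vee))$, and $w_x(\s^{2d}(\th))$ may equal $\s^{2d}(\th)+\g$; the paper bridges this by a separate two-case computation with $s_\g$ according to the value of $\<\g,\mu_x\>$. Without it, your claimed implication ``$\<\s^d(\th),\mu_x\>\ge 1$ gives $x\overset{(\th,d)}\to\cdot\overset{(\s^d(\th),d)}\to x'$'' does not close, because you cannot verify $\mu_{x+\th^\vee-\s^d(\th^\vee)-\s^{2d}(\th^\vee)}\preceq\l$ for the second arrow.

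Second, part (b) is declared ``the main obstacle'' and then left entirely to ``mimic Lemma \ref{z3}''. It does not reduce to a formal transcription of the type II argument: the paper's proof of (b) first pins $\<\g,\mu_x\>=1$ by a $\tilde\th$-positivity argument, then \emph{rules out} $\<\s^r(\g),\mu_x\>=1$ by showing it would force $\<\s^{r-d}(\th),\mu_x\>,\<\s^{r-2d}(\th),\mu_x\>\le-1$ and $\<\s^r(\th),\mu_x\>\le 0$, hence $\<\s^r(\tilde\th),\mu_x\>=-1$, contradicting positivity; only then does it deduce the vanishing at $r-d$, $r-2d$, and it obtains $\<\s^r(\th),\mu_x\>\ge 1$ by evaluating $\s^r(\tilde\th)$ against $\mu_{x'}$ rather than against $\mu_x$. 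None of these steps is supplied or even signposted in your outline, so as written the proposal does not constitute a proof of (a) or (b), even though the intended approach is the right one.
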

\begin{proof}
Note that $\Phi=\Phi_{J, \a}$, whose connected components are of type $D_4$. Thus $\{\th, \s^d(\th), \s^{2d}(\th)\}$ is an orthogonal set. We set $\tilde \th=\th+\s^d(\th)+\s^{2d}(\th)+\g$.

(a) First we claim that

(i) $\mu_x+\th^\vee-\s^{2d}(\th^\vee) \preceq \l$.

Indeed, one checks that $\mu_x+\th^\vee-\s^{2d}(\th^\vee)= \mu_x+\th^\vee-w_x(\s^{2d}(\th^\vee)) \preceq \l$ if $\<\g, \mu_x\>=0$ and $\mu_x+\th^\vee-\s^{2d}(\th^\vee)=s_\g(\mu_x+\th^\vee-w_x(\s^{2d}(\th^\vee))) \preceq \l$ if $\<\g, \mu_x\>=1$. So (i) is proved.

If $\<\s^d(\th), \mu_x\> \le -2$, then $x \overset {(\s^d(\th), d)} \to {x+\s^d(\th^\vee)-\s^{2d}(\th^\vee)} \overset {(\th, d)} \to x'$, which contradicts the assumption of the lemma. So $\<\s^d(\th), \mu_x\> \ge -1$. Similarly, using (i) we have $\<\s^d(\th), \mu_x\> \le 0$. Assume $\<\s^d(\th), \mu_x\>=-1$. If $\<\s^{2d}(\th), \mu_x\> \le 0$, then $\<\s^d(\th)+\s^{2d}(\th)+\g, \mu_x+\th^\vee\> \le -1$ and $\mu_x+{\tilde \th}^\vee \preceq \mu_x+\th^\vee \preceq \l$, a contradiction. So we have $\<\s^{2d}(\th), \mu_x\> \ge 1$ and $\mu_x-\s^{2d}(\th^\vee)+\s^d(\th^\vee) \preceq \mu_x-\s^{2d}(\th^\vee) \preceq \l$. Then $x \overset {(\s^d(\th), d)} \to {x+\s^d(\th^\vee)-\s^{2d}(\th^\vee)} \overset {(\th, d)} \to x'$, a contradiction. Thus $\<\s^d(\th), \mu_x\>=0$.

Assume $\<\s^{2d}(\th), \mu_x\> \le 0$. Then $\<\s^{2d}(\th), \mu_x+\th^\vee+\s^d(\th^\vee)+\g^\vee\> \le -1$ and $\<\s^{d}(\th)+\s^{2d}(\th)+\g, \mu_x+\th^\vee-\s^{2h}(\th^\vee)\> \le -1$, which implies $$\mu_x+{\tilde \th}^\vee \preceq \mu_x+\th^\vee+\s^d(\th^\vee)+\g^\vee \preceq \mu_x+\th^\vee-\s^{2h}(\th^\vee) \preceq \l,$$ which is a contradiction. So $\<\s^{2d}(\th), \mu_x\> \ge 1$ as desired.

(b) Similar to the proof of (a), we have $\<\s^d(\th), \mu_x\>=\<\s^{2d}(\th), \mu_x\> = 0$. If $\<\g, \mu_x\>=0$, then $\<\s^d(\th)+\s^{2d}(\th)+\g, \mu_x + \th^\vee\> =-1$, which means $\mu_x+{\tilde \th}^\vee \preceq \mu_x+\th^\vee \preceq \l$, a contradiction. So $\<\g, \mu_x\>=1$.

Assume $\<\s^r(\g), \mu_x\>=1$. If $\<\s^{r-d}(\th), \mu_x\> \ge 0$, then $\<w_x(\s^{r-d}(\th)), \mu_x\>=\<\s^{r-d}(\th)+\s^r(\g), \mu_x\> \ge 1$ and $$\mu_x+\th^\vee-w_x(\s^r(\th^\vee))-w_x(\s^{r-d}(\th^\vee)) \preceq \mu_x+\th^\vee-w_x(\s^r(\th^\vee)) \preceq \l,$$ which implies $x \overset {(\th, r-d)} \to {x+\th^\vee-\s^{r-d}(\th^\vee)} \overset {(\s^{r-d}(\th), d)} \to x'$, a contradiction. So $\<\s^{r-d}(\th), \mu_x\> \le -1$. Similarly, $\<\s^{r-2d}(\th), \mu_x\> \le -1$. If $\<\s^r(\th), \mu_x\> \ge 1$, then one checks that $x \overset {(\s^{r-d}(\th), d)} \to {x+\s^{r-d}(\th^\vee)-\s^r(\th^\vee)} \overset {(\th, r-d)} \to x'$, a contradiction. So $\<\s^r(\th), \mu_x\> \le 0$ and $\<\mu_x, \s^r(\tilde \th)\>=-1$, which is a contradiction. Therefore, the case $\<\s^r(\g), \mu_x\>=1$ dose not occur.

Now we have $\<\s^r(\g), \mu_x\>=0$ and $w_x(\s^i(\th))=\s^i(\th)$ for $i \in r+\ZZ d$. Then one deduces that $\<\s^{r-d}(\th), \mu_x\>=\<\s^{r-2d}(\th), \mu_x\>=0$ as in the proof of (a). If $\<\s^r(\th), \mu_x\> \le 0$, then $$\<\s^r(\tilde \th), \mu_{x'}\>=\<\s^r(\tilde \th), \mu_x-\s^r(\th^\vee)\> \le -1,$$ a contradiction. So we have $\<\s^r(\th), \mu_x\> \ge 1$ as desired.

(c) It follows from Corollary \ref{convv'} and Lemma \ref{central'}.
\end{proof}

\begin{lem} \rm{(cf. Case 3 in the proof of \cite[Lemma 4.7.10]{CKV})}\label{z7}
Assume $\co_\a$ is of type III and $\mu_y+\s^k(\tta^\vee) \npreceq \l$ for any $y \in \bar I_{\l, J, b}$ and any $k \in \NN$. If there do not exist $\th \in \co_\a$, $r \in [1, 3d-1]$ and $x' \in \bar I_{\l, J, b}$ satisfying $x \overset {(\th, r)} \to x'$, then $\<\b, \mu_x\>=1$, $\<\s^d(\a), \mu_x\>=\<\s^{2d}(\a), \mu_x\>=0$ and $\<\a, \mu_x\> \ge -1$. Here $\b$ is the common neighborhood of $\a$, $\s^d(\a)$ and $\s^{2d}(\a)$ in the Dynkin diagram of $\Pi_J \cup \co_\a$.
\end{lem}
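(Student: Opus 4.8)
The plan is to imitate the proof of Lemma \ref{z4} (the type II analogue), with the $A_3$-configuration there replaced by the $D_4$-configuration forced by type III. First I would record the local picture. As in the proof of Lemma \ref{z6}, $\Phi = \Phi_{J, \a}$ and its connected components are of type $D_4$; in the $D_4$ containing $\a$, the three simple roots coming from $\co_\a$ are the orthogonal leaves $\a, \s^d(\a), \s^{2d}(\a)$, so the remaining (central) simple root $\b$ lies in $\Pi_J$ and satisfies $\<\b, \s^{jd}(\a^\vee)\> = -1$ for $j \in \{0,1,2\}$. Since the only roots of $\Phi_J$ in this component are $\pm\b$, the element $w_x$ acts there as $1$ or $s_\b$, whence $w_x(\s^{jd}(\a)) \in \{\s^{jd}(\a),\, \s^{jd}(\a)+\b\}$ and $\<w_x(\s^{jd}(\a)), \a^\vee\> \in \{0,-1\}$. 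Next, exactly as in the proof of Lemma \ref{z3}, the hypothesis $\mu_y + \s^k(\tta^\vee) \npreceq \l$ gives $\<\s^k(\tta), \mu_y\> \ge 0$ for all $y \in \bar I_{\l, J, b}$ and $k \in \NN$; applying this with the root $\d := \s^d(\a)+\s^{2d}(\a)+\b$ (for which $\mu_x+\a^\vee+\d^\vee = \mu_x+\tta^\vee$) shows $\<\d, \mu_x+\a^\vee\> \ge 0$, and since $\<\d, \a^\vee\> = -1$ this gives $\<\s^d(\a)+\s^{2d}(\a)+\b,\, \mu_x\> \ge 1$; call this inequality $(\ast)$.

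The core of the proof is to show that for $j \in \{1,2\}$ one has $w_x(\s^{jd}(\a)) = \s^{jd}(\a) + \b$ and $\<w_x(\s^{jd}(\a)), \mu_x\> = 1$, equivalently $\<\s^{jd}(\a), \mu_x\> + \<\b, \mu_x\> = 1$. For the upper bound: if $\<w_x(\s^{jd}(\a)), \mu_x\> \ge 2$, then $\<w_x(\s^{jd}(\a)), \mu_x+\a^\vee\> \ge 1$ regardless of the value of $\<w_x(\s^{jd}(\a)), \a^\vee\>$, so by the convex-hull estimate applied to $\mu_x$ and to $\mu_x+\a^\vee$ (both $\preceq\l$), by Lemma \ref{minuscule}, and by the computation $\a^\vee - \s^{jd}(\a^\vee) \in (1-\s)\pi_1(M_J)$ (so that the Kottwitz point is preserved), the transition $x \overset{(\a, jd)}\to x+\a^\vee-\s^{jd}(\a^\vee)$ is valid, its target lies in $\bar I_{\l, J, b}$, and $jd \in [1,3d-1]$, contradicting the hypothesis. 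Hence $\<w_x(\s^{jd}(\a)), \mu_x\> \le 1$. To exclude $\<w_x(\s^{jd}(\a)), \mu_x\> \le 0$ and to exclude $w_x(\s^{jd}(\a)) = \s^{jd}(\a)$, I would reproduce the chain of implications of Lemma \ref{z3}(b)/Lemma \ref{z4}: in each of these cases Lemma \ref{central'}(a)--(d) forces enough vanishing among $\<\b, \mu_x\>$ and the leaf pairings to contradict either $(\ast)$ or a further transition built from the $\preceq\l$-data already available. Once this is done, combining $\<\s^d(\a),\mu_x\>+\<\b,\mu_x\> = 1 = \<\s^{2d}(\a),\mu_x\>+\<\b,\mu_x\>$ with $(\ast)$ and with $0 \le \<\b,\mu_x\> \le 1$ (since $\b \in \Pi_J$ and $\mu_x$ is $J$-dominant $J$-minuscule), and using Lemma \ref{central'}(b) to rule out $\<\b,\mu_x\>=0$ (which would force $w_x(\s^{jd}(\a)) = \s^{jd}(\a)$), yields $\<\b,\mu_x\> = 1$ and $\<\s^d(\a),\mu_x\> = \<\s^{2d}(\a),\mu_x\> = 0$; finally $\<\a,\mu_x\> \ge -1$ follows from $\<\tta,\mu_x\> = \<\a,\mu_x\>+1 \ge 0$.

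I expect the main obstacle to be precisely this middle step, namely ruling out $\<w_x(\s^{jd}(\a)), \mu_x\> \le 0$ and $w_x(\s^{jd}(\a)) = \s^{jd}(\a)$. This is where the $D_4$ case genuinely differs from the $A_3$ case of Lemma \ref{z4}: there are now two leaves $\s^d(\a)$ and $\s^{2d}(\a)$ both adjacent to the single $\Phi_J$-node $\b$, so assertions about one interact with those about the other through $\<\b,\mu_x\>$ and through the inequalities $\<\s^k(\tta),\mu_y\> \ge 0$ for the various shifts $k$; moreover the ``no transition'' hypothesis must be exploited over the whole range $r \in [1,3d-1]$, i.e.\ for transitions of length $2d$ as well as of length $d$. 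Tracking all of this simultaneously, and verifying in each sub-case that the transitions one writes down really are valid (the four $\preceq\l$ conditions together with membership in $\bar I_{\l, J, b}$), is the delicate part; the rest is a routine transcription of the cited lemmas.
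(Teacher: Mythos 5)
Your inequality $(\ast)$ --- $\<\s^d(\a)+\s^{2d}(\a)+\b,\ \mu_x\> \ge 1$, equivalently $\<\d, \mu_x+\a^\vee\> \ge 0$ for $\d:=\s^d(\a)+\s^{2d}(\a)+\b$ --- is exactly the inequality the paper's proof turns on, and your description of the $D_4$-configuration is correct. But there is a genuine gap: the step you yourself flag as ``the delicate part'', namely excluding $\<w_x(\s^{jd}(\a)),\mu_x\> \le 0$ and excluding $w_x(\s^{jd}(\a))=\s^{jd}(\a)$, is never carried out; you only announce that you ``would reproduce the chain of implications'' of Lemmas \ref{z3} and \ref{z4}. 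That chain does not transcribe mechanically to the present picture (two leaves $\s^d(\a),\s^{2d}(\a)$ now interact through the single node $\b$, and the sub-cases of Lemma \ref{z3}(b) are driven by factorizations $x\overset{(\th,d)}\to\cdot\overset{(\s^d(\th),r-d)}\to x'$ of a given transition, which have no counterpart under the ``no transition at all'' hypothesis here). So the core of your argument is missing rather than routine. Note also that your upper bound only excludes $\<w_x(\s^{jd}(\a)),\mu_x\>\ge 2$, which by Lemma \ref{central'}(a) yields merely $\<\s^{jd}(\a),\mu_x\>\le 1$, not the $\le 0$ that the conclusion requires.

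The detour through the $w_x$-images is in fact unnecessary, and the paper assembles the pieces you already have much more directly: (1) the no-transition hypothesis gives $\<\s^{d}(\a),\mu_x\>\le 0$ and $\<\s^{2d}(\a),\mu_x\>\le 0$ individually, exactly as in the proof of Lemma \ref{z6}; (2) if $\<\s^d(\a)+\s^{2d}(\a),\mu_x\>\le -1$ then, since $\<\b,\mu_x\>\le 1$ ($\mu_x$ being $J$-minuscule) and $\<\d,\a^\vee\>=-1$, one gets $\<\d,\mu_x+\a^\vee\>\le -1$ and hence $\mu_x+\tta^\vee\preceq\mu_x+\a^\vee\preceq\l$, contradicting the hypothesis on $\tta$; so both leaf pairings vanish; (3) now $\<\d,\mu_x+\a^\vee\>\ge 0$ reads $\<\b,\mu_x\>-1\ge 0$, forcing $\<\b,\mu_x\>=1$, and $\<\a,\mu_x\>\ge -1$ follows from $\<\tta,\mu_x\>\ge 0$. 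You should replace the unproved middle step by this three-line argument.
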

\begin{proof}
As in the proof of Lemma \ref{z6}, we have $\<\s^d(\a), \mu_x\>, \<\s^{2d}(\a), \mu_x\> \le 0$. If $\<\s^d(\a)+\s^{2d}(\a), \mu_x\> \le -1$, then $\<\s^d(\a)+\s^{2d}(\a)+\b, \mu_x+\a^\vee\> \le -1$ and $\mu_x+\tta^\vee \preceq \mu_x+\a^\vee \preceq \l$, a contradiction. So $\<\s^d(\a), \mu_x\>=\<\s^{2d}(\a), \mu_x\>=0$. Again, by the inequality $\<\s^d(\a)+\s^{2d}(\a)+\b, \mu_x+\a^\vee\> \ge 0$, we have $\<\b, \mu_x\>=1$. Finally, we have $\<\a, \mu_x\> \ge -1$ since $\<\tta, \mu_x\> \ge 0$.
\end{proof}

\begin{proof}[Proof of Proposition \ref{k2'} when $\co_\a$ is of type III] \

If $\mu_y + \s^{k}(\tta^\vee) \preceq \l$ for some $y \in \bar I_{\l, J, b}$ and $k \in \ZZ$, the proposition follows similarly as in the type II case. We assume from now on that $\mu_y + \s^{k}({\tilde \a}^\vee) \npreceq \l$ for any $y \in \bar I_{\l, J, b}$ and any $k \in \ZZ$.

Case(1): There do not exist $\th \in \co_\a$, $r \in [1, 3d-1]$ and $x' \in \bar I_{\l, J, b}$ satisfying $x \overset {(\th, r)} \to x'$. Let $\textsl{g}: \PP^1 \to G(L)/K$ be defined as \cite[page 60]{CKV}. Following the computation in loc. cit., we obtain, by Lemma \ref{z1} and Lemma \ref{z7}, that $\eta_J(\textsl{g}(0))-\eta_J(\textsl{g}(\infty))=y_\a$ and \begin{align*} & \quad \ \textsl{g}\i(z) b \s(\textsl{g}(z)) \\ &= K U_{\a+\b}(c_{3d} \s^{3d}(z) t^{\<\a+\b, \mu_x\>}) \\ & \quad \ \times U_{\a+\s^d(\a)+\s^{2d}(\a)+2\b}(-c''\s^d(z)\s^{2d}(z)\s^{3d}(z)t^{\<\a+\b, \mu_x\>}) \\ & \quad \ \times U_{\a+\s^{2d}(\a)+\b}(c'\s^{2d}(z)\s^{3d}(z)t^{\<\a+\b, \mu_x\>}) U_\a(-z t\i) t^{\mu_x} K \\ & = K U_\a(-z t\i) t^{\mu_x} K \in \cup_{\l' \preceq \l} K t^{\l'} K \subseteq K \backslash G(L) / K, \end{align*} where $c_{3d}, c', c'' \in \bold{k}[[t]]$ are some constants, and the second inclusion follows from $\<\a+\b, \mu_x\> \ge 0$ by Lemma \ref{z7}.

Case(2): $x \overset {(\th, r)} \to x'$ for some $\th \in \co_\a$, $r \in [1, 3d-1]$ and $x' \in \bar I_{\l, J, b}$. As in the type I case, it suffices to show that

(a) there exist $P \in X_{\mu_x}^{M_J}(b)$ and $Q \in X_{\mu_{x'}}^{M_J}(b)$ such that $\eta_J(P)-\eta_J(Q)=\sum_{i=0}^{r-1} \s^i(\th^\vee)$.

By Lemma \ref{convv'}, we may assume $x \overset {(\th, r)} \rightarrowtail x'$. If $r \in [1, 2d-1]$, (a) follows from Proposition \ref{main}. Now we assume $r \in [2d,  3d-1]$. Let $\textsl{g}: \PP^1 \to G(L)/K$ be defined by $$\textsl{g}(z)=g_x {}^{(\dot b_x \s)^{r-1}} U_\th(z t\i) {}^{(\dot b_x \s)^{r-2}} U_\th(z t\i) \cdots U_\th(z t\i) K,$$ where $g_x \in M_J(L)$ satisfying $g_x\i b \s(g_x)=\dot b_x=t^{\mu_x} \dot w_x$ as usual. Then one computes that $\eta_J(\textsl{g}(0))-\eta_J(\textsl{g}(\infty))=\sum_{k=0}^{r-1} \s^k(\th^\vee)$. It remains to show $\textsl{g}\i b \s(\textsl{g}) \subseteq \cup_{\l' \preceq \l} K t^{\l'} K \subseteq K \backslash G(L) / K$.

Case(2.1): $r=2d$. If $\<\g, \mu_x\>=1$, by Lemma \ref{z1} and Lemma \ref{z6} (a), we have \begin{align*} & \quad \ \textsl{g}\i(z) b \s(\textsl{g}(z)) \\ & =K U_\th(-z t\i) U_{\s^{2d}(\th)+\s^d(\th)+\g}(c \s^d(z) \s^{2d}(z) t^{\<\s^{2d}(\th), \mu_x\>}) U_{\s^{2d}(\th)}(c_{2d} \s^{2d}(z) t^{\<\s^{2d}(\th), \mu_x\>}) t^{\mu_x} \dot w_x K\\ & = K U_\th(-z t\i) t^{\mu_x} K \in \cup_{\l' \preceq \l} K t^{\l'} K .\end{align*} If $\<\g, \mu_x\>=0$, then \begin{align*}\textsl{g}\i(z) b \s(\textsl{g}(z)) & =K U_\th(-z t\i) U_{\s^{2d}(\th)}(c_{2d} \s^{2d}(z) t^{\<\s^{2d}(\th), \mu_x\>-1}) t^{\mu_x} \dot w_x K \\ & = K U_\th(-z t\i) t^{\mu_x} K \in \cup_{\l' \preceq \l} K t^{\l'} K .\end{align*} Here $c, c_{2d} \in \bold{k}[[t]]$ are some constants; the second equality in both cases follows from $\<\s^{2d}(\th), \mu_x\> \ge 1$ by Lemma \ref{z6} (a).

Case(2.2): $r \in [2d+1, 3d-1]$. Then we have (see \cite[page 53]{CKV}) \begin{align*} & \quad \ \textsl{g}\i(z) b \s(\textsl{g}(z)) \\ & = K U_\th(-z t\i) U_{\s^r(\th)+\s^{r-d}(\th)+\s^{r-2d}(\th)+\s^r(\g)}(c'' \s^r(z)\s^{r-d}(z)\s^{r-2d}(z) t^{\<\s^r(\th), \mu_x\>-1}) \\ & \quad \ \times U_{\s^r(\th)+\s^{r-d}(\th)+\s^r(\g)}(c' \s^r(z)\s^{r-d}(z) t^{\<\s^r(\th), \mu_x\>}) U_{\s^r(\th)}(c_r \s^r(z) t^{\<\s^r(\th), \mu_x\>} ) t^{\mu_x} \dot w_x K \\ & = K U_\th(-z t\i) t^{\mu_x} K \in \cup_{\l' \preceq \l} K t^{\l'} K,\end{align*} where $c', c'' ,c_r \in \bold{k}[[t]]$ and the second equality follows from $\<\s^r(\th), \mu_x\> \ge 1$ by Lemma \ref{z6} (b).
\end{proof}

\end{document}